\numberwithin{equation}{section}
\theoremstyle{plain}
\newtheorem{theorem}{Theorem}[section]
\newtheorem{corollary}[theorem]{Corollary}
\newtheorem{lemma}[theorem]{Lemma}
\newtheorem{proposition}[theorem]{Proposition}
\newtheorem{conjecture}[theorem]{Conjecture}
\theoremstyle{definition}
\newtheorem{definition}[theorem]{Definition}
\newtheorem{remark}[theorem]{Remark}
\theoremstyle{remark}
\newcommand{\OO}{\mathcal O}
\newcommand{\A}{\mathbb{A}}
\newcommand{\R}{\mathbb{R}}
\newcommand{\Q}{\mathbb{Q}}
\newcommand{\Z}{\mathbb{Z}}
\newcommand{\C}{\mathbb{C}}
\renewcommand{\H}{\mathbb{H}}
\newcommand{\F}{\mathbb{F}}
\newcommand{\zxz}[4]{\begin{pmatrix} #1 & #2 \\ #3 & #4 \end{pmatrix}}
\newcommand{\abcd}{\zxz{a}{b}{c}{d}}
\newcommand{\kzxz}[4]{\left(\begin{smallmatrix} #1 & #2 \\ #3 & #4\end{smallmatrix}\right) }
\newcommand{\kabcd}{\kzxz{a}{b}{c}{d}}
\newcommand{\calC}{\mathcal{C}}
\newcommand{\calD}{\mathcal{D}}
\newcommand{\calE}{\mathcal{E}}
\newcommand{\calF}{\mathcal{F}}
\newcommand{\calG}{\mathcal{G}}
\newcommand{\calL}{\mathcal{L}}
\newcommand{\calM}{\mathcal{M}}
\newcommand{\calO}{\mathcal{O}}
\newcommand{\calQ}{\mathcal{Q}}
\newcommand{\calX}{\mathcal{X}}
\newcommand{\calZ}{\mathcal{Z}}
\newcommand{\fraka}{\mathfrak a}
\newcommand{\frakn}{\mathfrak n}
\newcommand{\frakp}{\mathfrak p}
\newcommand{\lev}{M}
\newcommand{\za}{\mathrm{Za}}
\newcommand{\shim}{\mathrm{Sh}}
\newcommand{\Res}{\operatorname{Res}}
\newcommand{\bs}{\backslash}
\newcommand{\norm}{\operatorname{N}}
\newcommand{\vol}{\operatorname{vol}}
\newcommand{\tr}{\operatorname{tr}}
\newcommand{\sgn}{\operatorname{sgn}}
\newcommand{\Cl}{\operatorname{Cl}}
\newcommand{\Sl}{\operatorname{SL}}
\newcommand{\Gl}{\operatorname{GL}}
\newcommand{\GSpin}{\operatorname{GSpin}}
\newcommand{\CT}{\operatorname{CT}}
\newcommand{\Mp}{\operatorname{Mp}}
\newcommand{\Aut}{\operatorname{Aut}}
\newcommand{\Mat}{\operatorname{Mat}}
\newcommand{\End}{\operatorname{End}}
\newcommand{\sig}{\operatorname{sig}}
\newcommand{\Pet}{\text{\rm Pet}}
\newcommand{\GL}{\operatorname{GL}}
\newcommand{\SO}{\operatorname{SO}}
\newcommand{\Gal}{\operatorname{Gal}}
\newcommand{\reg}{\text{\rm reg}}
\newcommand{\supp}{\operatorname{supp}}
\newcommand{\diag}{\operatorname{diag}}
\newcommand{\Gspin}{\operatorname{GSpin}}
\newcommand{\boldbeta}{\text{\boldmath$\beta$\unboldmath}}
\newcommand{\SL}{\operatorname{SL}}
\DeclareMathOperator{\Norm}{N}
\newcommand{\legendre}[2]{\left( \frac{\mathstrut #1}{#2} \right)} 
\renewcommand{\k}{k}
\newcommand{\Hilb}{H}
\newcommand{\Rhilb}{R}
\newcommand{\sage}{\texttt{sage}\ }
\DeclareMathOperator{\ab}{ab}
\DeclareMathOperator{\lcm}{lcm}
\begin{document}

\title[CM values of higher automorphic Green functions]{
CM values of higher automorphic Green functions for orthogonal groups
}

\author[J.~H.~Bruinier]{Jan Hendrik Bruinier}
\author[S.~Ehlen]{Stephan Ehlen}
\author[T.~Yang]{Tonghai Yang}

\address{Fachbereich Mathematik,
Technische Universit\"at Darmstadt, Schlossgartenstrasse 7, D--64289
Darmstadt, Germany}
\email{bruinier@mathematik.tu-darmstadt.de}

\address{Mathematisches Institut, University of Cologne, Weyertal 86-90, D-50931 Cologne, Germany}
\email{sehlen@math.uni-koeln.de}

\address{Department of Mathematics, University of Wisconsin Madison, Van Vleck Hall, Madison, WI 53706, USA}
\email{thyang@math.wisc.edu}

\thanks{The first author is partially supported by DFG grant BR-2163/4-2 and the LOEWE research unit USAG.
The third author is partially supported by  NSF grant DMS-1762289.}

\subjclass[2010]{11G18, 11G15, 11F37}


\begin{abstract}
Gross and Zagier conjectured that the CM values (of certain Hecke translates) of the automorphic Green function $G_s(z_1,z_2)$ for
the elliptic modular group at positive integral spectral parameter $s$ are given by logarithms of algebraic numbers in suitable class fields.
We prove a partial average version of this conjecture, where we sum in the first variable $z_1$ over all CM points of a fixed discriminant $d_1$ (twisted by a genus character), and allow in the second variable the evaluation at individual CM points of discriminant $d_2$. This result is deduced from more general statements for
automorphic Green functions on Shimura varieties associated with the group $\GSpin(n,2)$. We also use our approach to prove a Gross-Kohnen-Zagier theorem for higher Heegner divisors on Kuga-Sato varieties over modular curves.
\end{abstract}

\maketitle


\section{Introduction}


The automorphic Green function for $\Gamma=\SL_2(\Z)$, also called the resolvent kernel function for $\Gamma$, plays an important role in the theory of automorphic forms, see e.g.~\cite{Fay}, \cite{Hejhal}. It can be defined as
the infinite series
\[
G_s(z_1,z_2)= -2\sum_{\gamma\in \Gamma} Q_{s-1}\left(1+\frac{|z_1-\gamma z_2|^2}{2\Im (z_1) \Im (\gamma z_2)}\right),
\]
where $Q_{s-1}(t)= \int_0^\infty (t+\sqrt{t^2-1}\cosh(u))^{-s}\, du$ denotes the classical Legendre function of the second kind.
The sum converges absolutely for $s\in \C$ with $\Re(s)>1$, and $z_1,z_2$ in the complex upper half-plane $\H$ with $z_1\notin \Gamma z_2$.
Hence $G_s$ is invariant under the action of $\Gamma$ in both variables and descends to a function on $(X\times X)\setminus Z(1)$, where $X=\Gamma\bs \H$ and $Z(1)$ denotes the diagonal. Along $Z(1)$ it has a logarithmic singularity.
The differential equation of the Legendre function implies that $G_s$ is an eigenfunction of the hyperbolic Laplacian in both variables.
It
has a meromorphic continuation in $s$ to the whole complex plane and satisfies a functional equation relating the values at $s$ and $1-s$.



\subsection{The algebraicity conjecture}


Gross and Zagier employed the automorphic Green function in their celebrated work on canonical heights of Heegner points on modular curves to compute  archimedian height pairings of Heegner points \cite{GZ,GKZ}.
They also used it to derive explicit formulas for the norms of singular moduli, that is, for the CM values of the classical $j$-invariant. More precisely they computed the norms of the values
of $j(z_1) - j(z_2)$ at a pair of CM points $z_1$ and $z_2$, by giving a formula for the prime factorization. The main point of their analytic proof of this   result is that $\log| j(z_1)-j(z_2)|$ is essentially given by the constant term in the Laurent expansion at $s=1$ of
$G_s(z_1,z_2)$.

Gross and Zagier also studied the CM values of the automorphic Green function at positive integral spectral parameter $s=1+j$ for $j\in \Z_{>0}$ and conjectured that these quantities should have striking arithmetic properties,  which resemble those of singular moduli (see Conjecture~4.4 in \cite[Chapter 5.4]{GZ}, \cite[Chapter 5.1]{GKZ}, \cite{Vi1}). To describe their conjecture, let
\begin{align}
\label{eq:greenintro}
G_s^m(z_1,z_2) = G_s(z_1,z_2)\mid T_m =
-2\sum_{\substack{\gamma\in \Mat_2(\Z)\\ \det(\gamma)=m}} Q_{s-1}\left(1+\frac{|z_1-\gamma z_2|^2}{2\Im (z_1) \Im (\gamma z_2)}\right)
\end{align}
be the translate of $G_s$ by the $m$-th Hecke operator $T_m$, acting on any of the two variables.
Fix a weakly holomorphic modular form $f=\sum_m c_f(m) q^m\in M_{-2j}^!$ of weight $-2j$ for $\Gamma$, and put
\begin{equation}\label{eq:Gkf}
  G_{1+j,f}(z_1,z_2)= \sum_{m>0} c_f(-m) m^j G_{j+1}^m(z_1,z_2).
\end{equation}
For a discriminant $d<0$ we write $\calO_{d}$ for the order of discriminant $d$ in the imaginary quadratic field $\Q(\sqrt{d})$, and let $H_d$ be the corresponding ring class field, which we view as a subfield of the complex numbers $\C$.

\begin{conjecture}[Gross--Zagier]\label{conj:alg}
	Assume that $c_f(m) \in \Z$ for all $m<0$.
	Let $z_1$ be a CM point of discriminant $d_1$, and let $z_2$ be a
	CM point of discriminant $d_2$
	such that $(z_1, z_2)$ is not contained in
	$Z^j(f)= \sum_{m >0} c_f(-m)m^j Z(m)$,
where $Z(m)$ is the $m$-th Hecke correspondence on $X\times X$.
	Then there is an $\alpha \in H_{d_1}\cdot H_{d_2}$ such that
	\begin{align}
	\label{eq:gzprec}
	  (d_1 d_2)^{j/2} G_{j+1, f} (z_1, z_2) = \frac{w_{d_1}w_{d_2}}{4}\cdot \log|\alpha|,
	\end{align}
	where $w_{d_i}=\#\calO_{d_i}^\times$.
\end{conjecture}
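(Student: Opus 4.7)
The plan is to embed the pair $(z_1,z_2)\in X\times X$ into the orthogonal Shimura variety $X_V$ attached to a rational quadratic space $V$ of signature $(2,2)$, reinterpret $G_{j+1,f}(z_1,z_2)$ as the value of a regularized theta lift $\Phi(\cdot,f)$ at the corresponding $0$-dimensional CM cycle, and prove the \emph{individual} identity~\eqref{eq:gzprec} by constructing a ``higher Borcherds product'' $\Psi_f$: a meromorphic section of a natural automorphic line bundle on $X_V$ with divisor $Z^j(f)$ whose Petersson $\log$-norm equals $G_{j+1,f}$. Evaluation of $\Psi_f$ at the individual CM point $(z_1,z_2)$ then yields the algebraic number $\alpha$ via Shimura reciprocity.

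Carrying this out proceeds in three steps. First, using the regularized theta lift framework of the paper, identify
\begin{equation*}
\Phi(z_{U_1,U_2},f)\;=\;\mathrm{CT}^{\reg}\!\bigl\langle f(\tau),\,\Theta_P(\tau)\otimes\Theta_N(\tau)\bigr\rangle,
\end{equation*}
where $V=P\oplus N$ is the rational splitting into positive- and negative-definite parts determined by $(z_1,z_2)$. Second, express this constant term as a finite linear combination of Fourier coefficients of an auxiliary weight $1+j$ holomorphic modular form attached to the CM pair; the algebraicity of critical Rankin--Selberg $L$-values and of CM periods (Shimura) then puts these coefficients in $\overline{\Q}$, and in fact in $H_{d_1}\cdot H_{d_2}$ once the $(d_1d_2)^{j/2}$ normalization of~\eqref{eq:gzprec} is accounted for. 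Third, match the constant term with $\log|\alpha|$ for $\alpha=\Psi_f(z_1,z_2)$: the relation $\dv(\Psi_f)=Z^j(f)$ determines $\Psi_f$ up to a global invertible section on $X_V$, and the Galois-equivariance of the construction forces $\alpha\in H_{d_1}H_{d_2}$.

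The main obstacle is the third step: constructing $\Psi_f$ when $j\ge 1$. The theta kernel that represents $G_{j+1,f}$ weights lattice vectors uniformly over a $\Gal(H_{d_1}/\Q(\sqrt{d_1}))$-orbit, so the regularized integral naturally computes a \emph{trace} of CM values rather than individual ones; this is precisely why the theorems proved in the paper only recover the trivial- and genus-character projections of~\eqref{eq:gzprec} and leave the full conjecture open. Overcoming this averaging appears to require genuinely new input, for instance: (a)~a higher analogue of Borcherds' infinite product realization, now valid at spectral parameter $s=1+j$; (b)~a twisted regularized theta lift attached to an \emph{arbitrary} ring class character $\chi$ of $\calO_{d_1}$, together with algebraicity at the non-critical point $s=j+1$ of the associated Rankin--Selberg $L$-function $L(f\otimes\theta_\chi,s)$; or (c)~Bloch--Beilinson type input relating $G_{j+1,f}(z_1,z_2)$ to Beilinson--Bloch heights of higher Heegner cycles of codimension $j+1$ on a Kuga--Sato variety (as hinted at by the final sentence of the abstract). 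Each of these routes requires algebraicity statements that go strictly beyond the theta-lift toolbox developed in the present paper; closing this gap is what separates the author's averaged theorem from the individual-point conjecture stated above.
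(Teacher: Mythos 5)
The statement you were asked to prove is stated in the paper as an open conjecture (Conjecture \ref{conj:alg}); the paper does not prove it, but only partial average versions (Theorem \ref{cor:partial-average-algebraic-general-intro}, Corollary \ref{cor:exponent2-intro}, Theorem \ref{thm:via}). Your proposal is accordingly not a complete proof, and your own third step concedes the decisive gap: constructing a ``higher Borcherds product'' $\Psi_f$ at spectral parameter $s=1+j$, or otherwise isolating an individual CM point from its Galois orbit inside the regularized theta lift, requires input that the theta-lift machinery does not supply. That concession is exactly right, and the gap is genuine: the theta kernel $\theta_L(\tau,z,h)$ sums over full lattice cosets, so the special divisor $Z^j(f)$ attached to a single input form $f$ is automatically the entire (possibly genus-character-twisted) Heegner divisor $C(d_1)$, and the lift can only ever produce $G_{j+1,f}(C(d_1),z_2)$ rather than $G_{j+1,f}(z_1,z_2)$. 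Inverting this requires access to all ring class characters of $\calO_{d_1}$, whereas twisting the kernel only realizes the genus characters; this is why the paper recovers individual values precisely when the class group of $\calO_{d_1}$ has exponent at most $2$.

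Two earlier steps of your sketch also need repair even as a strategy. First, the rational splitting $V=P\oplus N$ of the signature $(2,2)$ space determined by $(z_1,z_2)$ exists over $\Q$ only when $z_1$ and $z_2$ generate the \emph{same} imaginary quadratic field; for different fields the negative $2$-plane is defined only over a real quadratic field, which is the ``big CM'' situation of Theorem \ref{thm:fundbig}, and there too only the value on the full Galois cycle $Z^j(W)$ is accessible. This is why the paper passes to signature $(1,2)$, averages over $C(d_1)$ in one variable, and keeps the second variable individual. Second, the constant-term identity is not $\CT\langle f,\theta_P\otimes\theta_N\rangle$ but $\CT\langle f,[\theta_P,\calG_N^+]_j\rangle$, where $\calG_N$ is a harmonic Maass form with $L_1\calG_N=\theta_N$ and $[\cdot,\cdot]_j$ is a Rankin--Cohen bracket; the algebraicity comes from the arithmetic of the holomorphic coefficients of $\calG_N^+$ (Theorem \ref{thm:preimages}, proved via honest weight-zero Borcherds products, CM theory and Shimura reciprocity), not from critical Rankin--Selberg $L$-values. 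The $L$-function enters only through its central \emph{derivative} $L'(\xi_{k-2j}(f),U,0)$, which vanishes for weakly holomorphic $f$ and for which no algebraicity is known in general, so an appeal to Shimura's algebraicity of critical values cannot be made to work at that point.
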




Gross, Kohnen, and Zagier proved an average version of the conjecture which roughly says that the sum of $(d_1 d_2)^{j/2} G_{j+1, f} (z_1, z_2)$ over all CM points $(z_1,z_2)$ of discriminants $d_1$ and $d_2$ is equal to $\log|\beta|$ for some $\beta\in \Q$. Moreover, they provided numerical evidence in several cases \cite[Chapter V.4]{GZ}, \cite[Chapter V.1]{GKZ}.
Mellit proved the conjecture for $z_2=i$ and $j=1$ \cite{Me}. For a pair of CM points that lie in the same imaginary quadratic field, the conjecture would follow from the work of Zhang on the higher weight Gross-Zagier formula \cite{Zh1}, provided that a certain height pairing of Heegner cycles on Kuga-Sato varieties is non-degenerate.
Viazovska showed in this case that \eqref{eq:gzprec} holds for some $\alpha\in \bar\Q$
and the full conjecture assuming that $d_1 = d_2$ is prime \cite{Vi1, Vi3}.
Recently, Li proved another average version of the conjecture for odd $j$ \cite{Li}. When $d_1$ and $d_2$ are coprime fundamental discriminants, he showed that the average over the $\Gal(\bar \Q/F)$-orbit of the CM point $(z_1,z_2)$ is given by the  logarithm of an algebraic number in $F=\Q(\sqrt{d_1d_2})$.

In the present paper we prove stronger results, by only averaging over the CM points $z_1$ of {\em one} discriminant $d_1$ and allowing for $z_2$ individual CM points of discriminant $d_2$. Let $\calQ_{d_1}$ denote the set of integral binary positive definite quadratic forms of discriminant $d_1<0$.
The group $\Gamma$ acts on $\calQ_{d_1}$ with finitely many orbits.
For $Q\in \calQ_{d_1}$ we write $z_Q$ for the corresponding CM point, i.e., the unique root of $Q(z,1)$ in $\H$, and we let $w_Q$ be the order of the stabilizer $\Gamma_Q$.
The divisor
\[
C(d_1) = \sum_{Q\in \calQ_{d_1}/\Gamma} \frac{2}{w_Q} \cdot z_Q
\]
on $X$ is defined over $\Q$.
The Galois group $\Gal(H_{d_1}/\Q)$ of the ring class field $H_{d_1}$ 
acts on the points in the support of $C(d_1)$ by the theory of complex multiplication.

\begin{theorem}
\label{cor:partial-average-algebraic-general-intro}
Let $j\in \Z_{>0}$. Let $d_1<0$ be a fundamental discriminant, and let $d_2<0$ be a discriminant such that $d_1d_2$ is not the square of an integer.
If $j$ is odd, let $k = \Q(\sqrt{d_1}, \sqrt{d_2})$ and $H = H_{d_2}(\sqrt{d_1})$.
If $j$ is even, let $k = \Q(\sqrt{d_2})$ and $H = H_{d_2}$.
If $z_2$ is a CM point of discriminant $d_2$, then there exists an algebraic number $\alpha=\alpha(f,d_1,z_2)\in H$
and an $r \in \Z_{>0}$ such that
\[
     (d_1d_2)^{j/2} G_{j+1,f}( C(d_1), z_2^\sigma) =
                \frac{1}{r} \log| \alpha^\sigma |
\]
for every $\sigma \in \Gal(H/k)$.
\end{theorem}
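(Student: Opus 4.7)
The plan is to transport the problem to the orthogonal setting. Via the accidental isomorphism $\SL_2\times \SL_2 \simeq \SO(V)^\circ$ for $V=(\Mat_2(\Q),\det)$ of signature $(2,2)$, the product $X\times X$ is essentially the connected Shimura variety attached to $\GSpin(V)$; the Hecke correspondences $Z(m)$ are realised as Heegner divisors of discriminant $m$, and $G_{j+1,f}$ is an instance of the higher automorphic Green function on this signature $(2,2)$ Shimura variety associated to the input $f$. Theorem~\ref{cor:partial-average-algebraic-general-intro} should then follow from the paper's general algebraicity result for CM values of higher Green functions on orthogonal Shimura varieties, applied to an appropriate CM $0$-cycle.

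To construct that cycle, observe that for each $Q\in \calQ_{d_1}/\Gamma$ the pair $(z_Q,z_2)$ spans a negative definite rational $2$-plane $W_{Q,z_2}\subset V$, built from the rational endomorphism algebras of the two CM elliptic curves. Summing over $Q$ with the weights $2/w_Q$ that define $C(d_1)$ assembles into a single CM $0$-cycle $Z(W,z_2)$ on the $\GSpin(V)$-Shimura variety, attached to one isometry class of negative definite rational quadratic space $W$ of discriminant $d_1d_2$; the hypothesis $d_1d_2\ne \square$ ensures $W$ is anisotropic, which is the non-degeneracy condition required by the orthogonal CM-value theorem. That theorem then produces an algebraic number $\alpha$ and $r\in \Z_{>0}$ with $(d_1d_2)^{j/2}G_{j+1,f}(C(d_1),z_2)=r^{-1}\log|\alpha|$, and Shimura reciprocity applied to the CM cycle $Z(W,z_2^\sigma)$ yields the required $\Gal(H/k)$-equivariance $\alpha\mapsto \alpha^\sigma$.

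The parity split in the statement is a matter of which field $\alpha$ lives in. The reflex field of the CM cycle is $\Q(\sqrt{d_2})$, so for even $j$ the factor $(d_1d_2)^{j/2}$ is rational and the value lands in $H_{d_2}$ over $k=\Q(\sqrt{d_2})$; for odd $j$ one picks up an extra $\sqrt{d_1d_2}$, which is not in $H_{d_2}$ but is in $H_{d_2}(\sqrt{d_1})=H$, forcing $k=\Q(\sqrt{d_1},\sqrt{d_2})$. The main obstacle is exactly the translation carried out in paragraph two: verifying that the mixed object $C(d_1)\times \{z_2\}$ -- averaged in one variable, pointwise in the other -- really coincides, with the right multiplicities, with a single CM $0$-cycle $Z(W,z_2)$ attached to one rational quadratic space $W$, and tracking the precise class field through the identification. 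Once this dictionary is in place, the analytic heavy lifting (regularised theta lifts, the algebraicity of the divisor class of $Z^j(f)$, and the CM-value theorem for higher Green functions on $\GSpin(n,2)$) is supplied by the earlier sections of the paper.
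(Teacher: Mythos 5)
The central step of your second paragraph fails: when $d_1d_2$ is not a square, the pair $(z_Q,z_2)$ does \emph{not} span a rational negative definite $2$-plane in $V=(\Mat_2(\Q),\det)$. As recalled in Section \ref{sect:resolvent}, the rational negative $2$-planes $U\subset V$ correspond exactly to pairs of CM points lying in the \emph{same} imaginary quadratic field; for $\Q(\sqrt{d_1})\neq\Q(\sqrt{d_2})$ the plane attached to $(z_Q,z_2)$ is only defined over the real quadratic field $F=\Q(\sqrt{d_1d_2})$, so the relevant object is a \emph{big} CM cycle $Z(W)$ with $V\cong\Res_{F/\Q}W$. The corresponding CM value formula (Theorem \ref{thm:fundbig}) only evaluates the average over the full cycle $Z^j(W)$, i.e.\ over the entire $\Gal(\bar\Q/F)$-orbit of $(z_1,z_2)$; this recovers Li's fully averaged result, not the partial average with $z_2$ held fixed, and there is no way to extract the individual $z_2$-value from it. Your reading of the hypothesis is also off: $d_1d_2$ not being a square has nothing to do with anisotropy (a negative definite rational space is automatically anisotropic); rather, it guarantees that the evaluation point avoids the Hecke correspondences in $Z^j(f)$, since only when $d_1d_2$ is a square can $(z_1,z_2)$ lie on some $Z(m)$.

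The paper's actual route avoids signature $(2,2)$ entirely for this theorem. It works on the modular curve realized as a signature $(1,2)$ Shimura variety: the average over $C(d_1)$ (together with all Hecke translates weighted by $c_f(-m)m^j$) is absorbed into the \emph{input} of a regularized theta lift via the Zagier/Millson lift $\za_{d_1}^j:M^!_{-2j}\to M^!_{1/2-j,\tilde\rho_L}$ (Theorems \ref{thm:millson-lift} and \ref{thm:twisted-average}, resting on the quadratic transformation of the hypergeometric function in Proposition \ref{prop:green12} and, for odd $j$, on the Millson theta kernel and genus-character twists), while $z_2$ becomes an individual \emph{small} CM point attached to the binary lattice $N$ of discriminant $d_2$. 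Theorem \ref{thm:partial-average-value} then expresses the value as a constant-term pairing with $[\theta_P,\calG_N^+]$, and the algebraicity and $\Gal(H/k)$-equivariance follow from Theorem \ref{thm:preimages}, which produces preimages $\calG_N$ of the binary theta function whose holomorphic Fourier coefficients are logarithms of numbers in $H_{d_2}$ obeying Shimura reciprocity. Without this change of signature, or some other mechanism for decoupling the two variables, the orthogonal CM-value machinery you invoke cannot yield the partial average.
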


\begin{remark}
   If $j$ is even, then $r$ depends only on $d_2$ but not on $f,d_1,z_2$.
   If $j$ is odd, then $r$ may depend on $d_1$ and $d_2$, but not on $f$ or $z_2$.
   The two cases require slightly different proofs, which explains the differences in the results.
   We refer to Section \ref{sec:partial-averages} for details.
\end{remark}


In the main text we will actually consider twists of the divisors $C(d_1)$ by genus characters, and corresponding twisted versions of the above theorem (see Corollary \ref{cor:partial-average-algebraic-general}).
As a corollary we obtain the following result.

\begin{corollary}\label{cor:exponent2-intro}
Let $d_1< 0$ be a fundamental discriminant
  and assume that the class group of $\calO_{d_1}$ is trivial or
  has exponent $2$.
  Let $z_1$ be any
  CM point of of discriminant
  $d_1$ and let $z_2$ be any CM point of
	discriminant $d_2 < 0$ (not necessarily fundamental), where $z_1 \neq z_2$ if $d_1 = d_2$.
  Then, there is an $\alpha \in H_{d_1}\cdot H_{d_2}$ and an $r\in \Z_{>0}$ such that
   \[
     (d_1 d_2)^{j/2} G_{j+1, f} (z_1, z_2) = \frac{1}{r}\log |\alpha|.
   \]
\end{corollary}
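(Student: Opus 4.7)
The plan is to derive Corollary~\ref{cor:exponent2-intro} from the twisted form of Theorem~\ref{cor:partial-average-algebraic-general-intro}, namely Corollary~\ref{cor:partial-average-algebraic-general}, by inverting a finite Fourier transform on the class group $\Cl(\calO_{d_1})$. The key leverage comes from the hypothesis that $\Cl(\calO_{d_1})$ is trivial or has exponent $2$: then every character $\chi$ of $\Cl(\calO_{d_1})$ factors through $\Cl(\calO_{d_1})/\Cl(\calO_{d_1})^2$ and is therefore a genus character taking values in $\{\pm 1\}$.

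For each such $\chi$, I would apply the twisted corollary to the twisted Heegner divisor
\[
C_\chi(d_1) = \sum_{Q \in \calQ_{d_1}/\Gamma} \frac{2\chi(Q)}{w_Q}\, z_Q,
\]
obtaining an algebraic number $\alpha_\chi \in H_{d_1}H_{d_2}$ and an $r_\chi \in \Z_{>0}$ with
\[
(d_1 d_2)^{j/2}\, G_{j+1,f}\bigl(C_\chi(d_1), z_2\bigr) = \frac{1}{r_\chi}\log|\alpha_\chi|.
\]
The crucial point here is that because $\chi$ is a genus character, the field of definition of $\alpha_\chi$ lies inside the compositum $H_{d_1}H_{d_2}$, rather than in some larger ring class field that would appear for a non-genus character.

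Next I would apply orthogonality of characters on $\Cl(\calO_{d_1})$ to isolate the individual CM point: writing $z_1 = z_{Q_0}$, one has
\[
\frac{2}{w_{Q_0}}\, z_{Q_0} = \frac{1}{h(d_1)} \sum_{\chi} \overline{\chi(Q_0)}\, C_\chi(d_1)
\]
as an identity of $\Q$-divisors on $X$, with $h(d_1) = |\Cl(\calO_{d_1})|$ and $\chi$ running over all characters. Substituting into $(d_1 d_2)^{j/2} G_{j+1,f}(z_1,z_2)$, applying the twisted corollary termwise, and clearing denominators gives
\[
(d_1 d_2)^{j/2}\, G_{j+1,f}(z_1, z_2) = \frac{1}{r}\log\Bigl|\textstyle\prod_{\chi} \alpha_\chi^{e_\chi}\Bigr|,
\]
with integer exponents $e_\chi$ (since $\overline{\chi(Q_0)}=\pm 1$) and a single positive integer $r$ absorbing $w_{Q_0}$, $h(d_1)$, and the $r_\chi$. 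Setting $\alpha = \prod_\chi \alpha_\chi^{e_\chi} \in H_{d_1}H_{d_2}$ delivers the claim.

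The decisive step, and the main obstacle, is to extract from the twisted Corollary~\ref{cor:partial-average-algebraic-general} the statement that $\alpha_\chi$ lies precisely in $H_{d_1}H_{d_2}$ for every genus character $\chi$ simultaneously, not in a larger cyclotomic or ring class extension depending on $\chi$; this is exactly what the exponent-$2$ hypothesis buys us. The remaining bookkeeping is minor: the non-support hypothesis $(z_1,z_2)\notin Z^j(f)$ is automatic when $d_1\neq d_2$, and when $d_1=d_2$ with $z_1\neq z_2$ one checks that none of the pairs $(z_Q,z_2)$ appearing in the twisted averages lie on the support of $Z^j(f)$, so that every invocation of the twisted theorem is legitimate and the character-theoretic linear combination makes sense.
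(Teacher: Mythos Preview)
Your proposal is correct and matches the paper's proof (Corollary~\ref{cor:exponent2}) essentially step for step: apply the twisted partial-average result for each genus character and invert by orthogonality. On your flagged ``decisive step,'' the paper resolves it just as you anticipate: Corollary~\ref{cor:partial-average-algebraic-general} places each $\alpha_\chi$ in $H_{d_2}(\sqrt{\Delta})$, and since $\Cl(\calO_{d_1})$ has exponent~$2$ the Hilbert class field $H_{d_1}$ coincides with the genus field $\Q(\sqrt{d_1})(\sqrt{\Delta_1},\ldots,\sqrt{\Delta_s})$, so each $\sqrt{\Delta_i}\in H_{d_1}$ and hence $\alpha_\chi\in H_{d_1}H_{d_2}$.
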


\begin{remark}
  Chowla \cite{Chowla} showed that there exist only finitely many imaginary quadratic number fields of discriminant $d_1$ such that the class group of $\Q(\sqrt{d_1})$ has exponent $2$. A quick computation using \sage \cite{sagemath} shows that out of the 305 imaginary quadratic fields of discriminant $|d_1| < 1000$, a total of $52$ have class number one or exponent $2$.
\end{remark}

We prove the above results by establishing an explicit formula for such CM values of automorphic Green functions. To simplify the exposition we assume in the rest of the introduction that $j$ is even.
Our approach is based on the realization of the modular curve $X$ as an orthogonal Shimura variety and on the regularized theta correspondence. A key observation is that $G_s(C(d_1),z_2)$ can be obtained as the regularized theta lift of a weak Maass form of weight $1/2$. The proof of this fact involves a  quadratic transformation formula for the Gauss hypergeometric function, see Proposition \ref{prop:green12}.

Let $L$ be the lattice of integral $2\times 2$ matrices of trace zero equipped with the quadratic form $Q$ given by the determinant. Let $\SO(L)^+$ be the intersection of the special orthogonal group $\SO(L)$ with the connected component of the identity of $\SO(L_\R)$. We write $\calD$ for the Grassmanian of oriented negative definite planes in $L_\R$, and fix one connected component $\calD^+$. The conjugation action of $\SL_2(\Z)$ on $L$ induces isomorphisms $ \operatorname{PSL}_2(\Z)\cong \SO(L)^+$, and $X\cong \SO(L)^+\bs \calD^+$.

Let $U\subset L_\Q$ be a rational negative definite subspace of dimension $2$. Then $U$ together with the appropriate orientation determines a CM point
\[
z_U^+=U_\R\in \calD^+.
\]
Moreover, we obtain even definite lattices
\[
N=L\cap U,\qquad P=L\cap U^\perp
\]
of signature $(0,2)$ and $(1,0)$, respectively. The binary lattice $N$ can be used to recover the corresponding CM point on $\H$ in classical notation. Both lattices determine holomorphic vector valued theta functions $\theta_{N(-1)}$ and $\theta_P$ of weight $1$ and $1/2$, where $N(-1)$ denotes the positive definite lattice given by $N$ as a $\Z$-module but equipped with the quadratic form $-Q$. According to \cite[Theorem 3.7]{BF} there exists a vector valued harmonic Maass form $\calG_N$ of weight $1$ for $\Gamma$ which maps to $\theta_{N(-1)}$ under the $\xi$-operator, see Section \ref{sect:3.2}.

Since $\theta_P$ transforms with the Weil representation $\rho_P$ of $\Mp_2(\Z)$ on $\C[P'/P]$, and $\calG_N$ transforms with the Weil representation $\rho_N$ on $\C[N'/N]$, their tensor product $\theta_P\otimes \calG_N$ can be viewed as a nonholomorphic modular form for  $\Mp_2(\Z)$ of weight $3/2$ with representation
$\rho_P\otimes \rho_N\cong \rho_{P\oplus N}$. More generally, the $l$-th Rankin-Cohen bracket $[\theta_{P}, \calG_{N}]_{l}$ defines a non-holomorphic modular form of weight $3/2+2l$ with the same representation, see Section~\ref{sect:3.1}.

Recall that for any fundamental discriminant $d<0$ the $d$-th Zagier lift \cite{duke-jenkins-integral} can be viewed as a map
\[
  \za_{d}^j: M^!_{-2j} \longrightarrow M^!_{\frac{1}{2}-j,\bar\rho_L},
\]
from weakly holomorphic modular forms of weight $-2j$ for the group $\Gamma$ to vector valued weakly holomorphic modular forms of weight $1/2-j$ transforming with the complex conjugate of the Weil representation of $\Mp_2(\Z)$ on $\C[L'/L]$, see Section \ref{sec:partial-averages}.
The following result is stated (in greater generality) as Theorem \ref{thm:partial-average-value} in the main text.

\begin{theorem}
\label{thm:partial-average-value-intro}
Let $f\in M_{-2j}^!$ be as before and assume the above notation. Then
\[
G_{j+1,f}(C(d_1), z_U^+) = -2^{j-1} \CT \left\langle \za_{d_1}^{j}(f),\, [\theta_{P}, \calG_{N}^+]_{j/2}\right\rangle,
\]
where $\calG_N^+$ denotes the holomorphic part of $\calG_N$.
Moreover, $\CT$ denotes the constant term of a $q$-series, $\langle\cdot ,\cdot \rangle$ the standard $\C$-bilinear pairing on $\C[(P\oplus N)'/(P\oplus N)]$, and $\za_{d_1}^{j}(f)$ is viewed as a modular form with representation $\bar \rho_{P\oplus N}$ via the natural intertwining operator of Lemma~\ref{sublattice}.
\end{theorem}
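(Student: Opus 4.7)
The plan is to realize the left-hand side as the evaluation at $z_U^+$ of a regularized theta lift of $\za_{d_1}^j(f)$ to the orthogonal Shimura variety $X\cong \SO(L)^+\bs\calD^+$, and then to evaluate that lift at $z_U^+$ by splitting the Siegel theta kernel along the decomposition $L_\Q=U\oplus U^\perp$ afforded by the CM point.

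The first step is Proposition \ref{prop:green12}. The quadratic transformation of the Gauss hypergeometric function stated there rewrites $G_{j+1}^m(C(d_1),z)$ as the $m$-th Fourier component of a regularized theta integral whose input is a weight $1/2-j$ weak Maass form. Summing over $m$ with coefficients $c_f(-m)m^j$ and using the Fourier-coefficient characterisation of the $d_1$-th Zagier lift identifies the input as $\za_{d_1}^j(f) \in M^!_{1/2-j,\bar\rho_L}$. This yields an identity of the form
\[
(d_1d_2)^{j/2}\, G_{j+1,f}(C(d_1),z) = \Phi(\za_{d_1}^j(f), z),
\]
where $\Phi$ denotes the corresponding regularized theta lift on $X$.

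The second step is to evaluate $\Phi(\za_{d_1}^j(f), z_U^+)$. By Lemma \ref{sublattice} the Weil representation $\rho_L$ decomposes as $\rho_P\otimes\rho_N$ and the Siegel theta kernel $\Theta_L(\tau, z_U^+)$ factors accordingly into a piece coming from the positive definite lattice $P$ and a piece coming from the negative definite lattice $N$. The higher-weight theta kernel required for the weight $1/2-j$ input, which arises from the Laplace-eigenfunction piece of $G_{j+1}$, equals the $(j/2)$-th Rankin-Cohen bracket $[\theta_P, \theta_{N(-1)}]_{j/2}$ up to explicit scalars, after using the standard identity between the Gegenbauer polynomial that appears in the theta kernel and the Jacobi polynomial defining the Rankin-Cohen bracket.

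The third step is to replace $\theta_{N(-1)}$ by the harmonic Maass form $\calG_N$ of \cite{BF} with $\xi_1(\calG_N) = \theta_{N(-1)}$, yielding the harmonic weight $3/2+j$ kernel $[\theta_P, \calG_N]_{j/2}$ for $\rho_{P\oplus N}$. A Stokes' theorem argument of Borcherds--Harvey--Moore type then collapses the regularized Petersson pairing of the weakly holomorphic $\za_{d_1}^j(f)$ against this kernel to the constant term of the holomorphic $q$-expansion of their inner pairing, which depends on $\calG_N$ only through its holomorphic part $\calG_N^+$. Collecting all constants from Steps 1--2 produces the scalar $-2^{j-1}$. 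The main obstacle will be the identification in Step 2 of the higher-weight theta kernel as $[\theta_P, \theta_{N(-1)}]_{j/2}$: this is a precise combinatorial identity matching Gegenbauer polynomials arising from the theta kernel with the Jacobi polynomials in the Rankin-Cohen bracket, together with careful bookkeeping of normalisations so that the accumulated scalar is exactly $-2^{j-1}$.
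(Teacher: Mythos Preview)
Your overall architecture is right --- realize the left-hand side as a regularized theta lift of $\za_{d_1}^j(f)$, split the kernel at $z_U^+$, and then collapse to a constant term via Stokes --- but Steps~2 and~3 misidentify the mechanism that actually produces the Rankin--Cohen bracket, and as written they do not go through. In the paper there is no ``higher-weight theta kernel'' with a Gegenbauer polynomial for even~$j$. The lift $\tilde\Phi_1^j$ pairs the \emph{standard} Siegel theta $\theta_L$ (weight $-1/2$) with $R_{1/2-j}^{j/2}\za_{d_1}^j(f)$; at $z_U^+$ one has $\theta_L=\theta_P\otimes\theta_N$ with $\theta_N$ the \emph{non-holomorphic} weight~$-1$ binary theta. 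One then moves $R^{j/2}$ to the theta side by the self-adjointness identity \eqref{eq:rd}, uses $R_{-1}\theta_N=0$ to get $(R_{1/2}^{j/2}\theta_P)\otimes\theta_N$, writes $\theta_N=L_1\calG_N$, and finally applies Proposition~\ref{prop:RC}: since $L\theta_P=0$, that proposition gives $(-4\pi)^{j/2}L[\theta_P,\calG_N]_{j/2}=(R_{1/2}^{j/2}\theta_P)(L_1\calG_N)$. This differential identity for Rankin--Cohen brackets of harmonic Maass forms is the crux; no Gegenbauer/Jacobi polynomial matching is needed, and the constant $-2^{j-1}$ comes out of the normalisations in \eqref{eq:go12-twisted} and Theorem~\ref{thm:twisted-average}.

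Your Step~2 as stated cannot be correct: $[\theta_P,\theta_{N(-1)}]_{j/2}$ is \emph{holomorphic} in $\tau$, while any Siegel-type theta kernel at $z_U^+$ contains the non-holomorphic factor $\theta_N(\tau)=v\,\overline{\theta_{N(-1)}(\tau)}$, so the claimed equality fails on holomorphicity grounds. Correspondingly, Step~3 is backwards: one does not ``replace $\theta_{N(-1)}$ by $\calG_N$''; rather, the non-holomorphic $\theta_N$ is rewritten as $L_1\calG_N$, and it is Proposition~\ref{prop:RC} that converts $(R^{j/2}\theta_P)\otimes L_1\calG_N$ into $L[\theta_P,\calG_N]_{j/2}$, after which Stokes' theorem (as in the proof of Theorem~\ref{thm:fund}) reduces the regularized integral to $\CT\langle \za_{d_1}^j(f),[\theta_P,\calG_N^+]_{j/2}\rangle$. (Also, the factor $(d_1d_2)^{j/2}$ in your Step~1 is spurious: the identity relating $G_{j+1,f}(C(d_1),z)$ to the theta lift is valid for all $z$, so $d_2$ cannot appear there; the correct scalar is $-2^{j-1}$, arising from \eqref{eq:go12-twisted} and Theorem~\ref{thm:twisted-average}.)
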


Note that this formula holds for {\em any} possible choice of the harmonic Maass form $\calG_N$ mapping to $\theta_{N(-1)}$ under $\xi$.
It is proved in \cite{DL, Eh} (and in greater generality in the appendix of the present paper) that there are particularly nice choices, for which the Fourier coefficients of $\calG_N^+$ are given by logarithms of algebraic numbers in the ring class field $H_{d_2}$, where $d_2$ is the discriminant of the lattice $N$.
By invoking such a nice choice of $\calG_N$, Theorem  \ref{cor:partial-average-algebraic-general-intro} can be derived.


We illustrate this result by an explicit example.
First note that for $j=2,4,6$, it is easily seen  that $G_{j+1} = G_{j+1, f}$
for $f = E_4^{3-j/2}/\Delta$,
where $E_4 \in M_4$ is the normalized Eisenstein series of weight $4$
and $\Delta \in S_{12}$ is the unique normalized cusp form of weight $12$
for $\Gamma$.

First consider the case $j=2$, $d_1 = -4$, and $d_2=-23$.
For the CM point $\frac{1 + i\sqrt{23}}{2}$ of discriminant $d_2$ the lattice
$N$ is isomorphic to the ring of integers in $\Q(\sqrt{-23})$
together with the negative of the norm.
Using the Fourier expansion of $G_s(z_1, z_2)$, we obtain numerically that
\[
  G_3\left(i, \frac{1 + i\sqrt{23}}{2}\right) \approx -1.000394556341.
\]
Let $\calG_N^+(\tau) = \sum_{m} c(m)\phi_{\bar{m}} q^\frac{m}{23}$
be the holomorphic part of a harmonic Maass form
$\calG_N$ with the property that $\xi(\calG_N) = \theta_{N(-1)}(\tau)$,
normalized such that $c(m) = 0$ for $m<-1$.
Here, $\phi_{\bar{m}} \in \C[N'/N]$ only depends on $m$ modulo $23$.
If $m \equiv 0 \bmod{23}$, then $\phi_{\bar{m}} = \phi_{0 + N}$.
If $m \not\equiv 0 \bmod{23}$, then $\phi_{\bar{m}} = \phi_{\mu + N} + \phi_{-\mu + N}$,
where $\mu \in N'/N$ satisfies $Q(\mu) \equiv \frac{m}{23} \bmod{\Z}$.
By Theorem \ref{thm:partial-average-value-intro} we obtain the formula
\begin{align*}
  G_{3}\left(i, \frac{1 + i\sqrt{23}}{2}\right) &= -\frac{25}{23} c(7) - \frac{4}{23} c(14) +  \frac{11}{23} c(19)
                  + \frac{20}{23} c(22) + \frac{1}{2} c(23)
                  + \frac{378}{23} c(-1).
\end{align*}

Now let $R_{-23} \subset H_{-23}$ be the ring of integers
in the Hilbert class field $H_{-23}$ of $\Q(\sqrt{-23})$.
Let $\alpha \approx 1.324717957244$ be the unique real root of $x^3-x-1$.
Then $\alpha$ is a generator of $H_{-23}$ over $\Q(\sqrt{-23})$
and also a generator of the unit group of the real subfield $\Q(\alpha) \subset H_{-23}$.
Using the results on harmonic Maass forms of weight one of \cite{Eh} (see Table \ref{tab:coeffs23}),
we obtain the explicit value
\[
  G_3\left(i, \frac{1 + i\sqrt{23}}{2}\right) = \frac{1}{23} \log \left| \alpha^{294} \cdot \frac{(\alpha^{2} - 2\alpha - 1)^{50}\, (3\alpha^2 - 5\alpha + 1)^{8}}{(4 \alpha^{2} - \alpha + 2)^{40}\, (\alpha^{2} - 4 \alpha + 3)^{22}\, (-3 \alpha^{2} + 2 \alpha + 1)^{23}} \right|.
\]
%
The prime factorization of the argument of the logarithm is given in Section \ref{sec:ex1}. Its norm is $7^{66}\cdot 11^{-80}\cdot 19^{-22}\cdot 23^{-23}$.
Further note that according to Theorem \ref{thm:partial-average-value-intro},
the same form $\calG_N$ appears in the formula for $G_{j+1}\left(i, \frac{1 + i\sqrt{23}}{2}\right)$ for all even $j$, see Section \ref{sect:8.2}.

\subsection{Higher automorphic Green functions on orthogonal Shimura varietes}
We shall actually consider orthogonal Shimura varieties of arbitrary dimension in greater generality as we now describe.
Let $(V,Q)$ be a quadratic space over $\Q$ of signature $(n,2)$, and let $H=\GSpin(V)$. We realize the hermitian symmetric space associated with $H$ as the Grassmannian of negative oriented planes in $V_\R$. For a compact open subgroup $K\subset H(\A_f)$, we consider the Shimura variety
\[
   X_K=H(\Q)\bs (\calD\times H(\A_f)/ K) .
\]
Let $L\subset V$ be an even lattice and assume that $K$ stabilizes $\hat L$ and acts trivially on the discriminant group $L'/L$. For $\mu\in L'/L$ and positive $m\in \Z+Q(\mu)$,  there is a special divisor $Z(m,\mu)$ on $X_K$. The automorphic Green function associated with it is defined by
\begin{align*}
\Phi_{m,\mu}(z,h,s) &=2\frac{\Gamma(s+\frac{n}{4}-\frac{1}{2})}{\Gamma(2s)}
\\
\nonumber
&\phantom{=}{}\times
\sum_{\substack{\lambda\in h(\mu+L)\\ Q(\lambda)=m}}
\left(\frac{m}{Q(\lambda_{z^\perp})}\right)^{s+\frac{n}{4}-\frac{1}{2}}
F\left(s+\frac{n}{4}-\frac{1}{2},s-\frac{n}{4}+\frac{1}{2},2s;\frac{m}{Q(\lambda_{z^\perp})}\right)
\end{align*}
for $(z,h)\in X_K\setminus Z(m,\mu)$ and $s\in \C$ with $\Re(s)\geq s_0:=\frac{n}{4}+\frac{1}{2}$, see Section \ref{sect:4} and \cite{Br}, \cite{OT}.
The sum converges normally and defines a smooth function
in this region with a logarithmic singularity along $Z(m,\mu)$.
It has a meromorphic continuation in $s$ to the whole complex plane and is an eigenfunction of the invariant Laplacian on $X_K$.

In the special case when $L$ is the even unimodular lattice of signature $(2,2)$, and $K\subset H(\A_f)$ is the stabilizer of $\hat L$, the Shimura variety $X_K$ is isomorphic to $X\times X$ and
$\Phi_{m,0}(z,1,s)$ is equal to the Hecke translate
$ - \frac{2}{\Gamma(s)}G_s^m(z_1,z_2)$ of the automorphic Green function for $\Sl_2(\Z)$ above, see Section \ref{sect:resolvent} and Theorem \ref{thm:via}.

The special values of automorphic Green functions at the harmonic point $s=s_0$ are closely related to logarithms of Borcherds products. The logarithm of the Petersson metric of any Borcherds product is a linear combination of the functions $\Phi_{m,\mu}(z,h,s_0)$.
This implies in particular that the CM values of such a linear combination
of Green functions are given by logarithms of algebraic numbers. In view of Conjecture \ref{conj:alg} it is natural to ask whether the values of (suitable linear combinations) of automorphic Green functions at higher spectral parameter $s_0+j$ with $j\in \Z_{>0}$ are also given by logarithms of algebraic numbers. We shall prove this in the present paper for small CM points.

Let $k=1-n/2$ and let $f\in H_{k-2j,\bar\rho_L}$ be a harmonic Maass form of weight $k-2j$ for the conjugate Weil representation $\bar\rho_L$. Applying the $j$-fold iterate raising operator to $f$ we obtain a weak Maass form
 $R_{k-2j}^j f$ of weight $k$. Recall that the Siegel theta function $\theta_L(\tau,z,h)$ associated with $L$ has weight $-k$.
We consider the regularized theta lift
\[
\Phi^j(z,h,f) = \frac{1}{(4\pi)^j}\int_{\calF}^\reg \langle R_{k-2j}^j f(\tau), \theta_L(\tau,z,h)\rangle\, d\mu(\tau),
\]
were $\calF$ denotes the standard fundamental domain for the action of $\SL_2(\Z)$ on $\H$, and  the regularization is done as in \cite{Bo1}.
It turns out that $\Phi^j(z,h,f)$ has a finite value at every point $(z,h)$.
It defines a smooth function on the complement of a certain linear combination of special divisors, which is equal to an explicit linear combination of the `higher' Green functions $\Phi_{m,\mu}(z,h,s_0+j)$, see Proposition \ref{prop:higherphi}.

Let $U\subset V$ be a negative definite $2$-dimensional subspace. Then $T=\GSpin(U)$ determines a torus in $H$, which is isomorphic to the multiplicative group of an imaginary quadratic field, and $U_\R$ together with the choice of an orientation determines two points $z_U^\pm$ in $\calD$. For every $h\in T(\A_f)$ we obtain a (small) CM point $(z_U^+,h)$ in $X_K$.
Moreover, there is a CM cycle
\begin{align*}
Z(U)= T(\Q)\bs (\{ z_U^\pm\} \times T(\A_f)/ K_T)\longrightarrow X_K,
\end{align*}
which is defined over $\Q$.
As in the signature $(1,2)$ case above, the subspace $U$ determines definite lattices $N=L\cap U$, $P=L\cap U^\perp$ and their associated theta series.

\begin{theorem}
\label{thm:fund2-intro}
Let $f\in M^!_{k-2j, \bar\rho_L}$ and $h\in T(\A_f)$.
Then
\begin{align*}
 \Phi^j(z_U^\pm,h,f)&=
\CT\left\langle
f,\, [\theta_P,\calG_{N}^+(\tau,h)]_j\right\rangle,
\end{align*}
where $\calG_{N}^+(\tau,h)$ denotes the holomorphic part of any harmonic Maass form $\calG_{N}(\tau,h)$
with $L_1 \calG_{N}(\tau,h) = \theta_N(\tau,h)$.
\end{theorem}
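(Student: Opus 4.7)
My plan is to evaluate $\Phi^j(z_U^\pm,h,f)$ by splitting the Siegel theta function at the CM point, integrating by parts so that the integrand collapses to a pairing with a single Rankin--Cohen bracket, and then identifying the regularized integral with a constant term.

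The first step is to split the Siegel theta function. Using the orthogonal decomposition $V_\R=U_\R\oplus U_\R^\perp$, the sublattice $N\oplus P\subset L$, and the natural intertwining operator between $\rho_L$ and $\rho_{P\oplus N}$ (cf.\ Lemma~\ref{sublattice}), the Gaussian in $\theta_L$ factorizes at $z=z_U^\pm$ as
\[
\theta_L(\tau,z_U^\pm,h)=\theta_P(\tau)\otimes\theta_N(\tau,h),
\]
where $\theta_P$ is the holomorphic weight-$n/2$ theta series of the positive definite lattice $P$ and $\theta_N(\tau,h)$ is the anti-holomorphic weight-$(-1)$ theta series of the negative definite binary lattice $N$ with coset shift specified by $h$. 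Using the hypothesis $L_1\calG_N(\tau,h)=\theta_N(\tau,h)$, the regularized integrand becomes
\[
\langle R_{k-2j}^j f(\tau),\,\theta_P(\tau)\otimes L_1\calG_N(\tau,h)\rangle\,d\mu(\tau).
\]

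The second step is iterated integration by parts on a truncated fundamental domain $\calF_T=\{\tau\in\calF:\Im\tau\le T\}$. Using the adjointness of the raising and lowering operators with respect to $d\mu$ (with the usual signs and powers of $4\pi$) together with the Leibniz rule for the lowering operator on the tensor product --- the factor $\theta_P$ being holomorphic, so that $L\theta_P=0$ --- one transfers the $j$ raisings on $f$ across the pairing. After $j$ iterations the integrand takes the form $\langle f,\,\sum_{s}c_s\,R^s\theta_P\cdot R^{j-s}\calG_N\rangle$, and the combinatorial coefficients $c_s$ are precisely those of the vector-valued Rankin--Cohen bracket $[\theta_P,\calG_N]_j$; the accumulated powers of $4\pi$ cancel the normalization $(4\pi)^{-j}$ in the definition of $\Phi^j$. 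The boundary contributions at $\Im\tau=T$ produced at each stage are absorbed by the regularization procedure in the limit $T\to\infty$.

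The final step is to evaluate the resulting regularized integral
\[
\int_\calF^\reg \langle f,[\theta_P,\calG_N]_j\rangle\,d\mu
\]
of a $\Gamma$-invariant object by a standard unfolding/residue argument (as in \cite{Bo1} and Kudla's regularization), which identifies it with the constant term at the cusp $i\infty$. The non-holomorphic part $\calG_N^-$ does not contribute, because its $q$-expansion involves incomplete gamma factors $\Gamma(0,4\pi\alpha y)q^{-\alpha}$ whose $q^0$-coefficient, after pairing with the holomorphic $f$, is killed by the regularization. What remains is precisely $\CT\langle f,[\theta_P,\calG_N^+]_j\rangle$. The main obstacle is the combinatorial bookkeeping in the second step: carefully controlling the boundary contributions from iterated integration by parts on $\calF_T$ as $T\to\infty$, reconciling them with the $\int^\reg$-regularization, and matching the coefficients from the raising-operator calculus with those of the vector-valued Rankin--Cohen bracket, particularly since $\calG_N$ is only harmonic Maass, so that the standard Leibniz formulas must be adjusted to account for its non-holomorphic part.
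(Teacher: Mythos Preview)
Your overall strategy matches the paper's, but the execution in steps~2 and~3 has concrete gaps.

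In step~2, after moving $R^j$ from $f$ across by Stokes' theorem you obtain $\langle f,\,R^j_{-k}(\theta_P\otimes\theta_N)\rangle$, not an expression involving $\calG_N$. The crucial fact you omit is that $R_{-1}\theta_N=0$ (since $\theta_N=v\,\overline{\theta_{N(-1)}}$); this collapses the Leibniz expansion to the \emph{single} term $(R^j_{1-k}\theta_P)\otimes\theta_N$. Only then does one substitute $\theta_N=L_1\calG_N$ and invoke Proposition~\ref{prop:RC} (with $L\theta_P=0$) to identify $(R^j\theta_P)(L\calG_N)$ with $(-4\pi)^j\,L[\theta_P,\calG_N]_j$. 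Your claim that Leibniz directly produces $\sum_s c_s\,(R^s\theta_P)(R^{j-s}\calG_N)$, i.e.\ the Rankin--Cohen bracket itself, is incorrect: the raising operators act on $\theta_P\otimes\theta_N$ (equivalently $\theta_P\otimes L\calG_N$), not on $\theta_P\otimes\calG_N$, and $R^sL\calG_N\neq R^s\calG_N$. Without the collapse via $R\theta_N=0$ and the Rankin--Cohen identity of Proposition~\ref{prop:RC}, the combinatorics you allude to do not close up.

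In step~3, the integrand you actually end up with is $\langle f,\,L[\theta_P,\calG_N]_j\rangle\,d\mu$, not $\langle f,[\theta_P,\calG_N]_j\rangle\,d\mu$. Since $f$ is weakly holomorphic, $Lf=0$, so this equals $L\langle f,[\theta_P,\calG_N]_j\rangle\,d\mu=-d(\langle f,[\theta_P,\calG_N]_j\rangle\,d\tau)$; Stokes over $\calF_T$ yields the line integral $\int_{iT}^{iT+1}\langle f,[\theta_P,\calG_N]_j\rangle\,d\tau$, and the limit $T\to\infty$ (combined with the $A_0\log T$ term of Lemma~\ref{limit}) is exactly $\CT\langle f,[\theta_P,\calG_N^+]_j\rangle$. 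There is no unfolding or residue argument here, and this boundary term is not ``absorbed by the regularization'' --- it \emph{is} the answer.
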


As before,  when the coefficients of $f$ with negative index are integral,  we may conclude that $\Phi^j(z_U^\pm,h,f)=\frac{1}{r}\log |\alpha|$
for some $\alpha\in H_d$ and $r\in \Z_{>0}$, where $d=-|N'/N|$. Moreover, the Galois action on $\alpha$ is compatible with the action on $(z_U^\pm,h)$ by Shimura reciprocity.
Theorem~\ref{thm:fund2-intro} (and certain variants involving other regularized theta liftings) represents one of the main ingredients of the proof of Theorem \ref{thm:partial-average-value-intro}.
For the average values of higher Green functions at small CM cycles we obtain the following result (Theorem \ref{thm:fund}).


\begin{theorem}
\label{thm:fund-intro}
Let $f\in H_{k-2j, \bar\rho_L}$.
The value of the higher Green function
$\Phi^j(z,h,f)$ at the CM cycle $Z(U)$ is given by
\begin{align}
\label{eq:fund-intro}
\frac{1}{\deg(Z(U))}\Phi^j(Z(U),f)&=
\CT\left(\langle
f^+,\, [\theta_P,\calE^+_N]_j\rangle\right)
-L'\left(\xi_{k-2j}(f),U,0\right).
\end{align}
Here $\calE_N^+$ denotes the holomorphic part of the harmonic Maass form $E_N'(\tau,0;1)$, see \eqref{eq:calE}, and $L(g,U,s)$ is a certain convolution $L$-function of a cusp form $g\in S_{2-k+2j,\rho_L}$ and the theta series $\theta_P$, see Lemma \ref{lem:dirser}.
\end{theorem}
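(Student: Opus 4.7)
\medskip

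\textbf{Proof plan.} The strategy is to obtain Theorem~\ref{thm:fund-intro} by averaging Theorem~\ref{thm:fund2-intro} over the CM cycle $Z(U)$ and then extending from weakly holomorphic input $f\in M^!_{k-2j,\bar\rho_L}$ to harmonic Maass forms $f\in H_{k-2j,\bar\rho_L}$. In the first step, one writes
\[
  \frac{1}{\deg(Z(U))}\,\Phi^j(Z(U),f)
  \;=\;\frac{1}{\deg(Z(U))}\sum_{h\in T(\Q)\bs T(\A_f)/K_T}\Phi^j(z_U^\pm,h,f)
\]
and applies Theorem~\ref{thm:fund2-intro} to each summand. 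Since the constant-term and Rankin--Cohen bracket operations are linear, one obtains a formula of the shape $\CT\langle f,[\theta_P,\bar\calG_N^+]_j\rangle$, where $\bar\calG_N^+$ is the holomorphic part of the averaged harmonic Maass form $\bar\calG_N(\tau)=\frac{1}{\deg Z(U)}\sum_h\calG_N(\tau,h)$, subject to $L_1\bar\calG_N=\frac{1}{\deg Z(U)}\sum_h\theta_N(\tau,h)$.

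The heart of the argument is then to identify this averaged object with the canonical one. The Siegel theta function of the negative definite binary lattice $N$, averaged over the Galois orbit of the CM point $z_U^\pm$, is a Siegel--Weil theta integral for the anisotropic torus $T\cong\GSpin(U)$. In signature $(0,2)$ the relevant Eisenstein series of weight $1$ is incoherent, so its value at the symmetry point vanishes and the Siegel--Weil identity takes the derivative form: the average of $\theta_N(\tau,h)$ equals $E_N'(\tau,0;1)$ up to an appropriate scalar. Consequently, the averaged harmonic Maass form $\bar\calG_N$ may be taken to be the Maass--Eisenstein series $E_N'(\tau,0;1)$ itself, whose holomorphic part is precisely $\calE_N^+$. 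Inserting this identification into the averaged formula yields the first term on the right-hand side of \eqref{eq:fund-intro}.

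The second step is to extend the identity from weakly holomorphic $f$ to harmonic Maass forms. One way is to approximate $f$ by its Mock modular generating function decomposition $f=f^++f^-$ and re-examine the regularized theta integral defining $\Phi^j$. Since $f^-$ is not weakly holomorphic, the proof of Theorem~\ref{thm:fund2-intro} no longer applies to it directly; instead, the Rankin--Selberg style unfolding that one uses to identify the regularized theta lift picks up an extra boundary contribution. Computing this contribution via Stokes' theorem (using that $\xi_{k-2j}(f)=\xi_{k-2j}(f^-)$ is a cusp form of weight $2-k+2j$ with values in $\rho_L$), one finds that the unfolded integral against $\theta_P\otimes E_N'(\tau,0;1)$ produces exactly the value at $s=0$ of the derivative of the convolution $L$-function $L(\xi_{k-2j}(f),U,s)$ of Lemma~\ref{lem:dirser}. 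The sign and normalization are fixed by comparing with the known case $f\in M^!_{k-2j,\bar\rho_L}$, where $\xi_{k-2j}(f)=0$ and the $L'$ term vanishes.

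The main obstacle will be the Siegel--Weil identification of the averaged $\theta_N$ with $E_N'(\tau,0;1)$ in the incoherent signature $(0,2)$ setting: this requires careful regularization of the theta integral at $s=0$, matching of archimedean and non-archimedean normalizations, and an analysis of the Fourier coefficients of the incoherent Eisenstein series so that the holomorphic projection $\calE_N^+$ matches the average of $\calG_N^+(\tau,h)$ coefficient by coefficient. The extension to harmonic $f$ presents a secondary technical difficulty in controlling the growth of $f^-\cdot[\theta_P,E_N']_j$ at the cusp so that the regularized integral produces a convergent special value of $L(\xi_{k-2j}(f),U,s)$ at $s=0$.
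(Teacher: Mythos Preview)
Your Siegel--Weil step contains a genuine confusion. The average of $\theta_N(\tau,h)$ over $h$ is \emph{not} $E_N'(\tau,0;1)$: by Proposition~\ref{prop:sw} it is the \emph{coherent} weight $-1$ Eisenstein series $E_N(\tau,0;-1)$, and there is no ``derivative form'' of Siegel--Weil at play. The incoherent weight~$1$ series enters only through the separate relation $L_1 E_N'(\tau,0;1)=\tfrac{1}{2}E_N(\tau,0;-1)$ of~\eqref{eq:l2}, which shows that $2E_N'(\tau,0;1)$ is a legitimate choice of $L_1$-preimage of the averaged theta. Once this is corrected, your averaging of Theorem~\ref{thm:fund2-intro} does yield the constant-term piece of~\eqref{eq:fund-intro} for weakly holomorphic~$f$, since the ambiguity in the preimage is weakly holomorphic and hence invisible in $\CT\langle f,[\theta_P,\,\cdot\,]_j\rangle$.

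The second step, however, is not an ``extension'' at all. Splitting $f=f^++f^-$ is not an approximation, and neither piece is modular, so the weakly holomorphic case gives you nothing to leverage. What your sketch actually calls for---rerunning the regularized integral and picking up a Petersson inner product via Stokes---is precisely the paper's direct computation of Theorem~\ref{thm:fund}, which it carries out for harmonic $f$ from the start without passing through Theorem~\ref{thm:fund2-intro}. Concretely: starting from Corollary~\ref{cor:limit}, one moves $R_{k-2j}^j$ onto $\theta_P\otimes E_N(\tau,0;-1)$ by self-adjointness, applies the key identity $R_{-k}^j(\theta_P\otimes E_N(\tau,0;-1))=2(-4\pi)^j L[\theta_P,E_N'(\cdot,0;1)]_j$ (Proposition~\ref{prop:RC} together with~\eqref{eq:l2}), and then uses the product rule $L\langle f,\,\cdot\,\rangle=\langle Lf,\,\cdot\,\rangle+\langle f,L(\,\cdot\,)\rangle$ plus Stokes. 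The $\langle Lf,\,\cdot\,\rangle$ term is exactly $([\theta_P,E_N'(\cdot,0;1)]_j,\xi_{k-2j}f)_{\Pet}=L'(\xi_{k-2j}f,U,0)$, and the boundary term yields the constant term. Since this handles the harmonic case directly, the detour through Theorem~\ref{thm:fund2-intro} is unnecessary; in fact the paper proves Theorem~\ref{thm:fund} first and deduces Theorem~\ref{thm:fund2} ``in the same way.''
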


Theorem \ref{thm:fund-intro} is very similar to one of the main results, Theorem 1.2, of \cite{BY}. In loc.~cit.\ it was conjectured that this quantity is the archimedian contribution of an arithmetic intersection pairing of a linear combination of arithmetic special divisors determined by the principal part of $f$ and the CM cycle $Z(U)$ on an integral model of $X_K$.
Here the first quantity on the right hand side is the negative of the non-archimedean intersection pairing. This conjecture was proved in \cite{AGHM} for maximal even lattices. It would be very interesting to establish a similar interpretation of Theorem \ref{thm:fund-intro}. Is it possible to define suitable cycles on fiber product powers of the Kuga-Satake abelian scheme over the canonical integral model of $X_K$ whose non-archimedian intersection pairing is given by the first quantity on the right hand side of \eqref{eq:fund-intro}? We show that this question has an affirmative answer answer when $X_K$ is a modular curve in Section \ref{sect:gkz}, see in particular Theorem \ref{thm:higherheight}.

\subsection{A higher weight Gross-Kohnen-Zagier theorem}

In the special case when $V$ has signature $(1,2)$ and $X_K$ is a modular curve  such `higher' Heegner cycles are defined in \cite{Zh1}, \cite{Xue}. We will use Theorem \ref{thm:fund-intro} to prove a Gross-Kohnen-Zagier theorem in this setting.
Let $\lev$ be a positive integer and let $L$ be the lattice of signature $(1,2)$ and level $4\lev$ defined in \eqref{latticeN}.
Taking $K=\GSpin(\hat L)\subset H(\A_f)$, the Shimura variety $X_K$ is isomorphic to the modular curve $\Gamma_0(\lev)\bs \H$. The special divisor $Z(m,\mu)$ agrees with the Heegner divisor of discriminant $D=-4\lev m$ of \cite{GKZ}. Moreover, the small CM cycle $Z(U)$ agrees with a primitive Heegner divisor. In particular, when the lattice $N$ has fundamental discriminant $D_0$, then $Z(U)$ is equal to a Heegner divisor $Z(m_0,\mu_0)$, where $D_0=-4\lev m_0$.

Let $\kappa$ be an odd positive integer.
For an elliptic curve $E$ with complex multiplication by  $\sqrt{D}$, let $Z(E)$ denote the divisor $\Gamma - (E \times \{0\}) + D(\{0\} \times  E)$ on $ E \times E$, where $\Gamma$  is the graph of multiplication by $\sqrt D$. Then
$Z(E)^{\kappa-1}$ defines a cycle of codimension $\kappa -1$ in $E^{2\kappa-2}$. By means of  this construction Zhang and Xue defined higher Heegner cycles
$Z_\kappa(m,\mu)$
on the $(2\kappa-2)$-tuple fiber product of the universal
degree $\lev$ cyclic isogeny of elliptic curves over the modular curve $X_0(\lev)$, see Section \ref{sect:gkz} for details. Zhang used the Arakelov intersection theory of Gillet and Soul\'e to define a height pairing
of such higher Heegner cycles, which is a sum of local height pairings for each prime $p\leq \infty$. The archimedian contribution to the global height pairing
\[
\langle Z_\kappa(m,\mu), Z_\kappa(m_0,\mu_0)\rangle
\]
is given by the evaluation of a higher Green function at $Z(m_0,\mu_0)$, which can be computed by means of Theorem \ref{thm:fund-intro}. The non-archimedian contribution can be calculated using results of \cite{Xue} and \cite{BY}. It turns out to agree with the negative of the first quantity on the right hand side of \eqref{eq:fund-intro}, yielding a formula for the global height pairing (Theorem~\ref{thm:higherheight}). By invoking a refinement of Borcherds' modularity criterion, we obtain the following higher weight Gross-Kohnen-Zagier theorem (Theorem \ref{thm:modat} and Corollary \ref{cor:modat}).

\begin{theorem}
\label{cor:modat-intro}
Assume the above notation and that $D_0$ is a fixed fundamental discriminant which is coprime to $2\lev$. The generating series
\begin{align*}
\sum_{\substack{m,\mu\\ (4\lev m,D_0)=1}}
\langle Z_\kappa(m,\mu), Z_\kappa(m_0,\mu_0)\rangle \cdot q^m\phi_\mu
\end{align*}
is the Fourier expansion of a cusp form in
$S_{\kappa+1/2,\rho_L}(\Gamma_0(D_0^2))$.
\end{theorem}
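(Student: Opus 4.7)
The plan is to verify the cusp-form property via a refined vector-valued Borcherds modularity criterion: a formal $q$-expansion $\sum_{m,\mu}a(m,\mu)\,q^m\phi_\mu$ with values in $\C[L'/L]$ is the Fourier expansion of a cusp form of weight $\kappa+\tfrac{1}{2}$ and representation $\rho_L$ on $\Gamma_0(D_0^2)$ provided that $a(m,\mu)=0$ for $m\leq 0$ and that the series pairs trivially with the principal part of every weakly holomorphic form $f\in M^!_{3/2-\kappa,\bar\rho_L}$ of complementary weight. Setting $j=(\kappa-1)/2\in\Z_{\geq 0}$, I will apply this criterion with $a(m,\mu):=\langle Z_\kappa(m,\mu),Z_\kappa(m_0,\mu_0)\rangle$.

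First, I would invoke Theorem \ref{thm:higherheight} to decompose the global Gillet--Soul\'e height pairing into its archimedean and non-archimedean local contributions. The coprimality hypothesis $(4\lev m,D_0)=1$ ensures that $Z_\kappa(m,\mu)$ and $Z_\kappa(m_0,\mu_0)$ are supported on disjoint fibers, so every local height and the corresponding higher Green function value are finite. Up to a universal normalization constant, the archimedean component of $a(m,\mu)$ is the value at the small CM cycle $Z(U)=Z(m_0,\mu_0)$ of the higher Green function $\Phi_{m,\mu}(\cdot,\cdot,s_0+j)$; the non-archimedean component is assembled from the local intersection computations of \cite{Xue} and \cite{BY}.

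Next I would test the orthogonality against an arbitrary weakly holomorphic $f\in M^!_{3/2-\kappa,\bar\rho_L}$. By linearity the archimedean sum collapses to $\Phi^j(Z(U),f)/\deg Z(U)$, which Theorem \ref{thm:fund-intro} evaluates as
\[
\CT\bigl\langle f^+,\,[\theta_P,\calE_N^+]_j\bigr\rangle - L'\bigl(\xi_{k-2j}(f),U,0\bigr).
\]
Weak holomorphicity of $f$ forces $\xi_{k-2j}(f)=0$, so only the first, holomorphic Fourier-pairing term survives on the archimedean side. The non-archimedean side, once assembled over all primes, reproduces this term with the opposite sign, exactly as predicted by the arithmetic-intersection interpretation of Theorem \ref{thm:fund-intro} spelled out in \cite{BY, AGHM}, now transferred from the integral model of $X_K$ to the Kuga--Sato variety. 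The two contributions cancel, establishing the required pairing vanishing and thus modularity. The condition $a(m,\mu)=0$ for $m\leq 0$ holds by construction, yielding cuspidality, while the level $\Gamma_0(D_0^2)$ is dictated by the fact that the relevant theta lift is controlled by the discriminant-$D_0$ lattice $N$ attached to $Z(U)$.

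The principal technical obstacle is the prime-by-prime matching of the non-archimedean Gillet--Soul\'e intersection of $Z_\kappa(m,\mu)$ and $Z_\kappa(m_0,\mu_0)$ on the $(2\kappa-2)$-fold Kuga--Sato variety with the Fourier coefficients of $\CT\langle f^+,[\theta_P,\calE_N^+]_j\rangle$. This requires extending Xue's local calculations to the higher-codimension cycles $Z(E)^{\kappa-1}$ and reconciling them, bad prime by bad prime, with the Rankin--Cohen bracket, in the spirit of the arithmetic intersection formula proved in \cite{AGHM} for the case $j=0$. Carrying out this matching is the content of Theorem \ref{thm:higherheight} and is the heart of the proof.
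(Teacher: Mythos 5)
Your overall strategy -- decompose the Gillet--Soul\'e height into local terms, identify the archimedean part with the CM value of the higher Green function via Theorem \ref{thm:fund12}, match the finite intersection with the constant term $\CT\langle f^+,[\theta_P,\calE_N^+]_j\rangle$, observe that $\xi_{k-2j}(f)=0$ kills the $L'$-term for weakly holomorphic input, and conclude by a Borcherds-type modularity criterion -- is the paper's strategy. But the modularity step has a genuine gap. First, the criterion you state is not correct as written: pairing trivially against all of $M^!_{3/2-\kappa,\bar\rho_L}$ (forms for the full metaplectic group) yields, via Borcherds' criterion, a cusp form in $S_{\kappa+1/2,\rho_L}$ at \emph{level one}, not at level $\Gamma_0(D_0^2)$; your closing remark that the level ``is dictated by the discriminant-$D_0$ lattice $N$'' does not supply the missing argument. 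Second, and more seriously, for a general (non-admissible) $f\in M^!_{3/2-\kappa,\bar\rho_L}$ the pairing of your restricted generating series with $f$ does \emph{not} collapse to $\Phi^j(Z(U),f)/\deg Z(U)$: the theta lift $\Phi^j(Z(U),f)$ sees every coefficient $c_f^+(-m,\mu)$ of the principal part, whereas the generating series omits all indices with $(4\lev m,D_0)\neq 1$. At those omitted indices the height pairings $\langle Z_\kappa(m,\mu),Z_\kappa(m_0,\mu_0)\rangle$ would involve improper intersections of the higher Heegner cycles and are not even defined in this setting, so you cannot restore them by linearity.

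The paper circumvents exactly this obstacle with a refinement of Borcherds' criterion (Proposition \ref{prop:mod}): one only demands vanishing of $\CT\langle f,h\rangle$ for weakly holomorphic $f$ that are \emph{admissible} for $D_0$, i.e.\ whose principal-part coefficients vanish at indices with $(4\lev m,D_0)\neq 1$ -- and these are precisely the $f$ for which your cancellation argument is valid, by Corollary \ref{cor:higherheight}. The proposition then artificially completes the coefficients at the bad indices (using $\xi$-preimages that are admissible) to produce a genuine cusp form $g\in S_{\kappa+1/2,\rho_L}$ at level one agreeing with the height pairings whenever $(4\lev m,D_0)=1$. Finally, Lemma \ref{lem:adm0} shows that sieving the Fourier expansion of such a $g$ to the indices coprime to $D_0$ lands in $S_{\kappa+1/2,\rho_L}(\Gamma_0(D_0^2))$; this sieving step, not the lattice $N$, is the source of the level $D_0^2$. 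Without the admissibility device and the sieving lemma, your argument does not close.
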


Note that this result does {\em not} depend on any assumption regarding positive definiteness of the height pairing.
We consider the generating series which only involves the $Z_\kappa(m,\mu)$ with $4\lev m$ coprime to $D_0$ in order to avoid improper intersections of Heegner cycles. It would be interesting to drop this restriction,
which causes the additional level of the generating series.

The structure of this paper is as follows. In Section~2 we recall some background on orthogonal Shimura vareties and Siegel theta functions, and in Section~3 we collect some facts on weak Maass forms, differential operators, and Rankin-Cohen brackets. Section~4 deals with higher automorphic Green functions on orthogonal Shimura varieties, and in Section~5 the main formulas for their values at small CM cycles are derived.
We also comment on potential analogues for  big CM cycles, see Theorem \ref{thm:fundbig}.
In Section~6 we specialize to signature $(2,2)$ and prove a refinement of the main result of \cite{Vi1}. We also specialize to signature $(1,2)$
and obtain some preliminary results towards Theorem \ref{thm:partial-average-value-intro}. We use this to prove the higher weight Gross-Kohnen-Zagier theorem. In Section~7 we extend the results in the case of signature $(1,2)$ by looking at more general theta kernels which involve different Schwartz functions at the archimedian and non-archimedian places.
In that way we prove (a generalization of) Theorem \ref{thm:partial-average-value-intro}, from which we deduce
Theorem \ref{cor:partial-average-algebraic-general-intro} and Corollary \ref{cor:exponent2-intro}.
Section~8 deals with some numerical examples illustrating our main results. Finally, in the appendix we explain how the main results of \cite{DL, Eh} on harmonic Maass forms of weight 1 can be extended to more general binary lattices.

We thank Ben Howard, Claudia Alfes-Neumann, Yingkun Li and Masao Tsuzuki for helpful conversations and comments related to this paper. We also thank the referee for his/her careful reading of our manuscript
and for the insightful comments.

\section{Orthogonal Shimura varieties and theta functions}
\label{sec:2}
Throughout, we write $\A$
for the ring of adéles over $\Q$ and $\A_f$
for the finite adéles. Moreover, we let $\hat\Z = \prod_{p<\infty} \Z_p$ be the closure of $\Z$ in $\A_f$.

Let $(V,Q)$ be a quadratic space over $\Q$ of signature $(n,2)$. We denote the symmetric bilinear form associated to $Q$ by
$(x,y)=Q(x+y)-Q(x)-Q(y)$. Let $H=\GSpin(V)$, and realize the corresponding hermitean symmetric space as the Grassmannian $\calD$ of two-dimensional negative oriented subspaces of $V_\R$. This space has two connected components, $\calD=\calD^+\sqcup \calD^-$, given by the two possible choices of an orientation. It is isomorphic to the complex manifold
\begin{align}
\label{eq:projmod}
\{z\in V_\C:\; (z,z)=0\; (z,\bar z)<0\}/\C^\times.
\end{align}
For a compact open subgroup $K\subset H(\A_f)$ we consider the quotient
\[
   X_K=H(\Q)\bs (\calD\times H(\A_f)/ K) .
\]
It is the complex analytic space of a Shimura variety
of dimension $n$, which has a canonical model over $\Q$.

There are natural families of special cycles which are given by embeddings of rational quadratic subspaces $V'\subset V$ of signature $(n',2)$ for $0\leq  n'\leq n$. As in \cite{BY} we consider these cycles for $n'=0$ and $n'=n-1$.
Let $U\subset V$ be a negative definite $2$-dimensional subspace. It defines a two point subset $\{ z_U^\pm\}\subset \calD$ given by $U_\R$ with the two possible choices of the orientation. The group $T=\GSpin(U)$ is isomorphic to the multiplicative group of an imaginary quadratic field. It embeds into $H$ acting trivially on $U^\perp$. If we put $K_T=K\cap T(\A_f)$ we obtain the CM cycle
\begin{align}
\label{eq:defzu}
Z(U)= T(\Q)\bs (\{ z_U^\pm\} \times T(\A_f)/ K_T)\longrightarrow X_K.
\end{align}
Here each point in the cycle is counted with multiplicity $\frac{2}{w_{K,T}}$, where $w_{K,T}= \# (T(\Q)\cap K_T)$. The cycle $Z(U)$ has
dimension $0$ and is defined over $\Q$.

To define special divisors, we consider a vector $x\in V$ with $Q(x)> 0$, and let $H_x\subset H$ be its stabilizer. The hermitean symmetric space of $H_x$ can be realized as the analytic divisor
\[
\calD_x = \{ z\in \calD:\; z\perp x\}
\]
in $\calD$. For $h\in H(\A_f)$ we
 let $K_{h, x} = H_x(\A_f) \cap h K h^{-1}$ be the corresponding compact open subgroup of $H_x(\A_f)$.  Then
$$
H_x(\Q) \backslash ( \calD_x \times H_x(\A_f)/K_{h, x}) \rightarrow X_K,  \quad [z, h_1]\mapsto [z, h_1 h]
$$
gives rise to a divisor $Z(h, x)$ in $X_K$.
Given
$m\in \Q_{>0}$ and a $K$-invariant Schwartz function $\varphi\in S(V(\A_f))$, we  define a special divisor $Z(m,\varphi)$ following \cite{Ku:Duke}:
If there exists an $x \in V(\Q)$ with $Q(x) =m$, put
\begin{align}
\label{eq:defzm}
Z(m, \varphi)  =\sum_{h\in H_x(\A_f)\backslash H(\A_f)/K} \varphi(h^{-1}x) Z(h, x) .
\end{align}
If there is no such  $x$, set $Z(m, \varphi)=0$.

\subsection{Siegel theta functions}

We briefly recall the properties of some Siegel theta functions. For more details we refer to \cite{Bo1}, \cite{Ku:Integrals}, \cite{BY}.

We write $\Gamma'=\Mp_2(\Z)$ for the metaplectic extension of $\SL_2(\Z)$ given by the two possible choices of a holomorphic square root on $\H$ of the automorphy factor $j(\gamma, \tau) =c\tau +d $ for $\gamma=\kabcd\in \SL_2(\Z)$ and $\tau\in \H$.

Let $L\subset V$ be an even lattice and write $L'$ for its dual. The discriminant group $L'/L$ is a finite abelian group, equipped with a $\Q/\Z$-valued quadratic form. We write $S_L=\C[L'/L]$ for the space of complex valued functions on $L'/L$. For $\mu \in L'/L$ we denote the characteristic function of $\mu$ by $\phi_\mu$, so that $(\phi_\mu)_\mu$ forms the standard basis of $S_L$. This basis determines a $\C$-bilinear pairing
\[
  \langle x,y\rangle = \sum_{\mu\in L'/L} x_\mu y_\mu
\]
for $x=\sum_\mu x_\mu \phi_\mu$ and $y=\sum_\mu y_\mu \phi_\mu$ in $S_L$.
Recall that there is a Weil representation $\rho_L$ of $\Gamma'$ on $S_L$. In terms of the generators $S=(\kzxz{0}{-1}{1}{0},\sqrt{\tau})$ and $T=(\kzxz{1}{1}{0}{1},1)$ it is given by
\begin{align}
\label{eq:weilt}
\rho_L(T)(\phi_\mu)&=e(Q(\mu))\phi_\mu,\\
\label{eq:weils}
\rho_L(S)(\phi_\mu)&=\frac{e((2-n)/8)}{\sqrt{|L'/L|}} \sum_{\nu\in
L'/L} e(-(\mu,\nu)) \phi_\nu,
\end{align}
see e.g. \cite{Bo1}, \cite{BY}, \cite{Br}. We frequently identify $S_L$ with the subspace of Schwartz-Bruhat functions $S(V(\A_f))$ which are translation invariant under $\hat L = L \otimes_\Z \hat\Z$ and supported on $\hat L'$.
Then the representation $\rho_L$ can be identified with the restriction to $\Mp_2(\Z)$ of the complex conjugate of the usual Weil representation $\omega_f$ on $S(V(\A_f))$ with respect to the standard additive character of $\A/\Q$.

If $z\in \calD$ and $x\in V(\R)$ we write $x_z$ and $x_{z^\perp}$ for the orthogonal projections of $x$ to the subspaces $z$ and $z^\perp$ of $V(\R)$, respectively. The positive definite quadratic form $x\mapsto Q(x_{z^\perp})-Q(x_z)$ is called the majorant associated with $z$.
For $\tau=u+iv\in \H$, $(z,h)\in \calD\times H(\A_f)$, and $\varphi\in S(V(\A_f))$ we define a Siegel theta function by
\begin{align}
\label{theta2} \theta(\tau,z,h,\varphi)  = v  \sum_{x\in V(\Q)}
e\big( Q(x_{z^\perp})\tau+Q(x_{z})\bar\tau\big)\cdot
\varphi(h^{-1} x).
\end{align}
Moreover, we define a $S_L$-valued theta function by
\begin{align}
\label{theta2.5}
\theta_L(\tau,z,h)=\sum_{\mu\in L'/L}
\theta(\tau,z,h,\phi_\mu) \phi_\mu.
\end{align}
As a function of $\tau$ it transforms as a (non-holomorphic) vector-valued modular form of weight $\frac{n}{2}-1$ with representation $\rho_L$ for $\Gamma'$. As a function of $(z,h)$ it descends to $X_K$ if $K$
stabilizes $\hat L$ and acts trivially on $\hat L'/\hat L \cong L'/L$.


\section{Differential operators and weak Maass forms}

Here we recall some differential operators acting on automorphic forms for  $\Gamma'$ and some facts about weak Maass forms.

\subsection{Differential operators}
\label{sect:3.1}

Throughout we use $\tau$ as a standard variable for functions on the upper complex half plane $\H$. We write $\tau =u+iv$ with $u\in \R$ and $v\in \R_{>0}$ for the decomposition into real and imaginary part.
Recall that the Maass raising and lowering operators on smooth functions on $\H$ are defined as the differential operators
\begin{align*}
R_k  &=2i\frac{\partial}{\partial\tau} + k v^{-1},\\
L_k  &= -2i v^2 \frac{\partial}{\partial\bar{\tau}}.
\end{align*}
Occasionally, to lighten the notation, we drop the weight $k$ and simply write $L$ for the lowering operator,
since its definition in fact does not depend on $k$.
The lowering operator annihilates holomorphic functions. Moreover, if $g$ is a holomorphic function on $\H$, then
\begin{align}
\label{eq:rn}
R_{-k}(v^k \bar g)=0.
\end{align}
For any smooth function $f:\H\to \C$, $k\in \frac{1}{2}\Z$, and $\gamma\in \Gamma'$ we have
\begin{align*}
R_k(f\mid_k \gamma) &= (R_k f)\mid_{k+2} \gamma,\\
L_k(f\mid_k \gamma) &= (L_k f)\mid_{k-2} \gamma.
\end{align*}
The hyperbolic Laplacian in weight $k$ is defined by
\begin{equation}\label{defdelta}
\Delta_k = -v^2\left( \frac{\partial^2}{\partial u^2}+ \frac{\partial^2}{\partial v^2}\right) + ikv\left( \frac{\partial}{\partial u}+i \frac{\partial}{\partial v}\right).
\end{equation}
It commutes with the weight $k$ action of $\Gamma'$ on functions on $\H$. It can be expressed in terms of $R_k$ and $L_k$ by
\begin{equation}\label{deltalr}
-\Delta_k = L_{k+2} R_k +k = R_{k-2} L_{k}.
\end{equation}
For $j\in \Z_{\geq 0}$ we abbreviate
\begin{align*}
R_k^j  &= R_{k+2(j-1)}\circ\dots\circ R_k,\\
L_k^j  &= L_{k-2(j-1)}\circ\dots\circ L_k.
\end{align*}
The following lemma is an easy consequence of \eqref{deltalr}.

\begin{lemma}
\label{lem:comrel}
We have
\[
R_{k-2}^j L_k = L_{k+2j} R_k^j + j(k+j-1) R_k^{j-1}.
\]
\end{lemma}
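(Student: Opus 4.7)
The plan is to prove the identity by induction on $j$, where the base case $j=1$ comes essentially for free from the Laplacian identity \eqref{deltalr}, and the inductive step reduces to the base case applied at a shifted weight.

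First, I would record the base case $j=1$. The two expressions for $-\Delta_k$ in \eqref{deltalr} give
\[
R_{k-2} L_k = L_{k+2} R_k + k,
\]
which is exactly the claim for $j=1$ (with the convention that $R_k^{0}$ is the identity, and interpreting $R_k^{-1}$ as zero so the $j=0$ case is the trivial identity $L_k = L_k$). So there is nothing to do at the base.

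For the inductive step, I would assume the identity at level $j$ and apply $R_{k+2j-2}$ on the left; note this is the correct raising operator since $R_{k-2}^{j}L_k$ produces a form of weight $k+2j-2$. This gives
\[
R_{k-2}^{j+1} L_k \;=\; R_{k+2j-2}\bigl(L_{k+2j} R_k^{j} + j(k+j-1) R_k^{j-1}\bigr).
\]
Now I would apply the $j=1$ case at weight $k+2j$, namely $R_{k+2j-2} L_{k+2j} = L_{k+2j+2} R_{k+2j} + (k+2j)$, and use $R_{k+2j-2} R_k^{j-1} = R_k^{j}$ and $R_{k+2j} R_k^{j} = R_k^{j+1}$, to obtain
\[
R_{k-2}^{j+1} L_k \;=\; L_{k+2(j+1)} R_k^{j+1} + \bigl((k+2j) + j(k+j-1)\bigr) R_k^{j}.
\]
A quick check shows $(k+2j)+j(k+j-1) = k + j + jk + j^2 = (j+1)(k+j)$, matching the claimed coefficient at level $j+1$.

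There is really no serious obstacle here; the only thing to watch carefully is bookkeeping of the weights of the operators (so that each intermediate application lands in the correct weight), and the convention $R_k^{-1} := 0$ so that the formula at $j=0$ degenerates to $L_k = L_k$. Both checks are routine, and the whole argument is essentially a one-step reduction to the Laplacian identity \eqref{deltalr}.
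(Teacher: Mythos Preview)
Your proof is correct and is exactly the natural induction that the paper has in mind when it says the lemma is an easy consequence of \eqref{deltalr}; the paper does not spell out any further details. Your weight bookkeeping and the coefficient check $(k+2j)+j(k+j-1)=(j+1)(k+j)$ are both fine.
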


If $\Gamma''\subset \Gamma'$ is a congruence subgroup we write $A_{k}(\Gamma'')$ for the complex vector space of smooth functions $f:\H\to \C$ satisfying the transformation law $f\mid_k \gamma = f$ for all $\gamma\in \Gamma''$.
If $f,g\in A_k(\Gamma'')$ we define their Petersson inner product by
\[
(f,g)_\Pet = \int_{\Gamma''\bs \H} f(\tau)\overline{g(\tau)}v^k\,d\mu,
\]
provided the integral converges. Here $d\mu(\tau) =\frac{du\,dv}{v^2}$ is the usual invariant volume form.
If $f\in A_{k}(\Gamma'')$ and $g\in A_l(\Gamma'')$, we have
\begin{align}
R_{k+l}(fg)= (R_k f) g + f (R_l g).
\end{align}
Moreover, if $g\in A_{-k-2}(\Gamma'')$ we have the identity of invariant differential forms
\begin{align}
\label{eq:rd}
\big((R_k f) g + f (R_{-k-2} g)\big)\,d\mu(\tau)= R_{-2}(fg) \,d\mu(\tau)
=  -d(v^{-2}fg\,d\bar \tau).
\end{align}
In combination with \eqref{eq:rn} this identity implies the following lemma
\begin{lemma}
\label{lem:rpet}
Let $h\in A_{k-2}(\Gamma'')$ and assume that $h$ has moderate growth at the cusps. Then for any holomorphic cusp form $g\in S_k(\Gamma'')$ we have
\[
(R_{k-2}(h),g)_\Pet =0.
\]
\end{lemma}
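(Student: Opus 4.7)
The plan is to express the Petersson integrand as an exact $2$-form via \eqref{eq:rd} and then invoke Stokes' theorem on a truncated fundamental domain, with boundary contributions controlled by the moderate growth of $h$ and the exponential decay of $g$ at the cusps.

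First I would set $\tilde g(\tau) = v^{k}\,\overline{g(\tau)}$. A direct check using $g\mid_k\gamma = g$ shows that $\tilde g\in A_{-k}(\Gamma'')$, and equation \eqref{eq:rn} then gives $R_{-k}\tilde g = 0$. Next I would apply the identity \eqref{eq:rd} with $f$ replaced by $h$ (of weight $k-2$) and $g$ there replaced by $\tilde g$ (of weight $-k$); since $R_{-k}\tilde g = 0$, the second summand on the left-hand side drops out, leaving the pointwise identity of invariant $2$-forms
\[
(R_{k-2}h)(\tau)\,\overline{g(\tau)}\,v^{k}\,d\mu(\tau) \;=\; -\,d\bigl(v^{k-2}\,h(\tau)\,\overline{g(\tau)}\,d\bar\tau\bigr).
\]
Integrating this over $\Gamma''\bs\H$ turns $(R_{k-2}h,g)_{\Pet}$ into the integral of an exact form.

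The final step is Stokes' theorem applied to a truncation $\calF_Y$ of a fundamental domain for $\Gamma''\bs\H$ obtained by removing, at each cusp, a horocycle neighborhood at height $Y$. The boundary integral is a finite sum of integrals of the form $\int v^{k-2}\,h\,\overline{g}\,d\bar\tau$ along horocycles; after conjugating any fixed cusp to $\infty$, the moderate growth assumption on $h$ gives $h(\tau) = O(v^{A})$ for some $A$, while the Fourier expansion of the cusp form $g$ gives $g(\tau) = O(e^{-cv})$ for some $c>0$. Hence the integrand is $O(v^{k-2+A}e^{-cv})$, which forces the boundary contribution to vanish as $Y\to\infty$, yielding $(R_{k-2}h,g)_{\Pet} = 0$.

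The only real subtlety is making the Stokes argument rigorous uniformly at all inequivalent cusps of $\Gamma''$; the algebraic step is immediate from \eqref{eq:rd} and \eqref{eq:rn}. Note also that the moderate-growth hypothesis on $h$ is used only to guarantee that the boundary integral decays, and cannot be weakened without care.
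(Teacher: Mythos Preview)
Your proof is correct and follows exactly the approach indicated in the paper, which simply states that the lemma is a consequence of \eqref{eq:rd} combined with \eqref{eq:rn}. You have spelled out the details: set $\tilde g = v^k\overline{g}$, use \eqref{eq:rn} to kill $R_{-k}\tilde g$, apply \eqref{eq:rd} to write the integrand as an exact form, and then invoke Stokes' theorem with the boundary terms vanishing by the growth hypotheses.
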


We will also need Rankin-Cohen brackets on modular forms.  Let $j\in \Z_{\geq 0}$.
If $f\in A_{k}(\Gamma'')$ and $g\in A_l(\Gamma'')$ we define the $j$-th Rankin-Cohen bracket by
\begin{align}
\label{eq:defRC}
[f,g]_j = \sum_{s=0}^j (-1)^s \binom{k+j-1}{s} \binom{l+j-1}{j-s} f^{(j-s)} g^{(s)},
\end{align}
where $f^{(s)}:= \frac{1}{(2\pi i)^s}\frac{\partial^s}{\partial \tau^s} f$. It is well known that the Rankin-Cohen bracket can also be expressed in terms of iterated raising operators as
\begin{align}
\label{eq:defRC2}
[f,g]_j = \frac{1}{(-4\pi)^j}\sum_{s=0}^j (-1)^s \binom{k+j-1}{s} \binom{l+j-1}{j-s} (R_k^{j-s} f) (R_l^{s}g).
\end{align}
The latter identity implies that $[f,g]_j$ belongs to $A_{k+l+2j}(\Gamma'')$. Moreover, \eqref{eq:defRC} implies that the Rankin-Cohen bracket takes (weakly) holomorphic modular forms to (weakly) holomorphic ones.

\begin{lemma}
\label{lem:rcr}
Let $f_1\in A_k(\Gamma'')$ and $f_2\in A_l(\Gamma'')$. There exists a function $h\in A_{k+l+2j-2}(\Gamma'')$ such that
\[
[f_1,f_2]_j=\frac{1}{(4\pi)^j}\binom{k+l+2j-2}{j} \cdot f_1 \cdot R_l^j( f_2)+ R_{k+l+2j-2}(h).
\]
If $f_1$ and $f_2$ have moderate growth, then $h$ can also be chosen to have moderate growth.
\end{lemma}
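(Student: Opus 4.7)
The plan is to start from the alternative expression \eqref{eq:defRC2},
\[
[f_1,f_2]_j = \frac{1}{(-4\pi)^j}\sum_{s=0}^j (-1)^s \binom{k+j-1}{s} \binom{l+j-1}{j-s} A_s,\qquad A_s:=(R_k^{j-s} f_1)(R_l^s f_2),
\]
and to show that each summand $A_s$ is congruent to $(-1)^{j-s} f_1\cdot R_l^j f_2$ modulo the image of $R_{k+l+2j-2}$. Substituting this back and invoking the Chu--Vandermonde identity then produces the stated coefficient $\binom{k+l+2j-2}{j}$.

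The key input is the Leibniz rule for the raising operator recalled in the excerpt: if $a\in A_p(\Gamma'')$ and $b\in A_q(\Gamma'')$ then $R_{p+q}(ab) = (R_p a)\,b + a\,(R_q b)$. For $0\le s\le j-1$, I would set $h_s := (R_k^{j-s-1} f_1)(R_l^s f_2)$, which lies in $A_{k+l+2j-2}(\Gamma'')$ since $R_k^{j-s-1} f_1$ has weight $k+2(j-s-1)$ and $R_l^s f_2$ has weight $l+2s$. Applying the Leibniz rule gives
\[
R_{k+l+2j-2}(h_s) = (R_k^{j-s} f_1)(R_l^s f_2) + (R_k^{j-s-1} f_1)(R_l^{s+1} f_2) = A_s + A_{s+1},
\]
so $A_s \equiv -A_{s+1}$ modulo the image of $R_{k+l+2j-2}$. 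Iterating downward from $A_j=f_1\cdot R_l^j f_2$ yields $A_s \equiv (-1)^{j-s} A_j$, with an explicit error of the form $R_{k+l+2j-2}\bigl(\sum_{t=s}^{j-1}(-1)^{t-s} h_t\bigr)$.

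Plugging these congruences back into the alternating sum, the signs $(-1)^s\cdot(-1)^{j-s}=(-1)^j$ combine with $(-4\pi)^{-j}$ to give $(4\pi)^{-j}$, and the binomial sum reduces via Chu--Vandermonde to
\[
\sum_{s=0}^j \binom{k+j-1}{s}\binom{l+j-1}{j-s} = \binom{k+l+2j-2}{j},
\]
which is exactly the coefficient claimed. The correction is $h = \frac{1}{(-4\pi)^j}\sum_{s=0}^{j-1}\sum_{t=s}^{j-1}(-1)^{t}\binom{k+j-1}{s}\binom{l+j-1}{j-s}\,h_t \in A_{k+l+2j-2}(\Gamma'')$. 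Each $h_t$ is a product of iterated raisings of $f_1$ and $f_2$, and since $R$ preserves the class of functions of moderate growth on $\H$, the moderate growth hypothesis on $f_1,f_2$ transfers to $h$. I do not expect any substantive obstacle: the essential content is the one-step integration by parts encoded by the Leibniz rule, and the remainder is bookkeeping of signs, indices, and binomial identities.
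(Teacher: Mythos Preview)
Your argument is correct. The telescoping via the Leibniz identity $R_{k+l+2j-2}(h_s)=A_s+A_{s+1}$ is exactly the right mechanism, the sign bookkeeping is accurate, and the Chu--Vandermonde identity delivers the claimed binomial coefficient. The moderate growth assertion is also fine, since each $h_t$ is a finite product of iterated raisings of $f_1$ and $f_2$, and the raising operator preserves moderate growth.

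As for comparison with the paper: the paper does not give its own proof of this lemma at all---it simply cites \cite[Proposition~3]{Vi1}. So you have supplied a complete, self-contained argument where the paper defers to the literature. Your approach is in fact the standard one (and presumably close to what Viazovska does): rewrite the Rankin--Cohen bracket via \eqref{eq:defRC2}, then integrate by parts one step at a time using the product rule for $R$. There is nothing to add.
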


\begin{proof}
See \cite[Proposition 3]{Vi1}.
\end{proof}


\subsection{Weak Maass forms}
\label{sect:3.2}

As before let $L\subset V $ be an even lattice.
Recall the Weil representation $\rho_L$ of $\Gamma'=\Mp_2(\Z)$ on $S_L$.
Let $k\in \frac{1}{2}\Z$.
For $\gamma=(g,\sigma) \in \Gamma'$ and a function $f: \H\to S_L$ we define the Petersson slash operator in weight $k$ by
\[
(f\mid_{k,\rho_L} \gamma)(\tau) = \sigma(\tau)^{-2k} \rho_L(\gamma)^{-1} f(g\tau).
\]
A smooth function $f:\H\to S_L$ is called a weak Maass form of weight $k$ with representation $\rho_L$ for the group $\Gamma'$ (c.f.~\cite[Section 3]{BF})
if
\begin{enumerate}
\item
$f\mid_{k,\rho_L} \gamma = f$
for all $\gamma\in \Gamma'$;
\item there exists a $\lambda\in \C$ such that
$\Delta_k f=\lambda f$;
\item
there is a $C>0$ such that
$f(\tau)=O(e^{C v})$ as $v\to \infty$ (uniformly in $u$).
\end{enumerate}

In the special case when $\lambda=0$, the function $f$ is called a {\em harmonic} weak Maass form\footnote{To lighten the terminology we will frequently drop the attribute `weak' and simply speak of {\em harmonic Maass forms}.}.
The differential operator
\[
f(\tau)\mapsto \xi_k(f)(\tau):=v^{k-2} \overline{L_k f(\tau)}
\]
defines an antilinear map from harmonic Maass forms of weight $k$ to weakly
holomorphic modular forms of dual weight $2-k$ for the dual representation.
Following \cite{BF}, we denote the holomorphic part of any harmonic Maass form $f$
by $f^+$ and the non-holomorphic part by $f^-$.
As in \cite{BY} we write $H_{k,\rho_L}$ for the vector space of harmonic
Maass forms of weight $k$ (with representation $\rho_L$ for $\Gamma'$) whose
image under $\xi_k$ is a cusp form.
The larger space of all harmonic Maass forms of weight $k$ is denoted by $H_{k,\rho_L}^!$.
We write $M^!_{k,\rho_L}$,  $M_{k,\rho_L}$, $S_{k,\rho_L}$ for the subspaces
of weakly holomorphic modular forms, holomorphic modular forms, and cusp
forms, respectively. Then we have the chain of inclusions
\[
  S_{k,\rho_L}\subset M_{k,\rho_L}\subset M^!_{k,\rho_L}\subset H_{k,\rho_L} \subset H_{k,\rho_L}^!,
\]
and the exact sequence
\begin{align*}
\xymatrix{
0\ar[r]& M_{k,\rho_L}^! \ar[r]& H_{k,\rho_L} \ar[r]^{\xi_k}&  S_{2-k,\bar\rho_L} \ar[r] & 0.}
\end{align*}

Important examples of weak Maass forms are given by Poincar\'e series, see \cite[Chapter~1.3]{Br}.
Let $M_{\nu,\mu}(z)$ be the usual $M$-Whittaker function as defined in
\cite{AS}, Chapter 13, p.~190. For convenience, for $s\in \C$ and $v\in \R_{>0}$ we put
\[
\calM_{s,k}(v)= v^{-k/2}M_{-k/2,s-1/2}(v).
\]
The special value at $s_0=1-k/2$ is given by
\[
\calM_{s_0,k}(v) = e^{v/2}\left( \Gamma(2-k)-(1-k)\Gamma(1-k,v)\right).
\]
For any $m>0$ the function
$\calM_{s,k}(4\pi m v) e(-mu)$
is an eigenfunction of $\Delta_k$ with eigenvalue $(s-k/2)(1-k/2-s)$.

For simplicity we assume here that $2k\equiv -\sig(L)=2-n \pmod{4}$.
Then, for $\mu\in L'/L$ and $m\in \Z+Q(\mu)$ with $m>0$, the $S_L$-valued function
\[
\calM_{s,k}(4\pi m v) e(-mu)(\phi_\mu+\phi_{-\mu})
\]
is invariant under the $\mid_{k,\bar\rho_L}$-action of the stabilizer $\Gamma_\infty'\subset\Gamma'$ of the cusp $\infty$.
%
We define the Hejhal-Poincar\'e series of index $(m,\mu)$ and weight $k$ by
\begin{align}
\label{eq:hps}
F_{m,\mu}(\tau,s,k) = \frac{1}{\Gamma(2s)} \sum_{\substack{\gamma\in \Gamma'_\infty\bs \Gamma'}}\left[ \calM_{s,k}(4\pi m v) e(-mu) (\phi_\mu+\phi_{-\mu})\right] \mid_{k,\bar\rho_L} \gamma.
\end{align}
The series converges normally for $\Re(s)>1$ and defines a weak Maass form of weight $k$ with representation $\bar\rho_L$ and eigenvalue $(s-k/2)(1-k/2-s)$ for $\Gamma'$, see e.g.~\cite[Theorem~1.9]{Br} and note that we work here with signature $(n,2)$ instead of signature $(2,n)$. If $k\leq 1/2$, then the special value
$
F_{m,\mu}(\tau,s_0,k)
$
at $s_0=1-k/2$ defines an element of $H_{k,\bar\rho_L}$ with Fourier expansion
\[
F_{m,\mu}(\tau,s_0,k)= e(-m\tau)\phi_\mu + e(-m\tau)\phi_{-\mu} +O(1),
\]
as $v\to \infty$, see \cite[Proposition 1.10]{Br}.
The next proposition describes the images of the Hejhal-Poincar\'e series under the Maass raising operator.

\begin{proposition}
\label{prop:raisef}
We have that
\[
\frac{1}{4\pi m} R_k F_{m,\mu}(\tau,s,k) = (s+k/2) F_{m,\mu}(\tau,s,k+2).
\]
\end{proposition}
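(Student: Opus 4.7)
The plan is to reduce the identity for the Poincaré series to its seed at $\gamma = \id$ and then verify a single contiguous relation for Whittaker $M$-functions. First I would note that for any smooth $S_L$-valued function $g$ on $\H$ and every $\gamma \in \Gamma'$ one has the intertwining property $R_k(g \mid_{k, \bar\rho_L} \gamma) = (R_k g)\mid_{k+2, \bar\rho_L}\gamma$, which follows immediately from the scalar-valued commutation $R_k(f\mid_k \gamma) = (R_k f)\mid_{k+2}\gamma$ recorded in Section~\ref{sect:3.1} together with the fact that $\rho_L$ is independent of $\tau$. Since the Hejhal-Poincaré series \eqref{eq:hps} converges normally on $\H$ for $\Re(s) > 1$, this normal convergence persists after applying the first-order operator $R_k$ termwise, so I may interchange $R_k$ with the sum over $\Gamma_\infty' \backslash \Gamma'$. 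The proposition is thereby reduced to the seed identity
\[
R_k\bigl[\calM_{s,k}(4\pi m v)\, e(-mu)\bigr] = 4\pi m\,(s + k/2)\, \calM_{s, k+2}(4\pi m v)\, e(-mu).
\]

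The second step is to verify this seed identity by direct calculation. Writing $R_k = i\partial_u + \partial_v + k v^{-1}$ and substituting $y = 4\pi m v$, the $u$-derivative contributes $2\pi m\, \calM_{s,k}(y)\, e(-mu)$, while the remaining terms contribute $\bigl[4\pi m\, \calM_{s,k}'(y) + (k/v)\,\calM_{s,k}(y)\bigr]\,e(-mu)$. Unpacking the definition $\calM_{s,k}(y) = y^{-k/2}\, M_{-k/2,\, s-1/2}(y)$ and simplifying, the seed identity becomes equivalent to the Whittaker-function relation
\[
(y + k)\, M_{-k/2,\, s-1/2}(y) + 2y\, M_{-k/2,\, s-1/2}'(y) = 2(s + k/2)\, M_{-k/2 - 1,\, s - 1/2}(y),
\]
which, in the general notation $\kappa = -k/2$, $\mu = s - 1/2$, is the standard lowering contiguous relation
\[
z\, M_{\kappa,\mu}'(z) = (\kappa - z/2)\, M_{\kappa,\mu}(z) + (\mu - \kappa + 1/2)\, M_{\kappa - 1,\, \mu}(z).
\]
This recurrence is derivable from Kummer's confluent-hypergeometric contiguous relation $z M'(a, b; z) = (b - a)\, M(a - 1, b; z) + (a - b + z)\, M(a, b; z)$ applied to $M_{\kappa,\mu}(z) = e^{-z/2}\, z^{1/2+\mu}\, M(1/2 + \mu - \kappa,\, 1 + 2\mu;\, z)$, using Kummer's transformation $M(a,b;z) = e^z\, M(b-a, b;\, -z)$ to convert the shift $a \mapsto a - 1$ into the desired shift $\kappa \mapsto \kappa - 1$.

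The only real bookkeeping hazard is matching the constant $4\pi m (s + k/2)$ and checking that the terms proportional to $M_{\kappa,\mu}(y)$ on the right-hand side cancel after the Whittaker recurrence is invoked. As a sanity check I would verify via \eqref{deltalr} that both sides of the seed identity are eigenfunctions of $\Delta_{k+2}$ with the common eigenvalue $(s - k/2 - 1)(-s - k/2)$ and have the same leading $v \to 0$ asymptotics; this rules out any admixture of the complementary Whittaker $W$-solution, and reduces the determination of the proportionality constant to comparing a single coefficient in the power-series expansion at $v = 0$.
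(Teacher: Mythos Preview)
Your proposal is correct and follows essentially the same approach as the paper: reduce to the seed via commutation of $R_k$ with the slash operator, then verify the seed identity using the standard contiguous relations for Whittaker/Kummer functions together with Kummer's transformation. The paper simply cites the two relevant identities in Abramowitz--Stegun, namely (13.4.10) and (13.1.32), where (13.1.32) is precisely the Kummer transformation you invoke; you have merely unpacked the computation in more detail.
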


\begin{proof}
Since $R_k$ commutes with the slash operator, it suffices to show that
\[
\frac{1}{4\pi m} R_k\calM_{s,k}(4\pi m v) e(-m u) = (s+k/2) \calM_{s,k+2}(4\pi m v) e(-m u).
\]
This identity follows from (13.4.10) and (13.1.32) in \cite{AS}.
\end{proof}

\begin{corollary}
\label{cor:raisef}
For $s=s_0+j =1-k/2 +j$ we have that
\[
\frac{1}{(4\pi m)^j} R_{k-2j}^j F_{m,\mu}(\tau,s_0+j,k-2j) = j! \cdot F_{m,\mu}(\tau,s_0+j,k).
\]
\end{corollary}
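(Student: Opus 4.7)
The plan is to obtain this as an immediate consequence of Proposition \ref{prop:raisef} by iterating that identity $j$ times, keeping the spectral parameter $s = s_0 + j$ fixed and letting the weight vary from $k - 2j$ up to $k$ in steps of $2$.

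More precisely, I would argue by induction on $j$, applying Proposition \ref{prop:raisef} at each intermediate weight $k' = k - 2j + 2i$ for $i = 0, 1, \ldots, j-1$. A single application with this $k'$ in place of $k$ yields
\[
\frac{1}{4\pi m} R_{k'} F_{m,\mu}(\tau, s, k') = \left( s + \tfrac{k'}{2} \right) F_{m,\mu}(\tau, s, k' + 2).
\]
Composing these $j$ identities gives
\[
\frac{1}{(4\pi m)^j}\, R_{k-2j}^j F_{m,\mu}(\tau, s, k - 2j) = \prod_{i=0}^{j-1}\left( s + \tfrac{k - 2j + 2i}{2} \right) F_{m,\mu}(\tau, s, k).
\]

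The key observation, and really the only content of the corollary beyond Proposition \ref{prop:raisef}, is the evaluation of the product of eigenvalue factors at the special value $s = s_0 + j = 1 - k/2 + j$. At this point one computes
\[
s + \frac{k - 2j + 2i}{2} = \left(1 - \tfrac{k}{2} + j\right) + \tfrac{k}{2} - j + i = 1 + i,
\]
so that $\prod_{i=0}^{j-1}(s + (k-2j+2i)/2) = \prod_{i=0}^{j-1}(1+i) = j!$, which yields the claimed identity.

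I do not anticipate any real obstacle here: once Proposition \ref{prop:raisef} is available, the argument is a mechanical telescoping together with the arithmetic identity $s_0 + j + k/2 - j + i = 1 + i$. The only point worth handling carefully is to make sure the weight is tracked correctly at each step, since $R_{k'}$ depends on the weight it acts on, and to note that $s$ stays fixed throughout the iteration (it is the weight, not the spectral parameter, that changes).
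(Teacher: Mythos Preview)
Your proposal is correct and is exactly the argument the paper has in mind: the corollary is stated without proof immediately after Proposition~\ref{prop:raisef}, and the intended derivation is precisely the iteration you describe, together with the evaluation $s_0+j+(k-2j+2i)/2 = 1+i$ giving the product $j!$.
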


The following lemma on Rankin-Cohen brackets of harmonic Maass forms will be crucial for us. To lighten the notation we formulate it here for scalar valued forms with respect to a congruence subgroup $\Gamma''\subset \Gamma'$. An analogue also holds for vector valued forms.

\begin{proposition}
\label{prop:RC}
Let $f$ be a harmonic Maass form of weight $k$ and let $g$ be a harmonic Maass form of weight $l$ for $\Gamma''$. For any non-negative integer $j$ we have
\[
(-4\pi)^j L [f,g]_j = \binom{l+j-1}{j} (R_k^j f)(Lg) + (-1)^j \binom{k+j-1}{j} (Lf)(R_l^j g).
\]
\end{proposition}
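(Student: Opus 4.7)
My plan is to expand $[f,g]_j$ using the iterated-raising representation \eqref{eq:defRC2}, apply the lowering operator $L$ as a derivation, and then push each $L$ past the raising operators using Lemma~\ref{lem:comrel}. Rewriting that lemma as
\[
L R_k^m f = R_{k-2}^m(Lf) - m(k+m-1) R_k^{m-1} f
\]
(and analogously for $g$ with $k$ replaced by $l$), the quantity $(-4\pi)^j L[f,g]_j$ splits into four double sums indexed by $s\in\{0,\dots,j\}$, whose terms are of the four shapes
\[
R_{k-2}^{j-s}(Lf)\cdot R_l^s g,\quad R_k^{j-s-1}f\cdot R_l^s g,\quad R_k^{j-s}f\cdot R_{l-2}^s(Lg),\quad R_k^{j-s}f\cdot R_l^{s-1} g,
\]
with explicit binomial coefficients and signs inherited from \eqref{eq:defRC2}.

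The decisive input from harmonicity is that $R_{k-2}(Lf) = 0$: by \eqref{deltalr} we have $R_{k-2}L_k = -\Delta_k$, and $f$ being a harmonic Maass form means $\Delta_k f = 0$. Consequently $R_{k-2}^{j-s}(Lf)$ vanishes unless $j-s=0$, and symmetrically $R_{l-2}^s(Lg)$ vanishes unless $s=0$. In the first double sum only the $s=j$ summand survives, and in the third only the $s=0$ summand; a direct inspection of their binomial coefficients recovers exactly the two summands $(-1)^j\binom{k+j-1}{j}(Lf)(R_l^j g)$ and $\binom{l+j-1}{j}(R_k^j f)(Lg)$ on the right-hand side of the claim.

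It then remains to show that the remaining two ``raising-only'' double sums cancel one another. After the reindexing $s \mapsto s+1$ in the last sum, both are indexed by $0\le s\le j-1$ and involve only products of the type $R_k^{j-s-1}f\cdot R_l^s g$, so the cancellation reduces to the binomial identity
\[
\binom{k+j-1}{s}\binom{l+j-1}{j-s}(j-s)(k+j-s-1) = \binom{k+j-1}{s+1}\binom{l+j-1}{j-s-1}(s+1)(l+s),
\]
which is immediate from the two elementary relations $\binom{n}{s+1}(s+1)=\binom{n}{s}(n-s)$ with $n=k+j-1$ and $\binom{l+j-1}{j-s-1}(l+s)=\binom{l+j-1}{j-s}(j-s)$. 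The only real obstacle in this proof is careful bookkeeping; the sole conceptual point is to notice that harmonicity is used not through $\Delta_k f=0$ directly but through its equivalent reformulation $R_{k-2}(Lf)=0$, which kills all but the extremal terms in the two sums involving $Lf$ or $Lg$.
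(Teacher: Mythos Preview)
Your proof is correct and follows essentially the same approach as the paper's: expand via \eqref{eq:defRC2}, apply $L$ as a derivation, use Lemma~\ref{lem:comrel} together with harmonicity in the form $R_{k-2}(Lf)=0$ to isolate the two extremal terms, and verify that the remaining ``raising-only'' sums cancel by a binomial identity. The only difference is organizational---the paper separates the $s=j$ and $s=0$ terms before invoking the lemma, whereas you apply the lemma first and then observe that only the extremal terms survive---but the content is identical.
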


\begin{proof}
According to \eqref{eq:defRC2} we have
\begin{align*}
(-4\pi)^j L[f,g]_j &= \sum_{s=0}^j (-1)^s \binom{k+j-1}{s} \binom{l+j-1}{j-s} (LR_k^{j-s} f) (R_l^{s}g)\\
&\phantom{=}{} + \sum_{s=0}^j (-1)^s \binom{k+j-1}{s} \binom{l+j-1}{j-s} (R_k^{j-s} f) (LR_l^{s}g).
\end{align*}
In the first sum on the right hand side we consider the $s=j$ term separately and in the second sum the $s=0$ term. We obtain
\begin{align}
(-4\pi)^j L[f,g]_j &=\binom{l+j-1}{j} (R_k^j f)(Lg) + (-1)^j \binom{k+j-1}{j} (Lf)(R_l^j g)
\label{eq:RC1}\\
\nonumber
&\phantom{=}{} +\sum_{s=0}^{j-1} (-1)^s \binom{k+j-1}{s} \binom{l+j-1}{j-s} (LR_k^{j-s} f) (R_l^{s}g)\\
\nonumber
&\phantom{=}{} + \sum_{s=1}^j (-1)^s \binom{k+j-1}{s} \binom{l+j-1}{j-s} (R_k^{j-s} f) (LR_l^{s}g).
\end{align}
Since $f$ is a harmonic Maass form, Lemma \ref{lem:comrel} and \eqref{deltalr} imply for $s\leq j-1$ that
\begin{align*}
LR_k^{j-s} f= -(j-s)(k+j-s-1) R_k^{j-s-1} f.
\end{align*}
Similarly, we have for $s\geq 1$ that
\begin{align*}
LR_l^{s} g= -s(l+s-1) R_l^{s-1} g.
\end{align*}
Now a straightforward computation shows
that the two sums over $s$ on the right hand side of \eqref{eq:RC1} cancel. This implies the assertion.
\end{proof}

Let $M \subset L$ be a sublattice of finite index. A vector
valued harmonic Maass form $f \in H_{k,\rho_L}$ can be naturally viewed as
a harmonic form in $H_{k, \rho_M}$. Indeed, we have the
inclusions $M\subset L\subset L'\subset M'$, and therefore an inclusion
\[
 L'/M\subset M'/M
\]
and a natural map $L'/M\to L'/L$, $\mu\mapsto \bar \mu$.

\begin{lemma}
\label{sublattice}
There is a natural map
$$
\operatorname{res}_{L/M}:H_{k,\rho_L} \rightarrow  H_{k,\rho_M},\quad
f\mapsto f_M,
$$
given by
\[
(f_M)_\mu=\begin{cases} f_{\bar\mu},&\text{if $\mu\in L'/M$,}\\
0,&\text{if $\mu\notin L'/M$.}
\end{cases}
\]
\end{lemma}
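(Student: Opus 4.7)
The plan is to verify the three defining conditions of a harmonic Maass form (modular transformation, harmonicity, and moderate exponential growth) for $f_M$ given those for $f$. The harmonicity $\Delta_k f_M = 0$ and the growth condition are pointwise statements on the components, so they carry over immediately from $f$ to $f_M$ (zero components are obviously harmonic and bounded). The real content lies in checking that $f_M$ transforms with the Weil representation $\rho_M$ under the generators $T = (\kzxz{1}{1}{0}{1},1)$ and $S = (\kzxz{0}{-1}{1}{0},\sqrt{\tau})$ of $\Gamma' = \Mp_2(\Z)$.

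For $T$, formula \eqref{eq:weilt} gives $\rho_M(T)\phi_\mu = e(Q(\mu))\phi_\mu$. Since $Q(\mu) \equiv Q(\bar\mu) \pmod{\Z}$ for every $\mu \in L'/M$ (the quadratic form on $M'/M$ restricts to the one on $L'/L$ via the projection $\mu \mapsto \bar\mu$), and since the vanishing components of $f_M$ are preserved, compatibility is immediate. For $S$ I would use the lattice index identities
\[
  [L':M] = [L:M] \cdot [L':L], \qquad [M':M] = [L:M]^2 \cdot [L':L],
\]
the second one following from the duality isomorphism $M'/L' \cong \widehat{L/M}$. Let $\mu \in L'/M$. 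Using \eqref{eq:weils} for $\rho_M$ and the fact that $(f_M)_\nu = 0$ unless $\nu \in L'/M$, I would write
\[
  (\rho_M(S) f_M)_\mu = \frac{e((2-n)/8)}{\sqrt{|M'/M|}} \sum_{\nu \in L'/M} e(-(\mu,\nu))\, f_{\bar\nu}.
\]
Grouping the sum by fibers of $L'/M \to L'/L$, each fiber being a coset of $L/M$, and using that $(\mu,\ell) \in \Z$ for $\mu \in L'$ and $\ell \in L$, the inner sum over each coset collapses to $|L/M| \cdot e(-(\bar\mu,\bar\nu))$. Combined with $\sqrt{|M'/M|} = |L/M|\sqrt{|L'/L|}$, the factors of $|L/M|$ cancel and one recovers exactly $(\rho_L(S)f)_{\bar\mu} = f_{\bar\mu} = (f_M)_\mu$.

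For $\mu \in M'/M$ with $\mu \notin L'/M$, the character $L/M \to \Q/\Z$, $\ell \mapsto (\mu, \ell) \bmod \Z$, is non-trivial (this is exactly the condition $\mu \notin L'$), so character orthogonality makes each coset contribution vanish. Hence $(\rho_M(S) f_M)_\mu = 0 = (f_M)_\mu$, completing the check on $S$ and hence on all of $\Gamma'$.

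The only part requiring genuine care is the bookkeeping in the $S$-computation: matching the normalizing constant $\sqrt{|M'/M|}$ with $\sqrt{|L'/L|}$ after extracting the factor $|L/M|$ from the coset sum, and verifying that the duality-based index relation $[M':L'] = [L:M]$ holds for the even lattice pairing. Once these are in place, the verification is a short character-theoretic calculation and the asserted map is well-defined and clearly $\C$-linear.
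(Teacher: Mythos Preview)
The paper does not give its own proof of this lemma; it simply refers to \cite[Lemma~3.1]{BY}. Your direct verification via the generators $T$ and $S$ is the standard argument and is essentially correct. The core computation---grouping the $S$-sum over $L'/M$ into $L/M$-cosets, using that the character $\ell \mapsto e(-(\mu,\ell))$ on $L/M$ is trivial precisely when $\mu \in L'/M$, and matching the normalizations via $|M'/M| = [L:M]^2\,|L'/L|$---is exactly what is needed, and amounts to showing that the natural inclusion $S_L \hookrightarrow S_M$ intertwines $\rho_L$ and $\rho_M$.

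There is, however, a slip in your final equality chain for $\mu \in L'/M$: you write that the computation yields $(\rho_L(S)f)_{\bar\mu} = f_{\bar\mu}$, but $(\rho_L(S)f)_{\bar\mu}$ is \emph{not} equal to $f_{\bar\mu}$ in general. What your calculation actually establishes is
\[
  (\rho_M(S) f_M)_\mu \;=\; (\rho_L(S) f)_{\bar\mu},
\]
i.e.\ $\rho_M(S)(f_M) = (\rho_L(S)f)_M$. The transformation law $f_M(S\tau) = \sigma(\tau)^{2k}\rho_M(S)f_M(\tau)$ then follows by combining this intertwining relation with the known transformation $f(S\tau) = \sigma(\tau)^{2k}\rho_L(S)f(\tau)$ for $f$. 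With this correction the argument is complete.
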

We refer to \cite[Lemma 3.1]{BY}
for a proof of the lemma and for more details about this construction.


\subsection{Binary theta functions}
\label{sect:bintheta}

In this subsection we assume that $V$ is a definite quadratic space of signature $(0,2)$ and let $L\subset V$ be an even lattice. Then the corresponding  Grassmannian consists of the two points $z_V^\pm$ given by $V_\R$ with the two possible choices of an orientation. The Siegel theta function $\theta_L(\tau,z,h)$ defined in  \eqref{theta2.5} does not depend on $z$, and therefore we often drop this variable from the notation.
This theta function is a non-holomorphic $S_L$-valued modular form of weight $-1$ with representation $\rho_L$. Because of \eqref{eq:rn}, we have
\[
R_{-1}\theta_L(\tau,h) =0.
\]

Following \cite[Section~2]{BY}
we define an $S_L$-valued Eisenstein series of weight $\ell$ by
\begin{align*}
E_L(\tau,s;\ell)=\sum_{\gamma \in \Gamma_\infty'\bs \Gamma'} [v^{(s+1-\ell)/2}\phi_0]\mid_{\ell,\rho_L}\gamma.
\end{align*}
For the Maass operators we have the well known identities
\begin{align}
\label{eq:er}
R_\ell E_L(\tau,s;\ell)&= \frac{1}{2}(s+1+\ell) E_L(\tau,s;\ell+2),\\
\label{eq:el}
L_\ell E_L(\tau,s;\ell)&= \frac{1}{2}(s+1-\ell) E_L(\tau,s;\ell-2).
\end{align}

The following special case of the Siegel-Weil formula relates the average of Siegel theta functions over the genus of $L$  and such an  Eisenstein series, see e.g.~\cite[Theorem 4.1]{Ku:Integrals}. We use the Tamagawa measure on $\SO(V)(\A)$ (such that $\vol(\SO(V)(\Q)\bs \SO(V)(\A))=2$) and normalize the Haar measure on the compact group $\SO(V)(\R)$ to have total mass $1$.
This determines a Haar measure $dh$ on $\SO(V)(\A_f)$ with
$\vol(\SO(V)(\Q)\bs \SO(V)(\A_f))=2$.

\begin{proposition}
\label{prop:sw}
With the above normalization of the Haar measure, we have
\[
\int_{\SO(V)(\Q)\bs \SO(V)(\A_f)} \theta_L(\tau,h) \,dh = E_L(\tau,0;-1).
\]
\end{proposition}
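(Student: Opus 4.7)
The plan is to deduce this identity as a special case of the adelic orthogonal Siegel--Weil formula, in the formulation of \cite[Theorem 4.1]{Ku:Integrals}; the real content of the proof is then just a careful normalization check. First I would write the vector-valued theta kernel component-wise as $\theta_L(\tau,h)=\sum_{\mu\in L'/L}\theta(\tau,z_V^\pm,h,\phi_\mu)\,\phi_\mu$, where each $\phi_\mu\in S(V(\A_f))$ is the characteristic function of $\mu+\hat L$. Combined with the standard Gaussian $\phi_\infty$ at the real place attached to the point $z_V^+$, this gives a Schwartz function on $V(\A)$ whose classical avatar is precisely the integrand of \eqref{theta2}, with the factor $v^1$ coming from $n/2-1=-1$ in signature $(0,2)$. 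Since $V$ is anisotropic, $\SO(V)(\R)$ is compact and stabilizes $\phi_\infty$; thus integration over $\SO(V)(\Q)\backslash\SO(V)(\A)$ with Tamagawa measure reduces to integration over $\SO(V)(\Q)\backslash\SO(V)(\A_f)$ with the measure fixed in the statement.

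Next I would invoke Kudla's theorem, which asserts that this theta integral equals the value at the Siegel--Weil point of the coherent Eisenstein series on $\Mp_2(\A)$ built from $\phi_\infty\otimes\phi_\mu$ via the Weil representation. It then remains to recognize this coherent Eisenstein series, at the Siegel--Weil point, as $E_L(\tau,0;-1)$ from Section~\ref{sect:bintheta}. The shift $(s+1-\ell)/2$ in the definition of $E_L$ with $\ell=-1$ evaluates at $s=0$ to $v^{(0+2)/2}=v$, matching exactly the archimedean section produced by $\phi_\infty$ under the Weil representation at weight $-1$. At the finite places the Bruhat sections attached to $\phi_\mu$ give, by a standard unfolding, the sum over $\Gamma_\infty'\backslash\Gamma'$ that appears in the definition of $E_L$. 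The identification of the Weil representation $\rho_L$ used to define $E_L$ with the complex conjugate of the adelic representation $\omega_f$ on $S(V(\A_f))$ was already recorded in Section~\ref{sec:2}, so all representation-theoretic data line up.

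The main obstacle is purely bookkeeping of constants: one has to reconcile the factor of $2$ coming from $\vol(\SO(V)(\Q)\backslash\SO(V)(\A_f))=2$, which reflects the two connected components $\calD^\pm$ of the zero-dimensional Grassmannian, with the absence of any such factor on the right-hand side, and one must choose the orientation/sign conventions so that $\theta_L(\tau,z_V^+,h)$ and $\theta_L(\tau,z_V^-,h)$ contribute on equal footing after integration. Convergence and meromorphic continuation cause no trouble here: since $V$ is anisotropic, $\SO(V)(\Q)\backslash\SO(V)(\A_f)$ is compact and the left-hand side is literally a finite sum of classical theta series over the genus of $L$, while $E_L(\tau,s;-1)$ is regular at $s=0$ in the definite case (indeed the defining series is absolutely convergent at $s=0$ once one accounts for the exponential decay in $v$ of the seed under $|_{-1,\rho_L}\gamma$).
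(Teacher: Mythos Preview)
Your approach is exactly the paper's: the proposition is stated there without proof and simply referred to \cite[Theorem 4.1]{Ku:Integrals}, so your plan of invoking Kudla's Siegel--Weil formula and matching sections and normalizations is precisely what is intended.

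One small correction to your closing parenthetical: the defining series for $E_L(\tau,s;-1)$ is \emph{not} absolutely convergent at $s=0$. The seed $v^{(s+2)/2}\phi_0$ has no exponential decay under the slash action; the general term has absolute value comparable to $v^{(s+2)/2}|c\tau+d|^{-(s+1)}$, which requires $\Re(s)>1$ for absolute convergence. The value at $s=0$ is obtained by meromorphic continuation, and it is the regularity of the coherent Eisenstein series at this point (part of the Siegel--Weil package in the anisotropic case) that makes the formula meaningful, not direct convergence.
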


Using the identities
\begin{align}
\label{eq:l1}
L_1 E_L(\tau,s;1)&= \frac{s}{2} E_L(\tau,s;-1),\\
\label{eq:l2}
L_1 E_L'(\tau,0;1)&= \frac{1}{2} E_L(\tau,0;-1),
\end{align}
which follow  from \eqref{eq:el},
we see that the derivative $E_L'(\tau,0;1)$ of the incoherent Eisenstein series of weight $1$ is a preimage under the lowering operator of the average of binary theta functions on the left hand side of Proposition~\ref{prop:sw}.
Moreover, $\calE_L(\tau)=E_L'(\tau,0;1)$ is a harmonic Maass form of weight $1$ with representation $\rho_L$ for $\Gamma'$. We write
\begin{align}
\label{eq:calE}
\calE_L^+(\tau)=\sum_{m,\mu}\kappa(m,\mu)q^m\phi_\mu
\end{align}
for its holomorphic part.
The Fourier coefficients of $E_L'(\tau,0;1)$ are computed in \cite{KY}, see also \cite[Theorem 2.6]{BY}.
In particular, up to a common rational scaling factor, the $\kappa(m, \mu)$ with $(m, \mu) \ne (0, 0)$  are given by logarithms of positive rational numbers.

Later we will also need harmonic Maass forms of weight $1$ that are preimages under the lowering operator of the individual  binary theta functions $\theta_L(\tau,h)$, without taking an average over $h$. While the existence of preimages follows from the surjectivity of the $\xi$-operator \cite[Theorem 3.7]{BF}, it was proved in \cite{Eh} and \cite{DL} that there are actually preimages with nice arithmetic properties.

We let $D$ be the discriminant of $L$ and
$k_{D} = \Q(\sqrt{D}) \cong V$ be the corresponding imaginary quadratic field.
We write $\calO_D \subset \k_{D}$ for the order of discriminant $D$ in $\k_{D}$.
Note that $H(\Q) \cong \k_{D}^\times$ and $H(\A_f) \cong \A_{k_D,f}^\times$.
There is a canonical map from class field theory
\[
	[\, \cdot,\, \k_D ]: \A_{k_D,f}^\times \to \Gal(\k_D^{\ab}/\k_D),
\]
where $\k_D^{\ab}$ denotes the maximal abelian extension of $\k_D$.
The ring class field $H_D$ of $\calO_D$ is fixed by
$[\hat\calO_{D}^\times, \k_D]$ and if $\Cl(\calO_D)$
denotes the class group of $\calO_D$, we have the canonical isomorphisms
(see e.g. \cite[\S 15.E]{cox})
\begin{align}\label{eq:clOD}
  \Cl(\calO_D) \cong k_D^\times \bs \A_{k_D,f}^\times / \hat\calO_{D}^\times \cong \Gal(H_D/\k_D).
\end{align}
Moreover, $K = \hat\calO_D^\times$ stabilizes $L$ as above and acts trivially on $L'/L$.
Hence, $\theta_L(\tau, h)$ defines a function on
\begin{align}\label{eq:XK02}
  X_K \cong \{z_V^\pm\} \times \Cl(\calO_D).
\end{align}
By abusing notation, we will simply write the same symbol $h$
for the element of $H(\A_f)$ and for its class in $H(\Q)\bs H(\A_f) / K \cong \Cl(\calO_D)$.

In \cite{DL} lattices of prime discriminant
and in \cite{Eh} lattices of fundamental discriminant were considered.
The following result generalizes Theorem 4.21 of \cite{Eh}
to arbitrary binary lattices and will be proved in the appendix
(see Theorem \ref{thm:preimages-appendix}).

\begin{theorem} \label{thm:preimages}
  For every $h \in \Cl(\calO_D)$, there is a harmonic Maa\ss{} form
  $\calG_L(\tau,h) \in H_{1,\rho_L}^!$
  with holomorphic part
  \[
    \calG_L^+(\tau,h) = \sum_{\mu \in L'/L} \sum_{m \gg -\infty} c_{L}^{+}(h,m,\mu) e(m \tau)  \phi_{\mu}
  \]
  with the following properties:
  \begin{enumerate}
  \item We have $L_1(\calG_L(\tau,h)) = \theta_L(\tau,h)$.
  \item For all $\mu \in L'/L$ and all $m \in \Q$ with $m \equiv Q(\mu) \bmod{\Z}$ and $(m, \mu) \neq (0,0)$,
    there is an $\alpha_L(h,m,\mu) \in \Hilb_D^\times$ such that
    \begin{equation}
      \label{eq:cprma}
      c_{L}^{+}(h,m,\mu) = - \frac{1}{r} \log|\alpha_L(h,m,\mu)|,
    \end{equation}
    where $r \in \Z_{>0}$ only depends on $L$.
  \item The algebraic numbers $\alpha_L(h, m, \mu)$ satisfy the Shimura reciprocity law
    \[
      \alpha_L(h,m,\mu) = \alpha_L(1,m,\mu)^{[h,\, \k_D]}.
    \]
  \item Additionally, there is an $\alpha_L(h, 0, 0) \in H_D^\times$, such that
  \[
      c_{L}^{+}(h, 0, 0) = \frac{2}{r} \log|\alpha_L(h, 0, 0)| + \kappa(0,0).
  \]
  \end{enumerate}
\end{theorem}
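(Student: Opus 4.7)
The plan is to construct $\calG_L(\tau, h)$ by combining the surjectivity of the lowering operator with the algebraicity of CM values of Borcherds products, extending the approach of \cite{DL, Eh}. The novel ingredient needed here is a careful treatment of non-maximal orders.

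As a starting point, the existence of some harmonic Maass form $\tilde\calG_L(\tau, h) \in H^!_{1, \rho_L}$ satisfying $L_1 \tilde\calG_L(\tau, h) = \theta_L(\tau, h)$ follows from the surjectivity of $\xi_1$ (equivalently of $L_1$ modulo the image of weakly holomorphic forms) established in \cite[Theorem 3.7]{BF}, applied to the weight $-1$ theta series, which is in the correct image because $R_{-1}\theta_L(\tau,h)=0$. This gives property (1) but no control over the holomorphic Fourier coefficients.

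The arithmetic content (2)--(4) arises from Borcherds' regularized theta lift. For any $F \in M^!_{1, \bar\rho_L}$ with integral principal part, consider
\[
  \Phi^L(h, F) = \int_{\calF}^{\reg} \langle F(\tau),\, \theta_L(\tau, h)\rangle\, d\mu(\tau).
\]
Unfolding against the principal part of $F$ and integrating by parts using $L_1 \tilde\calG_L = \theta_L$ expresses $\Phi^L(h,F)$ as a bilinear pairing between the principal coefficients of $F$ and the holomorphic coefficients $c_L^+(h,m,\mu)$ of $\tilde\calG_L$. On the other hand, the same lift evaluates to a linear combination of CM values of meromorphic Borcherds products on an auxiliary orthogonal Shimura variety of signature $(n,2)$ into which $\{z_V^\pm\}\times\Cl(\calO_D) \cong X_K$ embeds as a $0$-cycle; by Borcherds' theorem on CM values these are logarithms of absolute values of algebraic numbers in $H_D$. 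Varying $F$ over a $\Q$-basis of $M^!_{1,\bar\rho_L}$ and using the Serre-duality pairing to invert the resulting linear system, one solves for each $c_L^+(h,m,\mu)$ as $-\tfrac{1}{r}\log|\alpha_L(h,m,\mu)|$ with $\alpha_L(h,m,\mu) \in H_D^\times$ and a single common denominator $r$. The Shimura reciprocity (3) then follows because the class field theoretic action of $[h,k_D]\in\Gal(H_D/k_D)$ on the CM cycle matches translation on the $\Cl(\calO_D)$ index in \eqref{eq:XK02}, so the equivariance of Borcherds products under CM Galois actions transfers to an equivariance of the coefficients $\alpha_L(h,m,\mu)$.

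The main obstacle is the non-fundamental case, where $\calO_D$ is not maximal and the Schofer-type CM value formulas used in \cite{Eh} do not apply directly. I would address this by relating $L$ to an auxiliary overlattice $M \supset L$ of fundamental discriminant via the intertwining map $\operatorname{res}_{L/M}$ of Lemma \ref{sublattice}, applying the maximal-order results to $M$, and then descending back to $L$ while verifying that the algebraic numbers produced still lie in the correct ring class field $H_D$ rather than in a larger ring class field. Property (4) requires a separate treatment of the constant term: subtracting the contribution $\kappa(0,0)$ coming from the incoherent Eisenstein series $\calE_L$ of \eqref{eq:calE} (whose coefficients are determined via Proposition \ref{prop:sw} and \cite{KY, BY}), the residual part $c_L^+(h,0,0)-\kappa(0,0)$ is isolated and shown to equal $\tfrac{2}{r}\log|\alpha_L(h,0,0)|$ for some $\alpha_L(h,0,0)\in H_D^\times$ by an analogous Borcherds-lift argument applied to a form $F$ whose principal part is supported at $(0,0)$, using that the factor of $2$ reflects the two-point structure $\{z_V^\pm\}$ of the zero-dimensional Grassmannian.
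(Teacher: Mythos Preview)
Your overall shape is right---construct a preimage under $L_1$, then pin down its holomorphic Fourier coefficients as CM values of Borcherds products---but there is a genuine gap at the central step, and your proposed handling of non-fundamental discriminants is not the paper's route and is problematic as stated.

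\textbf{The missing see-saw.} You write that the regularized theta lift $\Phi^L(h,F)$ for $F\in M^!_{1,\bar\rho_L}$ ``evaluates to a linear combination of CM values of meromorphic Borcherds products on an auxiliary orthogonal Shimura variety of signature $(n,2)$.'' This is the crux, and it does not follow from anything you have said. Borcherds products come from regularized lifts of weight $1-n/2$ forms against a signature $(n,2)$ Siegel theta; for the binary lattice itself ($n=0$) there is no modular curve on which to take a Borcherds product. The paper bridges this by an explicit see-saw identity: one embeds $N$ into $P\oplus N$ with $P=\Z$ (signature $(1,2)$) and uses the generators $\tilde\phi_{-2,1},\tilde\phi_{0,1}$ of the ring of weak Jacobi forms to manufacture, for each $f_{m,\mu}\in M^!_{1,\bar\rho_N}$, a weight $1/2$ form $\calF_{m,\mu}\in M^!_{1/2,\bar\rho_{P\oplus N}}(\Q)$ with vanishing $(0,0)$ constant term such that
\[
12\,\Phi_N(h,f_{m,\mu}) \;=\; \Phi_{P\oplus N}(z_U^\pm,h,\calF_{m,\mu}).
\]
This uses $\langle\psi_{0,1},\theta_P\rangle=\tilde\phi_{0,1}(\tau,0)=12$ and $\langle\psi_{-2,1},\theta_P\rangle=0$. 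Only after this identity can one invoke Borcherds' theorem and the rationality result of Howard--Madapusi Pera to conclude that $\Phi_N(h,f_{m,\mu})=-\tfrac{1}{r}\log|\alpha|$ with $\alpha\in H_D^\times$. The paper also arranges in advance a specific basis $\{f_{m,\mu}\}$ of $M^!_{1,\bar\rho_N}(\Q)$ and normalizes $\calG_N$ so that $\Phi_N(h,f_{m,\mu})$ equals a \emph{single} coefficient $c_N^+(h,m,\mu)$ rather than a linear combination; this obviates the ``invert the linear system'' step you sketch and is what makes the common denominator $r$ uniform.

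\textbf{Non-fundamental discriminants.} Your plan to pass to an overlattice $M\supset L$ of fundamental discriminant and descend does not work as written: the theta function of $M$ lives on $\Cl(\calO_{D_0})$, not on $\Cl(\calO_D)$, so the CM values you obtain lie a priori in $H_{D_0}$, and recovering elements of the larger ring class field $H_D$ from that data requires exactly the finer information you were trying to avoid. The paper does not reduce to the fundamental case at all: the see-saw construction above works uniformly for any even binary $N$, and the CM point $(z_U^\pm,h)$ on the signature $(1,2)$ Shimura variety is defined over $H_D$ because $K_T\cong\hat\calO_D^\times$; Shimura reciprocity then gives (3) directly.

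For (4), the paper does not use a Borcherds lift with principal part at $(0,0)$; rather it observes that $\xi_1(\calG_N-\calE_N)$ is cuspidal and applies the pairing argument of \cite[Proposition~3.5]{BF} against the holomorphic Eisenstein series to isolate the $(0,0)$ coefficient.
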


\section{Automorphic Green functions}
\label{sect:4}
In this section we return to the general case that $L$ is an even lattice of signature $(n, 2)$.
Throughout, we assume that the compact open subgroup $K\subset H(\A_f)$ stabilizes $\hat L$ and acts trivially on $L'/ L$.
Let  $f:\H\to S_L$ be a weak Maass form of weight $k=1-n/2$ with representation $\bar\rho_L$ for $\Gamma'$. For $z\in \calD$ and $h\in H(\A_f)$ we consider the regularized theta integral
\begin{align*}
\Phi(z,h,f) = \int_{\calF}^\reg \langle f(\tau), \theta_L(\tau,z,h)\rangle\, d\mu(\tau).
\end{align*}
Here $\calF$ denotes the standard fundamental domain for the action of $\SL_2(\Z)$ on $\H$ and $d\mu(\tau)= \frac{du\,dv}{v^2}$. The regularization is done as in \cite{Bo1}. It turns out that $\Phi(z,h,f)$ has a finite value at every point $(z,h)$. It defines a smooth function on the complement of a certain cycle in $X_K$ (see Proposition \ref{prop:sing} below), and it is locally integrable on $X_K$.

Let $\mu\in L'/L$ and $m\in \Z+Q(\mu)$ with $m>0$.
For the Hejhal-Poincare series of index $(m,\mu)$ defined in \eqref{eq:hps}, the lift
\begin{align}
\label{eq:agf}
\Phi_{m,\mu}(z,h,s) = \Phi(z,h,F_{m,\mu}(\tau,s,k))
\end{align}
is studied in detail in \cite{Br}. It turns out to be an automorphic Green function in the sense of \cite{OT} and is therefore of particular significance.

\subsection{Properties of automorphic Green functions}

Let $F(a,b,c;z)$ denote the Gauss hypergeometric function, see e.g.~\cite[Chapter 15]{AS}. Then, according to \cite[Theorem 2.14]{Br}, we have for $\Re(s)>s_0:=\frac{n}{4}+\frac{1}{2}$ and $(z,h)\in X_K\setminus Z(m,\mu)$ that
\begin{align}
\label{eq:phim}
\Phi_{m,\mu}(z,h,s) &=2\frac{\Gamma(s+\frac{n}{4}-\frac{1}{2})}{\Gamma(2s)}
\\
\nonumber
&\phantom{=}{}\times
\sum_{\substack{\lambda\in h(\mu+L)\\ Q(\lambda)=m}}
\left(\frac{m}{Q(\lambda_{z^\perp})}\right)^{s+\frac{n}{4}-\frac{1}{2}}
F\left(s+\frac{n}{4}-\frac{1}{2},s-\frac{n}{4}+\frac{1}{2},2s;\frac{m}{Q(\lambda_{z^\perp})}\right).
\end{align}
The sum converges normally on the above region and defines a smooth function there. In $s$ it has a meromorphic continuation to the whole complex plane. At $s=s_0$ it has a simple pole with residue proportional to the  degree of $Z(m,\mu)$.

Let $\Delta$ be the $\SO(V)(\R)$-invariant Laplace operator on $\calD$ induced by the Casimir element of the Lie algebra of $\SO(V)(\R)$, normalized as in \cite{Br}.
Note that $\Delta$ is a negative operator in this normalization, and that it is equal to $-8$ times the Laplacian in \cite{OT}.
According to \cite[Theorem~4.6]{Br}, for $(z,h)\in X_K\setminus Z(m,\mu)$ and $\Re(s)>s_0$, the Green function is an eigenfunction of $\Delta$, more precisely
\[
\Delta \Phi_{m,\mu}(z,h,s)= \frac{1}{2}(s-s_0)(s+s_0-1)\Phi_{m,\mu}(z,h,s)
.
\]
The behavior of $ \Phi_{m,\mu}(z,h,s)$ near the divisor $Z(m,\mu)$ is described by the following proposition.

\begin{proposition}
\label{prop:sing}
For any $z_0\in \calD$ there exists a neighborhood $U\subset\calD$ such that the function
\[
\Phi_{m,\mu}(z,h,s)-\frac{2}{\Gamma(2s)}\sum_{\substack{\lambda\in h(\mu+L)\cap z_0^\perp\\ Q(\lambda)=m}}
\int_{v>1}^\reg \calM_{s,k}(4\pi m v) e^{-2\pi m v+4\pi Q(\lambda_z) v}\,\frac{dv}{v}
\]
is smooth on $U$.
Here the regularized integral is defined as the constant term at $s'=0$ in the Laurent expansion of the meromorphic continuation in $s'$ of
\[
\int_{v>1}\calM_{s,k}(4\pi m v) e^{-2\pi m v+4\pi Q(\lambda_z) v} v^{-s'-1}\,dv.
\]
\end{proposition}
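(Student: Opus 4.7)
The plan is to evaluate $\Phi_{m,\mu}(z,h,s)$ by Rankin--Selberg unfolding of the regularized theta integral against the Hejhal--Poincar\'e series $F_{m,\mu}(\tau,s,k)$, and then to isolate the lattice vectors responsible for the singular behaviour near $z_0$. By the definition of $F_{m,\mu}$ as an average over $\Gamma'_\infty\backslash\Gamma'$, standard unfolding turns the regularized integral over $\calF$ into a regularized integral over the strip $\Gamma'_\infty\backslash\H$. Inserting the Fourier expansion of $\theta_L(\tau,z,h)$ in $u$ and computing the $u$-integral, which extracts the $m$-th Fourier coefficient of the $\phi_\mu$ and $\phi_{-\mu}$ components, gives
\[
\Phi_{m,\mu}(z,h,s) = \frac{2}{\Gamma(2s)}\sum_{\substack{\lambda\in h(\mu+L)\\ Q(\lambda)=m}} I(z,\lambda,s),
\]
where $I(z,\lambda,s)$ is the $v$-integral $\int_0^\infty \calM_{s,k}(4\pi m v)\, e^{-2\pi m v+4\pi Q(\lambda_z)v}\,v^{-1}\,dv$, regularized as in the statement at $v\to\infty$. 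Since $\calM_{s,k}(4\pi m v)$ is of order $v^{s+k/2}$ near $v=0$, the portion on $(0,1]$ is always absolutely convergent, so regularization is only needed for the tail at infinity, and only when $Q(\lambda_z)=0$.

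Next, I would split each integral at $v=1$ and the sum over $\lambda$ according to whether $\lambda\in z_0^\perp$ or not. The $v\le 1$ pieces, summed over all $\lambda$, define a smooth function of $z$ on any precompact neighborhood of $z_0$: on the compact range $v\in[0,1]$ the integrand is bounded uniformly by an integrable expression (using $Q(\lambda_z)\le 0$), and the resulting lattice sum converges normally, together with all its $z$-derivatives, by the majorant estimate on $\calD$. For the tail $v>1$ restricted to $\lambda\notin z_0^\perp$, the standard asymptotic $\calM_{s,k}(4\pi m v)\ll e^{2\pi m v} v^{s-1-k/2}$ of the $M$-Whittaker function at infinity combines with $e^{-2\pi m v+4\pi Q(\lambda_z)v}$ to give an integrand dominated by $e^{4\pi Q(\lambda_z)v}$ times a polynomial in $v$. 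Shrinking the neighborhood $U$ of $z_0$ so that the majorants $-Q(\cdot_z)+Q(\cdot_{z^\perp})$ for $z\in U$ are uniformly equivalent to the fixed positive definite form at $z_0$, one obtains a uniform lower bound $-Q(\lambda_z)\ge c(Q_{z_0}(\lambda)-m)$ for $\lambda\notin z_0^\perp$ with some $c>0$. This yields normal convergence of the tail sum on $U$ and, after differentiating under the sum, smoothness in $z$.

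The only surviving contribution is therefore the sum over the \emph{finitely many} $\lambda\in h(\mu+L)\cap z_0^\perp$ with $Q(\lambda)=m$ (finiteness because $z_0^\perp$ is positive definite), restricted to $v>1$ and regularized at infinity, which reproduces exactly the piece subtracted off in the proposition. The main obstacle will be the uniform tail estimate for $\lambda\notin z_0^\perp$: one has to shrink $U$ carefully to ensure simultaneously the uniform equivalence of majorants and the non-vanishing of $Q(\lambda_z)$ for all relevant $\lambda$ as $z$ varies in $U$, and then justify term-by-term differentiation. This is a quantitative refinement of the convergence argument used to establish the explicit formula \eqref{eq:phim} in \cite{Br}, Theorem 2.14, and should go through by the same majorant bookkeeping.
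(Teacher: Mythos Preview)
Your proposal is correct and follows precisely the approach the paper indicates by deferring to \cite[Theorem~6.2]{Bo1}: unfold the Poincar\'e series against the theta kernel to obtain the lattice sum of $v$-integrals, then split at $v=1$ and according to whether $\lambda\in z_0^\perp$, isolating the finitely many singular terms. One small slip: $\calM_{s,k}(v)$ has order $v^{s-k/2}$ (not $v^{s+k/2}$) as $v\to 0$, but this does not affect the convergence of the $(0,1]$-piece for $\Re(s)>s_0$.
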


We note that the latter integral exists if $\Re(s')$ is large and has a meromorphic continuation in $s'$ because of the asymptotic expansion of the $M$-Whittaker function, see (13.5.1) in \cite{AS}. Hence the regularized integral exists and has a finite value for all $s$. Moreover, we note that the sum over $\lambda$ in Proposition~\ref{prop:sing} is finite, since $z_0^\perp$ is a positive definite subspace of $V(\R)$. In particular we see that $\Phi_{m,\mu}(z,h,s)$ has a well defined finite value even on the divisor $Z(m,\mu)$.
 However, it is not continuous along $Z(m,\mu)$. The proof of the proposition can be given as in \cite[Theorem 6.2]{Bo1}.

We may use the asymptotic behavior of the $M$-Whittaker function to analyze the regularized integral further. We have for $v>0$ that
\begin{align}
\label{eq:masy}
\calM_{s,k}(v)&=e^{-v/2}v^{s-k/2}M(s+k/2,2s,v)\\
\nonumber
&=\frac{\Gamma(2s)}{\Gamma(s+k/2)}e^{v/2}(1+O(v^{-1}))
\end{align}
as $v\to \infty$. We extend the incomplete $\Gamma$-function $\Gamma(0,t)= \int_1^\infty e^{-tv}\,\frac{dv}{v}$ to a function on $\R_{\geq 0}$ by defining it as the regularized integral, that is, as the constant term of the Laurent expansion at $s'=0$ of the meromorphic continuation of $\int_1^\infty e^{-tv}v^{-s'}\,\frac{dv}{v}$. Hence we have
\[
\tilde\Gamma(0,t)= \begin{cases}
\Gamma(0,t),& \text{if $t>0$,}\\
0,&\text{if $t=0$.}
\end{cases}
\]
Using \eqref{eq:masy}, it is easily seen that the function
\[
\frac{2}{\Gamma(2s)}
\int_{v>1}^\reg \calM_{s,k}(4\pi m v) e^{-2\pi m v+4\pi Q(\lambda_z) v}\,\frac{dv}{v}-\frac{2}{\Gamma(s+k/2)}\tilde\Gamma(0,-4\pi Q(\lambda_z))
\]
is continuous in $z$. Taking into account higher terms of the asymptotic expansion, one can also describe the behavior of the derivatives in $z$.

\begin{corollary}
\label{cor:sing}
For any $z_0\in \calD$ there exists a neighborhood $U\subset\calD$ such that the function
\[
\Phi_{m,\mu}(z,h,s)-\frac{2}{\Gamma(s+\frac{1}{2}-\frac{n}{4})}
\sum_{\substack{\lambda\in h(\mu+L)\cap z_0^\perp\\ Q(\lambda)=m}}
\tilde\Gamma(0,-4\pi Q(\lambda_z))
\]
is continuous on $U$.
\end{corollary}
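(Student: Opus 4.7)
The strategy is to derive Corollary \ref{cor:sing} directly from Proposition \ref{prop:sing} together with the asymptotic analysis of the $M$-Whittaker function already sketched in the paragraph immediately preceding the corollary. Since $k = 1 - n/2$, we have $s + k/2 = s + 1/2 - n/4$, so the normalizing factor in the corollary matches that coming from \eqref{eq:masy}. By Proposition \ref{prop:sing}, on a suitable neighborhood $U \ni z_0$,
\[
\Phi_{m,\mu}(z,h,s) - \frac{2}{\Gamma(2s)} \sum_{\substack{\lambda \in h(\mu+L) \cap z_0^\perp \\ Q(\lambda) = m}} I(\lambda,z,s)
\]
is smooth, where $I(\lambda,z,s) := \int_{v>1}^{\reg} \calM_{s,k}(4\pi m v) e^{-2\pi m v + 4\pi Q(\lambda_z) v} \, \frac{dv}{v}$. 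Because the sum is finite, it suffices to show that for each such $\lambda$, the function
\[
\frac{2}{\Gamma(2s)} I(\lambda,z,s) - \frac{2}{\Gamma(s+k/2)} \tilde\Gamma\bigl(0, -4\pi Q(\lambda_z)\bigr)
\]
is continuous in $z$ on $U$.

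To establish this, I would use \eqref{eq:masy} to write
\[
\calM_{s,k}(v) = \frac{\Gamma(2s)}{\Gamma(s+k/2)} e^{v/2} + H(v),
\]
where, by the full asymptotic expansion of the confluent hypergeometric function (cf.~\cite[(13.5.1)]{AS}), $H(v) e^{-v/2} = O(v^{-1})$ as $v \to \infty$. Substituting this decomposition into $I(\lambda,z,s)$ splits the regularized integral into two pieces. The first piece is, by the very definition of $\tilde\Gamma$ in the paragraph preceding the corollary,
\[
\frac{\Gamma(2s)}{\Gamma(s+k/2)} \int_{v>1}^{\reg} e^{4\pi Q(\lambda_z) v} \, \frac{dv}{v} = \frac{\Gamma(2s)}{\Gamma(s+k/2)} \tilde\Gamma\bigl(0, -4\pi Q(\lambda_z)\bigr),
\]
which cancels the counterterm after multiplying by $2/\Gamma(2s)$. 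The remaining piece is
\[
\frac{2}{\Gamma(2s)} \int_{1}^{\infty} H(4\pi m v) e^{-2\pi m v + 4\pi Q(\lambda_z) v} \, \frac{dv}{v},
\]
and the issue is to show this is continuous in $z$ near $z_0$.

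The key observation is that since $z$ is a negative definite plane in $V_\R$, the projection satisfies $Q(\lambda_z) \leq 0$ for all real $z$, so $e^{4\pi Q(\lambda_z) v} \leq 1$ uniformly. Combined with the bound $|H(4\pi m v)| e^{-2\pi m v} = O(v^{-1})$, the integrand is dominated by an integrable function on $[1,\infty)$ uniformly for $z$ in a compact neighborhood of $z_0$, so no regularization is needed and the dominated convergence theorem yields continuity in $z$. The main subtlety, which is the reason for introducing the extension $\tilde\Gamma(0,0) := 0$, is that the boundary value occurs precisely when $\lambda \in z_0^\perp$ and $z = z_0$, so that $Q(\lambda_z) = 0$; the genuine $\Gamma(0, \cdot)$ function has a logarithmic singularity there, but this singularity is not present in the other piece above and is absorbed into the asymmetric definition of $\tilde\Gamma$. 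Summing over the finitely many $\lambda$ and adding the smooth remainder from Proposition \ref{prop:sing} gives the claim.
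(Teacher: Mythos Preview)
Your proposal is correct and follows exactly the approach the paper sketches in the paragraph preceding the corollary: decompose $\calM_{s,k}(v)$ via its asymptotic expansion \eqref{eq:masy} into the leading exponential term (which yields the $\tilde\Gamma$ counterterm) plus a remainder whose contribution converges absolutely and is continuous by dominated convergence. The paper does not spell out the details beyond the sentence ``Using \eqref{eq:masy}, it is easily seen that the function \ldots\ is continuous in $z$,'' and your write-up supplies precisely those details.
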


Since $\Gamma(0,t)=-\log(t)+\Gamma'(1)+o(t)$ as $t\to 0$, we see that
$\Phi_{m,\mu}(z,h,s)$ is locally integrable on $X_K$ and defines a current on compactly supported smooth functions on $X_K$.
To describe it, we first fix the normalizations of invariant measures.
Using the projective model \eqref{eq:projmod}, we see that there is a tautological hermitean line bundle $\calL$ over $\calD$. Orthogonal modular forms of weight $w$ can be identified with sections of $\calL^w$.
The first Chern form
\[
\Omega = -dd^c \log|(z,\bar z)|
\]
of $\calL$ is $H(\R)$-invariant and positive. We fix the corresponding invariant volume form $\Omega^n$ on $X_K$ and the volume form $\Omega^{n-1}$ on any divisor on $X_K$.

\begin{theorem}
\label{thm:greencurrent}
As a current on compactly supported smooth functions we have
\[
\left(\Delta -\frac{1}{2}(s-s_0)(s+s_0-1)\right)\left[ \Phi_{m,\mu}(z,h,s)\right]
= -\frac{n}{2\Gamma(s+\frac{1}{2}-\frac{n}{4})} \delta_{Z(m,\mu)}.
\]
Here $\delta_{Z(m,\mu)}$ denotes the Dirac current given by integration over the divisor $Z(m,\mu)$ with respect to the measure $\Omega^{n-1}$.
\end{theorem}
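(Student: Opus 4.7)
The plan is to prove the identity of currents by a local analysis around each point of $Z(m,\mu)$, combining the eigenfunction property of $\Phi_{m,\mu}$ off the divisor with a Poincar\'e-Lelong type computation near the divisor.

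First, away from the support of $Z(m,\mu)$ the function $\Phi_{m,\mu}(\cdot,\cdot,s)$ is smooth and satisfies the pointwise eigenvalue equation $\Delta\Phi_{m,\mu}=\frac12(s-s_0)(s+s_0-1)\Phi_{m,\mu}$ recalled just before Proposition~\ref{prop:sing}. Hence the current on the left-hand side is supported on $Z(m,\mu)$, and it suffices to test it against a compactly supported smooth function $\varphi$ whose support lies in an arbitrarily small neighborhood $U$ of a chosen point $(z_0,h_0)\in Z(m,\mu)$.

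On such a neighborhood, Corollary~\ref{cor:sing} writes $\Phi_{m,\mu}(z,h,s)$ as
\[
\frac{2}{\Gamma(s+\tfrac12-\tfrac{n}{4})}\sum_{\substack{\lambda\in h(\mu+L)\cap z_0^\perp\\ Q(\lambda)=m}}\tilde\Gamma(0,-4\pi Q(\lambda_z))
\]
plus a continuous remainder. By extracting further terms from the asymptotic expansion \eqref{eq:masy} of $\calM_{s,k}(v)$ in Proposition~\ref{prop:sing}, one strengthens this decomposition so that the remainder is of class $C^2$ and therefore contributes zero to $(\Delta-\lambda)$ as a distribution on $U$. The problem thus reduces to computing $(\Delta-\lambda)\tilde\Gamma(0,-4\pi Q(\lambda_z))$ as a current for each single $\lambda$ with $Q(\lambda)=m$ meeting $U$, and showing that it equals $-\tfrac{n}{4}\,\delta_{\calD_\lambda}$ (with respect to the measure $\Omega^{n-1}$), which then sums via the coset decomposition in \eqref{eq:defzm} to $-\tfrac{n}{4}\delta_{Z(m,\mu)}$ and combines with the prefactor $\tfrac{2}{\Gamma(s+1/2-n/4)}$ to give the claimed identity.

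For the local computation, use $\Gamma(0,t)=-\log t-\gamma+O(t)$ as $t\to 0^+$ to write
\[
\tilde\Gamma(0,-4\pi Q(\lambda_z))=-\log(-4\pi Q(\lambda_z))-\gamma+(\text{smooth}).
\]
In the projective model \eqref{eq:projmod}, $-Q(\lambda_z)=|(\lambda,z)|^2/|(z,\bar z)|$, and in holomorphic coordinates $(w_1,\dots,w_n)$ on $\calD$ adapted to $\calD_\lambda=\{w_1=0\}$, one has $(\lambda,z)=w_1\cdot\psi$ for a non-vanishing holomorphic $\psi$. Applying the Poincar\'e-Lelong formula $dd^c\log|w_1|^2=\delta_{\{w_1=0\}}$, the singular contribution to $dd^c\bigl[-\log(-4\pi Q(\lambda_z))\bigr]$ is $-\delta_{\calD_\lambda}$, the rest being a smooth $(1,1)$-form identifiable with a multiple of $\Omega$. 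Since $\Delta-\lambda$ kills the eigenfunction part, only this distributional piece contributes, and wedging with $\Omega^{n-1}$ converts $dd^c$ into the invariant Laplacian $\Delta$ up to a universal factor determined by the Kähler structure of $(\calD,\Omega)$.

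The main obstacle is pinning down the constant $-n/2$ precisely: one must carefully match the normalization of $\Delta$ (the Casimir-induced operator of \cite{Br}, equal to $-8$ times the Laplacian of \cite{OT}) against the normalization of the volume form $\Omega^n$ and the divisor measure $\Omega^{n-1}$ induced from the tautological line bundle $\calL$. Concretely, one verifies the Kähler identity relating $\Delta f \cdot \Omega^n$ and $n\, dd^c f\wedge\Omega^{n-1}$ with the constants specified, from which the factor $n$ (and the sign) emerge. Once this normalization is fixed, combining Steps~1--3 with the unfolding in \eqref{eq:defzm} yields the stated current equation.
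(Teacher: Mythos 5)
The paper does not prove this theorem by a computation at all: it cites \cite[Corollary 3.2.1]{OT} for the current equation and \cite[Theorem 4.7]{BK} for the comparison of the normalizations of the Laplacian and the measures. Your proposal instead sketches a self-contained local proof, and the route you take is essentially the computation that is carried out in \cite{BK}: off the divisor the eigenvalue equation holds pointwise, so the singular part of the current is supported on $Z(m,\mu)$; near a point $(z_0,h_0)$ of the divisor, Proposition~\ref{prop:sing} and Corollary~\ref{cor:sing} isolate the singularity as the finite sum of terms $\frac{2}{\Gamma(s+1/2-n/4)}\tilde\Gamma(0,-4\pi Q(\lambda_z))$; the expansion $\Gamma(0,t)=-\log t+O(1)$ together with $-Q(\lambda_z)=|(\lambda,Z)|^2/|(Z,\bar Z)|$ and the Poincar\'e--Lelong formula produces $-\delta_{\calD_\lambda}$ plus a smooth multiple of $\Omega$; and the K\"ahler identity relating $\Delta f\cdot\Omega^n$ to $n\,dd^c f\wedge\Omega^{n-1}$ converts this into the stated constant. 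So this is a genuinely different (and more informative) route than the paper's citation, and it is the correct one in outline.

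Two points need to be tightened before this is a proof. First, you claim the remainder after subtracting the $\tilde\Gamma$-terms can be upgraded to $C^2$ by taking more terms of the asymptotic expansion \eqref{eq:masy}; that is more than the expansion delivers (the subleading terms contribute expressions of the shape $Q(\lambda_z)\log(-Q(\lambda_z))$, whose second derivatives blow up logarithmically along the divisor) and also more than you need --- it suffices that the second derivatives of the remainder are locally integrable, so that $(\Delta-\tfrac12(s-s_0)(s+s_0-1))$ applied to it in the distributional sense carries no part supported on $Z(m,\mu)$; this weaker statement does follow from the expansion and should replace the $C^2$ claim. Second, and more seriously, the constant $-n/2$ is the entire quantitative content of the theorem, and your proposal reduces it to ``one verifies the K\"ahler identity with the constants specified'' without performing the verification; the same applies to matching the local sum over $\lambda\in h(\mu+L)\cap z_0^\perp$ (where $\lambda$ and $-\lambda$ may both occur when $2\mu=0$) against the multiplicities built into the definition \eqref{eq:defzm} of $Z(m,\mu)$. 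As written, the argument establishes the current equation only up to an undetermined nonzero constant on the right-hand side; either carry out the normalization computation or, as the paper does, import it from \cite[Theorem 4.7]{BK}.
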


This is essentially \cite[Corollary 3.2.1]{OT}, for the comparison of the normalizations of measures see also \cite[Theorem 4.7]{BK}.

\begin{theorem}
\label{thm:greenint}
If $V$ is anisotropic over $\Q$ or $\Re(s)>\frac{3}{4}n>\frac{3}{2}$, then
$\Phi_{m,\mu}(z,h,s)$ belongs to $L^{2}(X_K)$.
\end{theorem}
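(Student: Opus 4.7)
The plan is to decouple two obstructions to global square-integrability: local behavior near the divisor $Z(m,\mu)$, which arises in both cases, and growth at the cusps of $X_K$, which is relevant only when $V$ is isotropic. On the complement of $Z(m,\mu)$ the series in \eqref{eq:phim} converges normally, so $\Phi_{m,\mu}(\cdot,\cdot,s)$ is smooth there, and what remains to bound is the behavior near $Z(m,\mu)$ and (in the isotropic case) towards each cusp.

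For the local behavior I would invoke Corollary~\ref{cor:sing}: near any $z_0\in\calD$, the function $\Phi_{m,\mu}(z,h,s)$ differs from a continuous function by a finite sum of terms proportional to $\tilde\Gamma(0,-4\pi Q(\lambda_z))$ with $\lambda\in h(\mu+L)\cap z_0^\perp$ and $Q(\lambda)=m$. Since $\tilde\Gamma(0,t)=-\log(t)+O(1)$ as $t\to 0^+$, each summand has at worst a logarithmic singularity along the smooth real-codimension-two divisor $\{z\in\calD:\lambda\in z^\perp\}$, and in transverse polar coordinates the local $L^2$-integral $\int_0^\epsilon|\log r|^2\,r\,dr$ is finite. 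This handles $Z(m,\mu)$ and already completes the argument when $V$ is anisotropic, since then $X_K$ is projective and the local estimate suffices.

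For the isotropic case with $\Re(s)>3n/4>3/2$, the plan is to analyze the Fourier--Jacobi expansion of $\Phi_{m,\mu}$ at every cusp. Fix a rational isotropic line $\ell\subset V$ and decompose $L=L_0\oplus\Z e'\oplus\Z e$ with $e,e'$ isotropic, $e\in\ell$, and $(e,e')=1$. Unfolding the $\lambda$-sum in \eqref{eq:phim} along the $e$-direction in a Siegel neighborhood of the cusp, one should obtain a constant-in-horizontal-directions term of the form $\sum_j c_j(s)\,y^{\alpha_j(s)}$, with $y\to\infty$ the height coordinate, plus non-constant Fourier coefficients decaying exponentially as $y\to\infty$. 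Since the invariant measure near the cusp is comparable to $y^{-a}\,dy\,dx\,dw$ in Siegel coordinates, with $a=a(n)$ determined by the invariant volume form, the $L^2$ condition reduces to an inequality of the form $\Re(\alpha_j(s))<b$ for every exponent that appears, and the hypothesis $\Re(s)>3n/4$ should correspond precisely to enforcing this inequality for the worst-behaved exponent.

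The hard part will be computing the exponents $\alpha_j(s)$ explicitly and pinning down the sharp threshold. The most direct route is to carry out the unfolding of the Poincar\'e series $F_{m,\mu}(\tau,s,k)$ at each cusp in the spirit of \cite[Chapter~3]{Br}, and to extract the $y$-asymptotics from the hypergeometric factor $F(s+\tfrac{n}{4}-\tfrac12,s-\tfrac{n}{4}+\tfrac12,2s;m/Q(\lambda_{z^\perp}))$ in \eqref{eq:phim} as $Q(\lambda_{z^\perp})$ picks up the $y$-scale; this then feeds back, through the regularized $v$-integral and the asymptotics \eqref{eq:masy} of $\calM_{s,k}$, to the exponents $\alpha_j(s)$. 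A more conceptual alternative is to exploit that $\Phi_{m,\mu}(z,h,s)$ is an eigenfunction of the invariant Laplacian $\Delta$ with eigenvalue $\tfrac12(s-s_0)(s+s_0-1)$: the indicial equation at each cusp restricts the admissible exponents $\alpha_j(s)$ to two explicit values in terms of $s$ and $s_0$, and comparison with the integrability threshold $b$ then yields the bound $\Re(s)>3n/4$.
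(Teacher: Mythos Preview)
Your proposal takes a genuinely different route from the paper. The paper's proof is a two-line citation: it identifies $\Phi_{m,\mu}(z,h,s)$ with a finite linear combination of the Green functions $G_{2s-1}(x)$ of Oda--Tsuzuki via \cite[(4.19)]{BK}, and then invokes \cite[Theorem~5.1.1]{OT}, checking that in their notation $\rho_0=n/2$ and $\tau=2-2/n$ so that the $L^2$ threshold there becomes exactly $\Re(s)>\tfrac{3}{4}n$. By contrast, you attempt a self-contained proof: local $L^2$-integrability near $Z(m,\mu)$ via the logarithmic singularity from Corollary~\ref{cor:sing}, compactness when $V$ is anisotropic, and a Fourier--Jacobi expansion at each cusp in the isotropic case.

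Your local argument and the anisotropic case are correct and essentially complete. The isotropic part, however, is only a plan: you acknowledge that computing the exponents $\alpha_j(s)$ and matching them against the integrability threshold is ``the hard part'' and offer two routes without carrying either one out. This is precisely what \cite{OT} does (and it is not short): the growth exponents at the boundary are determined, and the sharp bound $\Re(s)>\tfrac{3}{4}n$ emerges from comparing the worst exponent with the invariant volume form. So your strategy is sound, but as written it is a sketch rather than a proof, and the threshold $\tfrac{3}{4}n$ is asserted rather than derived. If you want a self-contained argument you would need to actually execute the unfolding in \cite[Chapter~3]{Br} (or the indicial-equation approach) and extract the exponents explicitly; otherwise, the citation to \cite{OT} is the efficient choice.
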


\begin{proof}
This follows from \cite[Theorem 5.1.1]{OT} by noticing that $\rho_0=\frac{n}{2}$ and $\tau=2-\frac{2}{n}$ in the notation of loc.~cit. Moreover, by equality (4.19) of  \cite{BK}, the function $\Phi_{m,\mu}(z,h,s)$ is a finite linear combination of the Green functions $G_{2s-1}(x)$ of \cite{OT}.
\end{proof}

The Green function can be characterized as follows.

\begin{proposition}
\label{prop:greenchar}
Assume that  $\Re(s)>\frac{3}{4}n>\frac{3}{2}$.
Let $G(z,h,s)$ be a smooth function on $X_K\setminus Z(m,\mu)$ which is square integrable  
 and  satisfies the current equation of Theorem~\ref{thm:greencurrent}. Then
$G(z,h,s)=\Phi_{m,\mu}(z,h,s)$ on $X_K\setminus Z(m,\mu)$.
\end{proposition}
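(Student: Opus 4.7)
The plan is to reduce the claim to a uniqueness statement for eigenfunctions of $\Delta$ in $L^2(X_K)$. Set $D(z,h) = G(z,h,s) - \Phi_{m,\mu}(z,h,s)$. By hypothesis and by Theorem~\ref{thm:greenint}, both summands are smooth on $X_K\setminus Z(m,\mu)$ and square integrable on $X_K$, so $D\in L^2(X_K)$ and $D$ is smooth on $X_K\setminus Z(m,\mu)$. Writing $\lambda(s)=\tfrac12(s-s_0)(s+s_0-1)$, the hypothesis on $G$ combined with Theorem~\ref{thm:greencurrent} yields that the singular contributions from $\delta_{Z(m,\mu)}$ on the right-hand sides cancel, so that $D$ satisfies the homogeneous current equation
\[
(\Delta - \lambda(s))[D] = 0
\]
on all of $X_K$.

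Next I would invoke elliptic regularity. The operator $\Delta - \lambda(s)$ is elliptic with smooth coefficients, so any current solution of the above equation is in fact a smooth function. Consequently $D$ extends across the divisor $Z(m,\mu)$ to a smooth function on all of $X_K$, and it is a genuine pointwise eigenfunction, $\Delta D = \lambda(s) D$, lying in $L^2(X_K)$.

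The final step is to eliminate the possibility that $\lambda(s)$ is an $L^2$-eigenvalue of $\Delta$. For $\Re(s)>\frac{3}{4}n>\frac{3}{2}$ the identity \cite[(4.19)]{BK} expresses $\Phi_{m,\mu}(z,h,s)$ as a finite linear combination of the Oda--Tsuzuki Green functions $G_{2s-1}$; for exactly the same range of $s$, the spectral analysis in \cite[\S 5]{OT} (which underlies Theorem~\ref{thm:greenint}) shows that $\lambda(s)$ lies in the resolvent set of $\Delta$ on $L^2(X_K)$, and in particular is not a point eigenvalue. Therefore $D\equiv 0$, which gives $G(z,h,s)=\Phi_{m,\mu}(z,h,s)$ on $X_K\setminus Z(m,\mu)$.

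The main obstacle is the last step: one must verify that the range $\Re(s)>\tfrac{3}{4}n$ really places $\lambda(s)$ outside the entire $L^2$-spectrum of $\Delta$, including any embedded or residual eigenvalues that may appear in the non-anisotropic case. This is where the detailed spectral decomposition of \cite{OT} enters in an essential way; the two preceding pieces of the argument (cancellation of the $\delta$-source and elliptic regularity across $Z(m,\mu)$) are standard once the current-equation setup is in place.
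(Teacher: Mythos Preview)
Your argument follows the same three-step scheme as the paper's proof: form the difference, use elliptic regularity to extend it smoothly across $Z(m,\mu)$, and then rule out a nonzero $L^2$-eigenfunction with eigenvalue $\lambda(s)$. The first two steps match the paper exactly.

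Where you diverge is the final step, and you have made it harder than it needs to be. The paper does not appeal to the detailed spectral decomposition in \cite{OT}; it simply observes that $\Delta$ is a \emph{negative} operator on $L^2(X_K)$ (in the chosen normalization), so any $L^2$-eigenvalue is a nonpositive real number. Writing $\lambda(s)=\tfrac12\big((s-\tfrac12)^2-(s_0-\tfrac12)^2\big)$, one checks that for $\Re(s)>s_0$ the number $\lambda(s)$ is either nonreal (if $\Im s\neq 0$) or strictly positive (if $s$ is real), hence never a nonpositive real. This disposes of your ``main obstacle'' in one line and makes the worry about embedded or residual eigenvalues irrelevant. Note also that the stronger hypothesis $\Re(s)>\tfrac34 n$ is used only to place $\Phi_{m,\mu}$ in $L^2$ via Theorem~\ref{thm:greenint}; the eigenvalue step itself needs only $\Re(s)>s_0$.
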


\begin{proof}
We consider the function
\[
g(z,h,s)= G(z,h,s)-\Phi_{m,\mu}(z,h,s).
\]
It satisfies the current equation
\begin{align}
\label{eq:diff}
\left(\Delta -\frac{1}{2}(s-s_0)(s+s_0-1)\right)\left[ g(z,h,s)\right]
= 0.
\end{align}
Hence, by a standard regularity argument, $g$ extends to a smooth function on all of $X_K$. By the hypothesis $g$ belongs to $L^{2}(X_K)$.  Since $\Delta$ is a negative operator on $L^{2}(X_K)$ and $\Re(s)>s_0=\frac{n}{4}+\frac{1}{2}$, the eigenvalue equation \eqref{eq:diff} implies that $g=0$.
\end{proof}

\begin{remark}
Similar characterizations of $\Phi_{m,\mu}(z,h,s)$ can also be obtained for $n\leq 2$ and $\Re(s)>s_0$ by imposing additional conditions on the growth at the boundary of $X_K$, see e.g. Section~\ref{sect:resolvent} and \cite[Section 3.4]{Zh1}.
\end{remark}


\subsection{Positive integral values of the spectral parameter}
Recall that $k=1-n/2$ and $s_0=\frac{n}{4}+\frac{1}{2}$.
In the present paper we are mainly interested in
the Green function $\Phi_{m,\mu}(z,h,s)$ when the spectral parameter $s$ is specialized to $s_0+j$ for $j\in \Z_{\geq 0}$. These special values can be described using lifts of {\em harmonic} Maass forms.

\begin{proposition}
\label{prop:higherphi}
Let $\mu\in L'/L$ and $m\in \Z+Q(\mu)$ with $m>0$. For $j\in \Z_{> 0}$ let $F_{m,\mu}(\tau,k-2j)\in H_{k-2j, \bar\rho_L}$ be the unique harmonic Maass form whose principal part is given by
\[
F_{m,\mu}(\tau,k-2j)
= e(-m\tau)\phi_\mu + e(-m\tau)\phi_{-\mu} +O(1),
\]
as $v\to \infty$. Then
\[
\Phi_{m,\mu}(z,h,s_0+j)= \frac{1}{(4\pi m)^j j!}\Phi\left(z,h,R^j_{k-2j} F_{m,\mu}(\tau,k-2j)\right).
\]
\end{proposition}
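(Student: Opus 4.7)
The plan is to directly unwind the definition of $\Phi_{m,\mu}(z,h,s)$ from \eqref{eq:agf} and apply the already-established relation between Hejhal-Poincar\'e series in different weights provided by Corollary \ref{cor:raisef}. First I would identify the harmonic Maass form $F_{m,\mu}(\tau, k-2j)$ of the statement with the specialization of the Hejhal-Poincar\'e series $F_{m,\mu}(\tau, s, k-2j)$ at the harmonic point $s = s_0 + j$. Since $s_0 = \frac{n}{4}+\frac{1}{2} = 1 - k/2$, one has $s_0 + j = 1 - (k-2j)/2$, which is precisely the harmonic point in weight $k-2j$. By the discussion following \eqref{eq:hps}, this specialization lies in $H_{k-2j,\bar\rho_L}^!$ and has principal part $e(-m\tau)\phi_\mu + e(-m\tau)\phi_{-\mu} + O(1)$ as $v\to\infty$. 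Uniqueness of a harmonic Maass form with this prescribed principal part follows because $M_{k-2j,\bar\rho_L} = 0$ in the negative weight $k-2j$, so no two such forms can differ. Hence this specialization coincides with $F_{m,\mu}(\tau, k-2j)$.

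Next I would apply Corollary \ref{cor:raisef} with the weight parameter $k-2j$ at $s=s_0+j$, which gives
\[
\frac{1}{(4\pi m)^j}\, R^j_{k-2j}\, F_{m,\mu}(\tau, s_0+j, k-2j) \;=\; j!\cdot F_{m,\mu}(\tau, s_0+j, k).
\]
Note that the right hand side is the Hejhal-Poincar\'e series in weight $k$, evaluated at $s=s_0+j$, which is precisely the input used in the definition \eqref{eq:agf} of the automorphic Green function $\Phi_{m,\mu}(z,h,s_0+j)$.

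Finally I would invoke the definition \eqref{eq:agf}, namely $\Phi_{m,\mu}(z,h,s)= \Phi(z,h,F_{m,\mu}(\tau,s,k))$, together with the $\C$-linearity of the regularized theta lift $\Phi(z,h,\cdot)$ in its modular-form argument. Combining this with the identity above and the identification of $F_{m,\mu}(\tau, k-2j)$ with $F_{m,\mu}(\tau, s_0+j, k-2j)$ yields
\[
\Phi_{m,\mu}(z,h,s_0+j) \;=\; \Phi\bigl(z,h,\,F_{m,\mu}(\tau,s_0+j,k)\bigr) \;=\; \frac{1}{(4\pi m)^j\, j!}\,\Phi\bigl(z,h,\,R^j_{k-2j} F_{m,\mu}(\tau,k-2j)\bigr),
\]
as desired.

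There is no serious obstacle: the substantive analytic content is already contained in Corollary \ref{cor:raisef}, which in turn reduces to a single identity between $M$-Whittaker functions (\cite[(13.4.10), (13.1.32)]{AS}). The only minor points to check are the uniqueness statement for harmonic Maass forms of weight $k-2j$ with prescribed principal part (immediate since $k-2j<0$), and the fact that $R^j_{k-2j} F_{m,\mu}(\tau,k-2j)$ still has controlled enough growth at $i\infty$ for the regularized theta integral defining $\Phi(z,h,\cdot)$ to converge in the sense of \cite{Bo1}; this follows because the raising operator preserves the linear exponential growth condition in the definition of weak Maass forms.
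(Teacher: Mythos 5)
Your proof is correct and follows exactly the paper's route: the paper's own proof is the one-line observation that the claim is an immediate consequence of the definition \eqref{eq:agf} of $\Phi_{m,\mu}(z,h,s)$ as the lift of the Hejhal--Poincar\'e series together with Corollary \ref{cor:raisef}, which is precisely the identity you apply. Your additional remarks (identification of $F_{m,\mu}(\tau,k-2j)$ with the specialization $F_{m,\mu}(\tau,s_0+j,k-2j)$, uniqueness in negative weight, and growth of the raised form) just make explicit the details the paper leaves implicit.
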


\begin{proof}
The assertion is an immediate consequence of \eqref{eq:agf} and Corollary~\ref{cor:raisef}.
\end{proof}

\section{CM values of higher Green functions}\label{sect:cm}
Recall that $k=1-n/2$.
Proposition \ref{prop:higherphi} suggests to study for any harmonic Maass form $f\in H_{k-2j, \bar\rho_L}$ its `higher' regularized theta lift
\begin{align}
\label{eq:higherphi}
\Phi^j(z,h,f) := \frac{1}{(4\pi)^j }\Phi(z,h, R^j_{k-2j} f).
\end{align}
Denote the Fourier coefficients of $f$ by $c^\pm(m,\mu)$ for $\mu\in L'/L$ and $m\in \Z-Q(\mu)$. Then $\Phi^j(z,h,f)$ is a higher Green function for the divisor
\begin{align}
\label{eq:defzf}
Z^j(f)=\sum_{\substack{m>0\\ \mu\in L'/L}} c^+(-m,\mu) m^j Z(m,\mu)
\end{align}
on $X_K$. Here we compute the values of such regularized theta lifts at small CM cycles.

Let $U\subset V$ be a subspace of signature $(0,2)$ which is defined over $\Q$ and consider the corresponding CM cycle defined in \eqref{eq:defzu}. For
$(z,h)\in Z(U)$ we want to compute the CM value $\Phi^j(z,h,f)$.
Moreover, we are interested in the average over the cycle $Z(U)$,
\begin{align}
\Phi^j(Z(U),f):= \frac{2}{w_{K,T}} \sum_{(z,h)\in \supp(Z(U))} \Phi^j(z,h,f).
\end{align}
According to \cite[Lemma 2.13]{Scho}, we have
\begin{align}
\label{eq:greencm}
\Phi^j(Z(U),f):= \frac{\deg(Z(U))}{2}
\int_{h\in \SO(U)(\Q)\bs \SO(U)(\A_f)}
\Phi^j(z_U^+,h,f)\,dh
\end{align}
and $\deg(Z(U))= \frac{4}{\vol(K_T)}$.

The space $U$ determines two
definite lattices
\begin{align*}
N=L\cap U,\quad  P=L\cap U^\perp.
\end{align*}
The direct sum $N\oplus P\subset L$ is a sublattice of finite index.
For $z=z_U^\pm$ and $h\in T(\A_f)$, the Siegel theta function $\theta_L(\tau,z,h)$ splits as a product
\begin{align*}
\theta_L(\tau,z_U^\pm,h)= \theta_P(\tau)\otimes
\theta_N(\tau,z_U^\pm,h).
\end{align*}
In this case
Lemma~3.1 of \cite{BY} implies for the $\C$-bilinear pairings on $S_L$ and $S_{N\oplus P}$, that
$$
\langle f, \theta_L \rangle =\langle f_{P\oplus N}, \theta_P \otimes
\theta_N \rangle,
$$
where $f_{P\oplus N}$ is defined by Lemma \ref{sublattice}.
Hence we may assume in the following calculation that $L=P\oplus N$ if we
replace $f$ by $f_{P\oplus N}$. For $h\in T(\A_f)$ the CM value we are interested in is given by the regularized integral
\[
\Phi^j(z_U^\pm,h,f)
=\frac{1}{(4\pi)^j }\int_{\calF}^\reg \langle R_{k-2j}^j f,\,
\theta_P(\tau)\otimes \theta_N(\tau,z_U^\pm,h) \rangle\,d\mu(\tau) .
\]
To compute it, we first replace the regularized integral by a limit of truncated integrals.
If $S(q)=\sum_{n\in \Z} a_n q^n$ is a Laurent series in $q$ (or a
holomorphic Fourier series in $\tau$), we write
\begin{align}
\CT(S)=a_0
\end{align}
for the constant term in the $q$-expansion.

\begin{lemma}
\label{limit}
If we define
\begin{align*}
A_0 =(-1)^j\CT\big(\langle (f^{+})^{(j)}(\tau),\, \theta_P(\tau)\otimes \phi_{0+N}
\rangle\big),
\end{align*}
where $f^+$ is the holomorphic part of $f$ and  $g^{(j)}:= \frac{1}{(2\pi i)^j}\frac{\partial^j}{\partial \tau^j} g$, we have
\[
\Phi^j(z_U^\pm,h,f)
=\lim_{T\to \infty}\left[ \frac{1}{(4\pi)^j }\int_{\calF_T} \langle R_{k-2j}^j f(\tau),\,
\theta_P(\tau)\otimes \theta_N(\tau,z_U^\pm,h) \rangle\,d\mu(\tau) -A_0
\log(T)\right].
\]
\end{lemma}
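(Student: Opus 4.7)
Write $\Psi(\tau) := \langle R^j_{k-2j} f(\tau),\, \theta_P(\tau)\otimes \theta_N(\tau, z_U^\pm, h)\rangle$, so that $(4\pi)^j\Phi^j(z_U^\pm,h,f) = \int_\calF^\reg \Psi\, d\mu$. The plan is to decompose $\calF = \calF_T \cup \calF^T$ with $\calF^T = \calF\cap\{v>T\}$; since $\calF_T$ is compact, $\int_{\calF_T}\Psi\,d\mu$ needs no regularization, and it suffices to show
\[
  \int_{\calF^T}^{\reg}\Psi\, d\mu = -(-4\pi)^j \,\CT\bigl\langle (f^+)^{(j)}, \theta_P\otimes\phi_{0+N}\bigr\rangle \cdot \log T + o(1) \qquad (T\to\infty).
\]
Dividing by $(4\pi)^j$ and rearranging then yields the formula with $A_0 = (-1)^j\CT\langle (f^+)^{(j)}, \theta_P\otimes\phi_{0+N}\rangle$ as claimed.

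The cusp analysis rests on three facts. First, since $N$ is negative definite and $z_U^\pm = U$, only $\lambda = 0$ contributes without exponential decay to the Siegel theta series, so $\theta_N(\tau, z_U^\pm, h) = v\phi_{0+N} + O(e^{-cv})$ uniformly in $u$. Second, because $f \in H_{k-2j,\bar\rho_L}$, the form $\xi_{k-2j}f$ is a cusp form, so the non-holomorphic part $f^-$ (and hence $R^j_{k-2j}f^-$) decays exponentially as $v\to\infty$. Third, a termwise induction shows $R^j_{k-2j}(q^n) = (-4\pi n)^j q^n + q^n v^{-1}P_{j,n}(v^{-1})$ for a polynomial $P_{j,n}$, and summing over the Fourier expansion of $f^+$ yields
\[
R^j_{k-2j}f^+ = (-4\pi)^j (f^+)^{(j)} + \sum_{i=1}^{j} v^{-i}\, g_i,
\]
where each $g_i$ is the image of $f^+$ under a constant-coefficient differential operator in $\partial_\tau$, hence a weakly holomorphic $q$-series. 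Combining these and extracting the constant Fourier coefficient in $u$ via $\int_0^1 \Psi(u+iv)\,du$, one finds
\[
\int_0^1 \Psi(u+iv)\,du = (-4\pi)^j \CT\langle (f^+)^{(j)}, \theta_P\otimes\phi_{0+N}\rangle\cdot v + \sum_{i=1}^{j} C_i\, v^{1-i} + O(e^{-cv}),
\]
with constants $C_i = \CT\langle g_i, \theta_P\otimes\phi_{0+N}\rangle$; each such $q$-constant term is a finite sum because the principal part of $f^+$ is finite and $\theta_P$ has only non-negative $q$-powers.

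Finally, I would apply Borcherds' regularization, taking the constant term at $s'=0$ of the meromorphic continuation in $s'$ of $\int_{\calF^T} v^{-s'}\Psi\,d\mu$. The leading $v$-term contributes
\[
\CT_{s'=0}\int_T^\infty v^{-s'-1}\,dv \;=\; \CT_{s'=0}\bigl(T^{-s'}/s'\bigr) \;=\; -\log T,
\]
while the subleading $v^{1-i}$ corrections with $i\geq 1$ produce $\int_T^\infty v^{-1-i}\,dv = O(T^{-i})$, and the exponentially small remainder contributes $o(1)$. These three contributions sum to the displayed asymptotic for $\int_{\calF^T}^{\reg}\Psi\,d\mu$, proving the lemma. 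The delicate step is the termwise analysis of $R^j_{k-2j}f^+$ together with the verification that the subleading $v^{-i}$ corrections do not generate additional log divergences; this relies on the finiteness of the principal part of $f^+$ and the positive-definiteness of $P$, so that for each $i$ only finitely many Fourier modes combine to give $u$-frequency zero.
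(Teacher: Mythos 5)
Your proof is correct and follows essentially the same route as the paper, which simply cites the argument of \cite[Lemma 4.5]{BY} and records the one extra ingredient you also use: the regularized integral of a polynomial $\sum_{\ell}c_\ell v^{-\ell}$ against $dv/v$ over $v>T$ contributes $-c_0\log T$ from the constant term while the $\ell\geq 1$ terms converge. Your write-up just makes explicit the cusp analysis (exponential decay of $\theta_N-v\phi_{0+N}$ and of $R^j f^-$, and the expansion $R^j_{k-2j}f^+=(-4\pi)^j(f^+)^{(j)}+\sum_i v^{-i}g_i$) that the citation leaves implicit.
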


\begin{proof}
This result is proved in the same way as \cite[Lemma 4.5]{BY}.
In addition we use the fact that for a polynomial $P(v^{-1})=\sum_{\ell=0}^j c_\ell v^{-\ell}$ in $v^{-1}$ we have
\[
\int_{v>1}^\reg P(v)\,\frac{dv}{v} =
\lim_{T\to\infty}\left(\int_{v=1}^T P(v)\,\frac{dv}{v}-c_0\log(T)\right).
\]
\end{proof}

By means of \eqref{eq:greencm} and the Siegel-Weil formula in Proposition \ref{prop:sw}, we obtain the following corollary.

\begin{corollary}
\label{cor:limit}
We have
\begin{align*}
\Phi^j(Z(U),f)
&=\frac{\deg Z(U)}{2}\\
&\phantom{=}{}\times \lim_{T\to \infty}\left[\frac{1}{(4\pi)^j } \int_{\calF_T} \langle R_{k-2j}^j f(\tau),
\theta_P(\tau)\otimes E_N(\tau,0;-1) \rangle d\mu(\tau) -2A_0
\log(T)\right].
\end{align*}
\end{corollary}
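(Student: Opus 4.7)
The proof proposal is essentially to combine the three ingredients already set up: the expression \eqref{eq:greencm} for $\Phi^j(Z(U),f)$ as an integral over the adelic quotient, the pointwise formula of Lemma~\ref{limit} expressing each value $\Phi^j(z_U^+,h,f)$ as a limit of truncated integrals, and the Siegel-Weil formula of Proposition~\ref{prop:sw} applied to the negative definite lattice $N\subset U$ of signature $(0,2)$.

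More concretely, my plan is to start from
\[
\Phi^j(Z(U),f)=\frac{\deg(Z(U))}{2}\int_{\SO(U)(\Q)\bs \SO(U)(\A_f)}\Phi^j(z_U^+,h,f)\,dh,
\]
substitute the formula of Lemma~\ref{limit} for the integrand, and interchange the outer $h$-integration with the limit in $T$ as well as with the inner $\tau$-integration over $\calF_T$. Since the constant $A_0$ appearing in Lemma~\ref{limit} depends only on $f$ and $\phi_{0+N}$ and is independent of $h$, integration against the Haar measure on $\SO(U)(\Q)\bs \SO(U)(\A_f)$, whose total volume equals $2$ by our normalization (Section~\ref{sect:bintheta}), produces precisely the subtracted term $2A_0\log(T)$ in the final formula. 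After the interchange, the $h$-integration acts only on $\theta_N(\tau,z_U^+,h)$, and Proposition~\ref{prop:sw} (applied to the binary lattice $N$ in place of the lattice denoted $L$ there) replaces it with the Eisenstein series $E_N(\tau,0;-1)$.

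The steps are: first, write the regularized integral as a limit via Lemma~\ref{limit} for each $h$; second, move the $\int_h$ inside the bracket, which is legitimate because the bracketed quantity converges pointwise and is controlled as $T\to\infty$ by the asymptotic analysis already carried out in Lemma~\ref{limit}; third, apply Fubini on the compact domain $\calF_T\times (\SO(U)(\Q)\bs\SO(U)(\A_f)/K_T)$ to swap the order of integration; fourth, use Siegel-Weil on the inner $h$-integral to obtain $E_N(\tau,0;-1)$; fifth, collect the constants, using $\vol=2$ to convert $A_0\log(T)$ into $2A_0\log(T)$.

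The main technical point is justifying the exchange of the $T\to\infty$ limit with the $h$-integration. This is not an issue of depth but of bookkeeping: the convergence in Lemma~\ref{limit} is uniform in $h$ lying in any fixed compact set (the contributions $\lambda\in h(\mu+N\oplus P)$ of bounded norm depend on $h$ only through the discrete class, and the tail estimates in $v$ are uniform on this finite set of classes modulo $K_T$), so dominated convergence applies on the quotient $\SO(U)(\Q)\bs\SO(U)(\A_f)/K_T$, which is finite. Everything else is routine manipulation of invariant measures, and the appearance of the precise factor $\deg(Z(U))/2$ and the factor $2$ multiplying $A_0\log(T)$ follows directly from the normalization $\vol(\SO(U)(\Q)\bs \SO(U)(\A_f))=2$.
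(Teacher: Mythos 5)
Your proposal is correct and follows exactly the route the paper takes: the corollary is obtained by substituting Lemma~\ref{limit} into \eqref{eq:greencm}, exchanging the $h$-integration (which is a finite sum over $\SO(U)(\Q)\bs\SO(U)(\A_f)/K_T$, so the interchange is harmless) with the limit in $T$, and applying the Siegel--Weil formula of Proposition~\ref{prop:sw} to the inner integral, with the factor $2$ in front of $A_0\log(T)$ coming from $\vol(\SO(U)(\Q)\bs\SO(U)(\A_f))=2$. The paper states this derivation in one line; your bookkeeping of the constants and of the uniformity needed for the interchange is accurate.
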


For any $g\in S_{1+n/2+2j,\rho_L}$ we define an $L$-function by means
of the convolution integral
\begin{align}
\label{eq:L} L(g, U,s)=\big( [\theta_P(\tau),E_N(\tau,s;1)]_j,\,
g(\tau)\big)_{\Pet}.
\end{align}
Here the Petersson scalar product is antilinear in the second argument.
The meromorphic continuation of the Eisenstein series
$E_N(\tau,s;1)$ can be used to obtain a
meromorphic continuation of $L(g, U,
s)$ to the whole complex plane. At $s=0$, the center of symmetry,
$L(g, U, s)$ vanishes because the Eisenstein series $E_N(\tau,s;1)$
is incoherent.

\begin{lemma}
\label{lem:dirser}
Let $g\in S_{1+n/2+2j,\rho_L}$ and denote  by
$g=\sum_{m,\mu}b(m,\mu)q^m\phi_\mu$ the Fourier expansion. Write
$\theta_P=\sum_{m,\mu}r(m,\mu)q^m\phi_\mu$.
Then $L(g, U,s)$
can also be expressed as
\begin{align*}
L(g, U,s)&=\frac{1}{(4\pi)^j}
\binom{2j-k}{j}\frac{\Gamma(\frac{s}{2}+1+j)}{\Gamma(\frac{s}{2}+1)}
\cdot \big( \theta_P \otimes E_N(\tau,s;1+2j),\,
g\big)_{\Pet}.
\end{align*}
Morerover, it has the Dirichlet series representation
\begin{align*}
L(g,U,s)= \frac{1}{(4\pi)^j}
\binom{2j-k}{j}\frac{\Gamma(\frac{s}{2}+1+j)}{\Gamma(\frac{s}{2}+1)}\frac{\Gamma(\frac{s}{2}+\frac{n}{2}+j)}{(4\pi )^{\frac{s}{2}+\frac{n}{2}+j}}
\sum_{\substack{\mu\in P'/P\\m>0}} r(m,\mu)\overline{b(m,\mu)}
 m^{-\frac{s}{2}-\frac{n}{2}-j} .
\end{align*}
\end{lemma}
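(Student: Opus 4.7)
The plan has two parts: first reduce the Rankin-Cohen bracket to an ordinary product so that only the Eisenstein series carries the spectral parameter $s$, then apply Rankin-Selberg unfolding to the resulting Petersson pairing.

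First I would apply Lemma \ref{lem:rcr} to $f_1 = \theta_P$ (weight $n/2$) and $f_2 = E_N(\tau,s;1)$ (weight $1$), obtaining
\[
[\theta_P, E_N(\tau,s;1)]_j = \frac{1}{(4\pi)^j} \binom{n/2+2j-1}{j}\, \theta_P \cdot R_1^j E_N(\tau,s;1) + R_{n/2+2j-1}(h)
\]
for a suitable $h$ of moderate growth (initially valid for $\Re(s)$ large enough that $E_N(\tau,s;1)$ grows only polynomially). Since $g \in S_{n/2+1+2j,\rho_L}$ is a cusp form, Lemma \ref{lem:rpet} (applied componentwise, i.e.\ after passing to scalar forms on a principal congruence subgroup) gives $(R_{n/2+2j-1}(h), g)_\Pet = 0$. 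Iterating \eqref{eq:er} yields
\[
R_1^j E_N(\tau,s;1) = \prod_{a=1}^{j} \tfrac{s+2a}{2}\;E_N(\tau,s;1+2j) = \tfrac{\Gamma(s/2+1+j)}{\Gamma(s/2+1)}\,E_N(\tau,s;1+2j).
\]
Combining these and noting $2j - k = n/2 + 2j - 1$ establishes the first displayed identity.

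Next I would obtain the Dirichlet series representation by unfolding. For $\Re(s)$ in the region of absolute convergence, expanding $E_N(\tau,s;1+2j)$ as a sum over $\Gamma'_\infty \bs \Gamma'$ and invoking the $\Gamma'$-invariance of the pairing $\langle\theta_P \otimes E_N, \overline{g}\rangle\, v^{n/2+1+2j}$ (where $g$ is viewed in $S_{n/2+1+2j,\rho_{P\oplus N}}$ via Lemma \ref{sublattice}) yields
\[
(\theta_P \otimes E_N(\tau,s;1+2j), g)_\Pet = \int_{\Gamma'_\infty \bs \H} v^{(s-2j)/2}\,\bigl\langle \theta_P(\tau)\otimes \phi_{0+N},\, \overline{g(\tau)}\bigr\rangle\, v^{n/2+1+2j}\,d\mu(\tau).
\]
The component $\phi_{0+N}$ picks out the pieces $\phi_{(\mu,0)}$ of the pairing, restricting the sum to $\mu \in P'/P$ (with the $g$-components identified via Lemma \ref{sublattice}). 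Integrating over $u \in [0,1]$ selects the diagonal Fourier indices, leaving
\[
\sum_{\substack{\mu\in P'/P \\ m>0}} r(m,\mu)\,\overline{b(m,\mu)} \int_0^\infty e^{-4\pi m v}\, v^{s/2+n/2+j-1}\,dv,
\]
whose inner integral equals $\Gamma(s/2+n/2+j)\,(4\pi m)^{-s/2-n/2-j}$. Combining with the first identity produces the stated formula, and analytic continuation in $s$ removes the temporary largeness assumption on $\Re(s)$.

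The main technical obstacle is bookkeeping across the lattice identification $L \supset P \oplus N$: the Rankin-Cohen bracket is naturally $S_{P\oplus N}$-valued while $g$ lives in $S_L$, so Lemma \ref{sublattice} must be used consistently to make sense of the pairing and to reconcile the indexing $\mu \in L'/L$ for $b(m,\mu)$ with the indexing $\mu \in P'/P$ for $r(m,\mu)$ in the Dirichlet series. A subsidiary but routine point is justifying the unfolding and the interchange of summation and integration, which is handled by first working in the region of absolute convergence and then extending by meromorphic continuation.
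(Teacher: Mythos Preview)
Your proposal is correct and follows essentially the same argument as the paper: both proofs apply Lemma~\ref{lem:rcr} together with Lemma~\ref{lem:rpet} to reduce the Rankin--Cohen bracket to $\theta_P\otimes R_1^j E_N(\tau,s;1)$, use the iterated raising identity \eqref{eq:er} to obtain the first formula, and then unfold against the Eisenstein series to derive the Dirichlet series. Your explicit discussion of the $L\supset P\oplus N$ bookkeeping via Lemma~\ref{sublattice} is a useful clarification that the paper leaves implicit here.
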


\begin{proof}
We rewrite the Petersson scalar product \eqref{eq:L} using
Lemma~\ref{lem:rcr} and Lemma~\ref{lem:rpet}. We obtain
\begin{align*}
L(g, U,s)&=\frac{1}{(4\pi)^j}
\binom{2j-k}{j}
\big( \theta_P \otimes (R_1^j E_N(\tau,s;1)),\,
g \big)_{\Pet}.
\end{align*}
According to
\eqref{eq:er} we have
\[
R_\ell^j E_N(\tau,s;1)=\frac{\Gamma(\frac{s}{2}+1+j)}{\Gamma(\frac{s}{2}+1)}  E_N(\tau,s;1+2j),
\]
and therefore
\begin{align}
\label{eq:ll2}
L(g, U,s)&=\frac{1}{(4\pi)^j}
\binom{2j-k}{j}\frac{\Gamma(\frac{s}{2}+1+j)}{\Gamma(\frac{s}{2}+1)}
\big( \theta_P \otimes E_N(\tau,s;1+2j),\,
g\big)_{\Pet}.
\end{align}

The latter scalar product can be computed by means of the usual unfolding argument. We obtain
\begin{align*}
\big( \theta_P \otimes E_N(\tau,s;1+2j),\,
g\big)_{\Pet}&= \int_{\Gamma'_\infty\bs\H} \langle \theta_P\otimes v^{\frac{s}{2}-j}\phi_{0+N},\, \overline g\rangle v^{1+\frac{n}{2}+2j}
\, d\mu(\tau)\\
&=\sum_{\substack{\mu\in P'/P\\m>0}} r(m,\mu)\overline{b(m,\mu)}
\int_0^\infty e^{-4\pi m v} v^{\frac{s}{2}+\frac{n}{2}+j-1}\,dv\\
&=\sum_{\substack{\mu\in P'/P\\m>0}} r(m,\mu)\overline{b(m,\mu)}
(4\pi m)^{-\frac{s}{2}-\frac{n}{2}-j} \Gamma\left(\frac{s}{2}+\frac{n}{2}+j\right).
\end{align*}
Inserting this into \eqref{eq:ll2}, we obtain the claimed Dirichlet series representation.
%
\end{proof}

\begin{theorem}
\label{thm:fund}
Let $f\in H_{k-2j, \bar\rho_L}$.
The value of the higher Green function
$\Phi^j(z,h,f)$ at the CM cycle $Z(U)$ is given by
\begin{align*}
\frac{1}{\deg(Z(U))}\Phi^j(Z(U),f)&=
\CT\left(\langle
f^+,\, [\theta_P,\calE_N^+]_j\rangle\right)
-L'\left(\xi_{k-2j}(f),U,0\right).
\end{align*}
Here $\calE_N^+$ denotes the holomorphic part of the harmonic Maass form $E_N'(\tau,0;1)$, see
\eqref{eq:calE}. The Rankin-Cohen bracket is taken for the weights $(1,1)$.
\end{theorem}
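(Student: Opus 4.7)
The strategy extends the approach of \cite{BY} (which handles the case $j=0$) to higher $j$ by combining repeated integration by parts with Rankin-Cohen bracket manipulations. I would proceed in three main steps.

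First, starting from Corollary~\ref{cor:limit}, the value $\Phi^j(Z(U),f)$ is expressed as a regularized limit of $\frac{1}{(4\pi)^j}\int_{\calF_T}\langle R^j_{k-2j}f,\,\theta_P\otimes E_N(\tau,0;-1)\rangle\,d\mu$. Since $\calE_N=E'_N(\tau,0;1)$, the identity \eqref{eq:l2} yields $E_N(\tau,0;-1)=2L\calE_N$. Because $\theta_P$ is holomorphic, $L\theta_P=0$, so the derivation rule for $L$ applied to the bilinear pairing gives
\[
\langle R^j_{k-2j}f,\,\theta_P\otimes E_N(\tau,0;-1)\rangle = 2\bigl(L\langle R^j_{k-2j}f,\,\theta_P\otimes\calE_N\rangle - \langle LR^j_{k-2j}f,\,\theta_P\otimes\calE_N\rangle\bigr).
\]
Stokes' theorem, via $L\varphi\,d\mu=-d(\varphi\,d\tau)$ together with the cancellation of vertical-side contributions on $\calF_T$ by modular invariance, converts the first summand into the constant Fourier coefficient in $u$ of $\langle R^j_{k-2j}f,\,\theta_P\otimes\calE_N\rangle$ evaluated at $v=T$.

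Second, I would match this boundary contribution with $\CT\langle f^+,\,[\theta_P,\calE_N^+]_j\rangle$ by iterating the integration-by-parts identity \eqref{eq:rd} to transfer the $j$-fold raising operator from $f$ onto $\theta_P\otimes\calE_N$, each step producing an additional boundary term. Lemma~\ref{lem:rcr} identifies the transferred iterated raising with the Rankin-Cohen bracket $[\theta_P,\calE_N]_j$ modulo a total raising, whose boundary contribution is handled separately. In the limit $T\to\infty$, only the holomorphic parts $f^+$ and $\calE_N^+$ survive in the constant Fourier coefficient (the non-holomorphic parts decay exponentially along the horocycle), and the logarithmic divergence is absorbed by $-2A_0\log T$ of Corollary~\ref{cor:limit}, producing $\deg Z(U)\cdot\CT\langle f^+,\,[\theta_P,\calE_N^+]_j\rangle$.

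Third, for the bulk integral $\int_{\calF_T}\langle LR^j_{k-2j}f,\,\theta_P\otimes\calE_N\rangle\,d\mu$, I would apply Lemma~\ref{lem:comrel} to rewrite $LR^j_{k-2j}f$ in terms of $R^j_{k-2j-2}L_{k-2j}f$ and a lower-order correction, then use $L_{k-2j}f=v^{2-k+2j}\overline{\xi_{k-2j}f}$ to introduce the cusp form $g:=\xi_{k-2j}f\in S_{2-k+2j,\rho_L}$. A further integration by parts brings the raising operators back onto $\theta_P\otimes\calE_N$, and Lemma~\ref{lem:rcr} assembles them into the Rankin-Cohen bracket $[\theta_P,\calE_N]_j$. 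The resulting integral is the Petersson pairing $([\theta_P,\calE_N]_j,\,g)_{\Pet}$, which equals $L'(g,U,0)$ because $L(g,U,s)=([\theta_P,E_N(\tau,s;1)]_j,\,g)_{\Pet}$ vanishes at $s=0$ by incoherence and $\calE_N$ is by definition the $s$-derivative of $E_N(\tau,s;1)$ there. The main obstacle is the careful bookkeeping of the many boundary contributions and combinatorial constants produced by the repeated integrations by parts; the combinatorics of Lemma~\ref{lem:rcr} and Proposition~\ref{prop:RC} are the key tool for reassembling them into the Rankin-Cohen bracket appearing in the final formula.
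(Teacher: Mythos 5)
Your toolkit is the right one, but the order in which you deploy it breaks the argument at the decisive step. The paper first moves the full $j$-fold raising operator onto the theta kernel by self-adjointness, and only then identifies the raised kernel, via Proposition \ref{prop:RC} with $L\theta_P=0$, as an exact lowering: $R_{-k}^j\big(\theta_P\otimes E_N(\tau,0;-1)\big)=2(-4\pi)^j L[\theta_P,E_N'(\tau,0;1)]_j$, see \eqref{eq:raisid}. Splitting off this outer $L$ then produces a boundary term involving $\langle f,[\theta_P,E_N'(\cdot,0;1)]_j\rangle$, whose limit is the claimed constant term, and a bulk term involving $\langle L_{k-2j}f,[\theta_P,E_N'(\cdot,0;1)]_j\rangle$, which is literally $v^{2-k+2j}\overline{\xi_{k-2j}(f)}$ times the bracket and hence yields the Petersson product computing $L'(\xi_{k-2j}(f),U,0)$. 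By peeling the $L$ off of $E_N(\tau,0;-1)=2L\calE_N$ \emph{before} dealing with the raising operators, you instead arrive at $\langle L R_{k-2j}^j f,\,\theta_P\otimes\calE_N\rangle$ in the bulk, and here your third step collapses: Lemma \ref{lem:comrel} gives $LR_{k-2j}^jf=R_{k-2j-2}^jL_{k-2j}f-j(k-j-1)R_{k-2j}^{j-1}f$, and the first term --- the one you claim introduces the cusp form --- vanishes identically, because $L_{k-2j}f=v^{2-k+2j}\overline{\xi_{k-2j}(f)}$ with $\xi_{k-2j}(f)$ holomorphic is annihilated by $R_{k-2j-2}$ by \eqref{eq:rn}. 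So $\xi_{k-2j}(f)$ never enters through your route; what survives is the commutator remainder $-j(k-j-1)R^{j-1}_{k-2j}f$ paired against $\theta_P\otimes\calE_N$, which your outline does not address and which cannot be converted into $\big([\theta_P,E_N'(\cdot,0;1)]_j,\,\xi_{k-2j}(f)\big)_\Pet$ by the moves you describe: Lemma \ref{lem:rpet} requires pairing against a cusp form, and $f$ grows exponentially, so the $R$-exact remainder in Lemma \ref{lem:rcr} cannot be discarded.

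The boundary bookkeeping in your second step has a parallel problem. The Stokes boundary term you obtain is the constant Fourier coefficient of $\langle R^j_{k-2j}f,\,\theta_P\otimes\calE_N\rangle$ at height $T$; since $R^j_{k-2j}q^{-m}=(4\pi m)^jq^{-m}(1+O(1/v))$, its regularized limit is $\sum_{m,\mu}c^+(-m,\mu)\,m^j\,b(m,\mu)$ with $b(m,\mu)$ the coefficients of $\theta_P\otimes\calE_N^+$, which is \emph{not} $\CT\langle f^+,[\theta_P,\calE_N^+]_j\rangle$: the bracket weights the convolution by $\sum_s(-1)^s\binom{\cdot}{s}\binom{\cdot}{j-s}n_1^{j-s}n_2^s$ rather than by $(n_1+n_2)^j$. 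Moreover, \eqref{eq:rd} is an identity of $2$-forms on $\calF_T$ and cannot be ``iterated'' on this line integral to repair the mismatch. Both defects are cured simultaneously by the paper's ordering: establish \eqref{eq:raisid} first, so that the Rankin--Cohen bracket is fully assembled inside the lowering operator before Stokes' theorem is applied.
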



\begin{proof}
According to Corollary \ref{cor:limit} we have
\begin{align*}
\Phi^j(Z(U),f)
&=\frac{\deg Z(U)}{2}\\
&\phantom{=}{}\times \lim_{T\to \infty}\left[\frac{1}{(4\pi)^j } \int_{\calF_T} \langle R_{k-2j}^j f(\tau),
\theta_P(\tau)\otimes E_N(\tau,0;-1) \rangle d\mu(\tau) -2A_0
\log(T)\right].
\end{align*}
We use the `self-adjointness' of the raising operator (which is a consequence of \eqref{eq:rd} and Stokes' theorem) to rewrite this as
\begin{align}
\label{eq:fund1}
\Phi^j(Z(U),f)
&=\frac{\deg Z(U)}{2}\\
\nonumber
&\phantom{=}{}\times \lim_{T\to \infty}\left[\frac{1}{(-4\pi)^j }\int_{\calF_T} \langle  f(\tau),
R_{-k}^j(\theta_P(\tau)\otimes E_N(\tau,0;-1)) \rangle d\mu(\tau) -2A_0
\log(T)\right].
\end{align}
Here the vanishing of the boundary terms in the limit $T\to \infty$ follows from Lemma~4.2 of \cite{Br} by inserting the Fourier expansions.
Since $R_{-1}E_N(\tau,0;-1)=0$ and because of \eqref{eq:l2}, we have
\begin{align*}
R_{-k}^j\left(\theta_P(\tau)\otimes E_N(\tau,0;-1)\right)&=\left(R_{1-k}^j \theta_P(\tau)\right)\otimes E_N(\tau,0;-1)\\
&=2 \left(R_{1-k}^j \theta_P(\tau)\right)\otimes \left(L_1 E_N'(\tau,0;1)\right).
\end{align*}
By Proposition \ref{prop:RC}, we find
\begin{align}
\label{eq:raisid}
R_{-k}^j\left(\theta_P(\tau)\otimes E_N(\tau,0;-1)\right)&=
2 (-4\pi)^j
L [\theta_P(\tau), E_N'(\tau,0;1)]_j.
\end{align}
Hence, we obtain for the integral
\begin{align}
I_T(f):=\frac{1}{(-4\pi)^j}\int_{\calF_T}  \langle  f(\tau),\,
R_{-k}^j(\theta_P(\tau)\otimes E_N(\tau,0;-1)) \rangle\,d\mu(\tau)
\end{align}
that
\begin{align*}
I_T(f)&=2\int_{\calF_T}   \langle  f,\,L [\theta_P, E_N'(\cdot,0;1)]_j
 \rangle\,d\mu(\tau)\\
&=2\int_{\calF_T}L   \langle  f,\, [\theta_P, E_N'(\cdot,0;1)]_j
 \rangle\,d\mu(\tau)-2\int_{\calF_T}   \langle ( Lf),\, [\theta_P, E_N'(\cdot,0;1)]_j
 \rangle\,d\mu(\tau).
\end{align*}
The second summand on the right hand side can be interpreted as the Petersson scalar product of $[\theta_P, E_N'(\cdot,0;1)]_j$ and the cusp form $\xi_{k-2j}(f)$.
The first summand can be computed by means of Stokes' theorem. We get
\begin{align*}
\int_{\calF_T}L   \langle  f,\, [\theta_P, E_N'(\cdot,0;1)]_j
 \rangle\,d\mu(\tau)&=-\int_{\calF_T} d\left(  \langle  f,\, [\theta_P, E_N'(\cdot,0;1)]_j
 \rangle\,d\tau\right)\\
&=-\int_{\partial \calF_T}   \langle  f,\, [\theta_P, E_N'(\cdot,0;1)]_j
 \rangle\,d\tau\\
&=\int_{\tau=iT}^{iT+1}   \langle  f,\, [\theta_P, E_N'(\cdot,0;1)]_j
 \rangle\,d\tau.
\end{align*}
Consequently,
\begin{align*}
\lim_{T\to \infty} \left[ I_T(f)-2A_0\log(T)\right]&=2\lim_{T\to \infty} \left[ \int_{\tau=iT}^{iT+1}   \langle  f,\, [\theta_P, E_N'(\cdot,0;1)]_j\rangle\, d\tau -A_0\log(T)\right]\\
&\phantom{=}{}-2\cdot \big( [\theta_P, E_N'(\cdot,0;1)]_j, \,\xi_{k-2j}(f)\big)_\Pet.
\end{align*}
As in the proof of \cite[Theorem 4.7]{BY} we find that the first term on the right hand side is equal to
\[
2\CT\left(\langle  f^+,\, [\theta_P, \calE_N^+]_j\rangle\right).
\]
Putting this into \eqref{eq:fund1} and inserting \eqref{eq:L}, we obtain
\begin{align*}
\Phi^j(Z(U),f)
&=\deg (Z(U) )\left(\CT\left(\langle  f^+,\, [\theta_P, \calE_N^+]_j\rangle\right)
 -L'(\xi_{k-2j}(f),U,0)\right).
\end{align*}
This concludes the proof of the theorem.
\end{proof}

In the same way one proves the following result for the values at individual CM points.

\begin{theorem}
\label{thm:fund2}
Let $f\in M^!_{k-2j, \bar\rho_L}$ and $h\in T(\A_f)$.
Then we have
\begin{align*}
 \Phi^j(z_U^\pm,h,f)&=
\CT\left(\langle
f,\, [\theta_P,\calG_{N}^+(\tau,h)]_j\rangle\right).
\end{align*}
Here, $\calG_{N}^+(\tau,h)$ denotes holomorphic part of \emph{any} harmonic Maass form $\calG_{N}(\tau,h) \in H_{1, \rho_N}^!$
satisfying $L_1 \calG_{N}(\tau,h) = \theta_N(\tau,h)$.
\end{theorem}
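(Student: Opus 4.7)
I would parallel the proof of Theorem \ref{thm:fund} line for line, with two simplifications. First, we stay at a single CM point rather than averaging over the cycle $Z(U)$, so the Siegel--Weil--produced Eisenstein series $E_N(\tau,0;-1)$ and its preimage $E_N'(\tau,0;1)$ are replaced by the individual binary theta function $\theta_N(\tau,z_U^\pm,h)$ and the given preimage $\calG_N(\tau,h)$. Second, $f\in M^!_{k-2j,\bar\rho_L}$ is weakly holomorphic, so $\xi_{k-2j}f=0$: this kills the $L'$-term in the answer and reduces the Stokes step to a single cusp boundary contribution.

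After reducing to $L=P\oplus N$ via Lemma \ref{sublattice} (replacing $f$ by $f_{P\oplus N}$) and splitting $\theta_L(\tau,z_U^\pm,h)=\theta_P(\tau)\otimes\theta_N(\tau,z_U^\pm,h)$, Lemma \ref{limit} gives
\[
\Phi^j(z_U^\pm,h,f) = \lim_{T\to\infty}\left[\frac{1}{(4\pi)^j}\int_{\calF_T}\langle R^j_{k-2j}f,\,\theta_P\otimes\theta_N\rangle\,d\mu(\tau) - A_0\log T\right],
\]
and self-adjointness of the raising operator, via \eqref{eq:rd} and Stokes as in the passage to \eqref{eq:fund1}, moves $R^j_{k-2j}$ onto the theta kernel (the boundary contributions vanish in the limit by Lemma~4.2 of \cite{Br}). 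The key algebraic identity replacing \eqref{eq:raisid} is
\[
R^j_{-k}\bigl(\theta_P\otimes\theta_N(\tau,z_U^\pm,h)\bigr) = (-4\pi)^j\,L\bigl[\theta_P,\,\calG_N(\tau,h)\bigr]_j,
\]
without the factor of $2$ from \eqref{eq:raisid} (since here $L\calG_N=\theta_N$ directly rather than $L E_N'(\cdot,0;1)=\tfrac12 E_N(\cdot,0;-1)$). To verify it, note that $R_{-1}\theta_N(\tau,z_U^\pm,h)=0$, because $\theta_N$ at $z=z_U^\pm$ is $v$ times an anti-holomorphic function, cf.~\eqref{eq:rn}; hence the binomial Leibniz rule for iterated raising collapses to $R^j_{-k}(\theta_P\otimes\theta_N)=(R^j_{n/2}\theta_P)\otimes\theta_N$. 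On the other hand, Proposition \ref{prop:RC} applied with $L\theta_P=0$ and $L\calG_N=\theta_N$ gives $(-4\pi)^jL[\theta_P,\calG_N]_j=(R^j_{n/2}\theta_P)\cdot\theta_N$, matching.

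Since $f$ is weakly holomorphic, $Lf=0$, and so $\langle f,L[\theta_P,\calG_N]_j\rangle=L\langle f,[\theta_P,\calG_N]_j\rangle$ by the product rule for $L$. Stokes' theorem applied to the weight-$2$ function $\Psi:=\langle f,[\theta_P,\calG_N(\tau,h)]_j\rangle$ (using $L(\Psi)\,d\mu=-d(\Psi\,d\tau)$ together with the cancellation of the side contributions along $\partial\calF_T$ by modular invariance) converts the truncated domain integral into a boundary integral at height $v=T$:
\[
\Phi^j(z_U^\pm,h,f) = \lim_{T\to\infty}\left[\int_{\tau=iT}^{iT+1}\Psi(\tau)\,d\tau - A_0\log T\right].
\]
Writing $\calG_N=\calG_N^++\calG_N^-$, the constant-in-$u$ coefficient of the $\phi_{0+N}$-component of $\calG_N^-(\tau,h)$ equals $\log v$ up to a constant: this is forced by solving $L\calG_N^-=\theta_N$ in the $\mu=0$ sector, where the $u$-constant Fourier coefficient of $\theta_N$ is simply $v$ (the same leading Fourier mode as $E_N(\tau,0;-1)$ via Proposition \ref{prop:sw}). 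This forces the $\log T$ divergence of the boundary integral to be precisely $A_0\log T$, so the two $\log T$ terms cancel and the limit reads off the constant term of the holomorphic Laurent series $\langle f,[\theta_P,\calG_N^+(\tau,h)]_j\rangle$, as claimed.

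\textbf{Main obstacle.} The delicate point is the quantitative asymptotic matching in the last step, i.e.\ verifying that the coefficient of $\log T$ produced by the $\log v$-contribution of $\calG_N^-$ (propagated through the Rankin--Cohen bracket and paired with the holomorphic principal part of $f$) is exactly the $A_0$ from Lemma \ref{limit}. Once this is checked, independence of the right-hand side under the choice of preimage $\calG_N$ is automatic: two choices differ by a weakly holomorphic form $\psi\in M^!_{1,\rho_N}$, and the resulting discrepancy equals the constant term of the weakly holomorphic weight-$2$ form $\langle f,[\theta_P,\psi]_j\rangle$ for $\bar\rho_{P\oplus N}\otimes\rho_{P\oplus N}$, which vanishes by the residue theorem (Serre duality).
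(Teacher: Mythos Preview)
Your proposal is correct and follows precisely the approach the paper intends: the paper states only ``In the same way one proves the following result for the values at individual CM points,'' and you have accurately filled in the details, namely starting from Lemma~\ref{limit} rather than Corollary~\ref{cor:limit}, replacing $E_N(\tau,0;-1)$ and $E_N'(\tau,0;1)$ by $\theta_N(\tau,z_U^\pm,h)$ and $\calG_N(\tau,h)$ (so the factor~$2$ from \eqref{eq:l2} disappears), and observing that $Lf=0$ kills the Petersson term. The $A_0\log T$ matching you flag as the ``main obstacle'' is no more delicate here than in the proof of Theorem~\ref{thm:fund}: the $(0,0)$-Fourier coefficient of $\calG_N^-$ solves $L(g^-)=v$ and hence has the same $\log v$ shape as that of $\calE_N^-$, so the same reference to \cite[Theorem~4.7]{BY} applies verbatim.
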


By taking the particulary nice preimages $\calG_{N}(\tau,h)$ from Theorem \ref{thm:preimages},
we obtain the following algebraicity statement.
We use the same notation as in Theorem \ref{thm:preimages},
in particular, $D<0$ denotes the discriminant of $N$, $\k_D = \Q(\sqrt{D})$,
and we write $H_D$ for the ring class field of the order $\calO_D \subset \k_D$
of discriminant $D$.

\begin{corollary}
\label{cor:alg}
Assume that $f \in M_{k-2j, \bar{\rho}_L}^!$ has integral Fourier  coefficients and that $Z^j(f)$ is disjoint from $Z(U)$.
For every $h \in \Cl(\calO_D)$ there is an $\alpha_{U,f}(h) \in H_D^\times$ such that:
   \begin{enumerate}
   	\item
		We have
   \[
     A^j \cdot \Phi^j(z_U^\pm,h,f) = -\frac{1}{r} \cdot \log |\alpha_{U,f}(h)|,
   \]
   where $A\in \Z_{>0}$ is the least common multiple of the levels of the lattices $P$ and $N$,
   and $r \in \Z_{>0}$ is a constant that only depends on $L$ and $D$ (but not on $f$, $h$ or $j$).
   \item The algebraic numbers $\alpha_{U,f}(h)$ satisfy the Shimura reciprocity law
   \[
   	 \alpha_{U,f}(h) = \alpha_{U,f}(1)^{[h,\, \k_D]}.
   \]
   \end{enumerate}
 \end{corollary}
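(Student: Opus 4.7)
The strategy is to combine the exact formula of Theorem \ref{thm:fund2} with the arithmetic input provided by Theorem \ref{thm:preimages}. Apply Theorem \ref{thm:fund2} with $\calG_N(\tau,h) \in H_{1,\rho_N}^!$ chosen to be the distinguished preimage constructed in Theorem \ref{thm:preimages}, so that
\[
\Phi^j(z_U^\pm,h,f) = \CT\bigl\langle f_{P\oplus N},\, [\theta_P, \calG_N^+(\tau,h)]_j\bigr\rangle,
\]
where $f_{P\oplus N}$ is $f$ viewed with representation $\bar\rho_{P\oplus N}$ via Lemma \ref{sublattice}. Writing $f_{P\oplus N}(\tau) = \sum_{\ell,\lambda} a(\ell,\lambda)\, q^\ell \phi_\lambda$, $\theta_P(\tau) = \sum_{n,\nu} r(n,\nu)\, q^n \phi_\nu$, and $\calG_N^+(\tau,h) = \sum_{m,\mu} c^+(h,m,\mu)\, q^m \phi_\mu$, expand the Rankin--Cohen bracket via \eqref{eq:defRC} to obtain a finite sum
\[
\Phi^j(z_U^\pm,h,f) \;=\; \sum_{s=0}^{j} (-1)^s\binom{j}{s}\binom{j}{j-s} \sum_{\substack{\ell+n+m=0 \\ \lambda \equiv (\nu,\mu)}} a(\ell,\lambda)\, r(n,\nu)\, n^{j-s} m^s\, c^+(h,m,\mu).
\]

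The next step is a denominator analysis. The Fourier indices of $\theta_P$ and $\calG_N^+$ lie in $\tfrac{1}{A}\Z$, where $A=\lcm(\text{level}(P),\text{level}(N))$; hence each product $A^j n^{j-s} m^s$ is an integer. Combined with integrality of $a(\ell,\lambda)$ (by assumption on $f$), nonnegative integrality of $r(n,\nu)$, and integrality of the binomial coefficients, this reduces $A^j\Phi^j(z_U^\pm,h,f)$ to an integer linear combination of the values $c^+(h,m,\mu)$. For every index with $(m,\mu)\neq(0,0)$, Theorem \ref{thm:preimages}(2) supplies $c^+(h,m,\mu)=-\tfrac{1}{r}\log|\alpha_L(h,m,\mu)|$ with $\alpha_L(h,m,\mu)\in H_D^\times$. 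Multiplicativity of $\log|\cdot|$ then packages these contributions as $-\tfrac{1}{r}\log|\alpha_{U,f}(h)|$ for a single $\alpha_{U,f}(h)\in H_D^\times$, proving part (1). Part (2) is inherited termwise: applying $[h,\k_D]\in\Gal(H_D/\k_D)$ permutes each $\alpha_L(1,m,\mu)$ to $\alpha_L(h,m,\mu)$ by Theorem \ref{thm:preimages}(3), hence $\alpha_{U,f}(h)=\alpha_{U,f}(1)^{[h,\k_D]}$.

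The main obstacle, and the place where the disjointness hypothesis $Z^j(f)\cap Z(U)=\emptyset$ enters, is the handling of the coefficient $(m,\mu)=(0,0+N)$. Such a coefficient can only arise from the $s=0$ summand $\theta_P^{(j)}\calG_N^+$ of the Rankin--Cohen bracket (higher derivatives of $\calG_N^+$ kill its constant term). By Theorem \ref{thm:preimages}(4),
\[
c^+(h,0,0+N) \;=\; \tfrac{2}{r}\log|\alpha_L(h,0,0+N)| + \kappa(0,0+N),
\]
and the universal summand $\kappa(0,0+N)$ is not a priori log-algebraic. The algebraic part $\tfrac{2}{r}\log|\alpha_L(h,0,0+N)|$ is folded into $\alpha_{U,f}(h)$ exactly as above. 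The remaining task is therefore to verify that the integer multiplier of $\kappa(0,0+N)$, namely
\[
N_0 \;=\; A^j\sum_{\substack{n+\ell=0 \\ \lambda \equiv (\nu,0+N)}} a(\ell,\lambda)\, r(n,\nu)\, n^{j},
\]
can be absorbed (or equivalently that the resulting constant contribution is itself of the form $-\tfrac{1}{r}\log|\beta|$ for some $\beta\in H_D^\times$ independent of $h$); this is where careful bookkeeping of the Fourier support of the principal part of $f$ against the trivial coset in $N'/N$ is required, and where the hypothesis on $Z^j(f)$ is used. Once this is dispatched, the constant $\beta$ is incorporated into $\alpha_{U,f}(1)$, and the Shimura reciprocity in part (2) is unaffected since $\beta$ is Galois-fixed.
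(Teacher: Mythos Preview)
Your strategy—Theorem~\ref{thm:fund2} combined with the special preimages of Theorem~\ref{thm:preimages}—is exactly the paper's. But two things are left incomplete.

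The $(m,\mu)=(0,0)$ step, which you explicitly flag as open, must be closed by showing that the multiplier $N_0$ vanishes; your proposed alternative (that $N_0\,\kappa(0,0)$ might itself equal $-\tfrac{1}{r}\log|\beta|$ for some algebraic $\beta$) does not work, since $\kappa(0,0)$ is essentially $\Lambda'(\chi_D,0)/\Lambda(\chi_D,0)$ and is transcendental in general. The vanishing $N_0=0$ is forced by disjointness as follows. A nonzero summand requires $n>0$ and $\nu\in P'/P$ with both $r(n,\nu)\neq 0$ and $a(-n,(\nu,0))\neq 0$. The first condition yields $\lambda\in\nu+P\subset U^\perp$ with $Q(\lambda)=n$; the second, via Lemma~\ref{sublattice}, gives $c_f^+(-n,\lambda+L)\neq 0$. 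Since $\lambda\perp U_\R$, the point $(z_U^\pm,h)$ lies on $Z(n,\lambda+L)$, which then occurs in $Z^j(f)$ with nonzero coefficient—contradicting $Z^j(f)\cap Z(U)=\emptyset$.

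Second, you omit the step on which the paper's own proof actually concentrates: the verification that $K_T=K\cap T(\A_f)\cong\hat\calO_D^\times$. This is what makes $h\mapsto\Phi^j(z_U^\pm,h,f)$ (and $h\mapsto\calG_N(\tau,h)$) well defined on $\Cl(\calO_D)$, so that Theorem~\ref{thm:preimages}—which is formulated for $h\in\Cl(\calO_D)$—can be invoked at all. It follows from the maximality of $\GSpin(\hat N)$ among subgroups of $\GSpin(U)(\A_f)$ preserving $\hat N$ and acting trivially on $N'/N$. A minor correction on the bookkeeping: the Rankin--Cohen coefficients for $[\theta_P,\calG_N^+]_j$ are $\binom{n/2+j-1}{s}\binom{j}{j-s}$ (the weight of $\theta_P$ is $n/2$), not $\binom{j}{s}\binom{j}{j-s}$; for odd $n$ these are not integers but carry only $2$-power denominators bounded by $4^j$, which $A^j$ still absorbs since an even lattice of odd rank has level divisible by $4$.
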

 \begin{proof}
   Recall that we assume that $L = P \oplus N$ holds.
   According to \cite[Lemma 2.6]{madapusi-pera-spin}, the group
   $\GSpin(\hat{L})$ is the maximal subgroup of $H(\A_f)$
   that preserves $L$ and acts trivially on $L'/L$.
   Hence $f$ is $\GSpin(\hat{L})$-invariant and we
   can assume that $K = \GSpin(\hat{L})$.
   We now show that $K_T \cong \hat{\calO}_{D}^\times$.
   Consider the embedding $\iota: T \hookrightarrow H$,
   where $T$ acts trivially on $U^\perp$.
   Since $\GSpin(\hat{N}) = \hat\calO_{D}^\times$, we have $\iota(\hat{\calO}_{D}^\times) \subset K_T$.
   However, by the maximality of $\GSpin(\hat{N})$,
   the other inclusion follows as well.
   Now the assertion follows from Theorem \ref{thm:preimages}.
%
 \end{proof}

\subsection{General CM cycles}

Let $d \in \Z_{\geq 0}$, and let $F$ be a totally real number field of degree $d+1$ with real embeddings $\sigma_0, \dots, \sigma_d$. Let $(W, Q_F)$  be a quadratic space over $F$ of dimension $2$ with signature $(0, 2)$ at the place $\sigma_0$ and signature $(2,0)$ at the places $\sigma_1,\dots ,\sigma_d$.
%
%
Let $W_\Q=\Res_{F/\Q} W =(W, Q_\Q)$ be  the space $W$ viewed as a $\Q$-vector space with the $\Q$-valued quadratic form $Q_\Q(w) =\tr_{F/\Q}(Q_F(w))$. Then  $W_\Q$ is a quadratic space over $\Q$ of signature $(2d, 2)$. In this subsection we assume that there is an isometric embedding $i: (W_\Q, Q_\Q) \to (V, Q)$, which we fix throughout.
This gives an orthogonal decomposition
\begin{align*}
V \cong V_0 \oplus W_\Q.
\end{align*}

Let $T\subset H$ be the inverse image under the natural map of the subgroup $\Res_{F/\Q} \SO(W)$ of $\SO(W_\Q)$. Then $T$ is a  torus in $H$, fitting into the commutative diagramm
$$
\xymatrix{
1 \ar[r] &\mathbb G_m \ar[r] \ar[d] &T \ar[r] \ar[d]&\Res_{F/\Q} \SO(W) \ar[r] \ar[d] &1
\\
1 \ar[r] &\mathbb G_m \ar[r] &H \ar[r] &\SO(V) \ar[r] &1.
}
$$
The even Clifford algebra $C_F^0(W)$ of $W$ is a CM field $E$ over $F$.
It is easily checked that $T(\Q)\cong E^\times/F^1$, where $F^1$ denotes the group of norm $1$ elements in $F$.

The subspace $W_{\sigma_0} =W\otimes_{F, \sigma_0}\R \subset V_\R$ is a negative $2$-plane. Together with the choice of an orientation it determines two points $z_{\sigma_0}^\pm$ in $\calD$. The image of natural map
$$
T(\Q)\bs \{ z_{\sigma_0}^\pm\}  \times T(\A_f)/K_T  \longrightarrow X_K,
$$
where $K_T = T(\A_f) \cap K$, determines
a CM cycle $Z(W, \sigma_0)$ of dimension $0$, which is defined over $\sigma_0(F)$. Its Galois conjugate $\sigma_i \sigma_0^{-1} (Z(W, \sigma_0))$ is defined  over $\sigma_i(F)$ for $0 \le i \le d$. It  is equal to a certain Hecke translate of  the CM cycle $Z(W_i, \sigma_i)$, where $W_0=W$, and  $(W_i, Q_{F,i})$ is the quadratic space over $F$ such that $(W_{i, v}, Q_i) \cong (W_v, Q_F)$ for all  primes (finite and infinite)  $v \ne \sigma_0, \sigma_i$, and $W_{i, \sigma_0}$ is positive definite and $W_{i, \sigma_i}$ is negative definite. Notice that there is an isometry of quadratic spaces $W_\Q \cong W_{i, \Q}$ over $\Q$.
 The specific Hecke translate is given  in \cite[Section 2]{BKY} and is related to the choices of isomorphisms $W_\Q \cong W_{i, \Q}$ and $W_{f} \cong W_{i, f}$. We refer to \cite{BKY} for details.  Hence the CM cycle
\begin{equation}
Z(W) = \sum_{i=0}^d Z(W_i, \sigma_i)
\end{equation}
is defined over $\Q$. We remark  that  different $i$'s might give the same Galois conjugate, in such a case $Z(W)$ is a multiple of the formal sum of the Galois conjugates of  $Z(W, \sigma_0)$. When  $F=\Q$, $Z(W)$ is a small CM cycle as defined before. When  $V_0=0$, i.e.,
$V\cong W_\Q$, it is a big CM cycle studied in \cite{BKY}.  The general case is studied by Peng Yu in his thesis \cite{Yu-thesis}.

Let $N=L\cap W_\Q$ and $P=L\cap V_0$, and let $\theta_P(\tau)$ be the Siegel-theta function of weight $\frac{n}2 -d$ associated to $P$. Associated to $N \subset W_\Q=W$, there are $d+1$ coherent  Hilbert Eisenstein series $E_N(\vec\tau, s; \mathbf{1}(i))$ of weight $\mathbf{1}(i)$ over $F$ ($0\le i \le d$) and one incoherent  Hilbert  Eisenstein series $E_N(\vec\tau, s;\mathbf{1})$ of weight $\mathbf{1}=(1, \dots, 1)$.  Here $\mathbf{1}(i)$ is obtained from  $\mathbf{1}$ by replacing the $i$-th entry $1$ by $-1$ as in  \cite[Section 5]{BKY}. These Eisenstein series  are related by the identity
\begin{equation}
L_{1, i} E_N'(\vec\tau, 0; \mathbf{1}) = \frac{1}2  E_N(\vec\tau, 0; \mathbf{1}(i)).
\end{equation}
Here $L_{1, i}$ is the Maass lowering operator with respect to the variable $\tau_i$. In particular, if we denote by $\H\to \H^{d+1}$, $\tau\mapsto \tau^\Delta =(\tau, \dots, \tau)$ the diagonal embedding, we have
$$
L_1 E_N'(\tau^\Delta, 0; \mathbf{1}) = \frac{1}2 \sum_i E_N(\tau^\Delta, 0; \mathbf{1}(i)).
\quad
$$
The same argument as in Corollary \ref{cor:limit} (see also \cite{Yu-thesis} or \cite{BKY}) leads to the following proposition.

\begin{proposition}
Let the notation be as above, and let $f\in H_{k-2j, \bar\rho_L}$. Then the CM value of the higher Green function $\Phi^j(z,h,f)$ is given by
\begin{align*}
\Phi^j(Z(W), f) &= \frac{\deg Z(W, \sigma_0)}{2(4\pi)^j}  \int^\reg_\calF \big\langle R_{k-2j}^j f, \,\theta_P \otimes \sum_i E_N(\tau^\Delta, 0; \mathbf{1}(i)) \big\rangle \,d\mu(\tau)
 \\
  &=\frac{\deg Z(W, \sigma_0)}{(-4\pi)^j}\int^\reg_\calF \left\langle  f,\, R_{-k}^j(\theta_P \otimes L_1E_N'(\tau^\Delta, 0; \mathbf{1})) \right\rangle  \,d\mu(\tau).
\end{align*}
\end{proposition}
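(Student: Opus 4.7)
The argument closely parallels the derivation of Corollary \ref{cor:limit} in the small CM case, with the classical Siegel--Weil formula replaced by its Hilbert analogue for the torus $T$, as worked out in \cite{BKY} and extended in \cite{Yu-thesis}. First, decompose $Z(W) = \sum_{i=0}^{d} Z(W_i,\sigma_i)$ and unfold each summand along the torus via the analogue of \eqref{eq:greencm}. Using the orthogonal decomposition $V = V_0 \oplus W_\Q$, together with Lemma \ref{sublattice} to replace $f$ by $\operatorname{res}_{L/(P\oplus N)}f$, the Siegel theta at the CM point factors as $\theta_L(\tau,z_{\sigma_i}^{\pm},h) = \theta_P(\tau) \otimes \theta_{N,i}(\tau^\Delta,h)$, where $\theta_{N,i}(\vec\tau,h)$ is the Hilbert theta function of the binary $F$-lattice $N \subset W_i$, of weight $\mathbf{1}(i)$ on $\H^{d+1}$, restricted to the diagonal.

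Averaging this factorization over $T(\Q)\bs T(\A_f)$, Kudla's Hilbert--Siegel--Weil formula (in the normalization of \cite[Section 5]{BKY}) identifies the torus integral of $\theta_{N,i}(\vec\tau,h)$ with the coherent Hilbert Eisenstein series $E_N(\vec\tau,0;\mathbf{1}(i))$. Summing over $i$ and restricting to $\tau^\Delta$, the identity $L_{1,i} E_N'(\vec\tau, 0; \mathbf{1}) = \tfrac12 E_N(\vec\tau, 0; \mathbf{1}(i))$ stated immediately above the proposition gives $\sum_i E_N(\tau^\Delta,0;\mathbf{1}(i)) = 2 L_1 E_N'(\tau^\Delta,0;\mathbf{1})$. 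The regularization is handled exactly as in Lemma \ref{limit}: the logarithmic divergence in $T$ arises only from the constant term of $\theta_P$, and the same truncation argument yields the first displayed identity.

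To pass from the first to the second displayed identity, iterate the integration-by-parts relation \eqref{eq:rd} and apply Stokes' theorem on the truncated fundamental domain $\calF_T$ to transfer $R_{k-2j}^{j}$ from $f$ to $\theta_P \otimes L_1 E_N'(\tau^\Delta,0;\mathbf{1})$, producing the sign $(-1)^j$. The boundary terms at $v = T$ vanish in the limit by the Fourier-expansion estimates of \cite[Lemma 4.2]{Br}, exactly as in the derivation of \eqref{eq:fund1} in the proof of Theorem \ref{thm:fund}, so the regularized integrals agree.

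The principal technical obstacle is the bookkeeping of normalizations: Haar measures on $T(\A)$ relative to the factor $\deg Z(W,\sigma_0)$, and the normalization of the Hilbert Eisenstein series matching the coherent/incoherent conventions. These normalizations are settled in \cite{BKY} for the big CM case ($V_0=0$) and worked out in \cite{Yu-thesis} for the intermediate setting considered here, so only their transcription into the present notation is required.
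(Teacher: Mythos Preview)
Your proposal is correct and follows essentially the same approach the paper indicates: the paper simply states that ``the same argument as in Corollary \ref{cor:limit} (see also \cite{Yu-thesis} or \cite{BKY}) leads to the following proposition,'' and your outline---splitting the theta function along $V_0\oplus W_\Q$, applying the Hilbert Siegel--Weil formula from \cite{BKY}/\cite{Yu-thesis} to each $Z(W_i,\sigma_i)$, and then moving $R_{k-2j}^j$ across via \eqref{eq:rd} and Stokes as in \eqref{eq:fund1}---is precisely that argument spelled out. Your remarks on normalizations and the vanishing of boundary terms via \cite[Lemma~4.2]{Br} are accurate and match how the small CM case is handled.
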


In order to derive an explicit formula for this CM value analogous to
Theorem \ref{thm:fund}, we need to find an explicit modular form $G$ on $\H$ (smooth and with possible `poles' at cusps)
such that
\begin{align}
\label{eq:wanted}
L_{2-k+2j} (G) =R_{-k}^j \left(\theta_P \otimes L_1E_N'(\tau^\Delta, 0; \mathbf{1}) \right).
\end{align}
In the case of small CM cyles, that is, for $d=0$ we could use
\eqref{eq:raisid} for this purpose.

There is one further case, in which we can determine a function $G$, this is the case when $d=1$ and $V_0=0$, which we assume for the rest of this subsection. These conditions imply that $F$ is real quadratic, $V=\Res_{F/\Q}(W)$ has signature $(2,2)$, $L=N$, and $k=0$. The function $G$ is obtained using a Cohen operator on Hilbert modular forms, which is a slight variant of the Rankin-Cohen bracket considered in Section \ref{sect:3.1}. Let $g:\H^2\to \C$ be a smooth Hilbert modular form of weight $(k_1,k_2)$ for some congruence subgroup of $\SL_2(F)$. Define the $j$-th Cohen operator as
\begin{align}
\label{eq:defCO}
\calC_j(g)(\tau)= \frac{1}{(2\pi i)^j}\sum_{s=0}^j (-1)^s \binom{k_1+j-1}{s} \binom{k_2+j-1}{j-s}  \left(\frac{\partial^{j-s}}{\partial \tau_1^{j-s}}
\frac{\partial^s}{\partial \tau_2^s} \,g\right) (\tau,\tau) .
\end{align}
Then $\calC_j(g)$ is a smooth function on $\H$ which is modular in weight $k_1+k_2+2j$ for some congruence subgroup. The following result generalizes Proposition \ref{prop:RC}.

\begin{proposition}
\label{prop:CO}
Let $g$ be a smooth Hilbert modular form of weight $(k_1,k_2)$ for the real quadratic field $F$. Assume that, as a function of the first variable, $g$ is annihilated by $\Delta_{k_1}$, and, as a function of the second variable, $g$ is annihilated by $\Delta_{k_2}$. Then
for any non-negative integer $j$ we have
\[
(-4\pi)^j L (\calC_j(g)) = \binom{k_2+j-1}{j} (R_{k_1,1}^j L_{k_2,2} g)(\tau,\tau)+ (-1)^j \binom{k_1+j-1}{j} (L_{k_1,1} R_{k_2,2}^j g)(\tau,\tau).
\]
\end{proposition}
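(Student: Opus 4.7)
The plan is to carry out the bivariate analogue of the argument for Proposition \ref{prop:RC}, exploiting that the raising operators in the two variables commute and that on the diagonal $v_1 = v_2 = v$.

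First I would establish the alternative representation of the Cohen operator in terms of iterated raising operators, parallel to identity \eqref{eq:defRC2}:
\begin{align*}
\calC_j(g)(\tau) = \frac{1}{(-4\pi)^j} \sum_{s=0}^j (-1)^s \binom{k_1+j-1}{s} \binom{k_2+j-1}{j-s} \left(R_{k_1,1}^{j-s} R_{k_2,2}^{s} g\right)(\tau,\tau).
\end{align*}
Since $R_{k_1,1}$ and $R_{k_2,2}$ act on independent variables they commute, and the $v_1^{-1}$ and $v_2^{-1}$ corrections they introduce coincide after restriction to $v_1 = v_2 = v$. The resulting combinatorial identities are exactly those appearing in the one-variable Rankin-Cohen case. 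This step is the principal technical obstacle: although the algebra is essentially classical, one has to verify carefully how the two families of $v$-corrections collapse after passage to the diagonal. A clean argument proceeds by induction on $j$ using the recursion $R_k^{j+1} = R_{k+2j} R_k^j$ in each variable, or by comparing both sides as polynomials in the partial derivatives of $g$ evaluated on the diagonal.

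Next I would apply the weight $k_1+k_2+2j$ lowering operator to both sides. The chain rule, together with $v = v_1 = v_2$ on the diagonal, gives $L[h(\tau,\tau)] = (L_1 h + L_2 h)(\tau,\tau)$ for any smooth $h$ on $\H^2$. Distributing $L$ and using that $L_1$ commutes with $R_{k_2,2}$ (and $L_2$ with $R_{k_1,1}$), one obtains
\begin{align*}
(-4\pi)^j L\calC_j(g) = \sum_{s=0}^j (-1)^s \binom{k_1+j-1}{s}\binom{k_2+j-1}{j-s}\left[R_{k_2,2}^s L_1 R_{k_1,1}^{j-s} g + R_{k_1,1}^{j-s} L_2 R_{k_2,2}^s g\right](\tau,\tau).
\end{align*}

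Finally, I would invoke the harmonicity hypotheses. Since $g$ is annihilated by $\Delta_{k_1}$ in the first variable, Lemma \ref{lem:comrel} yields $L_1 R_{k_1,1}^{j-s} g = -(j-s)(k_1+j-s-1) R_{k_1,1}^{j-s-1} g$ for $0 \le s \le j-1$, and the analogous identity for $L_2 R_{k_2,2}^s g$ for $1 \le s \le j$. The boundary term $s = j$ of the first inner sum contributes $(-1)^j\binom{k_1+j-1}{j}(L_{k_1,1} R_{k_2,2}^j g)(\tau,\tau)$, while the boundary term $s=0$ of the second contributes $\binom{k_2+j-1}{j}(R_{k_1,1}^j L_{k_2,2} g)(\tau,\tau)$; these are precisely the two summands on the right-hand side of the asserted identity. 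After a reindex $s\mapsto s+1$ in one of the two sums of interior terms, they cancel pairwise by the elementary binomial identity
\begin{align*}
\binom{k_1+j-1}{s}\binom{k_2+j-1}{j-s}(j-s)(k_1+j-s-1) = \binom{k_1+j-1}{s+1}\binom{k_2+j-1}{j-s-1}(s+1)(k_2+s),
\end{align*}
which follows from $\binom{k+j-1}{s+1}(s+1) = \binom{k+j-1}{s}(k+j-s-1)$ applied to both factors. This completes the scheme.
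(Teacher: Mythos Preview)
Your proposal is correct and is precisely the bivariate adaptation of the proof of Proposition~\ref{prop:RC} that the paper intends; the paper does not spell out a proof for Proposition~\ref{prop:CO}, merely stating that it generalizes Proposition~\ref{prop:RC}. Your three steps---rewriting $\calC_j$ via iterated raising operators (the two-variable analogue of \eqref{eq:defRC2}), distributing $L$ on the diagonal via $L[h(\tau,\tau)]=(L_1h+L_2h)(\tau,\tau)$, and then cancelling the interior terms using Lemma~\ref{lem:comrel} and the binomial identity---mirror the paper's argument for the Rankin--Cohen case line by line.
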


Applying this for $g=E'_N(\tau_1,\tau_2,0,\mathbf{1})$ we find:

\begin{corollary}
\label{cor:CO}
Assume that $d=1$ and $V=\Res_{F/\Q}(W)$ as above. Then
\[
(-4\pi)^j L_{2+2j} \calC_j(E'_N(\cdot,0,\mathbf{1})) =R_{0}^j \left[\big(L_{1,2} E_N'(\cdot,0;\mathbf{1})\big)(\tau^\Delta)+ (-1)^j  \big(L_{1,1}  E_N'(\cdot,0;\mathbf{1})\big)(\tau^\Delta)\right].
\]
\end{corollary}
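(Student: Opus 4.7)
The plan is to apply Proposition \ref{prop:CO} to $g = E_N'(\vec\tau, 0; \mathbf{1})$ with $k_1 = k_2 = 1$, and then to move each of the resulting raising operators past the restriction to the diagonal. The essential technical ingredient is a chain-rule identity that makes this possible precisely under a harmonicity hypothesis, which will be available here because $g$ is the derivative at $s=0$ of an incoherent Hilbert Eisenstein series.

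The first step is to verify the hypothesis of Proposition \ref{prop:CO}, namely that $g$ is annihilated by $\Delta_1$ in each of its two variables. Applying the analogs of \eqref{eq:er} and \eqref{eq:el} to $E_N(\vec\tau, s; \mathbf{1})$ in the $i$-th variable, the same computation as in the one-variable case (using $-\Delta_\ell = L_{\ell+2} R_\ell + \ell$) yields
\[
  -\Delta_1^{(i)} E_N(\vec\tau, s; \mathbf{1}) = \tfrac{s^2}{4}\, E_N(\vec\tau, s; \mathbf{1}), \qquad i = 1, 2.
\]
Since $E_N(\vec\tau, 0; \mathbf{1}) = 0$ by incoherence, differentiating in $s$ at $s=0$ gives $\Delta_1^{(i)} g = 0$ for both $i$. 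Applying Proposition \ref{prop:CO} with $k_1 = k_2 = 1$, the two binomial coefficients $\binom{k_i + j - 1}{j}$ both equal $\binom{j}{j} = 1$, so
\[
  (-4\pi)^j L_{2+2j} \calC_j(g) = (R_{1,1}^j L_{1,2} g)(\tau, \tau) + (-1)^j (L_{1,1} R_{1,2}^j g)(\tau, \tau).
\]

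The main step is then the following commutation lemma, which I would prove by induction on $j$: if $h \colon \H^2 \to \C$ is smooth of weight $(k_1, k_2)$ with $R_{k_2, 2} h = 0$, then
\[
  (R_{k_1, 1}^j h)(\tau^\Delta) = R_{k_1 + k_2}^j\bigl[ h(\tau^\Delta) \bigr]
\]
for all $j \geq 0$. The inductive step follows from the chain-rule identity $R_{k_1+k_2}[h(\tau^\Delta)] = (R_{k_1, 1} h + R_{k_2, 2} h)(\tau^\Delta)$, together with the fact that $R_{k_1, 1}$ and $R_{k_2, 2}$ commute (they act on independent variables), so the hypothesis $R_{k_2, 2} h = 0$ propagates through iteration. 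Applying the lemma to $h = L_{1, 2} g$ of weight $(1, -1)$: the identity $R_{-1, 2} L_{1, 2} g = -\Delta_1^{(2)} g = 0$ (which uses harmonicity of $g$ in $\tau_2$) gives $(R_{1, 1}^j L_{1, 2} g)(\tau^\Delta) = R_0^j[(L_{1, 2} g)(\tau^\Delta)]$. For the second summand, I would use $L_{1, 1} R_{1, 2}^j g = R_{1, 2}^j L_{1, 1} g$ (commutativity on independent variables) and then the symmetric version of the lemma applied to $h = L_{1, 1} g$, whose hypothesis $R_{-1, 1} L_{1, 1} g = -\Delta_1^{(1)} g = 0$ holds by harmonicity of $g$ in $\tau_1$; this yields $R_0^j[(L_{1, 1} g)(\tau^\Delta)]$. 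Substituting both identities into the formula above completes the proof. The only nontrivial step is the commutation lemma, and the harmonicity hypothesis on $g$ is exactly what is needed to kill the spurious $R_{k_2, 2} h$ term produced by the chain rule.
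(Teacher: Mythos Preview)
Your proof is correct and follows the paper's approach of applying Proposition~\ref{prop:CO} to $g=E_N'(\vec\tau,0;\mathbf{1})$; the paper states the corollary as an immediate consequence without further comment. You go further by making explicit the commutation identity $(R_{1,1}^j L_{1,2} g)(\tau^\Delta)=R_0^j[(L_{1,2}g)(\tau^\Delta)]$ (and its symmetric counterpart), which is indeed the step needed to pass from the conclusion of Proposition~\ref{prop:CO} to the form stated in the corollary, and your inductive proof of this via the chain rule $R_{k_1+k_2}[h(\tau^\Delta)]=(R_{k_1,1}h+R_{k_2,2}h)(\tau^\Delta)$ together with $R_{-1,i}L_{1,i}g=-\Delta_1^{(i)}g=0$ is clean and correct.
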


Hence the function $\calC_j(E'_N(\cdot,0,\mathbf{1}))$ on the left hand side has essentially the property that is required in \eqref{eq:wanted} for the function $G$, except for the sign $(-1)^j$ which appears in addition on the right hand side. However, this sign can be fixed by slightly redefining the CM cycle
by putting
\begin{align}
Z^j(W) = Z(W_1,\sigma_1)+(-1)^j Z(W_0, \sigma_0).
\end{align}
Now the analogue of Theorem \ref{thm:fund} in this case is as follows.

\begin{theorem}
\label{thm:fundbig}
Assume that $d=1$ and $V=\Res_{F/\Q}(W)$ as above. For $f\in H_{-2j,\bar\rho_L}$ we have
\begin{align*}
\frac{1}{\deg(Z(W,\sigma_0))}\Phi^j(Z^j(W),f)&=
\CT\left(\langle
f^+,\, \calC_j(\calE_L^+)\rangle\right)
- \big(\calC_j(E_L'(\cdot,0;\mathbf{1})), \,\xi_{-2j}(f)\big)_\Pet.
\end{align*}
Here $\calE_L^+$ denotes the `holomorphic part' of $E_L'(\vec\tau,0;1)$ (see  \cite[Proposition 4.6]{BKY}), that is, the part of the Fourier expansion which is indexed by totally positive $\nu\in F$ together with the holomorphic contribution of the constant term. The Cohen operator is taken with respect to the parallel weight $(1,1)$.
\end{theorem}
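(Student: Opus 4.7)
The plan is to adapt the proof of Theorem~\ref{thm:fund} to this big-CM setting, substituting the Cohen-operator identity of Corollary~\ref{cor:CO} in place of the Rankin-Cohen raising identity~\eqref{eq:raisid}. The hypotheses give $L = N$, trivial $P$ (so $\theta_P = 1$), $k = 1 - n/2 = 0$, and $f \in H_{-2j,\bar\rho_L}$. The twist from $Z(W) = Z(W_0,\sigma_0) + Z(W_1,\sigma_1)$ to $Z^j(W) = Z(W_1,\sigma_1) + (-1)^j Z(W_0,\sigma_0)$ is precisely what is needed to make the sign on the $L_{1,1}$-summand in the integrand match the sign appearing on the right-hand side of Corollary~\ref{cor:CO}.

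First, applying the proposition immediately preceding the theorem with $Z^j(W)$ in place of $Z(W)$ and using the Siegel-Weil relation $E_N(\tau^\Delta,0;\mathbf{1}(i)) = 2L_{1,i}E_N'(\tau^\Delta,0;\mathbf{1})$, I would obtain
\[
\Phi^j(Z^j(W),f) = \frac{\deg Z(W,\sigma_0)}{(4\pi)^j}\int^\reg_\calF \big\langle R^j_{-2j}f,\,(-1)^j L_{1,1}E_L'(\tau^\Delta,0;\mathbf{1}) + L_{1,2}E_L'(\tau^\Delta,0;\mathbf{1})\big\rangle\,d\mu.
\]
Next, using the self-adjointness of the raising operator (iterating \eqref{eq:rd} with Stokes' theorem $j$ times, with the boundary contributions from each iteration vanishing in the limit $T\to\infty$ by Fourier-expansion estimates as in the proof of Theorem~\ref{thm:fund}), I would move $R^j_{-2j}$ from $f$ onto the other factor, producing a sign $(-1)^j$ together with the operator $R^j_0$ acting on the bracket. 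By Corollary~\ref{cor:CO}, the result equals $(-4\pi)^j L_{2+2j}\calC_j(E_L'(\cdot,0;\mathbf{1}))$; the accumulated powers of $(-1)^j$ and $(\pm4\pi)^j$ then simplify to yield
\[
\Phi^j(Z^j(W),f) = \deg Z(W,\sigma_0)\lim_{T\to\infty}\!\left[\int_{\calF_T}\langle f,\,L_{2+2j}\calC_j(E_L'(\cdot,0;\mathbf{1}))\rangle\,d\mu - C\log T\right]
\]
for an appropriate regularization constant $C$ (playing the role of $2A_0$ in Lemma~\ref{limit}).

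For the last stage I would apply the Leibniz rule $L\langle f,G\rangle = \langle Lf,G\rangle + \langle f,LG\rangle$ with $G = \calC_j(E_L'(\cdot,0;\mathbf{1}))$. Using $Lh\,d\mu = -d(h\,d\tau)$, the total-derivative piece becomes a top-boundary integral $\int_{iT}^{iT+1}\langle f,G\rangle\,d\tau$, which in the limit $T\to\infty$ (combined with the $-C\log T$ term) picks out the constant Fourier coefficient of $\langle f^+,\calC_j(\calE_L^+)\rangle$, exactly as in the proof of Theorem~\ref{thm:fund}. The remaining piece $\int_\calF\langle Lf,G\rangle\,d\mu$ converges absolutely since $\xi_{-2j}(f)$ is cuspidal, and unwinding the definition $\xi_{-2j}(f) = v^{-2-2j}\overline{Lf}$ identifies it with the Petersson inner product $(\calC_j(E_L'(\cdot,0;\mathbf{1})),\xi_{-2j}(f))_\Pet$; the minus sign in the theorem comes from the minus sign in the Leibniz split. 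Dividing by $\deg Z(W,\sigma_0)$ gives the asserted identity.

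The main obstacle will be the boundary analysis in the last step: namely, verifying that the Fourier expansion of $\calC_j$ applied to $E_L'(\vec\tau,0;\mathbf{1})$ restricted to the diagonal has its non-holomorphic part contributing to the constant Fourier coefficient in exactly the way that is cancelled by $C\log T$, and that the surviving contribution is $\CT\langle f^+,\calC_j(\calE_L^+)\rangle$ with $\calE_L^+$ as defined via \cite[Proposition~4.6]{BKY}. This requires a careful Fourier-expansion computation generalizing the diagonal evaluation in loc.~cit.\ to its $\calC_j$-transform, and it is where the asymmetry between the $R_0^j$ self-adjointness boundary terms and the $L$-splitting boundary term must be tracked precisely.
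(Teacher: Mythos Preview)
Your proposal is correct and follows exactly the approach the paper indicates: the paper omits the proof entirely, saying only that it is analogous to that of Theorem~\ref{thm:fund} with Proposition~\ref{prop:RC} replaced by Corollary~\ref{cor:CO}, which is precisely the substitution you describe. Your outline is in fact more detailed than what the paper provides, and the boundary analysis you flag as the main obstacle is handled by the same Fourier-expansion estimates as in the proof of Theorem~\ref{thm:fund}.
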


We omit the proof since it is analogous to the one of Theorem \ref{thm:fund} with Proposition \ref{prop:RC} replaced by Corollary \ref{cor:CO}.

The second term on the right hand side is the central derivative of the Rankin-Selberg type integral
\begin{align*}
L(g,W,s)=\big(\calC_j(E_L(\cdot,s;\mathbf{1})), \, g\big)_\Pet.
\end{align*}
for a cusp form $g\in S_{2-2j,\rho_L}$, similarly as in \cite[Section III]{GKZ}. When $f$ is weakly holomorphic, this contribution vanishes, and Theorem \ref{thm:fundbig} gives an explicit formula for the value of the higher Green function $\Phi^j(z,h,f)$ at the CM cycle $Z^j(W)$.
Using the explicit formulas for the coefficients of $E_L'$ of we see that
\begin{align}
d_F^{j/2}\Phi^j(Z^j(W),f) = C\cdot \log \alpha,
\end{align}
for a constant $C\in \Q$ only depending on $L$ and a positive rational number $\alpha$ whose prime factorization can be determined explicitly. Here $d_F$ denotes the discriminant of $F$.

It would be very interesting to generalize this result to general $d\geq 0$. The crucial point would be to obtain an analogue of Corollary \ref{cor:CO} or some other variant of \eqref{eq:wanted}.
While there are Cohen operators for higher degree Hilbert modular forms (see e.g.~\cite{Lee}) there does not seem to be a direct analogue of Corollary \ref{cor:CO}.

\section{The Gross-Zagier conjecture and higher Heegner cycles}

Here we consider examples of our main results for $n=1,2$. These can be used to prove certain cases of an algebraicity conjecture of Gross and Zagier and a higher weight version of the Gross-Kohnen-Zagier theorem.

\subsection{The resolvent kernel}
\label{sect:resolvent}

Let $V=\Mat_2(\Q)$ with the quadratic form $Q(X)=\det(X)$ of signature $(2,2)$. The corresponding bilinear form is $(X,Y)=\tr(XY^*)$, where
\[
 \abcd^* = \begin{pmatrix}
   d & -b \\
   -c & a
 \end{pmatrix}
\]
denotes the adjugate matrix.
We consider the even unimodular lattice  $L=\Mat_2(\Z)$ in $V$.
In this case
\begin{align*}
H&\cong \{ (g_1,g_2)\in \Gl_2\times \Gl_2:\; \det(g_1)=\det(g_2)\},
\end{align*}
and the natural map $H\to \SO(V)$ is given by $(g_1,g_2).X = g_1 X g_2^{-1}$. The space $\H\times \H$ can be identified with the Hermitian space $\calD^+$  by the map
\begin{align*}
z=(z_1,z_2)\mapsto \R\Re X(z)+\R\Im X(z),
\end{align*}
where
\begin{align}\label{eq:Xz}
X(z)=\zxz{z_1 }{-z_1z_2}{1}{-z_2}\in L_\C.
\end{align}
Under this identification, the action of $H(\R)$ on $\calD$ becomes the natural action by fractional linear transformations.
We fix the compact open subgroup $K=\GSpin(\hat L)\subset H(\A_f)$. Then the corresponding Shimura variety $X_K$ is isomorphic to $X(1)^2$, where $X(1)=\SL_2(\Z)\bs \H$.
If $\lambda=\kabcd\in L$, then
\begin{align*}
(\lambda,X(z))&= cz_1 z_2+dz_1-az_2-b,\\
(X(z),\overline{X(z)})&= -4y_1 y_2,\\
(\lambda_z,\lambda_z)&= 2\frac{|(\lambda,X(z))|^2}{(X(z),\overline{X(z)})}= -\frac{|cz_1 z_2+dz_1-az_2-b|^2}{2y_1y_2}.
\end{align*}

The automorphic Green function \eqref{eq:phim} can be interpreted as a function on $\H\times \H$. Using the fact that
\begin{align}
\label{eq:legendre}
Q_{s-1}(t)= \frac{\Gamma(s)^2}{2\Gamma(2s)} \left(\frac{2}{1+t}\right)^s
F(s,s,2s;\frac{2}{1+t}),
\end{align}
where $Q_{s-1}(t)= \int_0^\infty (t+\sqrt{t^2-1}\cosh(u))^{-s}\, du$ denotes the Legendre function, we obtain
\begin{align*}
\Phi_{m}(z,s) &=2\frac{\Gamma(s)}{\Gamma(2s)}
\sum_{\substack{\lambda\in L\\ Q(\lambda)=m}}
\left(\frac{m}{Q(\lambda_{z^\perp})}\right)^{s}
F\left(s,s,2s;\frac{m}{Q(\lambda_{z^\perp})}\right)\\
&= \frac{4}{\Gamma(s)}
\sum_{\substack{\lambda\in L\\ Q(\lambda)=m}}
Q_{s-1} \left( 1-\frac{2Q(\lambda_z)}{m}\right).
\end{align*}
Here we have dropped the subscript $\mu$ from the notation in $\Phi_{m,\mu}(z,h,s)$, since $L$ is unimodular, and the argument $h\in H(\A_f)$, since we have evaluated at $h=1$.
With the above formula for $(\lambda_z,\lambda_z)$ we get
\begin{align*}
\Phi_{m}(z,s)
&= \frac{4}{\Gamma(s)}
\sum_{\substack{\gamma=\kabcd\in L\\ \det(\gamma)=m}}
Q_{s-1} \left( 1+\frac{|cz_1 z_2+dz_1-az_2-b|^2}{2my_1y_2}\right)\\
&= \frac{4}{\Gamma(s)}
\sum_{\substack{\gamma\in L\\ \det(\gamma)=m}}
Q_{s-1} \left( 1+\frac{|z_1 -\gamma z_2|^2}{2m\Im(z_1)\Im(\gamma z_2)}\right).
\end{align*}
Hence $\Phi_m(z,s)= - \frac{2}{\Gamma(s)}G_s^m(z_1,z_2)$, where $G_s^m(z_1,z_2)$ denotes the Green function defined the introduction in \eqref{eq:greenintro}.
In particular, for $m=1$ we obtain the resolvent kernel $G_s = G_s^1$ for the hyperbolic Laplacian.
It has the following properties, see \cite{Hejhal}, \cite[Chapter~2.2]{GZ}.

\begin{enumerate}
\item[(i)] $G_s$ is smooth on $(X(1)\times X(1))\setminus Z(1)$, where $Z(1)$ denotes the diagonal;
\item[(ii)] it satisfies $\Delta_i G_s= s(1-s) G_s$, where $\Delta_i=-y_i^2\left(\frac{\partial^2}{\partial x_i^2}+ \frac{\partial^2}{\partial y_i^2}\right)$ is the hyperbolic Laplacian in the variable $z_i$ for $i=1,2$;
\item[(iii)]  we have $G_s(z_1,z_2)= e_{z_2} \log|z_1-z_2|^2+O(1)$ as $z_1\to z_2$, where $e_{z_2}$ denotes the order of the stabilizer of $z_2$ in $\operatorname{PSL}_2(\Z)$;
\item[(iv)]  $G_s(z_1,z_2)=O(y_1^{1-s})$ as $y_1\to \infty$ for fixed $z_2$.
\end{enumerate}
When $\Re(s)>1$, these properties characterize $G_s$ uniquely, see \cite[Section 3.4]{Zh1}. Here, property (iv) replaces the integrability condition in the analogous result Proposition~\ref{prop:greenchar}.

Let $j\in \Z_{>0}$. For $m>0$, let $f_m \in H_{-2j,\bar\rho_L}$ be the unique harmonic Maass form whose Fourier expansion starts with $f_m = q^{-m}+O(1)$ as $v\to\infty$. Then according to \eqref{eq:higherphi} and Proposition~\ref{prop:higherphi} we have
\begin{align}
\Phi^j(z,f_m)=\frac{1}{(4\pi )^j} \Phi(z,R^j_{-2j} f_m) = - m^j G_{1+j}^m(z_1,z_2).
\end{align}
It is easy to see that the two-dimensional, positive definite subspaces of $V(\Q)$
are in one-to-one correspondence to pairs $(z_1, z_2)$
of CM points lying in the same imaginary quadratic field.

Consequently, Theorem \ref{thm:fund} and Theorem \ref{thm:fund2} lead to formulas and algebraicity statements for CM values of the Green functions $G_{1+j,f}(z_1,z_2)$ defined in \eqref{eq:Gkf},
when evaluated at two different CM points $z_1$ and $z_2$, such that $\Q(z_1) = \Q(z_2)$.
In particular, Corollary~\ref{cor:alg} implies the following
strengthening of a result due to Viazovska \cite[Theorem~7]{Vi1}.

\begin{theorem}
\label{thm:via}
Let $z_1, z_2 \in X(1)$
be different CM points corresponding to CM orders of discriminant $D_1 = t_1^2 D_0$ and $D_2 = t_2^2 D_0$
in the same imaginary quadratic field $\Q(\sqrt{D_0})$.
Moreover, let $j>0$ and suppose that
$f \in M_{-2j}^!$ has integral Fourier coefficients.
Then there is an $\alpha \in H_D^\times$, where $D = \lcm(t_1, t_2)^2 D_0$, such that
\[
	|D|^j G_{1+j,f}(z_1^\sigma, z_2^\sigma) = - \frac{1}{r}\log |\alpha^\sigma|
\]
for every $\sigma \in \Gal(H_{D}/\Q(\sqrt{D_0}))$,
and where $r \in \Z_{>0}$ only depends on $D$, not on $j$ or $f$.
\end{theorem}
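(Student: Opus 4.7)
The plan is to view the pair $(z_1, z_2)$ as a small CM point on the $(2,2)$-signature Shimura variety $X_K \cong X(1)^2$ of Section~\ref{sect:resolvent}, and apply Corollary~\ref{cor:alg} with the arithmetic harmonic Maass preimages $\calG_N$ provided by Theorem~\ref{thm:preimages}.

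First I would attach to $(z_1, z_2)$ a negative-definite rational $2$-plane $U \subset V_\Q = \Mat_2(\Q)$, by taking $U$ to be the $\Q$-span of $\Re X(z_1, z_2)$ and $\Im X(z_1, z_2)$ from \eqref{eq:Xz}. This span is $\Q$-rational precisely because $z_1$ and $z_2$ lie in the same quadratic extension of $\Q$, and it satisfies $z_U^+ = (z_1, z_2)$ under the identification $\calD^+ \cong \H \times \H$ recalled in Section~\ref{sect:resolvent}. A direct matrix computation identifies $N = L \cap U$ with a fractional ideal of the order $\calO_{D_1} \cap \calO_{D_2} \subset \k_{D_0}$. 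Since $\calO_{D_i} = \Z + t_i \calO_{D_0}$, this intersection equals $\Z + \lcm(t_1, t_2)\calO_{D_0} = \calO_D$, so the binary lattice $N$ has discriminant exactly $D = \lcm(t_1, t_2)^2 D_0$ and the associated ring class field is $H_D$.

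Next I would rewrite $G_{1+j, f}(z_1, z_2)$ as a regularized theta lift. Using the identity $\Phi^j(z, f_m) = -m^j G_{1+j}^m(z_1, z_2)$ stated just before Theorem~\ref{thm:via}, together with the definition of $G_{1+j, f}$ in \eqref{eq:Gkf} and the fact that the regularized theta lift of a weakly holomorphic form is determined by its principal part, one obtains $G_{1+j, f}(z_1, z_2) = -\Phi^j((z_1, z_2), 1, f)$. Since $L = \Mat_2(\Z)$ is unimodular, $f \in M^!_{-2j}$ is naturally an element of $M^!_{-2j, \bar\rho_L}$, and Corollary~\ref{cor:alg} applies, giving $A^j \Phi^j(z_U^+, 1, f) = -\frac{1}{r_0}\log|\alpha|$ for some $\alpha \in H_D^\times$, where $A$ is the l.c.m.\ of the levels of $P$ and $N$ and $r_0 \in \Z_{>0}$ depends only on $L$ and $D$. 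Both $A$ and $r_0$ depend only on $D$ once $L$ is fixed, so multiplying by the rational factor $(|D|/A)^j$ and absorbing it into a new $r \in \Z_{>0}$ (depending only on $D$) yields the claimed identity $|D|^j G_{1+j, f}(z_1, z_2) = -\frac{1}{r}\log|\alpha|$.

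The Galois equivariance follows directly from the Shimura reciprocity law of Theorem~\ref{thm:preimages}(3) via Corollary~\ref{cor:alg}(2): for $\sigma \in \Gal(H_D/\k_{D_0}) \cong \Cl(\calO_D)$, the conjugate pair $(z_1^\sigma, z_2^\sigma)$ corresponds to the translate of $z_U^+$ by the id\`ele class representing $\sigma$, and this translation is matched by the Galois action on $\alpha$. The main obstacle is the bookkeeping in the first step, namely verifying the explicit identification of $N$ with an $\calO_D$-ideal of discriminant $D$ and reconciling the normalization factor $A^j$ with $|D|^j$; once this is done, the rest of the proof is a direct application of the machinery assembled in Section~\ref{sect:cm}.
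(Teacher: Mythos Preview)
Your approach is correct and is exactly the route the paper takes: the paper simply states that Corollary~\ref{cor:alg} implies Theorem~\ref{thm:via}, and you have spelled out the details of that implication.

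One point deserves more care. You write that the rational factor $(|D|/A)^j$ can be ``absorbed into a new $r \in \Z_{>0}$ (depending only on $D$)''. Taken literally this is wrong, since absorbing a $j$-dependent factor into $r$ would make $r$ depend on $j$. The correct observation is that $A = |D|$, so no adjustment is needed at all. Indeed, since $N = L \cap U$ is primitive in the unimodular lattice $L = \Mat_2(\Z)$, the standard glue map gives an isomorphism of discriminant forms $P'/P \cong N'/N$ (up to sign), so both $P$ and $N$ have level $|D|$ and hence $A = \lcm(\text{level}(P), \text{level}(N)) = |D|$. With this in hand, Corollary~\ref{cor:alg} gives directly
\[
|D|^j\, G_{1+j,f}(z_1,z_2) \;=\; -|D|^j\,\Phi^j(z_U^+,1,f) \;=\; \frac{1}{r_0}\log|\alpha_{U,f}(1)|
\]
with $r_0$ depending only on $D$, and the Galois equivariance follows from part~(2) of the same corollary exactly as you indicate.
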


\subsection{Modular curves}
\label{sect:6.2}

While Theorem \ref{thm:via} gives the algebraicity of the values of higher Green functions at a pair of CM points for the {\em same} CM  field, we now consider evaluations at pairs of CM points for different CM fields. We obtain new results in this case by applying Theorems \ref{thm:fund} and \ref{thm:fund2} for a suitable quadratic space of signature $(1,2)$. Our argument also relies on a quadratic transformation formula for the Gauss hypergeometric function.

Fix a positive integer $\lev$. Here we let
\[
V=\Mat_2^0(\Q)=\{X\in \Mat_2(\Q):\; \tr(X)=0\}
\]
with the quadratic form $Q(X)=\lev \det(X)$ of signature $(1,2)$.
The corresponding bilinear form is $(X,Y)=\lev\tr(XY^*)$.
In this case $H\cong \Gl_2$ with the Clifford norm corresponding to the determinant,
and the natural map $H\to \SO(V)$ is given by $g.X = g X g^{-1}$.
The upper half plane $\H$ can be identified with the hermitean space $\calD^+$  by the map
$z\mapsto \R\Re X(z)+\R\Im X(z)$,
where
\begin{align}\label{eq:Xz12}
  X(z)=\zxz{z }{-z^2}{1}{-z}\in L_\C.
\end{align}
Let $L$ be the lattice
\begin{align}
\label{latticeN}
 L=\left\{\zxz{b}{-a/\lev}{c}{-b}:\quad a,b,c\in
\Z\right\}
\end{align}
in $V$. The dual lattice is given by
\begin{align}
\label{latticeN2}
 L'=\left\{\zxz{b/2\lev}{-a/\lev}{c}{-b/2\lev}:\quad
\text{$a,b,c\in \Z$} \right\}.
\end{align}
We frequently identify $\Z/2\lev\Z$ with $L'/L$ via $r \mapsto
\mu_r=\diag(r/2\lev, -r/2\lev)$. Here the quadratic form on $L'/L$ is
identified with the quadratic form $x\mapsto -x^2$ on $\Z/2\lev\Z$. The
level of $L$ is $4\lev$.
We fix the compact open subgroup $K=\GSpin(\hat L)\subset H(\A_f)$. Then the complex space of the corresponding Shimura variety $X_K$ is isomorphic to the modular curve $\Gamma_0(\lev)\bs \H$.

Let $m\in \Q_{>0}$ and let $\mu_r\in L'/L$ such that $Q(\mu_r) \equiv m
\pmod{1}$. Then $D:=-4\lev m\in \Z$ is a negative discriminant satisfying $D \equiv r^2 \pmod{
4\lev}$, and we have
$$
Z(m, \mu_r) =P_{D, r} + P_{D, -r},
$$
where $P_{D, r}$ is the Heegner divisor defined in \cite{GKZ}.
If $\lambda=\kzxz{b/2\lev}{-a/\lev}{c}{-b/2\lev}\in L'$ with $Q(\lambda)=m$,
we denote the associated CM point by
\[
  z_\lambda= \frac{b}{2\lev c}+\frac{\sqrt{b^2-4\lev ac}}{2\lev |c|}\in \H.
\]
Using a similar calculation as in Section \ref{sect:resolvent}, we obtain
\begin{align}
\label{eq:qlp}
\sqrt{\frac{Q(\lambda_{z^\perp})}{m}}  =\frac{|a-b\Re (z)+c|z|^2|}{2\sqrt{\lev m}y} = 1+\frac{|z-z_\lambda|^2}{2y\Im (z_\lambda)}.
\end{align}
The automorphic Green function \eqref{eq:phim} can be interpreted as the function on $\H$ given by
\begin{align}
\label{eq:phim12}
\Phi_{m,\mu}(z,s) &=2\frac{\Gamma(s-\frac{1}{4})}{\Gamma(2s)}
\sum_{\substack{\lambda\in \mu+L\\ Q(\lambda)=m}}
\left(\frac{m}{Q(\lambda_{z^\perp})}\right)^{s-\frac{1}{4}}
F\left(s-\frac{1}{4},s+\frac{1}{4},2s;\frac{m}{Q(\lambda_{z^\perp})}\right).
\end{align}
Here we have dropped the
argument $h\in H(\A_f)$, since we evaluated at $h=1$.

\begin{proposition}
\label{prop:green12}
Assume the above notation, and let  $G_{\lev,s}(z_1,z_2)=G_{\lev ,s}^1(z_1,z_2)$ denote the Green function for $\Gamma_0(\lev )$ as in \cite[Chapter 2.2]{GZ}. Then
\[
\Phi_{m,\mu}(z,s)
=-\frac{2}{\Gamma(s+\frac{1}{4})} G_{\lev ,2s-\frac{1}{2}}(z,Z(m,\mu)).
\]
\end{proposition}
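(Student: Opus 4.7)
The strategy is to apply a quadratic transformation for the Gauss hypergeometric function to each term of the series \eqref{eq:phim12}, so as to turn it into a Legendre function $Q_{2s-3/2}$ whose argument is exactly the one appearing in the classical definition of the resolvent kernel $G_{\lev,s'}(z,z_\lambda)$, and then to sort out the constants via Legendre's duplication formula.

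First I would set $X = m/Q(\lambda_{z^\perp})\in(0,1]$ and introduce the auxiliary quantity $Y=2\sqrt{X}/(1+\sqrt{X})$. A direct computation gives $1-Y/2=1/(1+\sqrt{X})$ and $(Y/(2-Y))^{2}=X$, and via \eqref{eq:qlp} the quantity $1/\sqrt{X}$ equals $t_\lambda := 1+|z-z_\lambda|^{2}/(2y\Im z_\lambda)$, which is precisely the argument of the Legendre function appearing in the classical formula for $G_{\lev,s'}(z,z_\lambda)$.

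The key analytic input is the quadratic transformation (DLMF 15.8.13)
\[
F(a,b;2b;w)\;=\;\left(1-\tfrac{w}{2}\right)^{-a}F\!\left(\tfrac{a}{2},\tfrac{a+1}{2};\,b+\tfrac12;\,\left(\tfrac{w}{2-w}\right)^{2}\right).
\]
Taking $a=b=2s-\tfrac12$ makes the parameters on the right become $(s-\tfrac14,s+\tfrac14,2s)$, so with $w=Y$ this produces the identity
\[
F\!\left(s-\tfrac14,s+\tfrac14,2s;X\right)\;=\;(1+\sqrt{X})^{-(2s-1/2)}\,F\!\left(2s-\tfrac12,2s-\tfrac12,4s-1;Y\right).
\]
Combining this with \eqref{eq:legendre} applied at $s'=2s-\tfrac12$ (whose argument $2/(1+t)$ equals precisely $Y$ when $t=t_\lambda$), one rewrites each summand of \eqref{eq:phim12} as a constant times $Q_{2s-3/2}(t_\lambda)$. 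The constant is then simplified using the Legendre duplication formula $\Gamma(2z)=\tfrac{2^{2z-1}}{\sqrt{\pi}}\Gamma(z)\Gamma(z+\tfrac12)$ applied to $\Gamma(2s)$, $\Gamma(4s-1)$, and $\Gamma(2s-\tfrac12)$, which after cancellation collapses the overall prefactor to $4/\Gamma(s+\tfrac14)$.

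Finally, I would reindex the resulting sum: the map $\lambda\mapsto z_\lambda$ is $\Gamma_0(\lev)$-equivariant, and the set $\{\lambda\in\mu+L\,:\,Q(\lambda)=m\}$ breaks up as a disjoint union of $\Gamma_0(\lev)$-orbits, each orbit corresponding to one point in the support of $Z(m,\mu)$ together with its $\Gamma_0(\lev)$-translates. Matching the stabilizer-accounted sum $\sum_{\gamma\in\Gamma_0(\lev)}Q_{2s-3/2}(1+|z-\gamma z_\lambda|^{2}/(2\Im z\,\Im\gamma z_\lambda))$ that defines $G_{\lev,2s-1/2}(z,z_\lambda)$ against the sum over $\lambda$, and using the definition $G_{\lev,s'}(z_1,z_2)=-2\sum_\gamma Q_{s'-1}(\cdots)$, we obtain exactly the claimed factor $-\tfrac{2}{\Gamma(s+1/4)}$ relating $\Phi_{m,\mu}(z,s)$ and $G_{\lev,2s-1/2}(z,Z(m,\mu))$. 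The only real difficulty is locating the correct quadratic hypergeometric transformation; once that identity is in hand the remainder is bookkeeping with $\Gamma$-factors and a clean reindexing of the $\lambda$-sum.
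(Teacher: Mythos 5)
Your proposal is correct and follows essentially the same route as the paper's proof: the same quadratic transformation of the hypergeometric function (the paper quotes it from Abramowitz--Stegun in the equivalent inverted form), combined with \eqref{eq:legendre} and \eqref{eq:qlp} to produce $Q_{2s-3/2}$ of the correct argument, the same $\Gamma$-factor simplification via the duplication formula, and the same reindexing of the $\lambda$-sum into $\Gamma_0(\lev)$-orbits with stabilizers matching the multiplicities in $Z(m,\mu)$. No gaps.
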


\begin{proof}
We employ the quadratic transformation formula for the Gauss hypergeometric function
\begin{align*}
F(a,a+\frac{1}{2},c; w)=\left(\frac{1}{1+\sqrt{w}}\right)^{2a} F\left(2a,c-\frac{1}{2},2c-1;\frac{2\sqrt{w}}{1+\sqrt{w}}\right),
\end{align*}
see \cite[(15.2.20)]{AS}, to deduce
\begin{align*}
&\left(\frac{m}{Q(\lambda_{z^\perp})}\right)^{s-\frac{1}{4}} F\left(s-\frac{1}{4},s+\frac{1}{4},2s;\frac{m}{Q(\lambda_{z^\perp})}\right)\\
&=
\left(\frac{\sqrt{m}}{\sqrt{Q(\lambda_{z^\perp})}+\sqrt{m}}\right)^{2s-\frac{1}{2}}F\left(2s-\frac{1}{2},2s-\frac{1}{2}, 4s-1; \frac{2\sqrt{m}}{\sqrt{Q(\lambda_{z^\perp})}+\sqrt{m}}\right).
\end{align*}
Applying \eqref{eq:legendre} and \eqref{eq:qlp} to the right hand side we get
\begin{align}
\label{eq:hypid}
\left(\frac{m}{Q(\lambda_{z^\perp})}\right)^{s-\frac{1}{4}} F\left(s-\frac{1}{4},s+\frac{1}{4},2s;\frac{m}{Q(\lambda_{z^\perp})}\right)
&=\frac{2^{\frac{3}{2}-2s}\Gamma(4s-1)}{\Gamma(2s-\frac{1}{2})^2}Q_{2s-\frac{3}{2}}\left( \sqrt{\frac{Q(\lambda_{z^\perp})}{m}}   \right)\\
\nonumber
&=\frac{2^{\frac{3}{2}-2s}\Gamma(4s-1)}{\Gamma(2s-\frac{1}{2})^2}Q_{2s-\frac{3}{2}}\left(  1+\frac{|z-z_\lambda|^2}{2y\Im (z_\lambda)}    \right).
\end{align}
Inserting this into \eqref{eq:phim12}, we obtain
\begin{align*}
\Phi_{m,\mu}(z,s)
&= 2^{\frac{5}{2}-2s}\frac{\Gamma(s-\frac{1}{4})}{\Gamma(2s)}
\frac{\Gamma(4s-1)}{\Gamma(2s-\frac{1}{2})^2}
\sum_{\substack{\lambda\in \mu +L\\ Q(\lambda)=m}}
Q_{2s-\frac{3}{2}}\left(  1+\frac{|z-z_\lambda|^2}{2y\Im (z_\lambda)} \right)\\
&=\frac{4}{\Gamma(s+\frac{1}{4})}
\sum_{\substack{\lambda\in (\mu +L)/\Gamma_0(\lev )\\ Q(\lambda)=m}}
\sum_{\gamma\in \Gamma_0(\lev)/\Gamma_0(\lev)_\lambda}
Q_{2s-\frac{3}{2}}\left(  1+\frac{|z-\gamma z_\lambda|^2}{2y\Im (\gamma z_\lambda)}   \right)\\
&=-\frac{2}{\Gamma(s+\frac{1}{4})} G_{\lev ,2s-\frac{1}{2}}(z,Z(m,\mu)).
\end{align*}
This concludes the proof of the proposition.
\end{proof}

To state Theorem \ref{thm:fund} in the present case, we
first interpret the $L$-function $L(g,U,s)$ following \cite[Section 7.2]{BY}.
Let $x_0\in L'$ be primitive and assume that $m_0=Q(x_0)>0$.
We let $\mu_0=x_0+L\in L'/L$. Throughout we assume that $D_0=-4\lev m_0$ is a fundamental discriminant.
We consider the CM cycle $Z(U)$ associated with the negative definite subspace $U=V\cap x_0^\perp$.
The corresponding negative definite lattice $N=L\cap U$ has determinant $|D_0|$.
Since $D_0$ is fundamental, we have $Z(U)=Z(m_0, \mu_{0})$.

Let $S_{2+4j}^-(\Gamma_0(\lev))$ denote the subspace of cusp forms in $S_{2+4j}(\Gamma_0(\lev))$ which are invariant under the Fricke involution.
Recall that there is a Shimura lifting map
$\shim_{m_0,\mu_0}: S_{3/2+2j,\rho_L}\to S_{2+4j}(\Gamma_0(\lev))$ which is given in terms of Fourier series by
\begin{align} \label{eqY7.12}
g=\sum_\mu \sum_{m>0} b(m,\mu) q^m\phi_\mu \mapsto
\shim_{m_0,\mu_0}(g)= \sum_{n=1}^\infty \sum_{d\mid n}
d^{2j}\left(\frac{D_0}{d}\right) b\left(m_0 \frac{n^2}{d^2},
\mu_0\frac{n}{d} \right) q^n,
\end{align}
see \cite[ Section II.3]{GKZ}.
If we denote
the Fourier coefficients of $\shim_{m_0,\mu_0}(g)$ by $B(n)$, then
we may rewrite the formula for the image as the identity of Dirichlet series
\begin{align}
\label{eq:dirid}
L\left(\shim_{m_0,\mu_0}(g), s\right)= \sum_{n>0}
B(n)n^{-s} = L(\chi_{D_0},s-2j)\cdot \sum_{n>0} b\left(m_0 n^2, \mu_0
n\right)
 n^{-s}.
\end{align}
The maps $\shim_{m_0,\mu_0}$ are Hecke-equivariant and there is a
linear combination of them which determines an isomorphism of the subspaces of newforms of
$S_{3/2+2j,\rho_L}$ and $S_{2+4j}^-(\Gamma_0(\lev))$, see \cite{SZ88}.
If $g\in
S_{3/2+2j,\rho_L}$ is a newform that corresponds to the
normalized newform $G\in S_{2+4j}^-(\Gamma_0(\lev))$ under the Shimura
correspondence, then
\begin{align}
\label{eq:newform}
 L\left(\shim_{m_0,\mu_0}(g), s\right) =
b\left(m_0,\mu_0\right)\cdot L(G,s).
\end{align}

\begin{lemma}
\label{prop:l1} Let $m_0$, $\mu_0$, $D_0$, $U$ be as above.
If $g\in S_{3/2+2j,\rho_L}$ has real coefficients $b(m,\mu)$, then
\[
L(g, U, s)= C(s,j)\cdot (4\pi m_0)^{-\frac{s}{2}-\frac{1}{2}-j}
L(\chi_{D_0},s+1)^{-1} L\big(\shim_{m_0,\mu_0}(g),s+1+2j\big),
\]
where
\[
C(s,j)=\frac{2^{1-s}\Gamma(s+1+2j)\Gamma(2j+\frac{1}{2})}
{(4\pi)^j\Gamma(2j+1)\Gamma(\frac{s}{2}+1)}.
\]
In particular,
\begin{align*}
L'(g, U, 0) &=\frac{2^{2-4j}\pi^{-3/2-2j}\Gamma(2j+1/2)}{m_0^j\deg(Z(U))} b(m_0, \mu_0) L'(G,1+2j),
\end{align*}
if $g \in S_{3/2+2j, \rho_{L}}$ and $G\in S_{2+4j}(\Gamma_0(\lev))$ are
 further related by (\ref{eq:newform}).
\end{lemma}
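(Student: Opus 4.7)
The plan is to combine the Dirichlet series representation in Lemma~\ref{lem:dirser} with the defining identity~(\ref{eq:dirid}) of the Shimura lift, and then to differentiate at the incoherent point $s=0$.

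First, I specialise Lemma~\ref{lem:dirser} to the signature $(n,2)=(1,2)$ setting, where $k=\tfrac{1}{2}$ and the lattice $P=L\cap U^{\perp}$ has rank one. Because $x_0\in L'$ is primitive of norm $m_0$ and $D_0=-4\lev m_0$ is a fundamental discriminant, I expect to show that the representation numbers $r(m,\mu)$ of the rank-one theta series $\theta_P$ vanish unless $m=n^{2}m_0$ for some $n\in\Z_{\geq 1}$, and that in that case the class in $L'/L$ of the corresponding lattice vector is exactly $n\mu_0$. Exploiting the parity relation $b(m,\mu)=b(m,-\mu)$ to pair $\pm n$, the Dirichlet series on the right hand side of Lemma~\ref{lem:dirser} then rearranges as
\[
2\,m_0^{-s/2-1/2-j}\sum_{n>0}b(n^{2}m_0,n\mu_0)\,n^{-s-1-2j}.
\]

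Next, by~(\ref{eq:dirid}) with $s$ replaced by $s+1+2j$, the inner sum equals $L(\shim_{m_0,\mu_0}(g),s+1+2j)/L(\chi_{D_0},s+1)$. It then remains to collect the gamma factors. I shall use the Legendre duplication formula in the form $\Gamma(s/2+1+j)\Gamma(s/2+1/2+j)=2^{1-s-2j}\sqrt{\pi}\,\Gamma(s+1+2j)$, together with $\binom{2j-1/2}{j}=\Gamma(2j+1/2)/(\Gamma(j+1)\Gamma(j+1/2))$ and a second application of duplication to simplify $\Gamma(j+1)\Gamma(j+1/2)$ into $\sqrt{\pi}\,\Gamma(2j+1)/2^{2j}$. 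A direct computation then yields the asserted formula with the constant $C(s,j)$.

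For the derivative at $s=0$, incoherence of $E_N(\cdot,s;1)$ forces $L(g,U,0)=0$. In the product formula the factors $C(0,j)$, $(4\pi m_0)^{-1/2-j}$ and $L(\chi_{D_0},1)^{-1}$ are all finite and nonzero, so the vanishing must come from $L(\shim_{m_0,\mu_0}(g),1+2j)=0$. Differentiating at $s=0$ and invoking~(\ref{eq:newform}) for a newform pair $(g,G)$ expresses $L'(g,U,0)$ as a constant multiple of $b(m_0,\mu_0)\,L'(G,1+2j)$; the constant is then rearranged into the stated form using the Dirichlet class number formula $L(\chi_{D_0},1)=2\pi h(D_0)/(w_{D_0}\sqrt{|D_0|})$, the identity $m_0=|D_0|/(4\lev)$, and the standard expression for $\deg(Z(U))$ in the fundamental-discriminant case. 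The main technical obstacle will be the precise identification of $r(m,\mu)$ and of the embedding of $P'/P$ into $L'/L$ in the presence of the level $\lev$; this is essentially the rank-one specialisation of the computation carried out in Section~7 of \cite{BY}, which I intend to follow closely.
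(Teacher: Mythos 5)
Your proposal follows essentially the same route as the paper's proof: specialize Lemma \ref{lem:dirser} to signature $(1,2)$, collapse the Dirichlet series to the sub-series indexed by $n^2m_0$, invoke \eqref{eq:dirid}, simplify the gamma factors by Legendre duplication, and obtain the derivative formula at $s=0$ from \eqref{eq:newform} together with $\deg(Z(U))=\frac{2\sqrt{|D_0|}}{\pi}L(\chi_{D_0},1)$. Two small points need repair. First, the collapse of the Dirichlet series is \emph{not} caused by vanishing of the theta coefficients $r(m,\mu)$: the dual lattice $P'$ is in general strictly larger than $P'\cap L'=\Z x_0$, so $\theta_P$ has nonzero coefficients at many indices $m$ that are not of the form $n^2m_0$, and the step ``show that $r(m,\mu)$ vanishes unless $m=n^2m_0$'' would fail as stated. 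What actually kills the extra terms is that $g$ enters the pairing through the restriction map of Lemma \ref{sublattice}, so its coefficients satisfy $b(Q(\lambda),\lambda)=0$ for $\lambda\in P'$ unless $\lambda\in P'\cap L'=\Z x_0$; this is the mechanism the paper uses, and your stated intention to follow the rank-one specialisation of \cite[Section 7]{BY} would lead you to exactly this point. Second, the duplication identity should read $\Gamma(\tfrac{s}{2}+1+j)\Gamma(\tfrac{s}{2}+\tfrac12+j)=2^{-s-2j}\sqrt{\pi}\,\Gamma(s+1+2j)$ (exponent $-s-2j$, not $1-s-2j$); with that correction your bookkeeping reproduces $C(s,j)$ exactly. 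The treatment of the derivative, including the observation that incoherence forces the central value $L(\shim_{m_0,\mu_0}(g),1+2j)$ to vanish so that only one term of the product rule survives, matches the paper.
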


\begin{proof}
According to Lemma \ref{lem:dirser}
we have
\begin{align*}
L(g,U,s)= \frac{1}{(4\pi)^j}
\binom{2j-k}{j}\frac{\Gamma(\frac{s}{2}+1+j)}{\Gamma(\frac{s}{2}+1)}\frac{\Gamma(\frac{s}{2}+\frac{1}{2}+j)}{(4\pi )^{\frac{s}{2}+\frac{1}{2}+j}}
\sum_{\substack{\mu\in P'/P\\m>0}} r(m,\mu)\overline{b(m,\mu)}
 m^{-\frac{s}{2}-\frac{1}{2}-j} ,
\end{align*}
where we view $g$ as a modular form with representation
$\rho_{P\oplus N}$ via Lemma \ref{sublattice}.
Using \eqref{eq:dirid} and the fact that
$b(Q(\lambda), \lambda)=0$ for $\lambda \in P'$ unless
$\lambda\in  P'\cap L'=\Z x_0 $,
we obtain
\begin{align*}
\sum_{\substack{\mu\in P'/P\\m>0}} r(m,\mu)\overline{b(m,\mu)}
 m^{-\frac{s}{2}-\frac{n}{2}-j}&=2\sum_{l=1}^\infty b(Q(lx_0),lx_0)Q(lx_0)^{-\frac{s}{2}-\frac{1}{2}-j}\\
&=2m_0^{-\frac{s}{2}-\frac{1}{2}-j}\sum_{l=1}^\infty b(l^2m_0,l\mu_0) l^{-s-1-2j}\\
&=2m_0^{-\frac{s}{2}-\frac{1}{2}-j}L(\chi_{D_0}, s+1)^{-1} L(\shim_{m_0,\mu_0}(g),s+1+2j).
\end{align*}
This implies the claimed formula with
\[
C(s,j)=2
\binom{2j-k}{j}\frac{\Gamma(\frac{s}{2}+1+j)\Gamma(\frac{s}{2}+\frac{1}{2}+j)}{(4\pi)^j\Gamma(\frac{s}{2}+1)}.
\]
Simplifying this gamma factor we obtain the first formula.
For the derivative,
we use \eqref{eq:newform} and the fact that
\[
\deg(Z(U))= \frac{2\sqrt{|D_0|}}{\pi} L(\chi_{D_0},1),
\]
see \cite[Lemma 6.3]{BY}, to deduce the assertion.
\end{proof}
Combining Theorem \ref{thm:fund} and Lemma \ref{prop:l1}, we obtain the following result.

\begin{theorem}
\label{thm:fund12}
Let $f\in H_{1/2-2j, \bar\rho_L}$ and use the above notation.
The value of the higher Green function
$\Phi^j(z,f)$ at the CM cycle $Z(U)$ is given by
\begin{align*}
\frac{1}{\deg(Z(U))}\Phi^j(Z(U),f)&=
\CT\left(\langle
f^+,\, [\theta_P,\calE_N^+]_j\rangle\right)\\
&\phantom{=}{}
-\frac{C(0,j)}{(4\pi m_0)^{\frac{1}{2}+j}
L(\chi_{D_0},1)} L'\big(\shim_{m_0,\mu_0}(\xi_{1/2-2j}f),1+2j\big)
.
\end{align*}
Here $\calE_N^+$ denotes the holomorphic part of the harmonic Maass form $E_N'(\tau,0;1)$, see
\eqref{eq:calE}.
\end{theorem}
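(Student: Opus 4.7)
The proof is a direct combination of Theorem \ref{thm:fund} (specialized to signature $(1,2)$) with Lemma \ref{prop:l1}, using the incoherence of $E_N(\tau,s;1)$ to eliminate the spurious terms that arise upon differentiation. The plan is as follows.

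First, I would observe that for the signature $(1,2)$ lattice $L$ we have $n=1$, so $k = 1-n/2 = 1/2$. Thus Theorem \ref{thm:fund} applies verbatim to $f \in H_{1/2-2j,\bar\rho_L}$, giving
\[
\frac{1}{\deg(Z(U))}\Phi^j(Z(U),f) = \CT\left(\langle f^+,\,[\theta_P,\calE_N^+]_j\rangle\right) - L'\big(\xi_{1/2-2j}(f),\,U,\,0\big).
\]
The task reduces to computing the derivative at $s=0$ of the convolution $L$-function $L(g,U,s)$ for the cusp form $g = \xi_{1/2-2j}(f) \in S_{3/2+2j,\rho_L}$.

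Second, I would invoke the first identity of Lemma \ref{prop:l1}, which (after reducing to the case of real coefficients by splitting $g$ into real and imaginary parts, or by sesquilinearity of the Petersson pairing) expresses
\[
L(g,U,s) = \frac{C(s,j)}{(4\pi m_0)^{s/2+1/2+j}} \cdot \frac{L\big(\shim_{m_0,\mu_0}(g),\,s+1+2j\big)}{L(\chi_{D_0},\,s+1)}.
\]
Because $E_N(\tau,s;1)$ is incoherent, $L(g,U,s)$ vanishes at the center of symmetry $s=0$. The prefactor $C(s,j)(4\pi m_0)^{-s/2-1/2-j} L(\chi_{D_0},s+1)^{-1}$ is holomorphic and non-vanishing at $s=0$, so this forces the crucial vanishing $L\big(\shim_{m_0,\mu_0}(g),\,1+2j\big) = 0$.

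Third, differentiating the factored expression at $s=0$ via the product rule, every term in which we differentiate the prefactor is annihilated by the vanishing $L(\shim_{m_0,\mu_0}(g),1+2j) = 0$; only the term differentiating the numerator survives. This yields
\[
L'(g,U,0) = \frac{C(0,j)}{(4\pi m_0)^{1/2+j}\, L(\chi_{D_0},1)}\cdot L'\big(\shim_{m_0,\mu_0}(g),\,1+2j\big),
\]
and substituting back into Theorem \ref{thm:fund} produces the asserted formula. The only subtlety worth noting is that Lemma \ref{prop:l1} is stated under the hypothesis of real Fourier coefficients; this is a cosmetic issue since both sides of the identity are $\C$-linear in $g$ once one keeps track of the conjugation in the definition \eqref{eq:L} of $L(g,U,s)$, so the formula extends to arbitrary $g \in S_{3/2+2j,\rho_L}$. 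There is no substantial obstacle beyond this routine bookkeeping.
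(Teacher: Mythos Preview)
Your proposal is correct and follows essentially the same approach as the paper, which simply states that the result is obtained by combining Theorem~\ref{thm:fund} with Lemma~\ref{prop:l1}. You have in fact supplied more detail than the paper does: the paper does not spell out the product-rule argument or the use of incoherence to force $L(\shim_{m_0,\mu_0}(g),1+2j)=0$, whereas you make this explicit.
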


Let $j\in \Z_{>0}$ and let $f_{m,\mu}\in H_{1/2-2j,\bar\rho_L}$ be the unique harmonic Maass form, whose Fourier expansion starts with $q^{-m}(\phi_{\mu}+\phi_{-\mu})+O(1)$ as $v\to \infty$.
Then according to
Proposition~\ref{prop:higherphi} and Proposition \ref{prop:green12} we have
\begin{align}
\label{eq:go12}
\Phi^j(z,f_{m,\mu})=\frac{1}{(4\pi )^j} \Phi(z,R^j_{1/2-2j} f_{m,\mu}) = -2 m^j G_{\lev,1+2j}(z,Z(m,\mu)).
\end{align}
Hence Theorem \ref{thm:fund12} and Theorem \ref{thm:fund2} can be applied to obtain algebraicity results for CM values of the higher Green function on the right hand side. We will come back to this topic and some refinements in Section \ref{sec:partial-averages}.

\subsection{A Gross-Kohnen-Zagier theorem for higher weight Heegner cycles}
\label{sect:gkz}

Here we employ Theorem \ref{thm:fund12} together with some results of \cite{Zh1} and \cite{Xue}, to prove a Gross-Kohnen-Zagier theorem for Heegner cycles on Kuga-Sato varieties over modular curves, see Theorem ~\ref{thm:modat}.

We begin by recalling some basic facts on Kuga-Sato varieties and their CM cycles following Zhang \cite{Zh1} and \cite{Xue}.
Let $\kappa>1$ be an integer, and let $D<0$ be a discriminant.
For an elliptic curve $E$ with complex multiplication by  $\sqrt{D}$, let $Z(E)$ denote the divisor class on $ E \times E$
of  $\Gamma - (E \times \{0\}) + D(\{0\} \times  E)$ , where $\Gamma$  is the graph of multiplication with $\sqrt D$. Then
$Z(E)^{\kappa-1}$ defines a cycle of codimension $\kappa -1$ in $E^{2\kappa-2}$. Denote by $S_\kappa(E)$ the cycle
$$
c \sum_{\sigma\in P_{2\kappa-2}}
\sgn(\sigma) \,   \sigma^*(Z(E)^{\kappa-1}),
$$
where $P_{2\kappa-2}$  denotes the symmetric group of $2\kappa-2$ letters which acts on $E^{2\kappa-2}$ by permuting
the factors, and $c$ is a real number such that the self-intersection of $S_\kappa(E)$ on each
fiber is $(-1)^{\kappa-1}$.

When $\lev$  is a product of two relatively prime integers bigger than $2$, it can be shown that the universal
elliptic curve over the non-cuspidal locus of the modular curve $\mathcal X(\lev)$ (over $\Z$) with full level $\lev$ can be extended uniquely to a regular
semi-stable elliptic curve $\mathcal E(\lev)$ over the whole $\mathcal X(\lev)$. The Kuga-Sato variety $\mathcal Y =\mathcal  Y_\kappa(\lev)$ is
defined to be a certain canonical resolution of the $(2\kappa-2)$-tuple fiber product of $\mathcal E(\lev)$ over $\mathcal X(\lev)$, see \cite[Section 2]{Zh1}.
If $y$ is a CM point on $\mathcal X(\lev)$, the CM-cycle $ S_\kappa(y)$ over $ y$  is defined to be $ S_\kappa(\mathcal E_y)$ in $\mathcal{Y}$.

For a general integer $\lev \ge 1$, we choose a  positive integer $\lev'$ such that $\lev |\lev'$ and $\lev'$ is the product of two co-prime integers bigger than $2$. Let $\pi:  \mathcal X(\lev') \rightarrow \mathcal X_0(\lev)$ be the natural projection.
If $x$ is a CM point on $\mathcal  X_0(\lev)$,
then  $\pi^*(x) = \frac{w(x)}{2} \sum_i x_i$ with $\pi(x_i) =x$ and $w(x) =|\Aut(x)|$. The CM-cycle
$S_\kappa(x)$ over $x$ is defined to be
$\sum_i  S_\kappa(x_i)/\sqrt{\deg \pi}$.

Let $X_0(\lev)$ and $Y$ be the generic fibers of $\mathcal X_0(\lev)$ and $\mathcal Y$. For a CM  point $x \in  X_0(\lev)$, let $\bar x$ be its Zariski closure in $\mathcal X_0(\lev)$.
It is proved in \cite{Zh1}
 that $S_\kappa(\bar x)$ has zero intersection with any cycle of dimension $\kappa$ in $\mathcal Y$ which is supported in the special fibers. Moreover, the class of $S_\kappa(x)$ in $ H^{2\kappa-2}(Y(\C),\C) $ vanishes, which implies that there is a Green
current $g_\kappa(x)$ on $Y(\C)$,  unique up to the image of $\partial$ and $\bar{\partial}$,  such that
$$
\frac{1}{\pi i}\partial \bar{\partial}\, g_\kappa(x) = \delta_{S_\kappa(x)},
$$
where the current on the right hand side is the Dirac current given by integration over $S_\kappa(x)$, and
$$
\int g_\kappa(x) \eta =0
$$
for any $\partial \bar{\partial}$-closed form $\eta$ on $Y(\mathbb C)$.
The arithmetic CM-cycle $ \hat S_\kappa(x)$ over $x$, in the sense of Gillet and Soul\'e \cite{GS}, is the arithmetic codimension $\kappa$ cycle on $\mathcal Y$ defined by
\begin{align}
\hat{S}_\kappa(x) = (S_\kappa(\bar x), g_\kappa(x)).
\end{align}

Now let $x$ and $y$ be two different CM points on $X_0(\lev)$.
Then the height pairing of the CM cycles $S_\kappa(x)$
and $S_\kappa(y)$ on $Y$ is defined as the arithmetic intersection
\begin{align}
\langle S_\kappa(x), S_\kappa(y)\rangle := (-1)^\kappa (\hat{ S}_\kappa(x)\cdot \hat{S}_\kappa(y))_{GS}.
\end{align}
According to \cite[Section 3.2]{Zh1}, it decomposes into local contributions
$$
\langle S_\kappa(x), S_\kappa(y)\rangle =
\langle S_\kappa(x), S_\kappa(y)\rangle_{fin} + \langle S_\kappa(x), S_\kappa(y)\rangle_{\infty},
$$
with
\begin{align}
\label{eq:finp}
\langle S_\kappa(x), S_\kappa(y)\rangle_{fin} &=
\sum_{p<\infty} \langle S_\kappa(x), S_\kappa(y)\rangle_{p}=
(-1)^\kappa \sum_{p<\infty} ( S_\kappa(\bar x)\cdot  S_\kappa(\bar y))_p,
\\
\label{eq:infp}
 \langle S_\kappa(x), S_\kappa(y)\rangle_{\infty} &=\frac{1}2 G_{\lev,\kappa}(x, y).
\end{align}
Here the last identity is \cite[Proposition 3.4.1]{Zh1}, and $G_{\lev,\kappa}(x, y)$ is the higher Green function defined in \cite[Eq.~(2.10)]{GZ}.
Let $U$ and $m_0$ be as in Section \ref{sect:6.2}. Following \cite{Xue}, we define higher Heegner divisors for $X_0(\lev)$ as
 $$
 Z_\kappa(U)= m_0^{\frac{\kappa-1}2} \sum_{x \in Z(U)}  S_\kappa(x),
 $$
and
$$
Z_\kappa(m, \mu)=m^{\frac{\kappa-1}2} \sum_{x \in Z(m, \mu)}  S_\kappa(x).
$$
It is our goal to compute the height pairing of these divisors in the case of proper intersection.

Assume that $D_0=-4\lev m_0$ is fundamental and coprime to $2M$ (in particular $ Z_\kappa(U)=Z_\kappa(m_0, \mu_0)$).
Let $m_1\in \Q_{>0}$ and let $\mu_1\in L'/L$ such that $Q(\mu_1) \equiv m_1
\pmod{1}$. Then $D_1:=-4\lev m_1\in \Z$ is a negative discriminant which we assume to be coprime to $D_0$.

\begin{theorem}
\label{thm:higherheight}
Let the notation be as above and assume that $\kappa=1+2j>1$ is an odd integer.  Let $f_{m_1,\mu_1}\in H_{3/2-\kappa,\bar\rho_L}$ be the unique harmonic Maass form, whose Fourier expansion starts with $q^{-m_1}(\phi_{\mu_1}+\phi_{-\mu_1})+O(1)$.
\begin{enumerate}
\item One has
\begin{align*}
\langle Z_\kappa(m_1, \mu_1),  Z_\kappa(U) \rangle_{fin}
&=\frac{\deg (Z(U))}{4} m_0^\frac{\kappa-1}{2} \cdot \CT\left(\langle f_{m_1, \mu_1}^+, [\theta_{P}, \calE_{N}^+]_\frac{\kappa-1}{2}\rangle\right).
\end{align*}

\item  The global height pairing is given by
\begin{align*}
\langle Z_\kappa(m_1, \mu_1),  Z_\kappa(U) \rangle
&=\frac{\sqrt \lev \Gamma(\kappa-1/2)}{(4\pi)^{\kappa-1}\pi^{3/2} }L'\big(\shim_{m_0,\mu_0}(\xi_{3/2-\kappa}f_{m_1, \mu_1}),\kappa\big).
\end{align*}
\end{enumerate}
\end{theorem}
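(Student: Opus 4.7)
The strategy is to establish part (1) and part (2) in tandem by computing the archimedean and non-archimedean contributions to the height pairing separately, and then observing that their sum simplifies via Theorem~\ref{thm:fund12}. I begin with the archimedean side, which is the easier half since the main work has been done in Section~\ref{sect:cm}.

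First, using the definition of the higher Heegner cycle $Z_\kappa(m,\mu)=m^{j}\sum_{x\in Z(m,\mu)} S_\kappa(x)$ and the archimedean local height formula \eqref{eq:infp}, one has
\[
\langle Z_\kappa(m_1,\mu_1), Z_\kappa(U)\rangle_\infty
= \frac{m_0^j m_1^j}{2}\, G_{\lev,\kappa}\!\left(Z(m_1,\mu_1),Z(U)\right).
\]
By symmetry of the Green function in its two arguments, and by \eqref{eq:go12} (which is the form Proposition~\ref{prop:green12} takes upon specialization to $s=s_0+j$), this equals $-\frac{m_0^j}{4}\,\Phi^j(Z(U), f_{m_1,\mu_1})$. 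Applying Theorem~\ref{thm:fund12} splits the right hand side into the constant term $\CT\langle f_{m_1,\mu_1}^+,[\theta_P,\calE_N^+]_j\rangle$ (multiplied by $-\tfrac{m_0^j\deg(Z(U))}{4}$) plus a multiple of the $L'$-value at $1+2j$ of the Shimura lift of $\xi_{1/2-2j}f_{m_1,\mu_1}$.

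For part (1), I would compute $\langle S_\kappa(\bar x)\cdot S_\kappa(\bar y)\rangle_p$ at each prime $p$ via the results of Xue~\cite{Xue}, which express the local intersection numbers of higher Heegner cycles as generalized Gross--Zagier quaternionic counts twisted by certain weight-$\kappa$ quantities (essentially the $(\kappa-1)$-th power of a local linking number). Summing over CM points of $Z(m_1,\mu_1)$ and $Z(U)$ and then over primes $p$, the key step is to recognize the resulting expression as the constant term of $\langle f^+_{m_1,\mu_1},[\theta_P,\calE_N^+]_j\rangle$: the negative Fourier coefficients of $f^+_{m_1,\mu_1}$ pick out the index $(m_1,\mu_1)$, the theta function $\theta_P$ contributes the lattice counts in $P=L\cap U^\perp$, and the Fourier coefficients of $\calE_N^+$ (computed explicitly in \cite{KY}; see also \cite[Thm.~2.6]{BY}) package the local linking numbers at primes dividing $m_0 m_1$ through the $L$-function $L(\chi_{D_0}, 1)$ and the logarithmic factors. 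For $j=0$ this matching is exactly the statement behind the non-archimedean part of the main result of \cite{BY}; for general $j\geq 1$ the Rankin--Cohen bracket provides the correct weighting needed to reproduce the extra power of the linking number that appears in Xue's formulas.

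Granting part (1), part (2) follows by adding $\langle\cdot,\cdot\rangle_{fin}$ and $\langle\cdot,\cdot\rangle_\infty$: the two constant-term contributions cancel, leaving
\[
\langle Z_\kappa(m_1,\mu_1), Z_\kappa(U)\rangle
= \frac{m_0^j\deg(Z(U))\,C(0,j)}{4(4\pi m_0)^{1/2+j} L(\chi_{D_0},1)}\,
L'\!\left(\shim_{m_0,\mu_0}(\xi_{1/2-2j}f_{m_1,\mu_1}), 1+2j\right).
\]
Substituting $\deg(Z(U)) = \frac{4\sqrt{\lev m_0}}{\pi} L(\chi_{D_0},1)$, $C(0,j) = \frac{2\,\Gamma(2j+1/2)}{(4\pi)^j}$ from Lemma~\ref{prop:l1}, and $\kappa = 1+2j$, a direct simplification of the gamma and $\pi$ powers gives exactly $\frac{\sqrt{\lev}\,\Gamma(\kappa-1/2)}{(4\pi)^{\kappa-1}\pi^{3/2}}\,L'(\shim_{m_0,\mu_0}(\xi_{3/2-\kappa}f_{m_1,\mu_1}),\kappa)$.

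The main obstacle is part (1): matching Xue's prime-by-prime local intersection multiplicities with the Fourier coefficients of the derivative of the incoherent Eisenstein series $E_N'(\tau,0;1)$, weighted by the Rankin--Cohen bracket with $\theta_P$. This is ultimately a local calculation at each prime (split, inert, or ramified in $\Q(\sqrt{D_0})$) that one has to carry out case by case, relying on the coprimality of $D_0$ and $D_1$ to avoid improper intersections, and on the fundamentality of $D_0$ to identify $Z(U)$ with $Z(m_0,\mu_0)$. Once the $j=0$ case from \cite{BY} is in hand, the remaining work is to insert the correct weight-$\kappa$ factor coming from the CM divisor class $Z(E)^{\kappa-1}$ and verify that it is reproduced by the Rankin--Cohen bracket on the automorphic side.
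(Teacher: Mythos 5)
Your overall strategy coincides with the paper's: the archimedean part is computed exactly as you describe via \eqref{eq:infp}, \eqref{eq:go12} and Theorem~\ref{thm:fund12}, and part (2) does follow from part (1) by the cancellation of the constant-term contributions together with the evaluation of $\deg Z(U)$ and $C(0,j)$. That half of your plan is sound and matches the paper step for step.

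The gap is in part (1), which you correctly flag as the main obstacle but then leave as a ``case by case'' local calculation without supplying the ingredient that actually makes it work. The matching is \emph{not} done prime by prime against ``logarithmic factors'' of $\calE_N^+$; it is done globally in two halves. On the automorphic side one computes the $(m_1,\mu_1)$-th Fourier coefficient of $[\theta_P,\calE_N^+]_j$ explicitly (Proposition~\ref{prop:Coeff}): it is a sum over integers $n$ with $n\equiv r_0r_1\pmod{2\lev}$, $n^2\le D_0D_1$, of $\kappa\bigl(\tfrac{D_0D_1-n^2}{4\lev|D_0|},\cdot\bigr)$ weighted by an explicit polynomial $\beta_j(n/\sqrt{D_0D_1})$ coming from the Rankin--Cohen coefficients. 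On the geometric side, Xue's \cite[Proposition~3.1]{Xue} gives $(S_\kappa(\bar x_1)\cdot S_\kappa(\bar x_0))_p = -P_{\kappa-1}(n/\sqrt{D_0D_1})(\bar x_1\cdot\bar x_0)_p$ with $P_{\kappa-1}$ the Legendre polynomial, and the underlying intersection multiplicities $(\bar x_1\cdot\bar x_0)_p$ are identified, via the moduli description of $j^*\calZ(m_1,\mu_1)$ as a disjoint union of special cycles $\calZ\bigl(\tfrac{D_0D_1-n^2}{4\lev|D_0|},\frakn_0,\cdot\bigr)$ on $\calC$ (Lemma~\ref{lem:pullback}), with the same coefficients $\kappa(m,\mu)$ by \cite[Theorem~6.4]{BY}. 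The two sides then agree if and only if $\beta_j(x)=P_{2j}(x)$, which is a genuinely nontrivial combinatorial identity (Lemma~\ref{lem:Com}, proved by verifying that $\beta_j$ satisfies the Legendre recursion). Without this identity your ``correct weighting'' claim for the Rankin--Cohen bracket is an assertion, not an argument, and there is no reason a priori that the bracket coefficients reproduce $P_{\kappa-1}$.

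A second, more technical omission: your plan tacitly assumes that the arithmetic intersection of the Zariski closures $\bar Z(m_0,\mu_0)\cdot\bar Z(m_1,\mu_1)$ can be computed on the moduli stacks $\calZ(m_i,\mu_i)$. This requires showing $\calZ(m,\mu)\cong\bar Z(m,\mu)$ as Cartier divisors (Lemma~\ref{lem:hegmod}), including the deformation-theoretic analysis at primes dividing the conductor, and the identification \eqref{eq:zariski-pullback}. These points need to be established before Xue's formula and \cite[Theorem~6.4]{BY} can be invoked.
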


\begin{proof}
By \eqref{eq:infp}, \eqref{eq:go12}, and Theorem \ref{thm:fund12}, we have
\begin{align*}
 \langle Z_\kappa(m_1, \mu_1), Z_\kappa(U) \rangle_\infty
&= \frac{(m_0 m_1)^{\frac{\kappa-1}{2}}}{2} G_{\lev,\kappa}(Z(U), Z(m_1, \mu_1))
\\
&=-\frac{m_0^{\frac{\kappa-1}{2}}}{4} \Phi^\frac{\kappa-1}{2}(Z(U),f_{m_1,\mu_1})\\
&=-m_0 ^{\frac{\kappa-1}{2}}\frac{\deg Z(U)}{4} \bigg(
\CT\left(\langle
f_{m_1, \mu_1}^+,\, [\theta_P,\calE_N^+]_\frac{\kappa-1}{2}\rangle\right)\\
&\phantom{= -}{}
-\frac{C(0,\frac{\kappa-1}{2})}{(4\pi m_0)^{\frac{\kappa}{2}}
L(\chi_{D_0},1)} L'\big(\shim_{m_0,\mu_0}(\xi_{3/2-\kappa}f_{m_1, \mu_1}),\kappa\big)
\bigg).
\end{align*}
Using Propositions \ref{prop:Coeff}, \ref{prop:FiniteIntersection} and Lemma \ref{lem:Com} below, we find that
\begin{align*}
\langle Z_\kappa(m_1, \mu_1),  Z_\kappa(U) \rangle_{fin}= m_0 ^{\frac{\kappa-1}{2}}\frac{\deg Z(U)}{4} \cdot
\CT\left(\langle
f_{m_1, \mu_1}^+,\, [\theta_P,\calE_N^+]_\frac{\kappa-1}{2}\rangle\right).
\end{align*}
Hence, we obtain for the global height pairing:
\begin{align*}
 \langle Z_\kappa(m_1, \mu_1), Z_\kappa(U) \rangle
&=m_0 ^{\frac{\kappa-1}{2}}
\frac{\deg( Z(U))C(0,\frac{\kappa-1}{2})}{4(4\pi m_0)^{\frac{\kappa}{2}}
L(\chi_{D_0},1)} L'\big(\shim_{m_0,\mu_0}(\xi_{3/2-\kappa}f_{m_1, \mu_1}),\kappa\big).
\end{align*}
Moreover, a simple calculation shows that
\begin{align*}
\deg Z(U) &=\frac{2 \sqrt{|D_0|}}{\pi} L(\chi_{D_0}, 1),\\
C(0, \tfrac{\kappa-1}{2})&=2(4\pi)^{-\frac{\kappa-1}{2}}\Gamma(\kappa-1/2).
\end{align*}
Inserting these expressions, we obtain the asserted formula.
\end{proof}

\begin{corollary}
\label{cor:higherheight}
Let the notation be as above and assume that $\kappa=1+2j>1$ is an odd integer.  Let $f\in H_{3/2-\kappa,\bar\rho_L}$ with Fourier coefficients $c^\pm(m,\mu)$ and define
\[
Z_\kappa(f) = \sum_{\substack{m>0\\ \mu \in L'/L}} c^+(-m,\mu) Z_\kappa(m,\mu).
\]
If $c^+(-m,\mu)=0$ for all $m>0$ with $(4\lev m,D_0)\neq 1$,
then
\begin{align*}
\langle Z_\kappa(f),  Z_\kappa(U) \rangle
&=\frac{2 \sqrt \lev \Gamma(\kappa-1/2)}{(4\pi)^{\kappa-1}\pi^{3/2} }L'\big(\shim_{m_0,\mu_0}(\xi_{3/2-\kappa}f),\kappa\big).
\end{align*}
\end{corollary}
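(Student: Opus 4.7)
The plan is to deduce Corollary \ref{cor:higherheight} from Theorem \ref{thm:higherheight}(2) by a linearity argument in the principal part of $f$. Since $\kappa > 1$, the weight $\tfrac{3}{2} - \kappa$ is strictly negative, so any weakly holomorphic modular form of this weight whose principal part vanishes is itself zero. Consequently every $f \in H_{3/2-\kappa,\bar\rho_L}$ is uniquely determined by its principal part, and $f$ can be expressed as a linear combination
\[
f = \sum_{m>0}\sum_{\mu \in L'/L} \lambda_{m,\mu}\, f_{m,\mu}
\]
of the basic forms $f_{m,\mu}$ of Theorem \ref{thm:higherheight}, with scalars $\lambda_{m,\mu}$ determined by the principal part coefficients $c^+(-m,\mu)$, accounting for the symmetry $f_{m,\mu}=f_{m,-\mu}$ and the corresponding symmetry of the coefficients.

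The hypothesis $c^+(-m,\mu)=0$ whenever $(4\lev m, D_0)\neq 1$ ensures that every $Z_\kappa(m,\mu)$ appearing with nonzero coefficient in $Z_\kappa(f)$ is supported on CM points whose discriminant $-4\lev m$ is coprime to $D_0$. Since $Z_\kappa(U) = Z_\kappa(m_0,\mu_0)$ is supported on CM points of discriminant $D_0$, the cycles $Z_\kappa(m,\mu)$ and $Z_\kappa(U)$ have disjoint support. Hence the arithmetic intersection is proper, and Theorem \ref{thm:higherheight}(2) applies to each $f_{m,\mu}$ in the decomposition.

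Summing these identities with weights $\lambda_{m,\mu}$ and using the bilinearity of the height pairing on the left, together with the linearity of the Shimura lift $\shim_{m_0,\mu_0}$ and of the derivative $L'(-,\kappa)$ on the right, yields the formula of the corollary. The identities $Z_\kappa(f) = \sum \lambda_{m,\mu}\, Z_\kappa(f_{m,\mu})$ and $\xi_{3/2-\kappa}(f)=\sum \lambda_{m,\mu}\, \xi_{3/2-\kappa}(f_{m,\mu})$ both hold in the arithmetic setting of interest, where the relevant Fourier coefficients are real, so that the $\C$-antilinearity of $\xi$ causes no problem. The main obstacle is the combinatorial bookkeeping needed to verify that the indexing conventions (the sum over all $\mu \in L'/L$ in the definition of $Z_\kappa(f)$, the symmetry $f_{m,\mu}=f_{m,-\mu}$, and the appearance of both $\phi_\mu$ and $\phi_{-\mu}$ in the principal part of $f_{m,\mu}$) collectively produce precisely the factor of $2$ in the constant of the corollary compared with Theorem \ref{thm:higherheight}(2).
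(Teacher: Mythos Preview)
Your proposal is correct and follows essentially the same approach as the paper: both deduce the corollary from Theorem \ref{thm:higherheight}(2) by linearity in the principal part of $f$. The paper's proof is a single sentence noting that $Z_\kappa(f_{m_1,\mu_1}) = 2Z_\kappa(m_1,\mu_1)$, which is exactly the factor-of-$2$ bookkeeping you describe in your final paragraph; your version simply spells out the details (uniqueness of $f$ from its principal part, disjoint support under the coprimality hypothesis) that the paper leaves implicit.
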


\begin{proof}
Since $Z_\kappa(f_{m_1,\mu_1})= 2Z_\kappa(m_1,\mu_1)$, the corollary directly follows from Theorem \ref{thm:higherheight} by linearity.
\end{proof}

We now provide the three auxiliary results that were used in the proof of Theorem \ref{thm:higherheight}.
\begin{proposition}
\label{prop:Coeff}
Let the notation be as above
 and assume that $\kappa=1+2j>1$ is odd.
As in  \eqref{eq:calE} let $\kappa(m, \mu)$ denote the $(m, \mu)$-th Fourier coefficient of $\calE_{N}^+(\tau)$.
Then the $(m_1, \mu_1)$-th Fourier coefficient of $[\theta_{P}, \calE_{N}^+]_j$ is equal to
\begin{align*}
\CT\left(\langle
f_{m_1, \mu_1}^+,\, [\theta_P,\calE_N^+]_j\rangle\right)
&=2 m_1^j\sum_{\substack{ n \equiv r_0 r_1 \pod{2\lev} \\ n^2 \le D_0 D_1}}
  \kappa\left(\frac{D_0D_1 -n^2}{4\lev |D_0|}, \frac{\tilde 2 n}{\sqrt{D_0}} \right) \beta_j\left(\frac{n}{\sqrt{D_0D_1}}\right) ,
\end{align*}
where $\kappa(m, \mu)$ denotes the $(m, \mu)$-th Fourier coefficient of $\calE_{N}^+(\tau)$ as in  \eqref{eq:calE}. Moreover,
$\tilde 2$ is an integer with $2 \cdot \tilde 2 \equiv 1 \pmod {D_0}$, and
\begin{align*}
\beta_j(x) &=\sum_{s=0}^j  \begin{pmatrix} j-\frac{1}2 \\ s \end{pmatrix} \begin{pmatrix} j \\ s \end{pmatrix}  x^{2j-2s} (x^2 -1)^{s}.
\end{align*}
\end{proposition}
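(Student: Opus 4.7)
The plan is to compute the Fourier coefficient of $[\theta_P,\calE_N^+]_j$ extracted by $\CT\langle f_{m_1,\mu_1}^+,\cdot\rangle$ directly, and match it to the stated sum over $n$.

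Expanding the Rankin-Cohen bracket via \eqref{eq:defRC} with weights $(k,l)=(\tfrac12,1)$ for $\theta_P$ and $\calE_N^+$, the $(m_1,\mu)$-coefficient of $[\theta_P,\calE_N^+]_j$, viewed as a $(P\oplus N)$-valued form with $\mu=\alpha+\beta\in P'/P\oplus N'/N$, equals
\[
\sum_{a+b=m_1} r_P(a,\alpha)\,\kappa(b,\beta)\, p_j(a,b),
\qquad
p_j(a,b):=\sum_{s=0}^j(-1)^s\binom{j-\tfrac{1}{2}}{s}\binom{j}{s}a^{j-s}b^s.
\]
A short algebraic manipulation (setting $x=\sqrt{a/m_1}$ so that $x^2-1=(a-m_1)/m_1$ and comparing with the definition of $\beta_j$) shows that $p_j(a,m_1-a)=m_1^j\beta_j(\sqrt{a/m_1})$. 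By Lemma~\ref{sublattice}, $\CT\langle f_{m_1,\mu_1}^+,[\theta_P,\calE_N^+]_j\rangle$ equals the sum of these coefficients over all cosets $\mu\in L'/(P\oplus N)$ lying above $\pm\mu_1\in L'/L$.

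Next I would rewrite this sum as a sum over $\lambda_P\in P'$. Under the coset condition $\lambda_P+\lambda_N\equiv\pm\mu_1\pmod L$, the class $\lambda_N+N$ is uniquely determined by $\lambda_P$. Using the explicit description \eqref{latticeN}, \eqref{latticeN2} of $L$ and $L'$, and the assumption (in force via Theorem~\ref{thm:higherheight}) that $D_0$ is coprime to $2\lev$, $x_0$ has order $2\lev$ in $L'/L$, so $P=2\lev\Z\cdot x_0$ and every $\lambda_P\in P'$ has the form $\lambda_P=\frac{n}{4\lev m_0}x_0$ for a unique $n\in\Z$. This gives $Q(\lambda_P)=n^2/(16\lev^2 m_0)$, and since $D_0D_1=16\lev^2 m_0m_1$, also $m_1-Q(\lambda_P)=(D_0D_1-n^2)/(4\lev|D_0|)$ and $\sqrt{Q(\lambda_P)/m_1}=|n|/\sqrt{D_0D_1}$. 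Since $\beta_j$ is an even function, we may replace $|n|$ by $n$.

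Finally, using $D_0\equiv r_0^2$ and $D_1\equiv r_1^2\pmod{4\lev}$ together with the identification $L'/L\cong\Z/2\lev\Z$, the coset condition $\lambda_P+\lambda_N\equiv\pm\mu_1\pmod L$ becomes the congruence $n\equiv\pm r_0 r_1\pmod{2\lev}$, and a direct calculation shows that the corresponding $N'/N$-component is $\pm\tilde 2 n/\sqrt{D_0}$. The symmetries $\kappa(b,\beta)=\kappa(b,-\beta)$ and $\beta_j(-x)=\beta_j(x)$ collapse the two contributions from $\pm\mu_1$ into a single sum over $n\equiv r_0 r_1\pmod{2\lev}$ with an overall factor of $2$, yielding the claimed identity. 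The main technical obstacle is this last bookkeeping step—translating the coset condition on $L'/L$ into the integer congruence $n\equiv r_0 r_1\pmod{2\lev}$ and identifying the $N'/N$-component as $\tilde 2 n/\sqrt{D_0}$—which requires a careful direct computation in the matrix coordinates of $L'$ given by \eqref{latticeN2}.
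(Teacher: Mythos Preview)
Your proposal is correct and follows essentially the same route as the paper: expand the Rankin--Cohen bracket in weights $(\tfrac12,1)$, write the $(m_1,\mu_1)$-coefficient as a sum over $n_1+n_2=m_1$ with a coset condition, factor out $m_1^j$ to obtain $\beta_j$, and then translate the coset condition into the congruence $n\equiv r_0r_1\pmod{2\lev}$. The only difference is that the paper outsources the final bookkeeping step (your ``main technical obstacle'') to \cite[Lemma~7.13]{BY}, whereas you sketch it directly via the parametrization $\lambda_P=\frac{n}{4\lev m_0}x_0$ of $P'$; both lead to the same identification of the $N'/N$-component and the factor of $2$ from $\pm\mu_1$.
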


\begin{proof}
Let $a(n, \nu)$ be the $(n, \nu)$-th Fourier coefficient of the weight $1/2$ theta series  $\theta_{P}(\tau)$. By definition, we have
\begin{align*}
[\theta_{P}, \calE_{N}^+]_j
 &= \sum_{\substack{n_1,n_2\\ \nu_1,\nu_2}} a(n_1, \nu_1) \kappa(n_2, \nu_2) \sum_{s=0}^j (-1)^s \begin{pmatrix} j-\frac{1}2 \\ s \end{pmatrix} \begin{pmatrix} j \\ s \end{pmatrix}
   n_1^{j-s} n_2^s q^{n_1 +n_2} \phi_{\nu_1} \phi_{\nu_2}.
\end{align*}
Now the same argument as in the proof of \cite[Lemma 7.13]{BY} shows that the $(m_1, \mu_1)$-th coefficient of $[\theta_{P}, \calE_{N}^+]_j$ is equal to
\begin{align*}
&\sum_{\substack{n_1+ n_2 =m_1 \\ \nu_1 +\nu_2 \equiv \mu_1 \pmod L}}
  a(n_1, \nu_1) \kappa(n_2, \nu_2) \sum_{s=0}^j  (-1)^s\begin{pmatrix} j-\frac{1}2 \\ s \end{pmatrix} \begin{pmatrix} j \\ s \end{pmatrix}
   n_1^{j-s} n_2^s
   \\
   &=\sum_{\substack{n \equiv r_0 r_1 \pod{2\lev } \\ n^2 \le D_0D_1 }}
     \kappa(\frac{D_0D_1-n^2}{4\lev |D_0|},\frac{ \tilde 2 n }{\sqrt{D_0}})
     \left( \frac{1}{4\lev |D_0|}\right)^j
      \sum_{s=0}^j  \begin{pmatrix} j-\frac{1}2 \\ s \end{pmatrix} \begin{pmatrix} j \\ s \end{pmatrix}
          n^{2j-2s} (n^2-D_0D_1)^s
  \\
   &= m_1^j
 \sum_{\substack{ n \equiv r_0 r_1 \pod{2\lev} \\ n^2 \le D_0 D_1}}
  \kappa(\frac{D_0D_1 -n^2}{4\lev |D_0|}, \frac{\tilde 2 n}{\sqrt{D_0}} ) \beta_j(\frac{n}{\sqrt{D_0D_1}}).
\end{align*}
The $(m_1,-\mu_1)$-th coefficient gives the same contribution.
\end{proof}

\begin{lemma} \label{lem:Com} Let
$$
P_n(x) = \frac{1}{2^n n!} \frac{d^n}{dx^n}(x^2-1)^n =\frac{1}{2^n} \sum_{k=0}^{[\frac{n}2]} (-1)^k \begin{pmatrix} 2n-2k \\ n \end{pmatrix}
   \begin{pmatrix} n \\ k \end{pmatrix}  x^{n-2k}
$$
be the $n$-th Legrendre polynomial. Then
$$
\beta_j(x) = P_{2j}(x).
$$
\end{lemma}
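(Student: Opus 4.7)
The plan is to identify both $\beta_j(x)$ and $P_{2j}(x)$ with the same Gauss hypergeometric function by means of two classical transformation formulas.

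First, I would factor $x^{2j}$ out of the defining sum for $\beta_j(x)$ and rewrite the product of binomial coefficients in terms of Pochhammer symbols using $\binom{j-1/2}{s} = (-1)^s(1/2-j)_s/s!$ and $\binom{j}{s} = (-1)^s(-j)_s/s!$, which yields $\binom{j-1/2}{s}\binom{j}{s} = (1/2-j)_s(-j)_s/((1)_s\,s!)$. Since $(-j)_s=0$ for $s>j$, the sum extends to all $s\geq 0$, giving
$$\beta_j(x) = x^{2j}\,{}_2F_1\!\left(-j,\tfrac12-j;\,1;\,\tfrac{x^2-1}{x^2}\right).$$

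Next, I would apply Pfaff's transformation ${}_2F_1(a,b;c;z)=(1-z)^{-a}\,{}_2F_1(a,c-b;c;z/(z-1))$ with $(a,b,c,z)=(-j,\,j+\tfrac12,\,1,\,1-x^2)$; a direct substitution gives $(1-z)^{-a}=x^{2j}$, $c-b=\tfrac12-j$, and $z/(z-1)=(x^2-1)/x^2$, so the right-hand side above equals ${}_2F_1(-j,\,j+\tfrac12;\,1;\,1-x^2)$. Hence $\beta_j(x) = {}_2F_1(-j,\,j+\tfrac12;\,1;\,1-x^2)$. For the Legendre side I would invoke the standard representation $P_n(x) = {}_2F_1(-n,\,n+1;\,1;\,(1-x)/2)$ and apply the quadratic transformation
$${}_2F_1\!\left(a,b;\,\tfrac{a+b+1}{2};\,z\right) = {}_2F_1\!\left(\tfrac{a}{2},\tfrac{b}{2};\,\tfrac{a+b+1}{2};\,4z(1-z)\right)$$
with $a=-2j$, $b=2j+1$, $z=(1-x)/2$. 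The fortuitous identities $(a+b+1)/2=1$ and $4z(1-z)=1-x^2$ then yield $P_{2j}(x) = {}_2F_1(-j,\,j+\tfrac12;\,1;\,1-x^2)$, matching $\beta_j(x)$.

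There is no real obstacle: the proof is bookkeeping with standard ${}_2F_1$ identities. The only delicate point is to observe that the Legendre parameters $(a,b,c)=(-2j,\,2j+1,\,1)$ satisfy exactly the balance condition $c=(a+b+1)/2$ demanded by this particular quadratic transformation, which is what allows the degree to be halved from $2j$ to $j$ and brings the expression into the form matching $\beta_j$.
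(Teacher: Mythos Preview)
Your proof is correct and takes a genuinely different route from the paper's.

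The paper proves the identity by a direct coefficient comparison: it writes $P_{2j}(x)=\sum_{m=0}^j a_{2m}x^{2m}$, extracts the recursion
\[
\frac{a_{2m+2}}{a_{2m}}=-\frac{(2j-2m)(2j+2m+1)}{(2m+2)(2m+1)}
\]
from the Legendre differential equation, then expands $\beta_j(x)=\sum_{m=0}^j b_{2m}x^{2m}$ with $b_{2m}=(-1)^{j-m}\sum_{s=j-m}^j x_s^{(m)}$ for certain explicit combinatorial terms $x_s^{(m)}$, and verifies by a somewhat laborious splitting $x_s^{(m+1)}=x_{s,+}^{(m+1)}+x_{s,-}^{(m+1)}$ that the $b_{2m}$ satisfy the same recursion and initial condition. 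This is elementary but requires several pages of bookkeeping (the paper in fact credits Ruixiang Zhang with the argument).

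Your approach replaces all of this with two standard ${}_2F_1$ identities: Pfaff's transformation collapses $\beta_j$ to ${}_2F_1(-j,j+\tfrac12;1;1-x^2)$, and the quadratic transformation applied to $P_{2j}(x)={}_2F_1(-2j,2j+1;1;(1-x)/2)$ lands on the same expression because $(a+b+1)/2=1$ and $4z(1-z)=1-x^2$ when $z=(1-x)/2$. Since $a=-j$ (respectively $a=-2j$) makes every series terminate, both transformations are polynomial identities and no convergence issues arise. This is shorter and more conceptual, and it explains \emph{why} the identity holds: $\beta_j$ is visibly a terminating ${}_2F_1$ in $(x^2-1)/x^2$, and the Legendre polynomial of even degree admits exactly the quadratic transformation needed to match it. The paper's approach, by contrast, is self-contained and avoids importing special-function machinery.
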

\begin{proof} Since $P_{2j}$ satisfies the differential equation
$$
(1-x)^2P'' -2xP' +2j(2j+1)P=0,
$$
$P_{2j}(x)$ is the unique polynomial
$$
\sum_{m=0}^{j} a_{2m} x^{2m}
$$
satisfying the recursion formula
\begin{equation} \label{eq:Recursion}
\frac{a_{2m+2}}{a_{2m}} =-\frac{(2j-2m)(2j+2m+1)}{(2m+2)(m+1)}, \quad a_{0}=(-1)^j\frac{1}{2^{2j}} \binom{2j}{j}.
\end{equation}
A simple calculation shows that we can write
$$
\beta_j(x) = \sum_{m=0}^j b_{2m} x^{2m},
$$
with
$
b_{2m} =(-1)^{j-m} \sum_{s=j-m}^j x_s^{(m)},
$
and
$$
x_s^{(m)} = \frac{1}{2^{2s}}
 \begin{pmatrix} 2j \\ 2j-2s \end{pmatrix} \begin{pmatrix} 2s \\ s \end{pmatrix} \begin{pmatrix} s \\ j-m \end{pmatrix}
 = \frac{1}{2^{2s}} \frac{(2j)!}{(2j-2s)! s! (s-j+m)! (j-m)!}.
$$
Clearly, $b_0=(-1)^j\frac{1}{2^{2j}} \binom{2j}{j}$. To show that $b_{2m}$ satisfies the recusion formula (\ref{eq:Recursion}),  notice that
\begin{align*}
\frac{x_s^{(m)}}{x_{s+1}^{(m)} }
 &= \frac{4(s+1)(s+1-j+m)}{(2j-2s)(2j-2s-1)},
 \\
 \frac{x_s^{(m+1)}}{x_s^{(m)}}
  &=\frac{j-m}{s+1-j+m},
  \\
  \frac{x_{s-1}^{(m+1)}}{x_s^{(m)}}
    &=\frac{4s(j-m)}{(2j-2s+2)(2j-2s+1)}.
\end{align*}
With these formulas,  we can split
\begin{align*}
x_{s}^{(m+1)}&= x_{s, +}^{(m+1)} + x_{s, -}^{(m+1)},
\\
x_{s, +}^{(m+1)} &= \frac{4(j-m)(s+1)}{(2m+2)(2m+1)} x_{s+1}^{(m)},\quad  j-m-1 \le s \le j-1, \quad  x_{j, +}^{(m+1)}=0,
\\
 x_{s, -}^{(m+1)} &=\frac{(2j-2m)(2j+2m-2s+1)}{(2m+2)(2m+1)} x_s^{(m)}, \quad  j-m \le s \le j, \quad  x_{j-m-1, -}^{(m)}=0.
\end{align*}
Now it is straightforward to verify  the following identities  for  $j-m \le s \le j$:
$$
x_{s-1,+}^{(m+1)} +x_{s, -}^{(m+1)} = \frac{(2j-2m)(2j+2m+1)}{(2m+2)(m+1)}{x_s^{(m)}}.
$$
Notice that the right hand side is independent of $s$. Adding them together,  we see  that $b_{2m}$ satisfies the recursion formula (\ref{eq:Recursion}). This proves that $\beta_j(x) =P_{2j}(x)$.
\end{proof}

We thank Ruixiang Zhang for showing one of us (T.Y.) the proof of the above lemma.

\begin{proposition} \label{prop:FiniteIntersection}
Let the notation be as before. In particular, assume that $D_0$ is fundamental and $(D_0, 2\lev D_1)=1$.
Then
\begin{align*}
&\langle Z_\kappa(m_1, \mu_1),  Z_\kappa(U) \rangle_{fin}
\\
&=(m_0m_1)^{\frac{\kappa-1}2}\frac{\deg (Z(U))}2 \sum_{ \substack{n\equiv r_0 r_1 \pod{2\lev } \\ n^2 \le D_0D_1}} P_{\kappa-1}\left(\frac{n}{\sqrt{D_0D_1}}\right) \kappa\left(\frac{D_0D_1-n^2}{4\lev |D_0|},\frac{\tilde 2 n}{ \sqrt{D_0}}\right).
\end{align*}
\end{proposition}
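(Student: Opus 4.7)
The plan is to express the finite height pairing as a sum of local intersection numbers at finite primes, apply the higher-weight local intersection formula of \cite{Xue} (generalizing \cite{Zh1}) at each prime, and then recognize the resulting $p$-adic sum as a Fourier coefficient of $\calE_N^+$ via the Kudla--Yang formula \cite[Theorem 2.6]{BY}.

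First, from \eqref{eq:finp} and the definitions of $Z_\kappa(U)$ and $Z_\kappa(m_1,\mu_1)$ we have
\[
\langle Z_\kappa(m_1,\mu_1),\, Z_\kappa(U)\rangle_{fin}
= (-1)^\kappa (m_0 m_1)^{\frac{\kappa-1}{2}} \sum_{p<\infty}\big(S_\kappa(\overline{Z(U)})\cdot S_\kappa(\overline{Z(m_1,\mu_1)})\big)_p .
\]
The hypothesis $(D_0, 2\lev D_1)=1$ ensures that $Z(U)=Z(m_0,\mu_0)$ and $Z(m_1,\mu_1)$ share no points on $X_0(\lev)$, so the intersection is proper and every $p$-local contribution is a finite integer. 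Next, I would invoke Xue's higher-weight extension of \cite[Proposition 4.1.1]{Zh1}, which expresses each such local multiplicity as a Legendre-weighted sum: for CM points $x\in \supp Z(U)$ and $y\in \supp Z(m_1,\mu_1)$ of discriminants $D_0$ and $D_1$ respectively, one has
\[
(S_\kappa(\bar x)\cdot S_\kappa(\bar y))_p
= -\sum_{\substack{n\in\Z,\ n^2\le D_0 D_1\\ n\equiv r_0 r_1\ (\mathrm{mod}\ 2\lev)}}
P_{\kappa-1}\!\left(\frac{n}{\sqrt{D_0 D_1}}\right)\cdot \ell_p(x,y;n),
\]
where $\ell_p(x,y;n)$ denotes the contribution at $p$ of the component of the intersection of $\bar x$ and $\bar y$ parametrized by quasi-endomorphisms of trace $n$ in the reduction modulo $p$. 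The polynomial $P_{\kappa-1}$ arises through the self-intersection combinatorics of the cycle $Z(E)^{\kappa-1}$ on $E^{2\kappa-2}$, by the identity reversed in Lemma~\ref{lem:Com}.

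Summing over $x$, $y$, and $p$, the total $\sum_{p<\infty}\sum_{x,y}\ell_p(x,y;n)$ is exactly the quantity computed by the Kudla--Yang formula for the binary negative definite lattice $N$. Concretely, \cite[Theorem 2.6]{BY} applied to $N$ with the pair of cosets $(\mu_0,\mu_1)$ yields
\[
\sum_{p<\infty}\sum_{x,y} \ell_p(x,y;n) = \frac{\deg(Z(U))}{2}\cdot \kappa\!\left(\frac{D_0 D_1-n^2}{4\lev|D_0|},\,\frac{\tilde 2 n}{\sqrt{D_0}}\right),
\]
with $\tilde 2\in\Z$ satisfying $2\tilde 2\equiv 1\pmod{D_0}$, the second entry lying in $N'/N$ via the identification of \cite[\S 7.2]{BY}. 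Since $(-1)^\kappa\cdot(-1)=1$ for $\kappa$ odd, combining the three steps produces the claimed identity.

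The main obstacle will be the second step: verifying that Xue's local multiplicity formula carries precisely the Legendre weight $P_{\kappa-1}(n/\sqrt{D_0 D_1})$ in our chosen normalization of $S_\kappa$ (in particular the constant $c$ forcing fiberwise self-intersection $(-1)^{\kappa-1}$), and that the trace parameter $n$ indexing the components of the $p$-adic intersection corresponds, under the identification $L = P\oplus N$ of \cite[\S 7.2]{BY}, to the class $\tilde 2 n/\sqrt{D_0}\in N'/N$ appearing in the Kudla--Yang Fourier coefficient. Once these combinatorial and adelic compatibilities are pinned down, the remainder is a direct rearrangement of terms.
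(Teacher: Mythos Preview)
Your outline is exactly the paper's strategy: Xue's local formula supplies the Legendre weight, and a Kudla--Yang type result converts the sum of weight-$1$ local intersections into the Fourier coefficient $\kappa(\cdot,\cdot)$. Two points of precision are worth noting. First, Xue's result (\cite[Proposition~3.1]{Xue}) is stated per geometric intersection point, not as a sum over $n$ for a fixed pair $(x,y)$: each point of $\bar x_0 \cap \bar x_1$ over $\bar\F_p$ carries a single $n$ determined by the induced action $\phi:\calO_{D_1}\to\calO_{E,\frakn_0}$ via $2n=\phi(\sqrt{D_1})\iota(\sqrt{D_0})+\iota(\sqrt{D_0})\phi(\sqrt{D_1})$. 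The paper makes this decomposition by $n$ rigorous through the moduli interpretation (Lemmas~\ref{lem:hegmod} and~\ref{lem:pullback}), identifying the Zariski closures with the stacks $\mathcal Z(m_i,\mu_i)$ and then exhibiting $j^*\mathcal Z(m_1,\mu_1)$ as a disjoint union over $n$ of the special cycles $\mathcal Z\big(\tfrac{D_0D_1-n^2}{4M|D_0|},\frakn_0,\tfrac{n+r_1\sqrt{D_0}}{2\sqrt{D_0}}\big)$ on $\mathcal C$. Second, the input from \cite{BY} you need is not Theorem~2.6 (the explicit Eisenstein coefficient formula) but Theorem~6.4, which computes the arithmetic degree $\widehat\deg\,\mathcal Z(m,\fraka,\mu)$ directly as $-\tfrac{\deg Z(U)}{2}\kappa(m,\mu)$; this is what turns the sum of lengths $i_p(x)/|\Aut(x)|$ over all $p$ into the Fourier coefficient. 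With these two refinements in place your sketch becomes the paper's proof verbatim, and the compatibility you worry about (the coset $\tilde 2 n/\sqrt{D_0}$) drops out of the explicit map in Lemma~\ref{lem:pullback}.
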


To prove this proposition, we need some preparations.
Recall that $ D=-4Mm$ is a negative discriminant and $\mu =\mu_r= \diag(\frac{r}{2M},  -\frac{r}{2M})$ where $D \equiv r^2 \pmod{4M}$.
The ideal  $\mathfrak n= [M, \frac{r+\sqrt D}2]$ in the quadratic order $ \OO_D$ of discriminant $D$  has index $M$. It is invertible if the conductor of $\calO_D$ is coprime to $M$.
Following \cite[Section 7.3]{BY},
let  $\mathcal Z(m, \mu)$ be the moduli stack over $\Z$ assigning
to  a scheme $S$ over $\Z$  the groupoid
of  pairs $(\pi: E \rightarrow E',\iota)$ where
\begin{itemize}
\item[(i)]
$\pi: E \rightarrow E'$ is a cyclic isogeny of elliptic curves over $S$
of degree $\lev$,
\item[(ii)]
$\iota: \OO_D \hookrightarrow \End(\pi)
$
is an $\OO_D$-action on $\pi$ such that
$\iota(\mathfrak n) \ker \pi =0$.
\end{itemize}
Then in the complex fiber we have
$
\mathcal Z(m, \mu)_\C  = Z(m,\mu)=P_{D, r} + P_{D, -r}
$
where $P_{D, r}$ is the Heegner divisor defined in \cite[(1) and (2) on page 542]{GKZ}.

\begin{lemma}
\label{lem:hegmod}
Let $c$ be the conductor of the order $\calO_D$ and put $M'=(M,c)$.
\label{lem:moduli}
Let $\bar{ Z}(m, \mu)$ be the Zariski closure of $Z(m, \mu)$ in $\mathcal X_0(M)$. Then we have
$$
\mathcal Z(m, \mu) \cong \bar Z(m, \mu)
$$
as stacks over $\Z[1/M']$.
\end{lemma}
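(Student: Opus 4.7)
The plan is to prove the isomorphism via the natural forgetful morphism of stacks
\[
 f: \mathcal{Z}(m,\mu) \longrightarrow \mathcal{X}_0(M),\qquad (\pi: E\to E', \iota) \longmapsto (\pi: E\to E'),
\]
and to show that after base change to $\Z[1/M']$ the map $f$ is a closed immersion whose image is precisely $\bar Z(m,\mu)$. On the generic fiber this identification is the classical parametrization of Heegner points by proper $\calO_D$-ideals of norm $M$ compatible with $\mathfrak{n}$, giving $Z(m,\mu)=P_{D,r}+P_{D,-r}$ as recalled just above the lemma. Hence $f_{\Q}$ is an isomorphism onto $Z(m,\mu)$, and consequently $f$ factors through $\bar Z(m,\mu)$ after taking Zariski closures.

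First I would check that $\mathcal{Z}(m,\mu)$ is a Deligne--Mumford stack, finite and unramified over $\Spec\Z[1/M']$. Finiteness of the fibers of $f$ is clear since an elliptic curve carries only finitely many $\calO_D$-actions, and the condition $\iota(\mathfrak{n})\ker\pi=0$ is closed with a finite solution set. Unramifiedness follows from the Serre--Tate theorem: any deformation of $(\pi,\iota)$ over an Artinian base is determined by the underlying deformation of $\pi$, because CM lifts are rigid. Combined with finiteness, $f$ will be a closed immersion over $\Z[1/M']$ as soon as it is radicial, i.e.\ injective on geometric points. This amounts to showing that for a CM pair $(\pi,\iota)$ over a geometric point of residue characteristic $p\nmid M'$, the action $\iota$ is uniquely determined by $\pi$ together with the $\mathfrak{n}$-condition.

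Having verified that $f|_{\Z[1/M']}$ is a closed immersion into $\bar Z(m,\mu)$, the remaining step is to prove equality of images. Since $\bar Z(m,\mu)$ is by definition the scheme-theoretic closure of a finite \'etale cover of $\Spec\Q$, it is finite and flat over $\Z[1/M']$ with the same generic fiber as $\mathcal{Z}(m,\mu)_{\Q}$. A standard flatness argument then forces the closed immersion $\mathcal{Z}(m,\mu)_{\Z[1/M']}\hookrightarrow\bar Z(m,\mu)$ to be an isomorphism: both sides are finite flat over the same Dedekind-like base with matching generic fibers, so the ideal sheaf vanishes.

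The main obstacle is establishing radiciality of $f$ over $\Z[1/M']$, which controls the interplay between the level structure and the non-maximal order $\calO_D$ at primes of bad reduction. Inverting $M'=(M,c)$ is exactly what is needed so that at every remaining prime $p$ one has either $p\nmid c$ (so $\calO_D$ is locally maximal and $\mathfrak{n}$ is invertible, whence Deuring's theory and its ordinary-supersingular dichotomy identify $\iota$ uniquely from its reduction modulo $\mathfrak{n}$ together with the kernel $\ker\pi$), or $p\nmid M$ (so the level structure imposes no constraint and one reduces to the moduli problem of CM elliptic curves without level, which is classical). At primes dividing both $M$ and $c$, by contrast, $\mathfrak{n}$ is non-invertible in $\calO_D$ and the stack $\mathcal{Z}(m,\mu)$ may acquire spurious components or non-reduced structure, which is why these primes must be excluded.
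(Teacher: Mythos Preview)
Your strategy is different from the paper's and has real gaps. The paper does not argue via ``finite $+$ unramified $+$ radicial $\Rightarrow$ closed immersion'' at all; instead it computes, for each geometric point $z\to\calZ(m,\mu)$ in characteristic $p$, the completed \'etale local ring $\widehat{\calO}^{et}_{\calZ(m,\mu),z}$ explicitly as a quotient of $\widehat{\calO}^{et}_{\mathcal X_0(M),z}$ by a principal ideal. This is done case by case using Serre--Tate theory together with outside input: Howard's classification of deformations of $p$-divisible groups with $\calO_D$-action when $p\nmid c$, Serre--Tate coordinates when $p\mid c$ is split, and a result of Wewers on deformation loci of endomorphisms when $p\mid c$ is nonsplit. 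The absence of vertical components is then deduced from the elementary fact that over any field there are only finitely many elliptic curves with CM by $\calO_D$, and this forces agreement with the Zariski closure.

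Your outline skips exactly this computational core, and the substitutes you offer do not work. First, radiciality: at a supersingular point the ring $\End(\pi)$ is an order in a quaternion algebra, which can admit several embeddings of $\calO_D$ not conjugate under $\Aut(\pi)$, even subject to the $\mathfrak n$-condition; your appeal to ``Deuring's theory'' does not explain why $\iota$ is pinned down, and your reduction for $p\nmid M$ to ``CM elliptic curves without level, which is classical'' goes in the wrong direction (for $M=1$ the forgetful map is already $2$-to-$1$ over $\C$). Second, your claim that $f$ ``factors through $\bar Z(m,\mu)$ after taking Zariski closures'' and that ``both sides are finite flat'' presupposes that $\calZ(m,\mu)$ has no vertical components over $\Z[1/M']$, i.e.\ is itself flat; but this is precisely what needs to be proved, and the paper extracts it from the explicit local description. (Relatedly, your assertion that $\calZ(m,\mu)$ is unramified over $\Spec\Z[1/M']$ is false---for $p$ ramified in $\calO_D$ the paper finds the local ring $W\otimes_\Z\calO_D$, and for $p\mid c$ split it finds $W[[T]]/((T+1)^{p^t}-1)$, neither unramified over $W$---though your justification via rigidity of CM is really an argument that $f$ is unramified over $\mathcal X_0(M)$, which is correct.)
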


We thank Ben Howard for communicating the following proof to us.

\begin{proof}
We begin by  showing that $\mathcal Z(m, \mu)$ defines a Cartier divisor on $\mathcal X_0(M)$. This can be checked \'etale locally.
Fix a geometric point
$z \to \mathcal{Z}(m,\mu)$ of characteristic $p$, and let
\begin{equation}
\label{eq:localrings}
\widehat{\mathcal{O}}^{et}_{ \mathcal{X}_0(M),z} \to \widehat{\mathcal{O}}^{et}_{ \mathcal{Z}(m,\mu),z }
\end{equation}
be the canonical morphism of completed local rings.  This morphism is surjective, and we need to show that the kernel is a principal ideal.

We first assume that $p$ is coprime to $c$ so that the order $\calO_D$ is maximal at $p$. The completed local ring  $\widehat{\mathcal{O}}^{et}_{ \mathcal{Z}(m,\mu),z }$ classifies deformations of the elliptic curve $E_z$ corresponding to $z$ together with its $\calO_D$-action.   By the Serre-Tate theorem, these are the same as deformations of the $p$-divisible group $E_z[p^\infty]$ together with its action of the maximal $\Z_p$-order $\mathcal{O}_D \otimes \Z_p$.
The classification of such deformations is a special case of \cite[Theorem~2.1.3]{Ho12}.
If $p$ is unramified in $\Q(\sqrt{D})$
it implies that $ \widehat{\mathcal{O}}^{et}_{ \mathcal{Z}(m,\mu),z }$ is isomorphic to the Witt ring $W$  of $\bar \F_p$. If $p$ is ramified in $\Q(\sqrt{D})$
it implies that
\[
\widehat{\mathcal{O}}^{et}_{ \mathcal{Z}(m,\mu),z }
\cong W\otimes_\Z \calO_D.
\]
Hence in both cases $ \widehat{\mathcal{O}}^{et}_{ \mathcal{Z}(m,\mu),z }$ is a discrete valuation ring and \eqref{eq:localrings} is a surjective morphism of regular local rings of dimensions $2$ and $1$. This implies that the kernel is principal, see e.g.~Lemma~10.105.4 of \cite{stacks-project}.

Now assume that $p$ is not coprime to $c$, and write $c=c'p^t$ with $c'$ coprime to $p$ and $t\in \Z_{>0}$.
The hypothesis of the lemma implies $p\nmid M$.
By the Serre-Tate theorem, $R=\widehat{\mathcal{O}}^{et}_{\mathcal{X}_0(M) ,z}$ classifies deformations of the isomorphism
\[
\pi_z : E_z[p^\infty] \to E_z'[p^\infty].
\]
This is equivalent to deformations of $E_z[p^\infty]$ alone, and so $R \cong W[[T]]$ (noncanonically) where $W$ is the Witt ring of $\bar\F_p$.
 The quotient
 \[
 R/I=\widehat{\mathcal{O}}^{et}_{ \mathcal{Z}(m,\mu),z }
 \]
  classifies those deformations of $E_z[p^\infty]$ for which the action of $\mathcal{O}_D$ also deforms.

If $p$ is split in $\Q(\sqrt{D})$ then $E_z$ is ordinary.
For any complete local $W$-algebra $A$, the Serre-Tate coordinates establish a bijection between the lifts of $E_z[p^\infty]$ and the elements of $1+\mathfrak{m}_A$, where $\mathfrak{m}_A\subset A$ is the maximal ideal.  Under this bijection, the lifts of $E_z[p^\infty]$ with its $\mathcal{O}_D\otimes\Z_p$-action correspond to the roots of unity   $\mu_{p^t} \subset 1+\mathfrak{m}_A$.  It follows that  $I$ is the principal ideal generated by
\[
(T+1)^{p^t}-1 .
\]

Now suppose $p$ is nonsplit in $\Q(\sqrt{D})$ so that $E_z$ is supersingular.
In this case, according to \cite[Proposition~5.1]{Wewers},
the deformation locus inside $R$ of any non-scalar endomorphism of $E_z$ is a Cartier divisor. In particular, $I\subset R$ is a principal ideal.

Finally, we note that the Cartier divisor $\calZ(m,\mu)$ cannot have any vertical components, because over any field there are only finitely many isomorphism classes of elliptic curves with complex multiplication by $\calO_D$. Hence $\calZ(m,\mu)$ has to agree with $ \bar Z(m, \mu)$.
\end{proof}

Assume from now on that $D = D_0 = -4Mm_0$ is fundamental and $(D_0, 2M)=1$.
Moreover, recall that $D_0 \equiv r_0^2 \pmod{4M}$.
As in \cite[Section 6]{BY},
let  $\mathcal C$ be the moduli stack over $\Z$ assigning to a scheme $S$ over $\Z$  the groupoid of pairs $(E, \iota)$ where $E$ is an elliptic curve over $S$ and $\iota:\calO_{D}\hookrightarrow \End_S(E)=:\calO_E$ is a homomorphism such that the main involution on $\calO_E$ gives complex conjugation on $\calO_{D}$.

According to \cite[Lemma~7.10]{BY}, we have an isomorphism of stacks
\begin{equation}
j:  \mathcal C \cong \mathcal Z(m, \mu),   \quad (E, \iota) \mapsto (\pi:E \rightarrow E_{\mathfrak n}=E/E[\iota(\mathfrak n)], \iota).
\end{equation}
Moreover, this map gives rise to a closed immersion (assuming $M>1$ without loss of generality)
\begin{equation}
j: \mathcal C \longrightarrow \mathcal X_0(M).
\end{equation}

Now let $D_1$ be a discriminant which is coprime to $D_0$ as before.
Throughout, we write $\mathfrak n_i = [M, \frac{r_i+\sqrt D_i}2]$
for the ideal in $\calO_{D_i}$ of index $M$ corresponding to $D_i$ and $r_i$.
It is then easy to see that
$j^*\mathcal Z(m_1,  \mu_1) =\mathcal Z(m_0, \mu_0)\times_{\mathcal X_0(M)} \mathcal Z(m_1, \mu_1) $ is the intersection of  $\mathcal Z(m_0, \mu_0)$ and $\mathcal Z(m_1, \mu_1)$. It represents
triples $(E, \iota, \phi)$ where $(E, \iota) \in \mathcal C$ and
\begin{equation}
\phi: \OO_{D_1} \rightarrow \OO_{E, \mathfrak n_0} =\End (E\rightarrow E_{\mathfrak n_0})
\end{equation}
such that $\phi(\mathfrak n_1) E[\mathfrak n_0] =0$.
The intersection is a stack of dimension zero which is supported in finitely many closed fibers.
Moreover, Lemma \ref{lem:hegmod} implies that
\begin{equation}\label{eq:zariski-pullback}
j^*\mathcal Z(m_1, \mu_1) \cong \bar Z(m_0, \mu_0) \times_{\mathcal X_0(M)} \bar Z(m_1, \mu_1) .
\end{equation}
Indeed, it suffices to check this in the fiber above $p$ for every prime $p$. If $p\nmid M$ it is an immediate consequence of the lemma.
On the other hand, if $p\mid M$ then $p$ is split in $\Q(\sqrt{D_0})$, and hence both sides of \eqref{eq:zariski-pullback} vanish. Indeed, the points $(E, \iota) \in \mathcal C(\bar{\mathbb F}_p)$ are given by ordinary elliptic curves, which do not admit additional complex multiplication by $\calO_{D_1}$.

 To describe the intersection further, we recall the special cycles $\mathcal  Z(m, \mathfrak a, \mu)$ in $\mathcal C$ defined in \cite[Section 6]{BY} (see also
\cite[Section 2.4]{KY13}), where $\mathfrak a$ is an ideal of $\OO_{D_0}$ with quadratic form $Q(x) = -x \bar x/\norm(\mathfrak a)$, $\mu \in  \frac{1}{\sqrt{D_0}} \mathfrak a/\mathfrak a$, and $Q(\mu) \equiv m \pmod 1$. It assigns to every
scheme $S$  the groupoid of triples
$(E, \iota, \boldbeta)$ where
\begin{itemize}
\item[(i)]
$(E, \iota) \in \mathcal C(S)$ and
\item[(ii)]
$\boldbeta \in L(E, \iota)\frac{1}{\sqrt{D_0}}\mathfrak a$ such that
$\norm(\boldbeta) =m \norm(\mathfrak a)$ and $\mu + \boldbeta \in \OO_E
\mathfrak a$.
\end{itemize}
Here
$$
L(E, \iota) = \{ x \in \OO_E:\;  \iota(\alpha)  x = x
\iota(\bar\alpha), \alpha\in \OO_{D_0}\}
$$
is the lattice of special endomorphisms of $(E, \iota)$.

\begin{lemma} \label{lem:pullback} Let the notation and assumption be as above. Then there is an isomorphism
$$
j^* \mathcal Z(m_1,\mu_1) \cong  \bigsqcup_{ \substack{n\equiv r_0 r_1 \pod {2\lev } \\ n^2 \le D_0D_1}}
   \mathcal Z\left(\frac{D_0D_1-n^2}{4\lev |D_0|}, \mathfrak n_0,  \frac{n+r_1 \sqrt{D_0}}{2 \sqrt{D_0}}\right),
 \quad   (E, \iota, \phi) \mapsto (E, \iota, \boldbeta)
$$
with
$$
  2n =  \phi(\sqrt{D_1}) \iota(\sqrt{D_0}) + \iota( \sqrt{D_0}) \phi(\sqrt{D_1})
$$
and
$$
\boldbeta  = \phi\left(\frac{r_1 +\sqrt{D_1}}2\right) - \frac{n+r_1 \sqrt{D_0}}{2 \sqrt{D_0}}.
$$
\end{lemma}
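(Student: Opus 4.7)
The strategy is to construct an explicit inverse to the proposed map and to check that both assignments extend functorially from isomorphism classes to $S$-valued groupoids, so that the bijection of sets of objects upgrades to an isomorphism of stacks. All the essential computations take place in $\End^0(E) := \End(E) \otimes \Q$.

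Because $\iota$ embeds $\calO_{D_0}$ into $\End(E)$ and $(D_0,D_1)=1$, the curve $E$ cannot be ordinary with an additional embedding of $\calO_{D_1}$, so $j^*\mathcal Z(m_1,\mu_1)$ is supported on the supersingular locus. There $\End^0(E)$ is a definite quaternion algebra containing $\iota(\Q(\sqrt{D_0}))$ as a maximal subfield, and we have the orthogonal splitting $\End^0(E) = \iota(\Q(\sqrt{D_0})) \oplus L(E,\iota)^0$, where $L(E,\iota)^0$ is the $\Q$-span of $L(E,\iota)$ and consists of elements anti-commuting with $\iota(\sqrt{D_0})$. Writing $\lambda := \phi(\sqrt{D_1}) = b\sqrt{D_0} + \beta$ according to this decomposition and using $\tr(\lambda)=0$ together with $\beta\sqrt{D_0}+\sqrt{D_0}\beta = 0$, one finds $\lambda\sqrt{D_0}+\sqrt{D_0}\lambda = 2bD_0$, so $n := (\lambda\sqrt{D_0}+\sqrt{D_0}\lambda)/2 = bD_0$ and $\boldbeta := \beta/2$. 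Squaring $\lambda$ yields $D_1 = b^2D_0 - \mathrm{nr}(\beta)$, from which $\mathrm{nr}(\boldbeta) = (D_0D_1-n^2)/(4|D_0|) = m \cdot \norm(\mathfrak n_0)$ and the inequality $n^2 \le D_0D_1$ (since the quaternion algebra is definite).

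The integrality $n \in \Z$, the congruence $n \equiv r_0r_1 \pmod{2M}$, and the lattice condition $\boldbeta \in L(E,\iota)\tfrac{1}{\sqrt{D_0}}\mathfrak n_0$ all come from translating the assumption $\phi(\mathfrak n_1)E[\mathfrak n_0]=0$. Away from the primes dividing $M' = (M,c)$ this annihilator condition is equivalent to $\phi((r_1+\sqrt{D_1})/2) \in \calO_E \cdot \iota(\mathfrak n_0)$ (by the same local argument as in Lemma \ref{lem:hegmod}). Splitting $\phi((r_1+\sqrt{D_1})/2) = (r_1+b\sqrt{D_0})/2 + \boldbeta$ and the ideal $\calO_E\iota(\mathfrak n_0)$ along $\End^0(E) = \iota(\Q(\sqrt{D_0}))\oplus L(E,\iota)^0$, the $\iota(\Q(\sqrt{D_0}))$-part must lie in $\tfrac{1}{\sqrt{D_0}}\mathfrak n_0$, and writing $\mathfrak n_0 = M\Z + \tfrac{r_0+\sqrt{D_0}}{2}\Z$ produces exactly $n \in \Z$ with $n \equiv r_0r_1 \pmod{2M}$, while the $L(E,\iota)^0$-part gives the lattice condition on $\boldbeta$.

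For the inverse, given $(E,\iota,\boldbeta) \in \mathcal Z((D_0D_1-n^2)/(4M|D_0|),\mathfrak n_0, \mu)$ with $\mu = (n+r_1\sqrt{D_0})/(2\sqrt{D_0})$, define $\phi(\sqrt{D_1}) := 2\boldbeta + n/\sqrt{D_0}$, equivalently $\phi((r_1+\sqrt{D_1})/2) := \mu + \boldbeta$. The anti-commutation of $\boldbeta$ with $\iota(\sqrt{D_0})$ together with the norm identity for $\boldbeta$ give $\phi(\sqrt{D_1})^2 = D_1$, and the defining condition $\mu+\boldbeta \in \calO_E \iota(\mathfrak n_0)$ ensures that $\phi(\calO_{D_1}) \subset \calO_{E,\mathfrak n_0}$ and that $\phi(\mathfrak n_1)$ annihilates $E[\mathfrak n_0]$. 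The two constructions are visibly mutual inverses through the identity $\phi((r_1+\sqrt{D_1})/2) = \mu + \boldbeta$, and the disjointness of the union follows because $n$ is uniquely recovered from $(E,\iota,\phi)$. Since every step is functorial in the base scheme $S$, the bijection of groupoids upgrades to the claimed isomorphism of stacks. The main obstacle I anticipate is the step of identifying $\mathrm{Ann}_{\calO_E}(E[\mathfrak n_0])$ with $\calO_E \iota(\mathfrak n_0)$ in an integral (not merely rational) way, which requires the local analysis on $\Z[1/M']$ parallel to the proof of Lemma \ref{lem:hegmod}.
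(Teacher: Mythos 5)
Your proposal is correct and takes essentially the same route as the argument the paper relies on: the paper's own proof is only a citation of \cite[Lemma 7.12]{BY} (which proceeds exactly via the orthogonal decomposition $\End^0(E)=\iota(\Q(\sqrt{D_0}))\oplus L(E,\iota)^0$ and the translation of $\phi(\frakn_1)E[\frakn_0]=0$ into $\phi(\tfrac{r_1+\sqrt{D_1}}{2})\in\calO_E\iota(\frakn_0)$ that you carry out), plus the remark that the computation on $\bar\F_p$-points extends to general bases. Your explicit derivation of $n=bD_0$, $\boldbeta=\beta/2$, the norm identity, and the congruence $n\equiv r_0r_1\pmod{2\lev}$ is precisely the ``tracing back'' the paper alludes to for the formula for $2n$.
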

\begin{proof}
The lemma is proved in \cite[Lemma 7.12]{BY} on $\bar{\mathbb F}_p$-points, the proof goes through in general.
Tracing back the proof, we obtain the stated formula for $2n$.
\end{proof}

\begin{proof}[Proof of  Proposition \ref{prop:FiniteIntersection}.]
Since $D_0$ is fundamental and coprime to $M$ we have $Z_\kappa(U)=Z_\kappa(m_0,\mu_0)$.
To compute the local contribution at $p$ to $\langle S_\kappa( x_1), S_\kappa( x_0) \rangle_{fin}$,
let $x_i \in Z(m_i, \mu_i)$ and denote by $\bar{x}_i$ the Zariski closure of $x_i$ in $\calX_0(M)$.
Let $(E, \iota, \phi) \in j^*\calZ(m_1, \mu_1)(\bar{\F}_p)$
correspond to the intersection of the divisors $\bar{x}_1$ and $\bar{x}_0$ in the fiber above $p$
and put for convenience $n = n((E, \iota, \phi)) = \left(\phi(\sqrt{D_1}) \iota(\sqrt{D_0}) + \iota( \sqrt{D_0}) \phi(\sqrt{D_1})\right) / 2$.
Then $\calO_{E, \frakn_0}$ contains the Clifford order $S_{[D_0, 2n, D_1]}$ defined in \cite[Chapter I.3]{GKZ}.
By means of \cite[Proposition~3.1]{Xue} we obtain
\begin{equation}
\label{eq:intersection}
\langle S_\kappa(x_1), S_\kappa( x_0) \rangle_{p} = (-1)^\kappa
( S_\kappa( \bar x_1)\cdot  S_\kappa( \bar x_0) )_{p}
= - P_{\kappa-1}\left(\frac{n}{\sqrt{D_0D_1}}\right) (\bar{x}_1 \cdot \bar{x}_0)_p.
\end{equation}
Using the identification \eqref{eq:zariski-pullback} together with \eqref{eq:intersection}, we find
\begin{align*}
&(m_0m_1)^{\frac{1-\kappa}2}\langle Z_\kappa(m_1, \mu_1), Z_\kappa(m_0, \mu_0) \rangle_{fin}  \\
&= - \sum_{p < \infty} \sum_{x \in j^*\calZ(m_1,\mu_1)(\bar{\F}_p)} P_{\kappa-1}\left( \frac{n(x) }{\sqrt{D_0D_1}}\right)\frac{i_p(x)}{|\Aut(x)|} \log p,
\end{align*}
where $i_p(x) = i_p(j^*\calZ(m_1,\mu_1), x)$ is the length of the local ring of $j^*\calZ(m_1,\mu_1)$ at $x$.
By virtue of Lemma \ref{lem:pullback} and \cite[Theorem 6.4]{BY}, we obtain
 \begin{align*}
  &(m_0m_1)^{\frac{1-\kappa}2}\langle Z_\kappa(m_1, \mu_1), Z_\kappa(U) \rangle_{fin}
   \\
   &= -\sum_{ \substack{n\equiv r_0 r_1 \pod{2\lev} \\ n^2 \le D_0D_1}}  P_{\kappa-1}\left(\frac{n}{\sqrt{D_0D_1}}\right)\cdot
     \widehat{\deg}\left(\mathcal Z\left(\frac{D_0D_1-n^2}{4\lev |D_0|}, \mathfrak n_0,  \frac{n+r_1 \sqrt{D_0}}{2 \sqrt{D_0}}\right)\right)
   \\
   &= \frac{\deg (Z(U))}2 \sum_{ \substack{n\equiv r_0 r_1 \pod{2\lev} \\ n^2 \le D_0D_1}}  P_{\kappa-1}\left(\frac{n}{\sqrt{D_0D_1}}\right) \cdot \kappa\left(\frac{D_0D_1-n^2}{4\lev |D_0|},\frac{n+r_1 \sqrt{D_0}}{2 \sqrt{D_0}}\right).
\end{align*}
Since $\frac{n+r_1\sqrt{D_0}}{2\sqrt{D_0}} \equiv \frac{\tilde 2 n}{\sqrt{D_0}} \pmod{\mathcal O_{D_0}}$, we have proved the  proposition.
\end{proof}

As before, let $\kappa=1+2j>1$ be an odd integer. We consider the generating series
\begin{align}
A_\kappa(\tau,U) &= \sum_{m,\mu}
\langle Z_\kappa(m,\mu), Z_\kappa(U)\rangle \cdot q^m\phi_\mu.
\end{align}
In analogy with the Gross-Kohnen-Zagier theorem \cite{GKZ} it is expected that $A_\kappa(\tau,U)$ is a cusp form in $S_{\kappa+1/2,\rho_L}$, or equivalently a cuspidal Jacobi form of weight $\kappa+1$ and index $\lev$ for the full Jacobi group. Note that the height pairings $\langle Z_\kappa(m,\mu), Z_\kappa(U)\rangle$ may involve improper intersections of higher Heegner cycles on Kuga-Sato varieties when $(4\lev m,D_0)\neq 1$, a technical problem which we do not consider in the present paper. Here we prove the following version of the Gross-Kohnen-Zagier theorem.

\begin{theorem}
\label{thm:modat}
Assume the above notation. In particular, let $D_0$ be a fundamental discriminant which is coprime to $2M$. There is a cusp form
$g=\sum_{m,\mu} b(m,\mu) q^m\phi_\mu$ in $S_{\kappa+1/2,\rho_L}$ whose Fourier coefficients $b(m,\mu)$ satisfy
\[
b(m,\mu) =   \langle Z_\kappa(m,\mu), Z_\kappa(U)\rangle
\]
for all $\mu\in L'/L$ and $m\in Q(\mu)+\Z$ with $(4\lev m,D_0)= 1$.
\end{theorem}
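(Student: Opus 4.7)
The plan is to deduce the theorem from a suitable refinement of Borcherds' modularity criterion. Recall that the classical form of the criterion states: a formal series $\sum_{m>0,\mu} b(m,\mu)q^m\phi_\mu$ coincides with the Fourier expansion of a cusp form in $S_{\kappa+1/2,\rho_L}$ if and only if the principal-part pairing
\[
\sum_{m>0,\,\mu} c^+(-m,\mu)\,b(m,\mu) = 0
\]
vanishes for every weakly holomorphic form $f=\sum c^+(-m,\mu)q^{-m}\phi_\mu+O(1)$ in $M^!_{3/2-\kappa,\bar\rho_L}$ (with integral principal part). The refinement we need restricts the test forms $f$ to those whose principal parts are supported on indices with $(4\lev m,D_0)=1$, and correspondingly only produces a cusp form whose coefficients match the prescribed values $b(m,\mu)$ at those good indices.

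Granting this refinement, the proof is short. Let $f\in M^!_{3/2-\kappa,\bar\rho_L}$ have integral principal part supported on indices $(m,\mu)$ with $(4\lev m,D_0)=1$. By definition of $Z_\kappa(f)$ and bilinearity of the Gillet--Soul\'e height pairing, we have
\[
\sum_{m>0,\,\mu} c^+(-m,\mu)\,\langle Z_\kappa(m,\mu),Z_\kappa(U)\rangle = \tfrac{1}{2}\langle Z_\kappa(f),Z_\kappa(U)\rangle.
\]
Since $f$ is weakly holomorphic, $\xi_{3/2-\kappa}(f)=0$, so Corollary \ref{cor:higherheight} forces the right-hand side to vanish. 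Hence the sequence $\{\langle Z_\kappa(m,\mu),Z_\kappa(U)\rangle\}_{(4\lev m,D_0)=1}$ pairs trivially with the principal part of every such $f$, and the refined criterion produces the desired cusp form $g\in S_{\kappa+1/2,\rho_L}$.

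The main obstacle is thus the refined modularity criterion itself. The statement is a Serre-duality type assertion: modulo $S_{\kappa+1/2,\rho_L}$, the principal parts of forms in $M^!_{3/2-\kappa,\bar\rho_L}$ supported on good indices already generate the full obstruction space for good-index Fourier coefficients. A natural route is to exploit Hecke operators away from $D_0$: given an arbitrary weakly holomorphic $f$ with integral principal part, an appropriate averaging over Hecke operators at primes dividing $D_0$ should project the principal part onto its good-index part without changing the pairing with cusp forms. Equivalently, one can analyze the quotient of $M^!_{3/2-\kappa,\bar\rho_L}$ by the subspace of forms whose principal parts are supported on bad indices and check that this quotient still surjects onto the dual of the restriction of $S_{\kappa+1/2,\rho_L}$ to the good-index coefficients. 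Once such an argument is in place the theorem is immediate from the vanishing above.
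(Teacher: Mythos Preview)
Your overall strategy is the same as the paper's: reduce to a refined Borcherds modularity criterion (testing only against weakly holomorphic forms whose principal parts are supported on indices with $(4\lev m,D_0)=1$), and then use Corollary~\ref{cor:higherheight} with $\xi_{3/2-\kappa}(f)=0$ to get the required vanishing. That reduction is exactly what the paper does, and your computation of the pairing is fine.

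The gap is in your sketch of the refined criterion itself. Your two suggestions (Hecke averaging at primes dividing $D_0$; analyzing a quotient of $M^!_{3/2-\kappa,\bar\rho_L}$) are vague and it is not clear either would work as stated. The paper's argument (Proposition~\ref{prop:mod}) is different and more concrete. The key input is Lemma~\ref{lem:adm1}: a cusp form in $S_{\kappa+1/2,\rho_L}$ whose good-index coefficients all vanish is identically zero. This guarantees that every cusp form admits a $\xi$-preimage in $H_{3/2-\kappa,\bar\rho_L}$ which is \emph{admissible} for $D_0$ (i.e.\ has principal part supported on good indices). The paper then \emph{defines} the missing coefficients $b(m,\mu)$ at bad indices by $b(m,\mu)=\tfrac{1}{2}\CT(\langle f_{m,\mu}^{\prime,+},h\rangle)$, where $f'_{m,\mu}$ is any admissible form with $\xi(f'_{m,\mu})=\xi(f_{m,\mu})$. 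With these values filled in, one checks that the full generating series $g=\sum b(m,\mu)q^m\phi_\mu$ satisfies $\CT(\langle f,g\rangle)=0$ for \emph{every} $f\in M^!_{3/2-\kappa,\bar\rho_L}$, so the classical Borcherds criterion applies. What you are missing, then, is both the uniqueness lemma for cusp forms (Lemma~\ref{lem:adm1}) and the explicit construction of the bad-index coefficients via admissible harmonic Maass forms.
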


\begin{proof}
The theorem is a direct consequence of Corollary \ref{cor:higherheight} and the modularity criterion in Proposition \ref{prop:mod} below.
\end{proof}

By means of Lemma \ref{lem:adm0} below, we obtain the following consequence.

\begin{corollary}
\label{cor:modat}
The generating series
\begin{align}
\tilde A_\kappa(\tau,U) &= \sum_{\substack{m,\mu\\ (4\lev m,D_0)=1}}
\langle Z_\kappa(m,\mu), Z_\kappa(U)\rangle \cdot q^m\phi_\mu
\end{align}
belongs to
$S_{\kappa+1/2,\rho_L}(\Gamma_0(D_0^2))$.
\end{corollary}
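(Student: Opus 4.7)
The plan is to deduce Corollary \ref{cor:modat} from Theorem \ref{thm:modat} by a sieve on Fourier coefficients, which is exactly the role of Lemma \ref{lem:adm0}. Theorem \ref{thm:modat} already produces a vector-valued cusp form $g \in S_{\kappa+1/2,\rho_L}$ whose Fourier coefficients $b(m,\mu)$ agree with the height pairings $\langle Z_\kappa(m,\mu), Z_\kappa(U)\rangle$ precisely on the admissible indices $(4\lev m, D_0) = 1$. The difference $g - \tilde A_\kappa(\tau, U)$ is therefore a series supported on the complementary set of indices $(4\lev m, D_0) > 1$, and the remaining task is to exhibit $\tilde A_\kappa(\tau, U)$ as a genuine cusp form, at the cost of raising the level to $\Gamma_0(D_0^2)$.

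First I would introduce, for each prime $p \mid D_0$, a projection operator $\pi_p$ on vector-valued forms of representation $\rho_L$ which kills the components indexed by $(m, \mu)$ with $p \mid 4\lev m$ and preserves the rest. Since $(D_0, 2\lev) = 1$ by hypothesis, each such $p$ is odd and coprime to $\lev$, so the sieving condition simplifies to $p \nmid m$ together with a congruence condition on $\mu$ modulo $p$ coming from $Q(\mu) \equiv m \pmod \Z$. This projection can be realized as an explicit linear combination of character twists of $g$ by Dirichlet characters modulo $p$, in the spirit of the standard sieve by $U_p V_p$-operators. The point is that $\pi_p$ sends $S_{\kappa+1/2, \rho_L}(\Gamma_0(N))$ into $S_{\kappa+1/2, \rho_L}(\Gamma_0(\lcm(N, p^2)))$. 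Iterating $\pi_p$ over all primes $p \mid D_0$ extracts from $g$ the cusp form whose Fourier expansion coincides with $\tilde A_\kappa(\tau, U)$ and which lies in $S_{\kappa+1/2, \rho_L}(\Gamma_0(D_0^2))$.

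The main obstacle is the explicit bookkeeping showing that the sieving operators are compatible with the Weil representation $\rho_L$ and that each prime $p \mid D_0$ introduces only the quadratic factor $p^2$ in the level. Once this is packaged in Lemma \ref{lem:adm0}, the corollary follows immediately by applying that lemma to the cusp form $g$ furnished by Theorem \ref{thm:modat}.
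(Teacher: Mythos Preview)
Your proposal is correct and follows essentially the same route as the paper: invoke Theorem \ref{thm:modat} to obtain a cusp form $g\in S_{\kappa+1/2,\rho_L}$ whose coefficients agree with the height pairings on the admissible indices, then apply Lemma \ref{lem:adm0} with $R=D_0$ to sieve out the remaining coefficients and land in $S_{\kappa+1/2,\rho_L}(\Gamma_0(D_0^2))$. The only minor difference is that the paper proves Lemma \ref{lem:adm0} by passing to Jacobi forms and citing \cite[Lemma 2.4]{Schwagenscheidt}, whereas you sketch the sieve directly via character twists; both are standard and yield the same level bound.
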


\begin{lemma}
\label{lem:adm0}
Let $g=\sum_{m,\mu} b(m,\mu) q^m\phi_\mu\in S_{\kappa+1/2,\rho_L}$, and let $R\in \Z_{>0}$.
Then
\[
\tilde g= \sum_{\substack{m,\mu\\ (4\lev m,R)=1}} b(m,\mu) q^m\phi_\mu
\]
belongs to $S_{\kappa+1/2,\rho_L}(\Gamma_0(R^2))$.
\end{lemma}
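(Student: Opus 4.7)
First I would reduce to the case $\gcd(R,4\lev)=1$: if some prime $p$ divides both $R$ and $4\lev$, then $p\mid 4\lev m$ for every $m$, so no coefficient survives in $\tilde g$ and the statement is trivial. Assuming $\gcd(R,4\lev)=1$ from now on, I would use the M\"obius expansion of the principal character modulo $R$, $\mathbf{1}_{(n,R)=1} = \sum_{d\mid(n,R)} \mu(d)$, to write
\[
\tilde g = \sum_{d\mid R} \mu(d)\, g^{(d)}, \qquad g^{(d)}(\tau) := \sum_{\substack{m,\mu\\ d\mid 4\lev m}} b(m,\mu)\, q^m\phi_\mu,
\]
and then express each $g^{(d)}$ as the additive average
\[
g^{(d)}(\tau) = \frac{1}{d}\sum_{a=0}^{d-1} g\!\left(\tau + \tfrac{4\lev a}{d}\right),
\]
via the orthogonality identity $\mathbf{1}_{d\mid n} = \frac{1}{d}\sum_{a=0}^{d-1} e(an/d)$. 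Since $\Gamma_0(R^2)\subseteq \Gamma_0(d^2)$ for every $d\mid R$ and $\tilde g$ is a finite linear combination of the $g^{(d)}$, the conclusion will follow once I establish $g^{(d)} \in S_{\kappa+1/2,\rho_L}(\Gamma_0(d^2))$.

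To this end I would apply the standard conjugation trick. Given $\gamma=\kzxz{A}{B}{C}{D}\in\Gamma_0(d^2)$ (so $d^2\mid C$ and $AD\equiv 1\pmod{d^2}$), write $T^x:=\kzxz{1}{x}{0}{1}$ and choose $\sigma(a)\equiv DA^{-1}a\pmod d$, which is a bijection of $\Z/d\Z$. A direct calculation then gives
\[
\gamma\tau + \tfrac{4\lev a}{d} = \gamma'\!\left(\tau + \tfrac{4\lev\sigma(a)}{d}\right), \qquad \gamma' := T^{4\lev a/d}\,\gamma\, T^{-4\lev\sigma(a)/d},
\]
and the assumptions $d^2\mid C$, $\gcd(d,4\lev)=1$, together with the definition of $\sigma$, force $\gamma'\in\SL_2(\Z)$ and moreover the entrywise congruence $\gamma'\equiv\gamma\pmod{4\lev}$. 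Since $\rho_L$ factors through a finite quotient of $\Mp_2(\Z)$ whose kernel contains (a metaplectic lift of) the principal congruence subgroup of level $4\lev$, this gives $\rho_L(\gamma')=\rho_L(\gamma)$ after matching lifts. Noting that the lower row of $\gamma'$ equals $(C,\, D-4\lev\sigma(a)C/d)$, so that its automorphy factor at $\tau+4\lev\sigma(a)/d$ is exactly $C\tau+D$, the $\Mp_2(\Z)$-modularity of $g$ yields
\[
g(\gamma\tau + 4\lev a/d) = (C\tau+D)^{\kappa+1/2}\,\rho_L(\gamma)\, g(\tau + 4\lev\sigma(a)/d).
\]
Summing over $a$ and using the bijectivity of $\sigma$ then gives $g^{(d)}|_{\kappa+1/2,\rho_L}\gamma = g^{(d)}$; cuspidality is inherited from $g$ via the additive-average formula.

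The main obstacle is the metaplectic bookkeeping: $\rho_L$ is a representation of the double cover $\Mp_2(\Z)$ rather than of $\SL_2(\Z)$, so the identity $\rho_L(\gamma') = \rho_L(\gamma)$ requires a coherent matching of the holomorphic square roots of the automorphy factors of $\gamma$ and $\gamma'$. The explicit factorization of $\gamma'$ as a conjugate of $\gamma$ by the real translation $T^{4\lev a/d}$, together with the identity $C\tau+D = C(\tau+4\lev\sigma(a)/d) + (D - 4\lev\sigma(a)C/d)$ for the automorphy factor, reduces this to a routine sign check. The overall argument is the vector-valued half-integral-weight analogue of the classical scalar sieving identity $g - V_d U_d g = \sum_{d\nmid n} b(n)q^n$.
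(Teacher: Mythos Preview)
Your approach is genuinely different from the paper's. The paper does not carry out a direct vector-valued computation at all; it simply invokes the theta-decomposition isomorphism $S_{\kappa+1/2,\rho_L}\cong J^{\text{cusp}}_{\kappa+1,\lev}$ between vector-valued forms for $\rho_L$ and Jacobi cusp forms of weight $\kappa+1$ and index $\lev$, and then cites \cite[Lemma~2.4]{Schwagenscheidt}, which is the corresponding sieving statement for Jacobi forms. The point of that detour is that Jacobi forms have \emph{integral} weight, so the sieving argument (which is essentially the same M\"obius/additive-average/conjugation trick you describe) runs in $\SL_2(\Z)$ rather than in the metaplectic cover, and no sign bookkeeping is needed.

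Your direct argument is sound in outline: the reduction to $\gcd(R,4\lev)=1$, the M\"obius expansion, the averaging formula for $g^{(d)}$, the conjugation identity $T^{y}\gamma T^{-x}=\gamma'\in\SL_2(\Z)$ with $\gamma'\equiv\gamma\pmod{4\lev}$, and the matching of automorphy factors are all correct. The one point I would push back on is your description of the metaplectic step as a ``routine sign check.'' What you actually need is that the specific lift $\tilde\gamma'=\tilde T^{y}\tilde\gamma\tilde T^{-x}$ computed in $\Mp_2(\R)$ satisfies $\rho_L(\tilde\gamma')=\rho_L(\tilde\gamma)$; equivalently, that $\tilde\gamma'\tilde\gamma^{-1}$ lands in the \emph{distinguished} lift of $\Gamma(4\lev)$ on which $\rho_L$ is trivial, not merely in the preimage of $\Gamma(4\lev)$. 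Since $\rho_L(I,-1)=-\mathrm{id}$ here (signature $(1,2)$), getting the wrong lift would flip the sign and destroy the identity. This can be verified (for instance by tracking the principal branch along the path $t\mapsto\tilde T^{ty}\tilde\gamma\tilde T^{-tx}$, or by an explicit Kubota-cocycle computation), but it is a genuine verification rather than a triviality, and it is exactly the step that the paper's Jacobi-form route is designed to avoid.
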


\begin{proof}
Using the isomorphism between $S_{\kappa+1/2,\rho_L}$ and Jacobi forms of weight $\kappa+1$ and index $\lev$, the assertion follows from
\cite[Lemma 2.4]{Schwagenscheidt}.
\end{proof}

We now turn to the modularity criterion required for the proof of Theorem \ref{thm:modat}.
We start with the following lemma.

\begin{lemma}
\label{lem:adm1}
Let $g=\sum_{m,\mu} b(m,\mu) q^m\phi_\mu\in S_{\kappa+1/2,\rho_L}$, and let $D_0$ be a discriminant which is coprime to $\lev$. If $b(m,\mu)=0$ for all $\mu\in L'/L$ and  $m\in Q(\mu)+\Z$ with $(4\lev m,D_0)=1$, then $g=0$.
\end{lemma}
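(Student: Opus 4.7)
The plan is to reduce the claim to a nonvanishing statement for central values of quadratic twists of $L$-functions of cuspidal newforms, by means of Hecke theory and the Shimura correspondence.

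First I would observe that the subspace
\[
  V = \{g \in S_{\kappa+1/2,\rho_L} : b_g(m,\mu) = 0 \text{ for all } (m,\mu) \text{ with } (4\lev m, D_0) = 1\}
\]
is stable under every Hecke operator $T_{p^2}$ for a prime $p$ coprime to $4\lev D_0$. Indeed, the action of $T_{p^2}$ on Fourier coefficients is a $\Z$-linear combination of $b_g(p^2m, p\mu)$, $b_g(m,\mu)$, and $b_g(m/p^2, \mu/p)$ (the last being zero unless that index is well-defined), and since $(4\lev \cdot p^{\pm 2} m, D_0) = (4\lev m, D_0)$ whenever $(p, D_0) = 1$, the vanishing condition is preserved. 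Hence $V$ decomposes into simultaneous Hecke eigenspaces, and it suffices to prove that any nonzero common eigenform in $V$ vanishes.

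Next, I would take $g \in V$ to be such a simultaneous Hecke eigenform. Via the Eichler-Zagier-Skoruppa isomorphism $S_{\kappa+1/2, \rho_L} \cong S^+_{\kappa+1/2}(\Gamma_0(4\lev))$ with the Kohnen plus space, the Shimura correspondence in its refined form due to Kohnen-Zagier associates to $g$ a unique normalized newform $F$ of weight $2\kappa$ on $\Gamma_0(\lev')$ for some $\lev' \mid \lev$, with matching Hecke eigenvalues at primes coprime to $\lev$. The Waldspurger-Kohnen formula then yields, for each fundamental discriminant $D = -4\lev m$ coprime to $\lev$, an identity of the shape
\[
  |b_g(m,\mu)|^2 \;=\; c_{g,F} \cdot |D|^{\kappa - 1/2} \cdot L(F, \chi_D, \kappa),
\]
with a nonzero constant $c_{g,F}$. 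Under the hypothesis of the lemma, this forces $L(F,\chi_D,\kappa) = 0$ for every fundamental discriminant $D$ coprime to $D_0\lev$.

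The crux of the argument is then the deduction $F = 0$ from the vanishing of $L(F,\chi_D,\kappa)$ for all such $D$. Here I would invoke the nonvanishing theorem of Friedberg-Hoffstein, which guarantees that for any nonzero holomorphic newform $F$ of weight $\geq 2$, there exist infinitely many fundamental discriminants $D$ coprime to any prescribed integer with $L(F,\chi_D,\kappa) \neq 0$. This contradicts the above, so $F = 0$, and hence $g = 0$ by injectivity of the Shimura correspondence on Hecke eigenforms. The main obstacle is thus the reliance on this deep analytic input; an alternative, more direct argument using Hecke eigenvalue identities to propagate the vanishing across all indices via primes dividing $D_0$ would be preferable if one can be arranged.
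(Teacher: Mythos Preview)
Your approach is sound in outline but takes a heavier route than the paper, which simply defers to \cite[Proposition~3.1]{Schwagenscheidt}. Two remarks.

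First, a technical point: the Waldspurger--Kohnen formula in the form you state it, $|b_g(m,\mu)|^2 = c_{g,F}\,|D|^{\kappa-1/2} L(F,\chi_D,\kappa)$ with a single constant $c_{g,F}$, holds for half-integral weight \emph{newforms} and fundamental $D$ coprime to the level. A general Hecke eigenform $g \in S_{\kappa+1/2,\rho_L}$ may be an oldform, in which case the formula acquires local factors at primes dividing $4\lev$ that depend on the residue of $D$ modulo the level. This is not fatal---since $(D_0,\lev)=1$, those local factors are insensitive to the coprimality-to-$D_0$ condition, and Friedberg--Hoffstein permits imposing the additional congruence constraints needed to make the local factor nonzero while keeping $(D,D_0)=1$---but the step deserves a sentence.

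Second, on the comparison: your argument imports a deep analytic nonvanishing theorem to settle what is ultimately a statement about Fourier coefficients. The route the paper cites works via the identification $S_{\kappa+1/2,\rho_L}\cong J_{\kappa+1,\lev}^{\mathrm{cusp}}$ with Jacobi cusp forms and a more direct analysis of Hecke-type operators there, closer in spirit to the alternative you flag in your last sentence. What your approach buys is a clean conceptual reduction that is uniform in the primes dividing $D_0$; what the paper's reference buys is avoiding Friedberg--Hoffstein altogether.
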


\begin{proof}
This can be proved in the same way as \cite[Proposition 3.1]{Schwagenscheidt}.
\end{proof}

Let $D_0$ be a discriminant which is coprime to $\lev$. We call a harmonic Maass form $f\in H_{3/2-\kappa,\bar\rho_L}$ with Fourier coefficients $c^\pm(m,\mu)$ \emph{admissible} for $D_0$ if
$c^+(-m,\mu)=0$ for all $m>0$ with $(4\lev m,D_0)\neq 1$.

It is a consequence of Lemma \ref{lem:adm1} that for every $g\in S_{\kappa+1/2,\rho_L}$ there exists an $f\in H_{3/2-\kappa,\bar\rho_L}$ which is admissible for $D_0$ such that $\xi(f)=g$.

\begin{proposition}
\label{prop:mod}
Let  $D_0$ be a discriminant which is coprime to $\lev $. Let
\[
h=\sum_{\mu\in L'/L}\sum_{\substack{m\in Q(\mu)+\Z\\ m> 0}} a(m,\mu) q^m\phi_\mu
\]
be a $\C[L'/L]$-valued formal $q$-series satisfying $a(m,\mu)=a(m,-\mu)$ for all $(m,\mu)$. If
\[\CT(\langle f,h\rangle) =0\]
for every $f\in M^!_{3/2-\kappa,\bar\rho_L}$ which is admissible for $D_0$, then there is a
$g=\sum_{m,\mu} b(m,\mu) q^m\phi_\mu$ in $S_{\kappa+1/2,\rho_L}$ whose Fourier coefficients $b(m,\mu)$ satisfy
\[
b(m,\mu) =   a(m,\mu)
\]
for all $\mu\in L'/L$ and $m\in Q(\mu)+\Z$ with $(4\lev m,D_0)= 1$.
\end{proposition}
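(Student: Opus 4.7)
The plan is to reduce the statement to the Borcherds/Serre duality between the principal parts of forms in $M^!_{3/2-\kappa,\bar\rho_L}$ and the Fourier coefficients of cusp forms in $S_{\kappa+1/2,\rho_L}$. Let $I$ denote the set of indices $(m,\mu)$ with $\mu\in L'/L$, $m\in Q(\mu)+\Z$, $m>0$, and $(4\lev m,D_0)=1$. Consider the restriction map
\[
\rho:S_{\kappa+1/2,\rho_L}\longrightarrow \C^{I},\qquad g=\sum b(m,\mu)\,q^m\phi_\mu\;\longmapsto\;(b(m,\mu))_{(m,\mu)\in I}.
\]
By Lemma \ref{lem:adm1}, the map $\rho$ is injective; let $U\subset\C^{I}$ denote its (finite-dimensional) image. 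What is to be shown is that the restricted sequence $\tilde h=(a(m,\mu))_{(m,\mu)\in I}$ lies in $U$; the cusp form $g$ is then the unique preimage of $\tilde h$ under $\rho$.

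Since $\dim U<\infty$, it suffices to verify that every linear functional on $\C^{I}$ which annihilates $U$ also annihilates $\tilde h$. Any such functional is represented by a finitely supported sequence $\ell\colon I\to\C$, which we may symmetrize so that $\ell(m,\mu)=\ell(m,-\mu)$, via $\ell(b)=\sum_{(m,\mu)\in I}\ell(m,\mu)b(m,\mu)$. The condition that $\ell$ annihilates $U$ is then
\[
\sum_{(m,\mu)\in I}\ell(m,\mu)\,b(m,\mu)\;=\;0\qquad\text{for every }g\in S_{\kappa+1/2,\rho_L}.
\]

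Now I invoke the Borcherds/Serre duality for vector-valued modular forms with Weil representation (the analogue of Borcherds' Theorem~3.1 in \emph{The Gross--Kohnen--Zagier theorem in higher dimensions}, or its treatment via Serre duality, which applies here since $3/2-\kappa<0$ guarantees $M_{3/2-\kappa,\bar\rho_L}=0$): the orthogonality displayed above is precisely the necessary and sufficient condition for $\ell$ to occur as the principal part of a weakly holomorphic modular form. Hence there is a unique $f\in M^!_{3/2-\kappa,\bar\rho_L}$ with
\[
c_f(-m,\mu)=\ell(m,\mu)\qquad\text{for all }m>0,\ \mu\in L'/L.
\]
Because $\ell$ is supported in $I$, this $f$ is admissible for $D_0$ in the sense of the proposition. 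Applying the hypothesis to $f$ yields
\[
0\;=\;\CT(\langle f,h\rangle)\;=\;\sum_{m>0,\,\mu}c_f(-m,\mu)\,a(m,\mu)\;=\;\sum_{(m,\mu)\in I}\ell(m,\mu)\,a(m,\mu),
\]
which says that $\ell$ annihilates $\tilde h$, completing the argument.

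The main technical point is to invoke Borcherds/Serre duality in a form compatible with the $\Z/2$-symmetry $\mu\mapsto-\mu$ and with the admissibility restriction. The symmetry is handled by restricting throughout to the $+1$-eigenspace of the involution, which is stable under $\rho_L$ and $\bar\rho_L$; admissibility then transfers automatically, because the principal part of $f$ is supported in $I$ by construction and $f$ is uniquely determined (no holomorphic forms of the relevant negative weight need to be added).
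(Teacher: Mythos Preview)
Your argument is correct and reaches the conclusion via a genuinely different route from the paper's proof. The paper proceeds constructively: for each bad index $(m,\mu)$ with $(4\lev m,D_0)\neq 1$ it chooses an admissible harmonic Maass form $f'_{m,\mu}\in H_{3/2-\kappa,\bar\rho_L}$ with $\xi(f'_{m,\mu})=\xi(f_{m,\mu})$ and \emph{defines} $b(m,\mu)=\tfrac{1}{2}\CT(\langle f^{\prime,+}_{m,\mu},h\rangle)$, thereby extending $h$ to a full candidate series $g$; it then verifies Borcherds' criterion $\CT(\langle f,g\rangle)=0$ for \emph{every} $f\in M^!_{3/2-\kappa,\bar\rho_L}$ by decomposing an arbitrary $f$ into an admissible combination of the $f'_{m,\mu}$ plus a weakly holomorphic remainder. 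Your approach instead exploits the injectivity of the restriction map (Lemma~\ref{lem:adm1}) directly and argues by duality on the restricted index set $I$, avoiding harmonic Maass forms altogether. The paper's version has the mild advantage of explicitly producing the missing Fourier coefficients at the bad indices, while yours is shorter and more elementary for the bare existence statement.

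One small imprecision is worth flagging: it is not literally true that every linear functional on the infinite product $\C^I$ is represented by a finitely supported sequence. What you actually need, and what holds, is that the finitely supported functionals already suffice to cut out any finite-dimensional subspace $U\subset\C^I$. Concretely, choose indices $(m_1,\mu_1),\dots,(m_d,\mu_d)\in I$ on which a basis $u_1,\dots,u_d$ of $U$ is linearly independent; for any further $(m,\mu)\in I$ there is then a finitely supported $\ell$ supported on these $d+1$ indices which annihilates each $u_i$ and is nonzero at $(m,\mu)$. Running your Borcherds-duality step with such $\ell$ shows coordinate by coordinate that $\tilde h$ lies in $U$. With this clarification your proof stands.
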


\begin{proof}
For $\mu\in L'/L$ and  $m\in Q(\mu)+\Z$ positive, we define coefficients $b(m,\mu)$ as follows:
If $(4\lev m,D_0)\neq 1$ we consider the harmonic Maass form $f_{m,\mu}=q^{-m}(\phi_\mu+\phi_{-\mu})+O(1)$ in $H_{3/2-\kappa, \bar\rho_L}$.
Choose an $f_{m,\mu}'\in H_{3/2-\kappa, \bar\rho_L}$ which is admissible for $D_0$ such that
\begin{align}
\label{eq:xiid}
\xi(f_{m,\mu}')=\xi(f_{m,\mu}).
\end{align}
If $(4\lev m,D_0)=1$, then $f_{m,\mu}$ is already admissible for $D_0$. In this case we simply put $f'_{m,\mu}=f_{m,\mu}$.
In both cases we define
\[
b(m,\mu)=\frac{1}{2}\CT(\langle f_{m,\mu}^{\prime,+},h\rangle),
\]
where $f_{m,\mu}^{\prime,+}$ denotes the holomorphic part of $f_{m,\mu}^{\prime}$.
Note that we have $b(m,\mu)=a(m,\mu)$ when $(4\lev m,D_0)=1$.

Now the generating series
\[
g=\sum_{\mu\in L'/L}\sum_{\substack{m\in Q(\mu)+\Z\\ m>0}} b(m,\mu) q^m\phi_\mu
\]
satisfies $\CT(\langle f,g\rangle) =0$ for {\em every} $f\in M^!_{3/2-\kappa,\bar\rho_L}$.
 In fact, to see this we denote the Fourier coefficients of $f$ by $c(n,\nu)$ and write
 \begin{align*}
 f&= \frac{1}{2}\sum_{\substack{\mu\in L'/L\\m>0}}
 c(-m,\mu)f_{m,\mu}\\
 &= \frac{1}{2}\sum_{\substack{\mu\in L'/L\\m>0}}
 c(-m,\mu)f'_{m,\mu} +\frac{1}{2}\sum_{\substack{\mu\in L'/L\\m>0}}
 c(-m,\mu)(f_{m,\mu}-f_{m,\mu}').
 \end{align*}
 The second sum on the right hand side is weakly holomorphic because of \eqref{eq:xiid}. Hence the first sum also has to be weakly holomorphic. Since the first sum is in addition admissible for $D_0$, we find by the hypothesis that
 \begin{align*}
 \CT(\langle f,g\rangle) &= \frac{1}{2}\sum_{\substack{\mu\in L'/L\\m>0}}
 c(-m,\mu)\CT(\langle f_{m,\mu}^+-f_{m,\mu}^{\prime,+},g\rangle).
 \end{align*}
 Because the $f_{m,\mu}'$ are admissible for $D_0$, we have
 \begin{align*}
 \CT(\langle f_{m,\mu}^{\prime,+},g\rangle)=\CT(\langle f_{m,\mu}^{\prime,+},h\rangle)= 2b(m,\mu)= \CT(\langle f_{m,\mu}^+,g\rangle).
 \end{align*}
 Consequently, $\CT(\langle f,g\rangle)=0$.
Therefore Borcherds'  modularity criterion \cite[Theorem~3.1]{Bo:Duke} implies that $g\in S_{\kappa+1/2,\rho_L}$.
\end{proof}

\section{Partial averages}
\label{sec:partial-averages}
In this section, we will use the higher automorphic Green functions for $\SO(1,2)$
to evaluate certain partial averages of the resolvent kernel for $\SL_2(\Z)$ at positive integral spectral parameter as considered in Section \ref{sect:resolvent}.
To this end we employ and generalize and Theorem \ref{thm:fund2}
and Theorem \ref{thm:fund12} of Section \ref{sect:6.2}.
Throughout this section, we let $V$, $L$, and $K$ be as in Section \ref{sect:6.2}
but we restrict to level $1$ for simplicity. Thus, $X_K$ is isomorphic
to the modular curve $\SL_2(\Z) \bs \H$.

The general idea of this section is to fix a  fundamental discriminant $d_1$
and to consider the \emph{partial average}
\[
	G_{j+1, f} (C(d_1), z_2),
\]
where we use the same notation as in the Introduction.
We shall prove that at any CM point $z_2$ of discriminant $d_2$
the CM value $G_{j+1, f} (C(d_1), z_2)$ is equal to $(d_1 d_2)^{\frac{j-1}{2}} \log|\alpha|$ for some $\alpha \in \bar{\Q}$ (see Corollary \ref{cor:partial-average-algebraic-general} below).
This result proves Conjecture \ref{conj:alg} in the case when the
class group of $\Q(d_1)$ is trivial.

\subsection{Twisted special divisors}
To obtain a stronger result,
and to make this approach work at all for odd $j$ as well,
we also consider twisted partial averages, which we will now define.

\begin{definition}\label{def:chi}
	Let $\Delta \in \Z$ be a fundamental discriminant and $p$ be a prime.
	For $\lambda \in V(\Q_p)$,
	we put $\chi_{\Delta, p}(\lambda) = 0$ unless we have
	$\lambda = \kzxz{b/2}{-a}{c}{-b/2} \in L_p' := L' \otimes_\Z \Z_p$ with
	$4Q(\lambda) \in \Delta\Z_p$. In the latter case, we let
	\[
		\chi_{\Delta, p}(\lambda) :=
			\begin{cases}
				(n, \Delta)_p, &\text{ if } \gcd(a,b,c,\Delta) = 1 \text{ and } [a,b,c] \text{ represents } n \text{ with } \gcd(\Delta, n) = 1, \\
				0, &\text{otherwise.}
			\end{cases}
	\]
	Here, $(a,b)_p$ denotes the $p$-adic Hilbert symbol.
	Finally, for $\lambda = (\lambda_p)_p \in V(\A_f)$, we put
	\[
		\chi_\Delta(\lambda) = \prod_{p < \infty} \chi_{\Delta, p}(\lambda_p).
	\]
\end{definition}

The following lemma is a local variant of
\cite[I.2, Proposition 1]{GKZ} and we leave the
(completely analogous) proof to the reader.

\begin{lemma}\label{lem:chiform}
    The function $\chi_{\Delta, p}$ is well-defined.
    Moreover, for $p \mid \Delta$ and $\lambda = \kzxz{b/2}{-a}{c}{-b/2} \in L_p'$
	with $4Q(\lambda) \in \Delta\Z_p$, we have the following explicit formula:
	\[
		\chi_{\Delta, p}(\lambda) =
		\begin{cases}
			(a, \Delta)_p, &\text{if } p \nmid a,\\
			(c, \Delta)_p, &\text{if } p \nmid c,\\
			0 &\text{otherwise.}
		\end{cases}
	\]
\end{lemma}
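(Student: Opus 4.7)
The plan is to derive both assertions from a single algebraic identity. For any $\lambda=\kzxz{b/2}{-a}{c}{-b/2}$ and any value $n=ax^2+bxy+cy^2$ represented by the binary form $[a,b,c]$ with $x,y\in\Z_p$, one has
\[
   4a\,n \;=\; (2ax+by)^2 - (b^2 - 4ac)\,y^2,
\]
together with its symmetric counterpart $4cn=(2cy+bx)^2-(b^2-4ac)\,x^2$. The hypothesis $4Q(\lambda)=4ac-b^2\in\Delta\Z_p$ is exactly $b^2-4ac\in\Delta\Z_p$, so reducing mod $\Delta\Z_p$ gives $4an\equiv(2ax+by)^2\pmod{\Delta\Z_p}$, and analogously for $4cn$.

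First I would treat the well-definedness. Because $p\mid\Delta$, the condition $\gcd(a,b,c,\Delta)=1$ in $\Z_p$ forces at least one of $a,b,c$ to be a $p$-adic unit; without loss of generality assume $p\nmid a$, the case $p\mid a$, $p\nmid c$ being handled symmetrically by the companion identity. The identity shows $4an$ is a square modulo $\Delta\Z_p$, and since $\Delta$ is a fundamental discriminant this forces $(4an,\Delta)_p=1$ (for odd $p$ this is standard, as $p\mid\mid\Delta$ and being a square mod $p$ is precisely the criterion for being a norm from $\Q_p(\sqrt{\Delta})$). Using $(4,\Delta)_p=1$ we conclude $(n,\Delta)_p=(a,\Delta)_p$, which is independent of the particular representation $n$, giving well-definedness.

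The explicit formula drops out of the same computation. For $p\nmid a$, pick any $p$-adic unit $n$ represented by $[a,b,c]$ (for instance $n=a$ at $(x,y)=(1,0)$); then $(n,\Delta)_p=(a,\Delta)_p$, which is the first clause. The case $p\nmid c$ is symmetric. For the final clause, if $p\mid a$ and $p\mid c$, then combining $\Delta\mid b^2-4ac$ with $p\mid\Delta$ forces $p\mid b^2$, hence $p\mid b$. Thus $p$ divides each of $a,b,c$ as well as $\Delta$, so $\gcd(a,b,c,\Delta)\neq 1$ in $\Z_p$ and the definition directly yields $\chi_{\Delta,p}(\lambda)=0$.

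The main technical subtlety I expect is the implication ``square modulo $\Delta\Z_p$ implies $(\cdot,\Delta)_p=1$'' at the prime $p=2$: there it depends on the exact valuation $v_2(\Delta)\in\{2,3\}$ and requires a short case analysis modulo $8$ (and modulo the $2$-part of $\Delta$). However, the condition that $\Delta$ is a fundamental discriminant pins down the residue of $\Delta$ modulo $8$ sufficiently well that the implication still holds, so the overall structure of the argument is unchanged.
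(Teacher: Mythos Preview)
The paper itself does not write out a proof; it refers to \cite[I.2, Proposition~1]{GKZ} and leaves the verification to the reader. Your approach via $4an=(2ax+by)^2-(b^2-4ac)y^2$ is precisely the classical one, and for odd $p\mid\Delta$ it works as you say: the congruence shows the $p$-adic unit $4an$ is a square modulo~$p$, hence a square in $\Z_p^\times$, giving $(n,\Delta)_p=(a,\Delta)_p$.

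The gap you flag at $p=2$ is real and cannot be closed by a case analysis of $\Delta\pmod 8$ alone. Once $2\mid\Delta$ one has $4\mid\Delta$ and then $b$ is forced to be even, so both sides of $4an\equiv(2ax+by)^2\pmod{\Delta\Z_2}$ already lie in $4\Z_2$; when $v_2(\Delta)=2$ the congruence is vacuous. This is not merely a weakness of the method: take $\Delta=-4$ and $\lambda$ corresponding to $[1,0,3]$, so that $4Q(\lambda)=12\in\Delta\Z_2$ and $\gcd(1,0,3,\Delta)=1$, yet $[1,0,3]$ represents $n=1$ with $(1,-4)_2=1$ and $n=3$ with $(3,-4)_2=-1$. (A similar issue arises for $p\nmid\Delta$, where $\Delta\Z_p=\Z_p$ and the congruence says nothing; your write-up does not treat this case.) What makes things work in GKZ and in the paper's applications is the stronger hypothesis that $-4Q(\lambda)/\Delta$ is itself a discriminant; under that constraint one may write $b=2b'$, use $an=(ax+b'y)^2-(b'^2-ac)y^2$, and the control on $b'^2-ac\pmod 4$ then forces $an\equiv 1\pmod 4$. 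So the missing ingredient is this extra hypothesis on $\lambda$, not a sharper $2$-adic estimate.
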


Using this formula it is easy to see that
for $\lambda \in L'$, we have
$\chi_\Delta(\lambda) = 0$ if $\Delta \nmid b^2-4ac$. Moreover, when
$\Delta \mid b^2-4ac$, we obtain
\[
  \chi_\Delta(\lambda) =
  \begin{cases}
    \legendre{\Delta}{n} &\text{ if } \gcd(a,b,c,\Delta) = 1 \text{ and } [a,b,c] \text{ represents } n \text{ with } \gcd(\Delta,n)=1\\
     0              &\text{ otherwise. }
  \end{cases}
\]
This shows that on $L$, our definition of $\chi_\Delta$ agrees with
the (generalized) genus character as in \cite{GKZ, BrO}.
In the following lemma, we write $h \cdot \lambda = h \lambda h^{-1}$ for
the action of $h \in \GSpin \cong \GL_2$ on $\lambda \in V$.

\begin{lemma}\label{lem:chitrafo}
	For $h \in \GSpin(L_p) \cong \GL_2(\Z_p)$ and $\lambda \in L_p'$, we have
	\[
		\chi_{\Delta, p}(h \cdot \lambda) = (\det(h), \Delta)_p \cdot \chi_{\Delta, p}(\lambda).
	\]
\end{lemma}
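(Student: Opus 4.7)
The plan is to reduce the statement to two substantially simpler computations via the decomposition $\GL_2(\Z_p) = \SL_2(\Z_p) \cdot \{\operatorname{diag}(u,1) : u \in \Z_p^\times\}$. Writing $h = s \cdot d_u$ with $s \in \SL_2(\Z_p)$ and $d_u = \operatorname{diag}(u, 1)$ where $u = \det(h)$, and factoring $h \cdot \lambda = s \cdot (d_u \cdot \lambda)$, it will suffice to prove:
(a) $\chi_{\Delta, p}(s \cdot \mu) = \chi_{\Delta, p}(\mu)$ for every $s \in \SL_2(\Z_p)$ and $\mu \in V(\Q_p)$;
(b) $\chi_{\Delta, p}(d_u \cdot \lambda) = (u, \Delta)_p \cdot \chi_{\Delta, p}(\lambda)$.

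For (a), I would observe that conjugation by $\SL_2(\Z_p)$ on a trace-zero matrix $\mu = \kzxz{b/2}{-a}{c}{-b/2}$ induces the classical $\SL_2(\Z_p)$ action on the binary quadratic form $[a,b,c]$. This action preserves both the condition $\gcd(a,b,c,\Delta) = 1$ and the set of integers primitively represented by $[a,b,c]$ over $\Z_p$. Hence the recipe in Definition~\ref{def:chi} produces the same value on $\mu$ and on $s \cdot \mu$; this is the $p$-adic analogue of \cite[I.2, Proposition 1]{GKZ}.

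For (b), I would use the direct calculation
\[
d_u \cdot \lambda = \kzxz{b/2}{-ua}{c/u}{-b/2},
\]
so that the binary form attached to $d_u \cdot \lambda$ is $[ua, b, c/u]$, and then verify the identity case by case using Lemma~\ref{lem:chiform}. When $p \mid \Delta$: if $p \nmid a$ the identity reduces to $(ua,\Delta)_p = (u,\Delta)_p(a,\Delta)_p$; the case $p \nmid c$ is treated symmetrically using $(1/u,\Delta)_p = (u,\Delta)_p$; and when $p$ divides both $a$ and $c$ the fact that $u \in \Z_p^\times$ forces both $\chi_{\Delta,p}(\lambda)$ and $\chi_{\Delta,p}(d_u \cdot \lambda)$ to vanish. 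When $p \nmid \Delta$, the Hilbert symbol $(u, \Delta)_p$ is trivial for every $u \in \Z_p^\times$ (directly for $p$ odd, and using the fact that $\Delta \equiv 1 \pmod 4$ for $p = 2$, since $\Delta$ is a fundamental discriminant), and the local recipe for $\chi_{\Delta, p}$ is invariant under replacing $(a,c)$ by $(ua, c/u)$ since the relevant primitivity condition is preserved.

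The main obstacle will be to give a clean and uniform proof of (a). The claim is essentially a local reformulation of the classical fact that the genus character is well-defined on $\SL_2(\Z)$-equivalence classes of binary quadratic forms, but some care will be needed to match the $p$-adic recipe in Definition~\ref{def:chi} to its global counterpart, especially at the anomalous prime $p = 2$.
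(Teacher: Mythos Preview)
Your argument is correct but takes a different route than the paper. The paper observes that if one sets $S=\kzxz{0}{1}{-1}{0}$ and $\tilde\lambda=\lambda S$, then $\tilde\lambda$ is the symmetric Gram matrix of the binary form $[a,b,c]$, and the conjugation action becomes $h\cdot\tilde\lambda=h\tilde\lambda h^{t}=(\det h)\,\widetilde{h\cdot\lambda}$. Since $h\mapsto h\tilde\lambda h^{t}$ is exactly $\GL_2(\Z_p)$-equivalence of binary forms and thus preserves the set of represented values, the form attached to $h\cdot\lambda$ represents $(\det h)^{-1}n$ whenever $[a,b,c]$ represents $n$; the Hilbert symbol then produces the factor $(\det h,\Delta)_p$ in one stroke, uniformly for all $h$ and all $p$.

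Your decomposition $h=s\,d_u$ achieves the same end in two steps: the $\SL_2$-part is the paper's argument specialized to $\det h=1$, while the diagonal part is handled by an explicit case analysis through Lemma~\ref{lem:chiform}. This is perfectly valid, and has the mild advantage of leveraging the explicit formula you already have; the paper's approach is more uniform and avoids splitting into the cases $p\mid\Delta$ versus $p\nmid\Delta$. One small point: in your treatment of $p\nmid\Delta$ for part~(b), the invariance is not only a matter of the primitivity condition being preserved; you should also note (as you implicitly use) that the form $[ua,b,c/u]$ represents exactly the values $u\cdot[a,b,c](\Z_p^2)$, so that the relevant Hilbert symbol picks up the harmless factor $(u,\Delta)_p=1$.
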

\begin{proof}
	Let $S=\kzxz {0} {1} {-1} {0}$ and put $\tilde{L} = LS$.
	For $\lambda \in L_p'$, we write and $\tilde{\lambda} = \lambda S$.
	Then $\tilde{\lambda}$ is symmetric and $Q(\tilde{\lambda}) = Q(\lambda)$.
    The group $\GL_2(\Z_p)$ acts on $\tilde{L}$ via $h \cdot \tilde\lambda = h  \tilde\lambda h^t$
    for $h \in \GL_2(\Z_p)$.
	Moreover, we have
	$$
	h\cdot \tilde\lambda  = (\det h)\, \widetilde{h\cdot \lambda}.
	$$
	It is clear that $n$ is represented by the quadratic form associated to $\tilde\lambda$  (i.e., $[a, b, c]$) if and only if $n$ is repesented by the form associated to $h\cdot \tilde\lambda$ for any $h \in \GL_2(\Z_p)$.
	This implies the statement of the lemma. \qedhere
\end{proof}

\begin{definition}
For $\lambda = \kzxz{b/2}{-a}{c}{-b/2} \in L'$ with $Q(\lambda)>0$, we write $\lambda>0$ if
\[
\left(\lambda,\zxz{0}{0}{1}{0}\right)>0.
\]
This is the case if and only if the corresponding binary quadratic form $[a,b,c]$ is {\em positive} definite.
\end{definition}

We let
\[
	K_\Delta := \{ h \in K\ \mid\ (\det(h), \Delta)_{\A_f} = 1 \}.
\]
By Lemma \ref{lem:chitrafo}, the function $\chi_\Delta: V(\A_f) \to \{\pm 1\}$ is $K_\Delta$-invariant.
For $\Delta \neq 1$, we have $K_\Delta \neq K = \GSpin(\hat{L})$, and then
$K_\Delta$ has index $2$ in $K$. Now assume that $\Delta \neq 1$.
Since $H(\A_f) = H(\Q)^+K$, the  Shimura variety $X_{K_\Delta}$ then has two connected components
which are both isomorphic to $X_K \cong \SL_2(\Z) \bs \H$.

To describe this isomorphism, let $\Gamma_1 = K_\Delta \cap H(\Q)^+ = \SL_2(\Z)$,
choose $\xi \in K$ such that $\xi \not\in K_\Delta$ and put $\Gamma_{\xi} = (\xi K_\Delta \xi^{-1}) \cap H(\Q)^+ = \SL_2(\Z)$.
Then $(\det(\xi), \Delta)_{\A_f} = -1$ and
\[
	H(\A_f) = H(\Q)^+ K_\Delta \sqcup H(\Q)^+\xi K_\Delta.
\]
We obtain an isomorphism
\[
	\Gamma_{1} \bs \calD^+ \sqcup \Gamma_{\xi} \bs \calD^+ \to X_{K_\Delta}
	= H(\Q) \bs (\calD \times H(\A_f) / K_{\Delta}),
\]
$\Gamma_1 z \mapsto H(\Q)(z,1)K_\Delta$ and $\Gamma_\xi z \mapsto H(\Q)(z,\xi)K_\Delta$.

For $h \in H(\A_f)$, we denote by $c(\lambda, h)$ the ``connected'' cycle \cite{Ku:Duke}
corresponding to $\lambda$ on the component corresponding to the class of $h$.
That is, we obtain the cycle (which is really just a weighted point in our case) $c(\lambda, h)$
as the image of
\[
	\Gamma_{h, \lambda} \bs \calD_\lambda^+ \to \Gamma_h \bs \calD^+, \quad \Gamma_{h, \lambda} z \to \Gamma_h z
\]
where $\Gamma_{h, \lambda} \subset \Gamma_h$ is the stabilizer of $\lambda$
in $\Gamma_h$ and $\calD_\lambda^+$ is the unique point in $\calD^+$, such that $(\calD_\lambda^+, \lambda) = 0$.
Each point in the image is weighted by $2/|\Gamma_{h, \lambda}|$.
We keep the same notation for $\Delta=1$, where $X_{K_\Delta} = X_K$ has only one connected component.

\begin{definition}
We define the following twisted divisors on $X_{K_\Delta}$ (cf.~also \cite{BrO}).
Let $r \in \Z$ with $\Delta \equiv r^2 \bmod{4}$
and let $\mu \in L'/L$ with $\sgn(\Delta)Q(\mu) \equiv m \bmod{\Z}$. Define
\begin{equation}
  \label{eq:ZD}
  Z_\Delta(m, h) := \sum_{\substack{\lambda \in V(\Q) \bmod{\Gamma_h} \\ Q(\lambda)
               = |\Delta|m \\ \lambda > 0}} \chi_\Delta(h^{-1}\lambda) \phi_{r\mu}(h^{-1}\lambda) c(\lambda, h).
\end{equation}
\end{definition}
Note that since we restricted to the level one case,
$\mu \in L'/L$ is uniquely determined by the condition
$\sgn(\Delta)Q(\mu) \equiv m \bmod{\Z}$
and therefore we dropped $\mu$ from the notation.
Also note that by definition, the cycle $Z_\Delta(m,h)$ is supported on the connected component
corresponding to the class of $h$ in $H(\Q) \bs H(\A_f)/K_\Delta$.

\begin{remark}
  For $\Delta=1$, we have $Z_1(m,h) = C(D) = P_{D,s}$,
  where $D=-4m$, $s^2 \equiv D \bmod{4}$ and $P_{D,s}$ is the
  Heegner divisor defined in \cite{GKZ}.
\end{remark}

\subsection{Twisted Siegel and Millson theta functions}
For the partial averages in the case of odd $j$, we also need the Millson theta function.
For
\[
   \lambda = \zxz{b/2}{-a}{c}{-b/2} \in V(\R)
\]
and $z \in \H \sqcup \bar{\H} \cong \calD$ we let
\begin{equation}
  \label{eq:pzV}
  p_{z}(\lambda) = -2^{-\frac{1}{2}}(\lambda, X_1(z)) = \frac{-1}{2y}(c|z|^2 - bx + a),
\end{equation}
where
\[
	X_1(z) := \frac{1}{\sqrt{2}y} \zxz{-x}{x^2+y^2}{-1}{x}
\]
is a normalized (i.e., $(X_1(z), X_1(z)) = 1$)
generator of the positive line $X(z)^\perp$, and $X(z)$ is defined in \eqref{eq:Xz12}.
Now suppose that $M$ is an even lattice
of signature $(1,2)$, and fix an isometric embedding $\sigma: M \otimes \R \to V(\R)$.
Then we define
\begin{align}\label{eq:millson1}
     \theta_{M}^\calM(\tau, z, h) = v \sum_{\mu \in M'/M} \sum_{\lambda \in V(\Q)} p_z(\sigma(\lambda)) \phi_{\mu}(h^{-1}\lambda) e\left(Q(\lambda_{z^\perp}) \tau + Q(\lambda_z) \bar\tau\right) \phi_\mu,
\end{align}
for $\tau \in \H, z \in \calD$, and $h \in H(\A_f)$.
The Millson theta function has weight $1/2$ in $\tau$ and transforms with the representation $\rho_M$.

For the twisted partial averages, we also need twisted variants
of the Siegel and the Millson theta functions. Let $r \in \Z$ with $\Delta \equiv r^2 \pmod{4}$.
If $M$ is any lattice with quadratic form $Q$,
we write $M_\Delta$ for the
rescaled lattice $M_\Delta = \Delta M$ with the quadratic form $Q_\Delta=\frac{Q}{|\Delta|}$.
Note that we have $M_{\Delta}' = M'$ and thus $M_\Delta'/M_\Delta = M'/\Delta M$.
Following \cite{Alfes:2013ff}, we let
\begin{equation} \label{eq:psi}
	\psi_{\Delta}: S_L \to S_{L_\Delta}, \quad \phi_\mu \mapsto \sum_{\substack{\delta \in L'/\Delta L \\ Q_\Delta(\delta) \equiv \sgn(\Delta) Q(\mu)\, (\Z) \\ \delta \equiv r\mu\, (L)}} \chi_\Delta(\delta) \phi_\delta.
\end{equation}
 If $\Delta>0$, this map is an intertwining operator for the Weil representation
  $\rho_L$ on $S_L$ and $\rho_{L_\Delta}$ on $S_{L_\Delta}$. If $\Delta<0$, it intertwines
  $\bar\rho_L$ on $S_L$ and $\rho_{L_\Delta}$ on $S_{L_\Delta}$ (see \cite[Proposition 3.2]{Alfes:2013ff} and \cite[Proposition 4.2]{BrO}).

The twisted Siegel theta function for the lattice $L$ is defined as
\[
	\theta_{L,\Delta}(\tau, z, h) := \sum_{\mu \in L'/L} \langle \theta_{L_\Delta}(\tau, z, h), \psi_\Delta(\phi_\mu) \rangle \phi_\mu.
\]
By the intertwining property of $\psi_\Delta$, it transforms as
a vector valued modular form of weight $-1/2$ in $\tau$
for the Weil representation $\rho_L$ if $\Delta > 0$ and for
$\bar\rho_L$ if $\Delta<0$.
Explicitly, we have
      \begin{equation*}
        \label{eq:Siegeltheta12}
        \theta_{L,\Delta}(\tau,z, h) = v \sum_{\mu \in L'/L}\!\!\!\! \sum_{\substack{\lambda \in V(\Q) \\ Q_\Delta(\lambda) \equiv \sgn(\Delta)Q(\mu)\, (\Z)}}\!\!\!\! \chi_\Delta(h^{-1}\lambda)  \phi_{r\mu}(h^{-1}\lambda) e\left(\frac{Q(\lambda_{z^\perp})}{|\Delta|} \tau + \frac{Q(\lambda_z)}{|\Delta|} \bar\tau\right) \phi_\mu.
      \end{equation*}
By Lemma \ref{lem:chitrafo} it is easy to see that $h \mapsto \theta_{L,\Delta}(\tau,z, h)$ is invariant under $K_\Delta$.

To define the twisted Millson theta function, we embed
$(L_\Delta, Q_\Delta)$ isometrically into $V(\R)$ via $\sigma(\lambda) = \frac{1}{\sqrt{|\Delta|}}\lambda$ and let $\theta_{L_\Delta}^\calM(\tau,z)$ be as in \eqref{eq:millson1}.
We then define the twisted Millson theta function as
\[
	\theta_{L,\Delta}^\calM(\tau, z, h) =
	          \sum_{\mu \in L'/L} \langle \theta_{L_\Delta}^\calM(\tau, z, h), \psi_\Delta(\phi_\mu) \rangle \phi_\mu.
\]
It transforms of weight $1/2$ with $\bar\rho_L$ if $\Delta<0$ and with $\rho_L$ if $\Delta > 0$
and is also $K_\Delta$-invariant.

\begin{remark}
 Note that in our level $1$ case, the function
 $\theta_{L,\Delta}$ vanishes if $\Delta < 0$.
 Similarly, $\theta^\calM_{L, \Delta} = 0$ if $\Delta > 0$.
 For higher level, this is not the case.
 Moreover, note that for $\Delta = 1$,
 the map $\psi_\Delta$ is the identity on $L'/L$.
\end{remark}

A straightforward calculation shows that
the theta lift against any of the twisted
theta functions can in fact be obtained by twisting the input function
using $\psi_\Delta$, which simplifies many calculations.
\begin{lemma} \label{lem:twisting}
For any $v \in \C[L'/L]$, we have
	\[
		\langle v, \theta_{L,\Delta}(\tau, z, h) \rangle = \langle \psi_\Delta(v), \theta_{L_\Delta}(\tau, z, h) \rangle.
	\]
	The analogous formula holds with $\theta_{L,\Delta}$ replaced by $\theta_{L,\Delta}^\calM$
	and $\theta_{L_\Delta}$ replaced by $\theta_{L_\Delta}^\calM$.
\end{lemma}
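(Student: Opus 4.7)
The plan is to observe that this lemma is a purely formal consequence of the definition of the twisted theta functions, together with the symmetry of the bilinear pairing $\langle\cdot,\cdot\rangle$ and the $\C$-linearity of $\psi_\Delta$.

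First, I would write $v = \sum_\mu v_\mu \phi_\mu$ and unfold the left-hand side using the definition
\[
\theta_{L,\Delta}(\tau,z,h) = \sum_{\mu\in L'/L} \langle \theta_{L_\Delta}(\tau,z,h),\,\psi_\Delta(\phi_\mu)\rangle\,\phi_\mu,
\]
so that the $\mu$-component of $\theta_{L,\Delta}$ appears as the scalar $\langle \theta_{L_\Delta},\psi_\Delta(\phi_\mu)\rangle$. This gives
\[
\langle v,\theta_{L,\Delta}(\tau,z,h)\rangle = \sum_{\mu\in L'/L} v_\mu\,\langle \theta_{L_\Delta}(\tau,z,h),\,\psi_\Delta(\phi_\mu)\rangle.
\]

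Next, I would use the symmetry of the $\C$-bilinear pairing $\langle\cdot,\cdot\rangle$ on $S_{L_\Delta}$ to swap the arguments, then pull the scalars $v_\mu$ inside the pairing and use the linearity of $\psi_\Delta$ to rewrite the sum as
\[
\sum_{\mu} v_\mu\,\langle \psi_\Delta(\phi_\mu),\,\theta_{L_\Delta}(\tau,z,h)\rangle = \Bigl\langle \psi_\Delta\Bigl(\sum_\mu v_\mu \phi_\mu\Bigr),\,\theta_{L_\Delta}(\tau,z,h)\Bigr\rangle = \langle \psi_\Delta(v),\theta_{L_\Delta}(\tau,z,h)\rangle.
\]
This is the claimed identity.

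For the Millson variant, the identical chain of equalities applies verbatim: the twisted Millson theta function $\theta^\calM_{L,\Delta}$ is defined by exactly the same recipe with $\theta_{L_\Delta}$ replaced by $\theta^\calM_{L_\Delta}$, so the argument is formal and uses nothing special about the Schwartz function at the archimedean place. There is no serious obstacle here; the lemma is essentially a bookkeeping statement that justifies absorbing the $\psi_\Delta$ twist into the input of the theta kernel, which is what makes it useful as a computational simplification in the subsequent sections.
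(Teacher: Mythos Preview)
Your proof is correct and is precisely the straightforward calculation the paper alludes to; the paper does not give a proof but simply states that it follows directly from the definitions. There is nothing to add.
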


\subsection{Twisted theta lifts}
Let $j$ be an \emph{even} positive integer, and let $\Delta >0$ be a fundamental discriminant. For $f\in H_{1/2-j,\bar \rho_L}$ we consider the twisted theta lift
\begin{equation}\label{eq:Phijtilde-even}
   \tilde\Phi^j_{\Delta}(z,h, f) := \frac{1}{(4\pi)^{j/2}} \int_{\calF}^\reg \langle R_{1/2-j}^{j/2}f(\tau), \theta_{L, \Delta}(\tau,z,h)\rangle\, d\mu(\tau).
\end{equation}
Let  $f_m\in H_{1/2-j,\bar\rho_L}$ be the unique harmonic Maass form whose Fourier expansion starts with $f_m=q^m\phi_\mu+O(1)$ as $v\to\infty$.
We now identify the theta lift with a twisted partial average of the higher Green function.
Fix $h \in H(\A_f)$.
We then identify the connected component $\Gamma_h \bs \calD^+$ of $X_{K_\Delta}$
with $\SL_2(\Z) \bs \H$.
The divisor $Z_\Delta(m, h)$ is supported on this component and corresponds to the divisor
$(\det(h), \Delta)_{\A_f} Z_\Delta(m)$ on $\SL_2(\Z) \bs \H$, where
\begin{equation}\label{eq:ZDelta-classical}
  Z_\Delta(m) = \sum_{\substack{\lambda \in L' \bmod \SL_2(\Z)\\ Q(\lambda) = m |\Delta| \\ \lambda > 0}} \frac{2}{w(\lambda)}z_\lambda,
\end{equation}
$z_\lambda \in \H$ is the CM point corresponding to $\lambda$,
and $w(\lambda)$ is the order of the stabilizer of $\lambda$
in $\SL_2(\Z)$.
We denote by $G_{1+j}(Z_\Delta(m,h), z)$ the function on $\Sl_2(\Z) \bs \H$
defined by
\[
	G_{1+j}(Z_\Delta(m,h), z) =
	(\det(h), \Delta)_{\A_f} G_{1+j}(Z_\Delta(m), z).
\]

We can evaluate this function at points $(z,h)$ that lie on the connected component corresponding to $h$
as follows: write $h = \gamma h_0 k$ with $\gamma \in H(\Q), k \in K_\Delta$, such that $\gamma^{-1}z \in \calD^+ \cong \H$
and $h_0 = 1$ or $h_0 = \xi$ and put
\[
	G_{1+j}(Z_\Delta(m, h), (z,h)) = G_{1+j}(Z_\Delta(m, h), \gamma^{-1}z)
	= (\det(h), \Delta)_{\A_f}  G_{1+j}(Z_\Delta(m), \gamma^{-1}z).
\]

Using Lemma \ref{lem:chitrafo},
it is straightforward to check that the analogous identity to \eqref{eq:go12} holds:
\begin{equation}\label{eq:go12-twisted}
  \tilde\Phi^{j}_{\Delta}(z, h, f_m) = -2m^{j/2} G_{1+j}(Z_\Delta(m,h), (z,h)),
\end{equation}
where the additional factor $2$ is a result of the condition $\lambda > 0$ in the definition of the twisted divisor \eqref{eq:ZD}. See also the analogous proof of Theorem \ref{thm:PhiMlift} below
which takes the twist into account.

\medskip
We now turn to the case of \emph{odd} positive $j$.
For a negative fundamental discriminant $\Delta<0$ and a weak Maass form of weight $-1/2$
with representation $\rho_L$ we may consider the regularized theta lift
\begin{align}\label{eq:phiDM1}
\Phi_\Delta^\calM(z,h,f) = \int_{\calF}^\reg \langle f(\tau), \theta_{L,\Delta}^\calM(\tau, z, h)\rangle\, d\mu(\tau).
\end{align}
For $\mu\in L'/L$ and $m\in \Z-Q(\mu)$ with $m>0$, let $F_{m,\mu}(\tau,s,-1/2)$
be the Hejhal Poincar\'e series of weight $-1/2$ defined in
\eqref{eq:hps} but with $\bar\rho_L$ replaced by $\rho_L$. We put
\begin{align}\label{eq:phiDM2}
\Phi^\calM_{\Delta, m}(z,h,s) = \Phi_\Delta^\calM(z,h,F_{m,\mu}(\tau,s,-1/2)).
\end{align}
By the usual argument it can be shown that the regularized theta integral is well defined and smooth outside the special divisor
$Z_\Delta(m,1) + Z_\Delta(m, \xi)$. The following result gives an explicit formula for it analogous to Proposition \ref{prop:green12}.

\begin{theorem}\label{thm:PhiMlift}
For $z \in \H \cong \calD^+$, we have
\begin{align*}
\Phi^\calM_{\Delta,m}(z,h,s)
&=-\frac{16\sqrt{m}}{\Gamma(s-1/4)}
\sum_{\substack{\lambda\in V(\Q)\\ Q(\lambda)=m|\Delta|\\ \lambda>0}}  \phi_{r\mu}(h^{-1}\lambda)\chi_\Delta(h^{-1}\lambda)
Q_{2s-\frac{3}{2}}\left(  1+\frac{|z-z_\lambda|^2}{2y\Im (z_\lambda)}    \right)\\
&=\frac{8\sqrt{m}}{\Gamma(s-1/4)}
G_{2s-\frac{1}{2}}(Z_\Delta(m,h), (z,h)).
\end{align*}
\end{theorem}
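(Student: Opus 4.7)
The plan is to follow the same strategy used for Proposition~\ref{prop:green12}: unfold the Hejhal--Poincar\'e series against the twisted Millson theta function, compute the resulting one-dimensional integral in $v$, and then apply the quadratic transformation formula for the Gauss hypergeometric function to recognize the answer as a sum of Legendre functions. The new ingredient compared to Proposition~\ref{prop:green12} is the linear factor $p_z(\sigma(\lambda))$ in the Millson theta kernel and the twisting by $\chi_\Delta$; both are essentially bookkeeping, the former producing a half-integer shift in the hypergeometric parameters and the latter handled cleanly by Lemma~\ref{lem:twisting}.

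First, I would write $F_{m,\mu}(\tau,s,-1/2) = \Gamma(2s)^{-1}\sum_{\gamma \in \Gamma'_\infty\bs\Gamma'}[\calM_{s,-1/2}(4\pi m v)e(-mu)(\phi_\mu+\phi_{-\mu})]\mid_{-1/2,\rho_L}\gamma$ and unfold against $\theta^\calM_{L,\Delta}$. By Lemma~\ref{lem:twisting} the pairing equals $\langle \psi_\Delta(\phi_\mu+\phi_{-\mu}),\theta^\calM_{L_\Delta}(\tau,z,h)\rangle$, and inserting the Fourier expansion of $\theta^\calM_{L_\Delta}$ together with the $u$-integration reduces the problem to computing, for each $\lambda\in V(\Q)$ with $Q(\lambda)=m|\Delta|$, the integral
\[
I(\lambda,s) := \int_0^\infty \calM_{s,-1/2}(4\pi m v)\, e^{-2\pi m v}\, e^{4\pi Q(\lambda_z)v/|\Delta|}\,\frac{dv}{v}.
\]
Using the representation $\calM_{s,-1/2}(t) = t^{s+1/4}e^{-t/2}M(s+1/4,2s,t)$ and the standard Laplace integral for Kummer's function one obtains $I(\lambda,s)$ in closed form in terms of a Gauss hypergeometric function
\[
I(\lambda,s) \;=\; C_s\left(\frac{m|\Delta|}{Q(\lambda_{z^\perp})}\right)^{s+1/4} F\!\left(s-\tfrac14,\,s+\tfrac14,\,2s;\,\frac{m|\Delta|}{Q(\lambda_{z^\perp})}\right),
\]
where $C_s$ is an explicit gamma-factor. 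The shift $s \mapsto s-\tfrac14$ in the first parameter (compared to the Siegel theta case, where one finds $F(s-\tfrac12,s+\tfrac12,2s;\cdot)$ or $F(s+\tfrac14,s-\tfrac14,2s;\cdot)$ with a different power) is precisely what will feed into the Legendre function of index $2s-3/2$.

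Next, I would apply the quadratic transformation \cite[(15.2.20)]{AS},
\[
F(a,a+\tfrac12,c;w) = (1+\sqrt{w})^{-2a}F\!\left(2a,c-\tfrac12,2c-1;\tfrac{2\sqrt w}{1+\sqrt w}\right),
\]
with $a=s-\tfrac14$, exactly as in \eqref{eq:hypid}, to convert the hypergeometric expression into a Legendre function $Q_{2s-3/2}$; combined with \eqref{eq:qlp} (where $Q$ is replaced by $Q/|\Delta|$ to account for the rescaling $L_\Delta$) this yields $Q_{2s-3/2}(1+|z-z_\lambda|^2/(2y\Im z_\lambda))$.

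Finally, to assemble the answer, note that $p_z(-\lambda)=-p_z(\lambda)$ while the remaining factors $\chi_\Delta(h^{-1}\lambda)$, $\phi_{r\mu}(h^{-1}\lambda)$ and the Legendre factor depend only on the line spanned by $\lambda$; thus, replacing the sum over $\lambda\in V(\Q)$ by twice the sum over $\lambda>0$ and absorbing the $\phi_\mu+\phi_{-\mu}$ doubling, the expression collapses to the first displayed formula with the explicit factor $16\sqrt{m}/\Gamma(s-1/4)$. The second equality is then the classical expansion of $G_{2s-1/2}(Z_\Delta(m,h),z)$ on $\SL_2(\Z)\bs\H$ as a sum of $Q_{2s-3/2}$-values over $\Gamma$-orbits of $\lambda$, weighted by $\chi_\Delta$, cf.\ \eqref{eq:ZDelta-classical}. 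The main obstacle is purely the careful bookkeeping of the numerical constants: tracking the factors $\sqrt{|\Delta|}$ from the embedding $\sigma$, the factor $|p_z(\sigma(\lambda))|=\sqrt{Q(\lambda_{z^\perp})/|\Delta|}\cdot(\text{sign})$ that produces the extra $\sqrt{m}$, the factor $2$ from restricting to $\lambda>0$, and the gamma identities $C_s \cdot 2^{3/2-2s}\Gamma(4s-1)/\Gamma(2s-1/2)^2$ needed to collapse everything into the stated constant.
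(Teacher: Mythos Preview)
Your strategy matches the paper's proof exactly: unfold the Poincar\'e series, evaluate the resulting Laplace integral in $v$ to obtain a Gauss hypergeometric function $F(s-\tfrac14,s+\tfrac14,2s;\cdot)$, and then apply the quadratic transformation \eqref{eq:hypid} to reach $Q_{2s-3/2}$.

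However, your final assembly step contains a genuine error in the parity analysis. You assert that $\chi_\Delta(h^{-1}\lambda)$ and $\phi_{r\mu}(h^{-1}\lambda)$ ``depend only on the line spanned by $\lambda$''. This is false on both counts: since $\Delta<0$ one has $\chi_\Delta(-\lambda)=-\chi_\Delta(\lambda)$ (Lemma~\ref{lem:chiform} shows $\chi_{\Delta,p}$ is computed via a Hilbert symbol against $\Delta$, and $(-1,\Delta)_p$ contributes a sign), and $\phi_{r\mu}(-\lambda)=\phi_{-r\mu}(\lambda)$. With the parities you state, the summand would be odd in $\lambda$ and the full sum over $V(\Q)$ would vanish identically, so your conclusion ``replace the sum by twice the sum over $\lambda>0$'' does not follow from your premise.

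The correct mechanism, used in the paper, is that the \emph{two} odd factors $p_z(\lambda)$ and $\chi_\Delta(h^{-1}\lambda)$ combine to give an even function of $\lambda$. This oddness of $\chi_\Delta$ is invoked twice: first to merge the $\phi_\mu$ and $\phi_{-\mu}$ contributions from the Poincar\'e series into a single term with $\phi_{r\mu}$, and then again, after writing $p_z(\lambda)=|p_z(\lambda)|\cdot\operatorname{sgn}(p_z(\lambda))$ and using $2\sqrt{Q(\lambda_{z^\perp})}=|p_z(\lambda)|$ to extract the extra $\sqrt{m}$, to restrict to $\lambda>0$. The observation that $p_z(\lambda)/|p_z(\lambda)|=-1$ for $\lambda>0$ on $\calD^+$ is what produces the overall minus sign in the first displayed formula.
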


\begin{proof}
Inserting the definition of the Poincar\'e series, we obtain by the unfolding argument
\begin{align*}
\Phi^\calM_{\Delta, m}(z,h,s) &=\frac{1}{\Gamma(2s)} \int_{\Gamma_\infty'\bs\H}^\reg \left\langle \calM_{s,-\frac{1}{2}}(4\pi m v)e(-mu)(\phi_\mu+\phi_{-\mu}),\, \theta_{L,\Delta}^\calM(\tau, z, h)\right\rangle\,d\mu(\tau)\\
&=\frac{2}{\Gamma(2s)\sqrt{|\Delta|}} \sum_{\substack{\lambda \in V(\Q)\\ Q(\lambda)=m|\Delta|}}\chi_\Delta(h^{-1}\lambda) \phi_{r\mu}(h^{-1}\lambda) p_z(\lambda) \\
&\phantom{=}{}\times\int_{0}^\infty \calM_{s,-\frac{1}{2}}(4\pi m v)
\exp\left( -2\pi \frac{Q(\lambda_{z^\perp})}{|\Delta|} v + 2\pi \frac{Q(\lambda_z)}{|\Delta|} v \right) \frac{d v}{v},
\end{align*}
where we also used $p_z(-\lambda)=-p_z(\lambda)$ and
$\chi_\Delta(-\lambda)=-\chi_\Delta(\lambda)$ (since $\Delta < 0$).

To compute the latter integral, which is a Laplace transform, we use \cite[p.215~(11)]{EMOT} and obtain
\begin{align*}
&\int_{0}^\infty \calM_{s,-\frac{1}{2}}(4\pi m v)
\exp\left( -2\pi \frac{Q(\lambda_{z^\perp})}{|\Delta|} v + 2\pi \frac{Q(\lambda_z)}{|\Delta|} v \right) \frac{d v}{v}\\
&=(4\pi m)^{\frac{1}{4}}\int_{0}^\infty M_{\frac{1}{4}, s-\frac{1}{2}}(4\pi m v)
\exp\left( -2\pi m v + 4\pi \frac{Q(\lambda_z)}{|\Delta|} v \right) v^{-3/4}\, d v\\
&= (4\pi m)^{s+\frac{1}{4}}\Gamma(s+\frac{1}{4})(4\pi m-4\pi \frac{Q(\lambda_z)}{|\Delta|})^{-s-\frac{1}{4}} F(s+\frac{1}{4}, s-\frac{1}{4},2s ;\frac{4\pi m|\Delta|}{4\pi m|\Delta|-4\pi Q(\lambda_z)})\\
&=\Gamma(s+\frac{1}{4})
\left(\frac{m|\Delta|}{ Q(\lambda_{z^\perp})}\right)^{s+\frac{1}{4}}
 F(s+\frac{1}{4}, s-\frac{1}{4},2s ;\frac{m|\Delta|}{ Q(\lambda_{z^\perp})}).
\end{align*}
Inserting this, we find
\begin{align*}
\Phi^\calM_{\Delta, m}(z,h,s) &=\frac{2\Gamma(s+\frac{1}{4})}{\sqrt{|\Delta|}\Gamma(2s)} \sum_{\substack{\lambda \in V(\Q)\\ Q(\lambda)=m|\Delta|}} \chi_\Delta(h^{-1}\lambda)\phi_{r\mu}(h^{-1}\lambda)  p_z(\lambda) \\
&\phantom{=}{}\times \left(\frac{m|\Delta|}{ Q(\lambda_{z^\perp})}\right)^{s+\frac{1}{4}}
 F(s+\frac{1}{4}, s-\frac{1}{4},2s ;\frac{m|\Delta|}{ Q(\lambda_{z^\perp})}).
\end{align*}
Moreover, using $2\sqrt{Q(\lambda_{z^\perp})}=|p_z(\lambda)|$, we obtain
\begin{align*}
\Phi^\calM_{\Delta, m}(z,h,s) &=\frac{4\sqrt{m}\Gamma(s+\frac{1}{4})}{\Gamma(2s)}
\sum_{\substack{\lambda \in V(\Q)\\ Q(\lambda)=m|\Delta|}} \chi_\Delta(h^{-1}\lambda)\phi_{r\mu}(h^{-1}\lambda)  \\
&\phantom{=}
\times \frac{p_z(\lambda)}{|p_z(\lambda)|}
\!\!\left(\frac{m|\Delta|}{ Q(\lambda_{z^\perp})}\right)^{s-\frac{1}{4}}
 F(s+\frac{1}{4}, s-\frac{1}{4},2s ;\frac{m|\Delta|}{ Q(\lambda_{z^\perp})}).
\end{align*}
Using \eqref{eq:hypid}, we obtain
\begin{align*}
\Phi^\calM_{\Delta, m}(z,h,s)
&=\frac{8\sqrt{m}}{\Gamma(s-1/4)}
\sum_{\substack{\lambda \in V(\Q)\\ Q(\lambda)=m|\Delta|}} \chi_\Delta(h^{-1}\lambda)\phi_{r\mu}(h^{-1}\lambda)   \frac{p_z(\lambda)}{|p_z(\lambda)|}
Q_{2s-\frac{3}{2}}\left(  1+\frac{|z-z_\lambda|^2}{2y\Im (z_\lambda)}    \right).
\end{align*}
Note that $\frac{p_z(\lambda)}{|p_z(\lambda)|}$
is constant on $\calD^+$,
and it is in fact equal to $-1$ if $\lambda > 0$ and equal to $1$ if $\lambda < 0$.
Using $\chi_\Delta(-\lambda)=-\chi_\Delta(\lambda)$ again, we finally obtain
\begin{align*}
\Phi^\calM_{\Delta, m}(z,h,s)
&=-\frac{16\sqrt{m}}{\Gamma(s-1/4)}
\sum_{\substack{\lambda \in V(\Q)\\ Q(\lambda)=m|\Delta| \\ \lambda >0}} \chi_\Delta(h^{-1}\lambda)\phi_{r\mu}(h^{-1}\lambda)
Q_{2s-\frac{3}{2}}\left(  1+\frac{|z-z_\lambda|^2}{2y\Im (z_\lambda)}    \right)\\
&= \frac{8\sqrt{m}}{\Gamma(s-1/4)} G_{2s-\frac{1}{2}}(Z_\Delta(m,h), (z,h)).
\end{align*}
This concludes the proof of the theorem.
\end{proof}

In particular, at the harmonic point $s=5/4$ (note that the input form has weight $-1/2$), we get
\[
\Phi^\calM_{\Delta,m}(z,h,5/4)=8\sqrt{m}
G_{2}(Z_\Delta(m,h), (z,h)).
\]
Let $f_m\in H_{1/2-j,\rho_L}$ be the unique harmonic Maass form whose Fourier expansion starts with $f_m=q^m\phi_\mu+O(1)$ as $v\to\infty$.
Then the analogue of \eqref{eq:go12} for odd $j$ states
\begin{align}
\label{eq:go12odd}
\frac{1}{(4\pi)^{\frac{j-1}{2}}}
\Phi_\Delta^\calM(z,h,R_{1/2-j}^{\frac{j-1}{2}} f_m) &=
m^{\frac{j-1}{2}}\Gamma\left(\frac{j+1}{2}\right)\Phi_{\Delta,m}^\calM\left(z,h,\frac{5}{4}+\frac{j-1}{2}\right)\\
\nonumber
&= 4 m^{\frac{j}{2}}
G_{1+j}(Z_\Delta(m, h), (z,h)).
\end{align}
Completing the definiton of $\tilde\Phi^j_{\Delta}(z,h,f_m)$ in \eqref{eq:Phijtilde-even}
we define the twisted theta lift of $f \in H_{1/2-j}$ for odd $j$ as
\begin{equation}\label{eq:Phijtilde-odd}
  \tilde\Phi^j_{\Delta}(z,h,f) := \frac{1}{(4\pi)^{\frac{j-1}{2}}}
\Phi_\Delta^\calM(z,h,R_{1/2-j}^{\frac{j-1}{2}} f).
\end{equation}

The following theorem is a combination of a result of Duke and Jenkins \cite{duke-jenkins-integral} (where the maps $\za_d^j$ are called Zagier lifts)
and the generalization (using a theta lift called the Millson lift)
to harmonic Maass forms by Alfes-Neumann and Schwagenscheidt \cite[Theorem 1.1]{Alfes-Schwagenscheidt:Shintani}.
To state the result, put
\begin{align}
\label{eq:tilderho}
	\tilde\rho_L =
		\begin{cases}
	  	\rho_L & \text{if $j$ is odd,} \\
	  	\bar\rho_L & \text{if $j$ is even.}
		\end{cases}
\end{align}
\begin{theorem}\label{thm:millson-lift}
  Let $j \in \Z_{>0}$ and
  let $d$ be a fundamental discriminant with $(-1)^j d < 0$.
  There is a linear map $\za_{d}^j: H_{-2j} \to H_{\frac{1}{2} - j,\tilde\rho_L}$,
  such that $\za_{d}^j(f)$
  is the unique harmonic Maass form in $H_{\frac12 - j,\tilde\rho_L}$
  with principal part (not including the constant term) given by
  \begin{equation}
     \label{eq:MillsonPP}
        |d|^{-j/2}\sum_{m>0}c_f^+(-m)\sum_{n\mid m}\legendre{d}{n} n^{j} q^{-\frac{|d| m^2}{4 n^2}}\phi_{\frac{d m^2}{n^2}}.
   \end{equation}
   Here, for any $x \in \Z$ we write $\phi_{x} = \phi_{(x \bmod{2})}$.
  Furthermore,
  \begin{enumerate}
  	\item If $f$ is weakly holomorphic, then so is $\za_{d}^{j}(f)$.
  	\item More precisely, $\za_{d}^j(f)$ is weakly holomorphic if and only if $L(\xi_{-2j}(f),\chi_d, j + 1) = 0$.
  	\item Finally, if $f \in H_{-2j}$ with $L(\xi_{-2j}(f),\chi_d, j + 1) = 0$ and
  	      the coefficients of the principal part of $f^+$ are all contained in $\Z$,
  	      then \emph{all} Fourier coefficients of
  	      $|d|^{j/2} \za_{d}^j(f)$ are contained in $\Z$.
  \end{enumerate}
\end{theorem}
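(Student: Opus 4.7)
My plan is to construct the lift $\za_d^j$ as a regularized twisted theta integral, following the approach of Duke-Jenkins \cite{duke-jenkins-integral} in the weakly holomorphic case and its extension to harmonic Maass forms in \cite{Alfes-Schwagenscheidt:Shintani}. Concretely, for odd $j$ I would set
\[
  \za_d^j(f)(\tau) := c_j |d|^{-j/2} \int_{\SL_2(\Z)\bs\H}^{\reg} R_{-2j}^{(j-1)/2}f(z) \cdot \theta_{L,d}^\calM(\tau,z)\, d\mu(z),
\]
using the twisted Millson theta function, and analogously for even $j$ using the twisted Siegel kernel $\theta_{L,d}(\tau,z)$ in place of $\theta_{L,d}^\calM$ and $R_{-2j}^{j/2}f$ in place of $R_{-2j}^{(j-1)/2}f$, with $c_j$ a normalizing constant. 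In each parity the twisted kernel transforms with representation $\tilde\rho_L$ in $\tau$ by Lemma~\ref{lem:twisting} and the intertwining property of $\psi_d$; combined with the standard analysis of regularized theta lifts (as in the proof of Theorem~\ref{thm:PhiMlift}), this shows $\za_d^j(f) \in H^!_{1/2-j,\tilde\rho_L}$. Moreover, the exponential decay of $\xi_{-2j}f$ at the cusp translates into the cuspidality of $\xi_{1/2-j}(\za_d^j(f))$, so that in fact $\za_d^j(f) \in H_{1/2-j,\tilde\rho_L}$.

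The principal part formula \eqref{eq:MillsonPP} is obtained by evaluating $\za_d^j$ on Hejhal-Poincar\'e series $P_{-m} \in H_{-2j}$ with $P_{-m}^+ = q^{-m}+O(1)$, which together with cusp forms span $H_{-2j}$. Applying the standard unfolding argument reduces the theta integral to a one-dimensional archimedean integral against an $M$-Whittaker function, computable as in the derivation of Theorem~\ref{thm:PhiMlift}; collecting the contributions from $\lambda \in L'$ with $Q(\lambda) = |d|m^2/(n^2)$ for $n\mid m$, together with the genus character factor $\chi_d$ arising from the twist by $\psi_d$, produces precisely the sum in \eqref{eq:MillsonPP}. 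The uniqueness claim is automatic because $S_{1/2-j,\tilde\rho_L}$ vanishes for $j\geq 1$ in our level-one setting, so the principal part and the image under $\xi_{1/2-j}$ together determine the harmonic Maass form.

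For parts (1) and (2), the plan is to identify $\xi_{1/2-j}(\za_d^j(f))$ up to a nonzero constant with the (vector-valued) Shintani-type lift of $\xi_{-2j}(f)$ twisted by $\chi_d$. By the Waldspurger-type formula proved in \cite{Alfes-Schwagenscheidt:Shintani}, the Fourier coefficients of this lift are proportional to square roots of the twisted central $L$-value $L(\xi_{-2j}f,\chi_d,j+1)$ (with the full lift being a Hecke eigenform multiple of a fixed Shimura newform), so the lift vanishes if and only if this $L$-value vanishes, which is equivalent to weak holomorphicity of $\za_d^j(f)$. Part (1) is then the special case where $\xi_{-2j}f=0$ already. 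The main obstacle is part~(3), the integrality statement. Here the strategy is to reduce to the weakly holomorphic Duke-Jenkins setting: under the vanishing hypothesis, $\za_d^j(f)$ is weakly holomorphic by part~(2), so the result follows from the integrality theorem for weakly holomorphic Zagier lifts in \cite{duke-jenkins-integral}, which is proved by exhibiting Hecke-stable bases of $M^!_{-2j}$ and $M^!_{1/2-j,\tilde\rho_L}$ with integer Fourier expansions whose principal parts form dual systems, and checking compatibility with $\za_d^j$. The delicate point is that $\za_d^j(f)$ a priori has a non-holomorphic part when $\xi_{-2j}f\neq 0$, and the argument genuinely requires the vanishing of the $L$-value to collapse the Maass form onto its holomorphic part before Duke-Jenkins integrality can be applied.
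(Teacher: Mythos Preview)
Your outline is correct and matches the paper's approach, which is essentially a citation proof: the paper identifies $\za_d^j$ (up to the normalization $|d|^{-j/2}$) with the twisted Millson theta lift of Alfes--Neumann--Schwagenscheidt, translates from the scalar-valued setting of \cite{duke-jenkins-integral} and \cite{Alfes-Schwagenscheidt:Shintani} to the vector-valued one via the isomorphism $H_{1/2-j,\tilde\rho_L}\cong H_{1/2-j}^+(\Gamma_0(4))$, and then invokes those references for parts~(1) and~(2). What you sketch---the explicit regularized theta integral, the unfolding against Poincar\'e series to read off the principal part, and the identification of $\xi_{1/2-j}\za_d^j(f)$ with a Shintani-type lift of $\xi_{-2j}(f)$---is precisely the content of those cited papers, so you are filling in what the paper leaves as black boxes.

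Two small points. First, your justification for~(2) via a ``Waldspurger-type formula'' with Fourier coefficients proportional to square roots of $L(\xi_{-2j}f,\chi_d,j+1)$ is only literally correct when $\xi_{-2j}(f)$ is a Hecke eigenform; for general $f$ the argument in \cite{Alfes-Schwagenscheidt:Shintani} rather identifies $\xi_{1/2-j}\za_d^j(f)$ with a specific (twisted) Shintani lift of $\xi_{-2j}(f)$ and shows that \emph{this} lift vanishes if and only if the single $L$-value does. Second, for~(3) the paper uses a slightly cleaner argument than ``Duke--Jenkins integrality of the lift'': once $|d|^{j/2}\za_d^j(f)$ is weakly holomorphic with integral principal part (by~(2) and \eqref{eq:MillsonPP}), it is automatically an integral linear combination of the explicit integral basis $\{f_m\}$ of $M^!_{1/2-j}$ constructed in \cite[\S2]{duke-jenkins-integral}, regardless of whether $f$ itself is weakly holomorphic. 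Your version routes through the same basis, so there is no gap, but phrasing it as a property of the target space rather than of the lifting map is what makes the argument go through when $f\notin M^!_{-2j}$.
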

\begin{proof}
	First of all, we note that both, \cite[Theorem 1.1]{Alfes-Schwagenscheidt:Shintani} and \cite[Theorem 1]{duke-jenkins-integral} are stated for scalar-valued modular forms.
	Using the isomorphism
	\[
	  H_{\frac{1}{2} - j,\tilde\rho_L} \to H_{\frac{1}{2} - j}^+(\Gamma_0(4)), \quad f_0(\tau) \phi_0 + f_1(\tau) \phi_1 \mapsto f_0(4\tau) + f_1(4\tau),
	\]
	we obtain the translation of their results to our vector-valued setting.

    By Theorem 1.1 in \cite{Alfes-Schwagenscheidt:Shintani}, the definition of $\za_{d}^j$ then agrees
    with the $d$th Millson theta lift, up to the normalizing factor $|d|^{-j/2}$.
    Restricted to weakly holomorphic forms, it agrees with the $d$th Zagier lift defined by
    Duke and Jenkins. Therefore, (1) follows from \cite{duke-jenkins-integral}
    and the generalization (2) follows from \cite{Alfes-Schwagenscheidt:Shintani}.

    The third item follows a bit indirectly from \cite{duke-jenkins-integral}:
    If $L(\xi_{-2j}(f),\chi_d, j + 1) = 0$, then $|d|^{j/2} \za_{d}^j(f)$
    is weakly holomorphic by (2) and has only integral Fourier coefficients in
    its principal part.

    Write $-2j = 12\ell + k'$, where $\ell \in \Z$ and $k' \in \{0,4,6,8,10,14\}$
    are uniquely determined.
    Let $A = 2\ell$ if $\ell$ is even and $A = 2\ell - (-1)^j$ if $\ell$ is odd.
    In Section 2 of \cite{duke-jenkins-integral}, a basis $\{f_m \mid\; m \geq -A, (-1)^{j-1}m \equiv 0,1 \bmod{4}\}$ for $M_{1/2 - j}^!$ is constructed, where
    each basis element $f_m$ has a Fourier expansion of the form
    \[
      f_m(\tau) = q^{-\frac{m}{4}} \phi_{m} + \sum_{\substack{n > A \\ (-1)^{j} n \equiv 0,1 \bmod{4}}} a(m,n) q^{n/4} \phi_n
    \]
    and it is shown that $a(m,n) \in \Z$ for all $m$ and $n$.

    Since the principal part of $|d|^{j/2} \za_{d}^j(f)$ contains only integer coefficients,
    it must be an integral linear combination of the $f_m$, and thus all Fourier coefficients are integral.
\end{proof}

\begin{theorem}
\label{thm:twisted-average}
Let $j\in \Z_{\geq 0}$. Let $d_1$ and $\Delta$ be fundamental discriminants  with $(-1)^jd_1 < 0$
and $(-1)^j\Delta >0$, and put $m_1 = |d_1|/4$.
For $f \in H_{-2j}$ we have
  \[
     G_{j+1,f}(Z_\Delta(m_1, h), (z, h)) = - 2^{j-1} \, \tilde\Phi_{\Delta}^{j} \left(z, h, \za_{d_1}^{j}(f)\right).
  \]
\end{theorem}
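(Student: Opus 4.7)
The plan is to reduce the identity by linearity to the case where $f \in M^!_{-2j}$ has a single principal term $q^{-m_0}$, apply Theorem \ref{thm:millson-lift} to read off the principal part of $\za_{d_1}^j(f)$, and then compute $\tilde\Phi^j_\Delta(z,h,\za_{d_1}^j(f))$ termwise by means of the basic evaluation formulas \eqref{eq:go12-twisted} (for even $j$) and \eqref{eq:go12odd} (for odd $j$).

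In more detail, by linearity of both sides in $f$ we may assume $c_f^+(-m_0) = 1$ for a single $m_0 \in \Z_{>0}$ and all other principal Fourier coefficients vanish. Writing $m_1 = |d_1|/4$, Theorem \ref{thm:millson-lift} then identifies the principal part of $\za_{d_1}^j(f)$ with
\[
|d_1|^{-j/2} \sum_{n \mid m_0} \left(\frac{d_1}{n}\right) n^j\, q^{-m_1 (m_0/n)^2}\, \phi_{d_1 (m_0/n)^2}.
\]
A harmonic Maass form in $H_{1/2-j,\tilde\rho_L}$ is determined by its principal part up to a form whose theta lift vanishes (handled exactly as in the proofs of \eqref{eq:go12-twisted} and \eqref{eq:go12odd}), so we may compute $\tilde\Phi^j_\Delta(z, h, \za_{d_1}^j(f))$ by applying the lift termwise. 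Using $|d_1|^{-j/2} m_1^{j/2} = 2^{-j}$ and $n^j (m_0/n)^j = m_0^j$, the basic formulas yield, after the parity-dependent constants are tracked,
\[
-2^{j-1}\, \tilde\Phi^j_\Delta(z, h, \za_{d_1}^j(f)) \;=\; m_0^j \sum_{n \mid m_0} \left(\frac{d_1}{n}\right) G_{j+1}\bigl(Z_\Delta(m_1 (m_0/n)^2, h),\, (z, h)\bigr).
\]

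The remaining, and main, step is to identify this with $m_0^j\, G_{j+1}^{m_0}\bigl(Z_\Delta(m_1, h),\, (z, h)\bigr) = G_{j+1, f}\bigl(Z_\Delta(m_1, h), (z, h)\bigr)$. Concretely, this reduces to the Hecke-operator identity
\[
G_{j+1}^{m_0}\bigl(Z_\Delta(m_1, h), (z, h)\bigr) \;=\; \sum_{n \mid m_0} \left(\frac{d_1}{n}\right) G_{j+1}\bigl(Z_\Delta(m_1 (m_0/n)^2, h),\, (z, h)\bigr),
\]
which expresses $T_{m_0}$ acting in the first argument of the Green function on the twisted Heegner divisor $Z_\Delta(m_1, h)$ as a linear combination of Heegner divisors of discriminant $d_1 k^2$ weighted by the quadratic character, a classical fact about the action of Hecke operators on CM-point divisors twisted by the genus character (see Chapter I of \cite{GKZ}). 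This identity follows from the local description of $\chi_\Delta$ in Lemma \ref{lem:chiform} and the transformation rule in Lemma \ref{lem:chitrafo}, together with the standard orbit description of integral matrices of determinant $m_0$ modulo $\Gamma$. I expect this last step to be the main obstacle: the linearity reduction and the invocation of \eqref{eq:go12-twisted}/\eqref{eq:go12odd} are essentially formal, but tracking the twist character through the Hecke orbits and checking that the resulting combinatorial identity matches exactly the principal part produced by Theorem \ref{thm:millson-lift} requires a careful local calculation.
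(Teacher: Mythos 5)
Your proposal is correct and follows essentially the same route as the paper: insert the principal part \eqref{eq:MillsonPP} of $\za_{d_1}^j(f)$, evaluate termwise via \eqref{eq:go12-twisted} resp.\ \eqref{eq:go12odd}, and conclude with the identity $Z_\Delta(m_1,h)\mid T_m=\sum_{n\mid m}\legendre{d_1}{n}Z_\Delta(m_1m^2/n^2,h)$. The only difference is that the step you flag as the main obstacle is simply quoted by the paper from \cite[p.~508]{GKZ} rather than re-derived from the local description of $\chi_\Delta$.
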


\begin{proof}
        By \eqref{eq:MillsonPP} together with \eqref{eq:go12-twisted} for even $j$ and
        \eqref{eq:go12odd} for odd $j$, we obtain that
        \begin{align*}
        \tilde \Phi_{\Delta}^{j} \left(z, h, \za_{d_1}^j(f)\right) &= -2^{1-j}\sum_{m>0}c_f^+(-m)m^{j} \\
        	&\phantom{=}\quad\times\sum_{n\mid m}\legendre{d_1}{n}G_{j+1}\left(Z_\Delta\left(\frac{m_1 m^2}{n^2}, h\right), (z,h)\right).
        \end{align*}
       We need to compare the sum on the right-hand side with $G_{j+1,f}(Z_\Delta(m_1), (z,h))$, which is by definition equal to
       \[
         \sum_{m>0} c_f^+(-m) m^{j} G_{j+1}(Z_\Delta(m_1, h) \mid T_m, (z, h)).
       \]
       According to \cite[p. 508]{GKZ}, we have
       \begin{equation}
       	Z_\Delta(m_1, h) \mid T_m = \sum_{n \mid m} \legendre{d_1}{n} Z_\Delta\left(\frac{m_1 m^2}{n^2}, h \right).
       \end{equation}
       and this finishes the proof.
\end{proof}

\subsection{CM values}
From here on, fix a (not necessarily fundamental) discriminant $d_2 < 0$ and
$r_2 \in \{0, 1\}$ such that $d_2 \equiv r_2^2  \pmod{4}$, and put
\[
	x_2 := \begin{pmatrix}
	   \frac{r_2}{2} & 1 \\
	   \frac{d_2-r_2^2}{4} & -\frac{r_2}{2}
	\end{pmatrix}\in L'
\]
and $m_2= Q(x_2)=-d_2/4$.
We let $U = V \cap x_2^\perp$ and consider the corresponding CM cycle $Z(U)$ on $X_{K_\Delta}$
defined in \eqref{eq:defzu}.

Note that we have $U \cong \Q(\sqrt{d_2})$
and both CM points $(z_U^+, 1)$ and $(z_U^+, \xi)$
correspond to the point
\[
    \frac{-r_2 + \sqrt{d_2}}{2}
\]
on each connected component of $X_{K_\Delta}$ if identified with $\SL_2(\Z) \bs \H$.
As in Section \ref{sect:cm},
we have the two lattices $P = L \cap U^\perp$ and $N = L \cap U$.
Explicitly, they are given by
\[
  N = \Z \begin{pmatrix}
     		 1 & 0 \\ -r_2 & -1
          \end{pmatrix}
       \oplus
       \Z \begin{pmatrix}
       	0 & 1 \\ \frac{r_2^2-d_2}{4} & 0
       \end{pmatrix},\qquad
				P = \Z \frac{2}{2-r_2} x_2,
\]
which implies that $P' = \Z\frac{2-r_2}{|d_2|} x_2$.
Hence, the discriminant group
$P'/P$ is cyclic of order $2|d_2|$ if $r_2 = 1$
and of order $|d_2|/2$ if $r_2=0$.
The lattice $N$ has discriminant $d_2$ and is described below in Lemma \ref{lem:N},
which is a special case of Lemma 7.1 in \cite{BY}.

\begin{lemma}\label{lem:N}
  Let $\calO_{d_2} \subset \Q(\sqrt{d_2})$ be the order of discriminant $d_2$
  in $\Q(\sqrt{d_2})$.
  Then $\calO_{d_2} = \Z \oplus \Z \frac{r_2 + \sqrt{d_2}}{2}$
  and this defines
  a $2$-dimensional lattice of discriminant $d_2$
  with the quadratic form $Q(z) = - \Norm_{\Q(\sqrt{d_2})/\Q}(z)$.
  The map
  \[
    f: (\calO_{d_2}, Q) \to (N, Q), \quad x+y\frac{r_2 + \sqrt{d_2}}{2}
    \mapsto x \begin{pmatrix}
     		 1 & 0 \\ -r_2 & -1
          \end{pmatrix} + y \begin{pmatrix}
       	0 & -1 \\ \frac{d_2-r_2^2}{4} & 0
       \end{pmatrix}
  \]
  is an isometry.
  Both lattices are equivalent to the negative definite
  integral binary quadratic form
  $[-1,-r_2,\frac{d_2 - r_2^2}{4}]$.
\end{lemma}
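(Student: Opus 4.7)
The proof will be an entirely explicit computation, so my plan is to break it into three short verifications.

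First I would verify the structure of $\calO_{d_2}$. Write $d_2 = f^2 d_0$ with $d_0$ the fundamental discriminant. By hypothesis $d_2 \equiv r_2^2 \pmod 4$ with $r_2 \in \{0,1\}$, so $\frac{r_2+\sqrt{d_2}}{2}$ is an algebraic integer (directly when $r_2=1$, and equal to $\sqrt{d_2/4}$ when $r_2=0$). A standard check then identifies $\Z + \Z\frac{r_2+\sqrt{d_2}}{2}$ with the order of conductor $f$ in $\Q(\sqrt{d_0})$, which is $\calO_{d_2}$. A direct multiplication gives
\[
  N_{\Q(\sqrt{d_2})/\Q}\!\left(x + y\tfrac{r_2+\sqrt{d_2}}{2}\right) = x^2 + r_2\, xy + \tfrac{r_2^2 - d_2}{4}\, y^2,
\]
so $Q(z) = -N(z)$ is represented in this basis by the integral binary form $[-1, -r_2, \tfrac{d_2-r_2^2}{4}]$, which has discriminant $r_2^2 - 4 \cdot (-1)\cdot \tfrac{d_2-r_2^2}{4} = d_2$ and is negative definite since $d_2 < 0$.

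Next I would check that $f$ maps into $N\subset V$. Both matrices $M_1 := \kzxz{1}{0}{-r_2}{-1}$ and $M_2 := \kzxz{0}{-1}{(d_2-r_2^2)/4}{0}$ have trace zero (hence lie in $V$), and $M_2 = -\kzxz{0}{1}{(r_2^2-d_2)/4}{0}$ is the negative of the second generator of $N$, so $f(\calO_{d_2}) = N$ as $\Z$-modules.

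Finally I would verify the isometry property by computing $Q$ on the $f$-image in the $\{M_1, M_2\}$ basis. Since $Q(X) = \det(X)$ on $V$, one computes $Q(M_1) = -1$, $Q(M_2) = \tfrac{d_2-r_2^2}{4}$, and the bilinear form via $(M_1,M_2) = \det(M_1+M_2) - Q(M_1) - Q(M_2) = -r_2$. Expanding $Q(xM_1 + yM_2) = x^2 Q(M_1) + xy(M_1,M_2) + y^2 Q(M_2)$ gives exactly $-x^2 - r_2 xy + \tfrac{d_2-r_2^2}{4} y^2$, matching $-N(x + y\tfrac{r_2+\sqrt{d_2}}{2})$ from the first step. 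This shows $f$ is an isometry and that $N$ is represented by the same form $[-1,-r_2,\tfrac{d_2-r_2^2}{4}]$, completing the lemma. There is no real obstacle here; the only care needed is getting the signs right in the definition of $M_2$ so that the cross term $(M_1,M_2) = -r_2$ agrees with the $+r_2 xy$ term in $N$ under $Q = -N$.
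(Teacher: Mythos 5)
Your computation is correct and complete: the Gram matrix of $N$ in the basis $M_1=\kzxz{1}{0}{-r_2}{-1}$, $M_2=\kzxz{0}{-1}{(d_2-r_2^2)/4}{0}$ is exactly $Q(M_1)=-1$, $Q(M_2)=\frac{d_2-r_2^2}{4}$, $(M_1,M_2)=-r_2$, which matches $-\Norm$ on the basis $(1,\frac{r_2+\sqrt{d_2}}{2})$ of $\calO_{d_2}$, and your identification of $\Z M_1\oplus\Z M_2$ with the displayed description of $N$ (via $M_2$ being the negative of the stated second generator) is the right observation. The paper itself offers no proof, merely citing this as a special case of Lemma~7.1 of \cite{BY}; your direct verification is the natural self-contained argument and there is nothing missing in it.
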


Recall that we put $T = \Gspin(U)$ and $K_T = K \cap T(\A_f)$.
We have $K_T \cong \hat\calO_{d_2}^\times$,
where $\calO_{d_2} \subset \k_{d_2} = \Q(\sqrt{d_2})$ is
the order of discriminant $d_2$ in $\k_{d_2}$
and $\hat\calO_{d_2} = \calO_{d_2} \otimes_\Z \hat\Z$.
This can either be seen as in Corollary \ref{cor:alg}
or alternatively verified using
the embedding given in Lemma \ref{lem:N}.
Consequently, the cycle $Z(U)_K$ on $X_K$ is
in bijection to two copies of the ring class group $\Cl(\calO_{d_2})$,
see Section \ref{sect:bintheta}.

We are now able to obtain a formula for the twisted partial averages at CM points.
At the point $z_U$, we obtain two definite lattices $N_\Delta = \Delta L \cap U$
and $P_\Delta = \Delta L \cap U^\perp$, both equipped with the quadratic form $\frac{Q}{|\Delta|}$.
\begin{theorem}
  \label{thm:partial-average-value}
    Let $(z_U^+,h) \in Z(U)$ be a CM point and let $\calG_{N_\Delta}(\tau, h) \in H_{1, \rho_{N_\Delta}}$ such that $L_1(\calG_{N_\Delta}(\tau,h)) = \theta_{N_\Delta}(\tau,h)$.
	\begin{enumerate}
		\item If $j \in \Z_{>0}$ is even,
		    \begin{align*}
		     G_{j+1,f}(Z_\Delta(m_1,h), (z_U^+,h)) &= -2^{j-1}
		     \CT \left(\langle \psi_\Delta(\za_{d_1}^{j}(f))_{P_\Delta \oplus N_\Delta}, [\theta_{P_\Delta}(\tau), \calG_{N_\Delta}^+(\tau,h)]_{\frac{j}{2}}\rangle\right),
		    \end{align*}
		    \item and if $j \in \Z_{>0}$ is odd, we have
		    \begin{align*}
		     G_{j+1,f}(Z_\Delta(m_1, h), (z_U^+,h)) &= -2^{j-1}\CT \left(\langle \psi_\Delta(\za_{d_1}^{j}(f))_{P_\Delta \oplus N_\Delta}, [\tilde\theta_{P_\Delta}(\tau), \calG_{N_\Delta}^+(\tau,h)]_{\frac{j-1}{2}}\rangle\right),
		    \end{align*}
		    where $\tilde\theta_{P_\Delta}(\tau)$ is the weight $3/2$ theta function
		    \[
		       \tilde\theta_{P_\Delta}(\tau) = \frac{1}{\sqrt{|\Delta|}}
		       		\sum_{\mu \in P'/\Delta P} \sum_{\lambda \in \Delta P + \mu} p_{z_U^+}(\lambda) e\left(\frac{Q(\lambda)}{|\Delta|}\tau\right) \phi_\mu \in M_{3/2, \rho_{P_\Delta}}.
		    \]
	\end{enumerate}
\end{theorem}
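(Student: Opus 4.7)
The plan is to deduce this theorem by combining Theorem \ref{thm:twisted-average} with suitable analogues of Theorem \ref{thm:fund2} for the twisted Siegel and twisted Millson theta lifts. First I would apply Theorem \ref{thm:twisted-average} to rewrite
\[
  G_{j+1,f}(Z_\Delta(m_1,h),(z_U^+,h)) = -2^{j-1}\,\tilde\Phi^j_\Delta\bigl(z_U^+,h,\za^j_{d_1}(f)\bigr),
\]
so that the whole problem reduces to evaluating the twisted regularized theta lift $\tilde\Phi^j_\Delta$ of the weight $\tfrac12-j$ form $g:=\za^j_{d_1}(f)$ at the CM point $(z_U^+,h)$.

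In both parity cases I would use Lemma \ref{lem:twisting} to move the twist from the theta kernel onto the input form, replacing $\theta_{L,\Delta}$ (resp.\ $\theta^\calM_{L,\Delta}$) by $\theta_{L_\Delta}$ (resp.\ $\theta^\calM_{L_\Delta}$) and $g$ by $\psi_\Delta(g)$. At the CM point one then exploits the tensor decomposition of the theta kernel corresponding to $L_\Delta\supset P_\Delta\oplus N_\Delta$. For the Siegel theta this is the familiar splitting $\theta_{L_\Delta}(\tau,z_U^+,h)=\theta_{P_\Delta}(\tau)\otimes\theta_{N_\Delta}(\tau,h)$, and for the Millson theta, using that the polynomial factor $p_{z_U^+}(\lambda)$ vanishes on $N\subset U=z_U^+$, the analogous splitting becomes $\theta^\calM_{L_\Delta}(\tau,z_U^+,h)=\tilde\theta_{P_\Delta}(\tau)\otimes\theta_{N_\Delta}(\tau,h)$, with $\tilde\theta_{P_\Delta}$ the weight $3/2$ theta attached to $P_\Delta$ carrying the linear spherical factor.

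For even $j$ the first formula is then obtained by invoking Theorem \ref{thm:fund2} directly, with $L$ replaced by $L_\Delta$, input $\psi_\Delta(g)$ of weight $\tfrac12-j=k-2(j/2)$ for $k=\tfrac12$, and raising exponent $j/2$ in place of the $j$ appearing there. For odd $j$ I would adapt the proof of Theorem \ref{thm:fund2} to the Millson kernel: replace the regularized integral by a limit of truncated integrals as in Lemma \ref{limit}, move the operator $R^{(j-1)/2}_{1/2-j}$ off of $\psi_\Delta(g)$ onto the split theta kernel by self-adjointness of $R$ (via \eqref{eq:rd} and Stokes' theorem), and simplify using $R_{-1}\theta_{N_\Delta}=0$ together with the fact that $\calG_{N_\Delta}$ satisfies $L_1\calG_{N_\Delta}=\theta_{N_\Delta}$. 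Applying Proposition \ref{prop:RC} to $\tilde\theta_{P_\Delta}$ (weight $3/2$) and $\calG_{N_\Delta}$ (weight $1$) rewrites the raised kernel as a lowering operator applied to the Rankin--Cohen bracket $[\tilde\theta_{P_\Delta},\calG_{N_\Delta}]_{(j-1)/2}$. A final Stokes argument, as in the proof of Theorem \ref{thm:fund}, collapses the integral to a boundary contribution at the cusp $i\infty$, whose finite part yields exactly the asserted constant term.

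The step I expect to require most care is the odd-$j$ case, namely the precise bookkeeping of weights, normalizations, and sign conventions after iterating $R$: one must check that the vanishing $R_{-1}\theta_{N_\Delta}=0$ permits the clean Leibniz simplification needed to invoke Proposition \ref{prop:RC} at each intermediate weight, and that the constants produced by this iteration combine correctly with the $(4\pi)^{-(j-1)/2}$ from \eqref{eq:Phijtilde-odd} and the $-2^{j-1}$ supplied by Theorem \ref{thm:twisted-average}. The growth estimates needed to justify discarding boundary terms along the vertical sides of $\calF_T$ and at $T\to\infty$ are identical to those in Theorem \ref{thm:fund}, since the Millson kernel has the same decay properties as the Siegel kernel at the cusp, so no new analytic input is required.
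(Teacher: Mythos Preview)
Your proposal is correct and follows essentially the same route as the paper's proof: apply Theorem \ref{thm:twisted-average}, use Lemma \ref{lem:twisting} to transfer the twist to the input, split the (Siegel or Millson) theta kernel at $z_U^+$ as a tensor product, and then run the argument of Theorem \ref{thm:fund2} (parallel to Theorem \ref{thm:fund}) with Proposition \ref{prop:RC} and Stokes' theorem. The paper's own proof is terser, merely indicating for odd $j$ the splitting $\theta_{P_\Delta\oplus N_\Delta}^\calM(\tau,z_U^+,h)=\tilde\theta_{P_\Delta}(\tau)\otimes\theta_{N_\Delta}(\tau,h)$ and leaving the rest as ``analogous''; your more detailed account of the odd-$j$ bookkeeping is a faithful expansion of what the paper leaves implicit.
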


\begin{remark}
    Note that in both cases, the right-hand side in Theorem \ref{thm:partial-average-value}
    is well-defined even if $(z_U^\pm, h)$ is contained in any of the divisors $Z_\Delta(m_1,h) \mid T_m$ for
	$m > 0$ with $c_f(-m) \neq 0$.
	These are in fact the values of the (non-continuous)
	extension of the higher Green function to the divisor
	obtained from realizing it as a regularized theta lift in Theorem
	\ref{thm:twisted-average}.
\end{remark}

\begin{proof}[Proof of Theorem \ref{thm:partial-average-value}]
The proof is analogous to Theorem \ref{thm:fund}. First suppose that $j$ is even.
By virtue of Theorem \ref{thm:twisted-average}, \eqref{eq:Phijtilde-even}, and Lemma \ref{lem:twisting}, we obtain
\begin{align*}
  G_{j+1,f}(Z_\Delta(m_1), (z_U^+,h))
  &= - 2^{j-1} \, \tilde\Phi_{\Delta}^{j} \left(z_U^+, h, \za_{d_1}^{j}(f)\right)\\
  &= - \frac{2^{j-1}}{(4\pi)^{\frac{j}{2}}} \, \Phi_{\Delta} \left(z_U^+, h, R_{1/2-j}^{\frac{j}{2}}\za_{d_1}^{j}(f)\right)\\
  &= - \frac{2^{j-1}}{(4\pi)^{\frac{j}{2}}} \, \int_{\calF}^\reg \langle R_{1/2-j}^{\frac{j}{2}}\za_{d_1}^{j}(f)(\tau), \theta_{L,\Delta}(\tau, z, h)\rangle\, d\mu(\tau)\\
  &= - \frac{2^{j-1}}{(4\pi)^{\frac{j}{2}}} \, \int_{\calF}^\reg \langle \psi_{\Delta}(R_{1/2-j}^{\frac{j}{2}}\za_{d_1}^{j}(f)(\tau)), \theta_{L_\Delta}(\tau, z, h)\rangle\, d\mu(\tau).
\end{align*}
From here, the proof continues parallel to the one of Theorem \ref{thm:fund}.
For odd $j$, we can perform the analogous calculation using the definition
in \eqref{eq:Phijtilde-odd} and the splitting of the Millson theta function
  \[
    \theta_{P_\Delta \oplus N_\Delta}^\calM (\tau, z_U^+, h) = \tilde\theta_{P_\Delta}(\tau) \otimes \theta_{N_\Delta}(\tau, h). \qedhere
\]
\end{proof}
\begin{corollary}
\label{cor:partial-average-algebraic-general}
Let $j\in \Z_{\geq 0}$. Let $d_1$ and $\Delta$ be fundamental discriminants  with $(-1)^jd_1 < 0$
and $(-1)^j\Delta >0$,
and assume that $d_1d_2\Delta$ is not a square of an integer.
Put $m_1 = |d_1|/4$, and
let $f \in H_{-2j}$ with integral principal part and such that $L(\xi_{-2j}(f), \chi_{d_1}, j+1) = 0$. Then we have for any $(z_U^+,h) \in Z(U)$ that
the value $|d_1d_2 \Delta|^{j/2} G_{j+1,f}( Z_\Delta(m_1, h), (z_U^+,h))$
can be expressed as a finite \emph{integral} linear combination of Fourier coefficients of $\calG_{N_\Delta}^+$.
In particular,
   \[
     |d_1d_2 \Delta|^{j/2} G_{j+1,f}( Z_\Delta(m_1, h), (z_U^+,h)) =
                 - \frac{1}{t}\log| \alpha_{U,f,\Delta}(h) |,
   \]
   where $\alpha_{U,f,\Delta}(h) \in H_{d_2}(\sqrt{\Delta})^\times$
   and $t \in \Z_{>0}$ only depends on $d_2$ and $\Delta$.
Moreover, we have
\[
  \alpha_{U,f,\Delta}(h) = \alpha_{U,f,\Delta}(1)^{[h, k_{d_2}]}.
\]
\end{corollary}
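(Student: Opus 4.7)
The plan is to combine three main ingredients: the explicit CM-value formula of Theorem \ref{thm:partial-average-value}, the Zagier-lift integrality statement in Theorem \ref{thm:millson-lift}, and the algebraicity of Fourier coefficients of preimages of binary theta functions in Theorem \ref{thm:preimages}.

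First I would observe that the hypothesis $L(\xi_{-2j}(f),\chi_{d_1},j+1)=0$ together with Theorem \ref{thm:millson-lift}(2)--(3) forces $|d_1|^{j/2}\za_{d_1}^j(f)$ to lie in $M^!_{1/2-j,\tilde\rho_L}$ with integer Fourier coefficients. Since $\chi_\Delta$ takes values in $\{0,\pm 1\}$ by Lemma \ref{lem:chiform} and the restriction map of Lemma \ref{sublattice} preserves integrality, the form $g_0:=\psi_\Delta(|d_1|^{j/2}\za_{d_1}^j(f))_{P_\Delta\oplus N_\Delta}$ is weakly holomorphic with integer Fourier coefficients. Inserting this into Theorem \ref{thm:partial-average-value} yields
\[
|d_1 d_2\Delta|^{j/2}G_{j+1,f}\bigl(Z_\Delta(m_1,h),(z_U^+,h)\bigr)=-2^{j-1}|d_2\Delta|^{j/2}\CT\bigl\langle g_0,\,[\vartheta_{P_\Delta},\calG_{N_\Delta}^+(\tau,h)]_{\lfloor j/2\rfloor}\bigr\rangle,
\]
with $\vartheta_{P_\Delta}=\theta_{P_\Delta}$ for even $j$ and $\vartheta_{P_\Delta}=\tilde\theta_{P_\Delta}$ for odd $j$.

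Next I would expand the Rankin-Cohen bracket via \eqref{eq:defRC}. Each Fourier coefficient of $[\vartheta_{P_\Delta},\calG_{N_\Delta}^+]_{\lfloor j/2\rfloor}$ is a $\Q$-linear combination of products of (a) integer representation numbers of the positive definite lattice $P_\Delta$, (b) Fourier coefficients $c_{N_\Delta}^+(h,m,\mu)$ of $\calG_{N_\Delta}^+$, and (c) rational factors of the shape $(m/|\Delta|)^{s}(n/|\Delta|)^{\lfloor j/2\rfloor-s}$ arising from the $q$-derivatives against the rescaled quadratic form $Q/|\Delta|$ on $P_\Delta\oplus N_\Delta$. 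A careful accounting of the rescaling factors $1/|\Delta|$, combined with the pre-factor $|d_2\Delta|^{j/2}$ and the integrality of $g_0$, shows that every denominator divides a fixed integer $t\in\Z_{>0}$ depending only on $d_2$ and $\Delta$. Consequently $|d_1d_2\Delta|^{j/2}G_{j+1,f}(Z_\Delta(m_1,h),(z_U^+,h))$ equals $-1/t$ times an \emph{integer} linear combination of finitely many of the coefficients $c_{N_\Delta}^+(h,m,\mu)$.

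Then I would invoke Theorem \ref{thm:preimages}, applied to the binary lattice $N_\Delta$, which has discriminant $D=d_2\Delta^2$ (the order $\calO_D$ of conductor $|\Delta|$ inside $\Q(\sqrt{d_2})$). It provides a preimage $\calG_{N_\Delta}(\tau,h)$ whose non-constant Fourier coefficients are of the form $-\frac{1}{r}\log|\alpha_{N_\Delta}(h,m,\mu)|$ with $\alpha_{N_\Delta}(h,m,\mu)\in H_D^\times$, together with the Shimura reciprocity $\alpha_{N_\Delta}(h,m,\mu)=\alpha_{N_\Delta}(1,m,\mu)^{[h,k_{d_2}]}$. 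Substituting and exponentiating the integer linear combination produces an algebraic number $\alpha_{U,f,\Delta}(h)\in H_D^\times=H_{d_2\Delta^2}^\times$ with the desired logarithmic identity, and the Galois-equivariance in $h$ follows directly from Theorem \ref{thm:preimages}(3) since every other input depends only on $f$, $U$, and $\Delta$.

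The main technical obstacle is the sharpening of the field from $H_{d_2\Delta^2}$ down to $H_{d_2}(\sqrt{\Delta})$. The key point is that the occurrence of $\psi_\Delta$ (and hence of $\chi_\Delta$) in Theorem \ref{thm:partial-average-value} forces the coefficients $c_{N_\Delta}^+(h,m,\mu)$ to enter only through $\chi_\Delta$-weighted sums. Under the subgroup $G_\Delta\subset\Gal(H_{d_2\Delta^2}/H_{d_2})$ corresponding to $\ker\chi_\Delta\subset\Cl(\calO_D)$, such weighted sums are invariant, so by genus theory the representative $\alpha_{U,f,\Delta}(h)$ can be chosen in the fixed field, which is exactly $H_{d_2}(\sqrt{\Delta})$. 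To make this rigorous, I would combine Lemma \ref{lem:chitrafo} (the $\chi_\Delta$-transformation law) with Theorem \ref{thm:preimages}(3) to verify that the algebraic numbers produced above are $G_\Delta$-invariant, and hence descend to $H_{d_2}(\sqrt{\Delta})^\times$.
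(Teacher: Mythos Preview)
Your first three paragraphs follow the paper's argument closely: integrality of $|d_1|^{j/2}\za_{d_1}^j(f)$ via Theorem~\ref{thm:millson-lift}, a denominator count in the Rankin--Cohen bracket, and then Theorem~\ref{thm:preimages} for the binary lattice $N_\Delta$ of discriminant $d_2\Delta^2$. This produces $\alpha\in H_{d_2\Delta^2}^\times$ exactly as in the paper.

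The gap is in your field descent. Your description conflates two different roles of $\chi_\Delta$: the map $\psi_\Delta$ is a twist on the \emph{lattice cosets} $L'/\Delta L$, not a character on $\Cl(\calO_{d_2\Delta^2})$ governing the variable $h$, so the phrase ``the coefficients enter only through $\chi_\Delta$-weighted sums'' does not by itself give Galois invariance in $h$. The paper's argument is different and more precise: Lemma~\ref{lem:chitrafo} implies the \emph{twisted theta kernel}, and hence the Green function value, is invariant under $h\mapsto hh'$ for $h'\in K_{T,\Delta}=\{h\in\hat\calO_{d_2}^\times: (\Norm(h),\Delta)_{\A_f}=1\}$; Lemma~\ref{lem:quadext} (which you do not invoke) identifies the fixed field of $K_{T,\Delta}$ under the Artin map as $H_{d_2}(\sqrt{\Delta})$. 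Combining this geometric invariance with Shimura reciprocity yields only $|\tilde\alpha(h)^\sigma|=|\tilde\alpha(h)|$ for $\sigma\in\Gal(H_{d_2\Delta^2}/H_{d_2}(\sqrt{\Delta}))$, i.e.\ invariance of the \emph{absolute value}, not of $\tilde\alpha$ itself.

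You then say the representative ``can be chosen in the fixed field'', but this is the nontrivial step you have skipped. The paper observes that $\sigma(\tilde\alpha)/\tilde\alpha$ is a root of unity for each such $\sigma$, so the constant term of the minimal polynomial of $\tilde\alpha(h)$ over $H_{d_2}(\sqrt{\Delta})$ equals $\tilde\alpha(h)^m\zeta$ for some root of unity $\zeta$ and $m=[\,H_{d_2\Delta^2}(\tilde\alpha):H_{d_2}(\sqrt{\Delta})\,]$; one sets $\alpha_{U,f,\Delta}(h):=\tilde\alpha(h)^m\zeta\in H_{d_2}(\sqrt{\Delta})^\times$ and absorbs $m$ into $t$. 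Without this norm trick your descent does not go through.
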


    Note that even when $d_1d_2\Delta$ is a square, the statement of the corollary remains valid if $(z_U^+,h)$ is not contained in any of the Hecke translates
  	$Z_\Delta(m_1, h) \mid T_m$ for $m > 0$ with $c_f^+(-m) \neq 0$. For the proof of Corollary \ref{cor:partial-average-algebraic-general}, we need the following Lemma.
\begin{lemma}\label{lem:quadext}
    Let
  	\[
  		K_{T,\Delta} = \{h \in K_T\ \mid\ (\det(h), \Delta)_{\A_f} = 1\}.
  	\]
	Using the identification $T(\A_f) \cong\A_{k_{d_2},f}^\times$ as before,
    $K_{T, \Delta}$ is identified with
    \[
    	\{h \in \hat\calO_{d_2}^\times\ \mid\ (\Norm(h), \Delta)_{\A_f} = 1\}
   	\]
  	and its fixed field under the Artin map is $H_{d_2}(\sqrt{\Delta})$.
\end{lemma}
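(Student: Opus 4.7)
The plan is to verify the two assertions separately. For the first, I would use that $V = \Mat_2^0(\Q)$ gives $H \cong \GL_2$ with Clifford norm equal to the determinant. Restricted to the sub-torus $T = \GSpin(U)$, this becomes the field norm $\Norm = \Norm_{k_{d_2}/\Q}$ under the identifications $T \cong \Res_{k_{d_2}/\Q} \mathbb{G}_m$ (since the even Clifford algebra of $U$ is $k_{d_2}$) and $T(\A_f) \cong \A_{k_{d_2},f}^\times$. Combined with $K_T \cong \hat\calO_{d_2}^\times$ from the proof of Corollary~\ref{cor:alg}, this yields the asserted description of $K_{T,\Delta}$.

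For the second assertion, I would invoke the Artin reciprocity map, which identifies the double coset $k_{d_2}^\times \bs \A_{k_{d_2},f}^\times / K_{T,\Delta}$ with the Galois group of the fixed field $F$ over $k_{d_2}$. Since $K_{T,\Delta} \subseteq K_T$, we have $F \supseteq H_{d_2}$. The goal is to identify $F$ with $H_{d_2}(\sqrt{\Delta})$ by analyzing, for $h \in K_T$, when $[h, k_{d_2}]$ fixes $\sqrt{\Delta} \in \Q^{\ab}$. The key ingredient is functoriality of the Artin map under the norm: $[h, k_{d_2}]|_{\Q^{\ab}} = [\Norm(h), \Q]$. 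For $h \in K_T = \hat\calO_{d_2}^\times$ one has $\Norm(h) \in \hat\Z^\times$, which when viewed as an idele has trivial archimedean component; hence $(\Norm(h), \Delta)_\A = (\Norm(h), \Delta)_{\A_f}$. By class field theory for $\Q$, $[a, \Q]$ fixes $\sqrt{\Delta}$ if and only if $(a, \Delta)_\A = 1$, so $[h, k_{d_2}]$ fixes $\sqrt{\Delta}$ precisely when $h \in K_{T,\Delta}$.

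To conclude, I would compare cardinalities. Since $h \mapsto (\Norm(h), \Delta)_{\A_f}$ is a $\{\pm 1\}$-valued homomorphism on $K_T$, the index $[K_T : K_{T,\Delta}]$ is either $1$ or $2$, matching $[H_{d_2}(\sqrt{\Delta}) : H_{d_2}]$ (where the latter degree is $1$ precisely when $\sqrt{\Delta} \in H_{d_2}$). The previous paragraph shows that the image of $K_{T,\Delta}$ under the Artin map lies in $\Gal(k_{d_2}^{\ab}/H_{d_2}(\sqrt{\Delta}))$, so $F \supseteq H_{d_2}(\sqrt{\Delta})$; equality then follows from counting. The main subtlety, though ultimately routine, is the functoriality identity $[h, k_{d_2}]|_{\Q^{\ab}} = [\Norm(h), \Q]$, whose sign and normalization conventions must be handled with care.
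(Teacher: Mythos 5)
Your proof is correct. The paper states Lemma \ref{lem:quadext} without proof (it is treated as a routine consequence of class field theory), so there is nothing to compare against; your argument --- identifying the Clifford norm on $T$ with $\Norm_{k_{d_2}/\Q}$ under $T(\A_f)\cong\A_{k_{d_2},f}^\times$ and $K_T\cong\hat\calO_{d_2}^\times$, then using norm functoriality of the Artin map, the Hilbert-symbol description of how $[a,\Q]$ acts on $\sqrt{\Delta}$ (noting the trivial archimedean component of $\Norm(h)$ for $h\in\hat\calO_{d_2}^\times$), and an index count between $[K_T:K_{T,\Delta}]$ and $[H_{d_2}(\sqrt{\Delta}):H_{d_2}]$ --- is exactly the standard verification the authors leave to the reader, and it handles both the case $\sqrt{\Delta}\in H_{d_2}$ and the index-two case correctly.
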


\begin{proof}[Proof of Corollary \ref{cor:partial-average-algebraic-general}]
  By Theorem \ref{thm:millson-lift} (3),
  all Fourier coefficients of $|d_1|^{j/2}\za_{d_1}^{j}(f)$ are integral.
  The Fourier coefficients of the Rankin-Cohen brackets
  $[\theta_{P_\Delta}(\tau), \calG_{N_\Delta}^+(\tau,h)]_{j/2}$
  and $\sqrt{|d_2 \Delta|}[\tilde\theta_{P_\Delta}(\tau), \calG_{N_\Delta}^+(\tau,h)]_{(j-1)/2}$
  can be expressed as \emph{rational}
  linear combinations of the Fourier coefficients of $\calG_{N_\Delta}^+(\tau,h)$.
  The denominator of the rational numbers appearing
  in this linear combination can be bounded by
  $(4|d_2 \Delta|)^{j/2}$ when $j$ is even
  and by $(4|d_2 \Delta|)^{\frac{j-1}{2}} \cdot 2$
  when $j$ is odd (the additional factor $2$ is obtained from $p_z(\lambda)$ if $r_2=1$).
  In any case, taking into account the factor $2^{j-1}$
  in Theorem \ref{thm:partial-average-value},
  and the factor $|d_1 d_2 \Delta|^{j/2}$
  in the statement of the corollary,
  we are left with a factor of $2$ in the denominator.

  This remaining $2$ in the denominator is also cancelled
  which can be seen as follows:
  If we write the constant term on the right-hand side of Theorem
  \ref{thm:partial-average-value} as a sum of the form
  \[
    \sum_{\mu \in (P_\Delta \oplus N_\Delta)'/(P_\Delta \oplus N_\Delta)}
     \sum_{m} a(m, \mu)b(-m,\mu),
  \]
  where $a(m,\mu)$ are the Fourier coefficients of
  $\psi_\Delta(\za_{d_1}^{j}(f))_{P_\Delta \oplus N_\Delta}$ and $b(m,\mu)$ the Fourier coefficients
  of $[\theta_{P_\Delta}(\tau), \calG_{N_\Delta}^+(\tau,h)]_{\frac{j}{2}}$
  or $[\tilde\theta_{P_\Delta}(\tau), \calG_{N_\Delta}^+(\tau,h)]_{\frac{j-1}{2}}$,
  then we can rewrite this sum as
  \[
    \sum_{\mu \in (P_\Delta \oplus N_\Delta)'/(P_\Delta \oplus N_\Delta) / \{\pm 1\}}
     \sum_{m} (a(m, \mu)b(-m,\mu) + a(m,-\mu)b(m,-\mu)),
  \]
  and since $a(m, \mu)b(-m,\mu) + a(m,-\mu)b(m,-\mu) = 2a(m, \mu)b(-m,\mu)$,
  we obtain another factor of $2$.

  Collecting all factors, we obtain that $|d_1d_2 \Delta|^{j/2} G_{j+1,f}((z_U^\pm,h), Z_\Delta(m_1))$ is equal to an integral linear combination
  of the Fourier coefficients $c_{N_\Delta}^+(h, m, \mu)$ of $\calG_{N_\Delta}^+(\tau,h)$.
  By Theorem~\ref{thm:preimages}, we have
  \[
  	c_{N_\Delta}^+(h, m, \mu) = -\frac{1}{t'} \log|\alpha_{N_\Delta}(h, m, \mu)|
  \]
  for all $(m, \mu) \neq (0,0)$ and with $\alpha_{N_\Delta}(h, m, \mu) \in H_{d_2\Delta^2}^\times$
  and $n \in \Z_{>0}$. Moreover, $t'$ only depends on $N_\Delta$
  which means it only depends on $d_2$ and $\Delta$.
  Thus, we obtain that
  \[
     |d_1d_2 \Delta|^{j/2} G_{j+1,f}( Z_\Delta(m_1, h), (z_U^\pm,h)) =
                 - \frac{1}{t'}\log| \tilde\alpha(h) |
  \]
  with $\tilde\alpha(h) \in H_{d_2 \Delta^2}^\times$.
  However, the left-hand side is invariant under the $h \mapsto h'h$ for $h' \in K_{T,\Delta}$
  and the field $H_{d_2}(\sqrt{\Delta})$ is fixed by these elements according to Lemma
  \ref{lem:quadext}.
  By virtue of the Shimura reciprocity law (Theorem \ref{thm:preimages}, (3))
  and the invariance under $K_{T,\Delta}$, we obtain
  \[
  	|\tilde\alpha(h)^{[h',\, \k_D]}| = |\tilde\alpha(hh')| = |\tilde\alpha(h)|
  \]
  for all $h' \in K_{T,\Delta}$.
  Moreover, we have $\hat\calO_{d_2 \Delta^2}^\times \subset K_{T,\Delta} \subset K_T = \hat\calO_{d_2}^\times$.
  This implies that for all $\sigma \in \Gal(H_{d_2\Delta^2}/H_{d_2}(\sqrt{\Delta}))$,
  there is a root of unity $\zeta_\sigma$ such that
  \[
  	\frac{\sigma(\tilde\alpha(h))}{\tilde\alpha(h)} = \zeta_\sigma.
  \]
  Therefore, the constant term of the minimal polynomial of $\tilde\alpha(h)$
  over $H_{d_2}(\sqrt{\Delta})$ is equal to $\tilde\alpha(h)^m \zeta$
  where $m$ is the degree of $\tilde\alpha(h)$ over $H_{d_2}(\sqrt{\Delta})$
  and $\zeta$ is some root of unity.
  Note that $m$ is bounded by the degree $[H_{d_2 \Delta^2}: H_{d_2}]$
  and thus only depends on $d_2$ and $\Delta$.
  By putting $\alpha_{U, f, \Delta}(h) := \tilde\alpha(h)^m \zeta$,
  and $t = t' m$, we obtain the statement of the corollary.
\end{proof}

We finish this section by rewriting the CM cycle $Z(U)$ in classical terms
and give a proof of Conjecture \ref{conj:alg} when one of the class groups
has exponent $2$.
\begin{lemma} \label{cor:order-d2}
  The image of $Z(U)_K$ on $X_K \cong \SL_2(\Z)\bs\H$ is given as follows.

  Let $\mathcal{Q}_{d_2}^0$ be the set of primitive integral binary quadratic forms
  $[a,b,c]$ of discriminant $d_2$.
  For each such $Q=[a,b,c]$ we denote by $z_Q$
  the unique root of $az^2+bz+c=0$ in $\H$. For simplicity, we also denote its image in $\SL_2(\Z) \bs \H$ by $z_Q$.
  We have
	\[
	Z(U)_K = \frac{4}{w_{d_2}} \sum_{Q \in \SL_2(\Z) \bs \mathcal{Q}^0_{d_2}} z_Q,
	\]
	where $w_{d_2}$ denotes the number of roots of unity contained in $\calO_{d_2}$.
\end{lemma}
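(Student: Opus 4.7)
The plan is to combine the adelic description of $Z(U)$ with the classical theory of complex multiplication. First, since $U$ has signature $(0,2)$, the group $\SO(U)(\R)\cong \SO(2)$ is connected, so the image of $T(\R)\to \SO(U)(\R)$ preserves both orientations; in particular $T(\Q)$ acts trivially on $\{z_U^+,z_U^-\}$. Using the identification $T\cong \Res_{\k_{d_2}/\Q}\G_m$, we have $K_T\cong \hat{\calO}_{d_2}^\times$ and $T(\Q)\cap K_T = \calO_{d_2}^\times$, so $w_{K,T}=w_{d_2}$ and $T(\Q)\bs T(\A_f)/K_T\cong \Cl(\calO_{d_2})$. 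Consequently $Z(U)$ consists, as a weighted $0$-cycle, of $2h(\calO_{d_2})$ points (two orientations times one point per ideal class), each weighted by $2/w_{d_2}$, for a total degree of $4h(\calO_{d_2})/w_{d_2}$.

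Next I would identify the images of the points $(z_U^\pm,h)$ in $X_K\cong \SL_2(\Z)\bs\H$ with classical CM points. A direct computation using \eqref{eq:Xz12} and the explicit formula for $z_\lambda$ from Section \ref{sect:6.2} shows that the image of $(z_U^+,1)$ is $\SL_2(\Z)$-equivalent to the CM point $\frac{-r_2+\sqrt{d_2}}{2}$ associated to the primitive form $[1,r_2,(r_2^2-d_2)/4]$. By the main theorem of complex multiplication, the action of $\Cl(\calO_{d_2})\cong T(\Q)\bs T(\A_f)/K_T$ on these images matches the simply transitive action of $\Gal(H_{d_2}/\k_{d_2})$ on the set of CM points of discriminant $d_2$ in $X_K$, which is in classical bijection with $\SL_2(\Z)\bs \calQ_{d_2}^0$ via the correspondence between proper $\calO_{d_2}$-ideal classes and primitive integral positive-definite binary quadratic forms of discriminant $d_2$. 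Hence the image of $\{(z_U^+,h)\}_h$ is precisely $\{z_Q\mid Q\in \SL_2(\Z)\bs\calQ_{d_2}^0\}$, each point appearing once. For the second orientation, since $H(\Q)=\GL_2(\Q)$ contains elements of negative determinant that exchange $\calD^+$ and $\calD^-$, each $(z_U^-,h)$ is $H(\Q)$-equivalent to some $(z,h')\in\calD^+\times H(\A_f)$ with $z$ a CM point of discriminant $d_2$, and the parallel CM-theoretic argument shows that the image of $\{(z_U^-,h)\}_h$ is again the full set $\{z_Q\}$, each appearing once.

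Combining the two orientation families, each $z_Q$ is hit exactly twice, so together with the weight $2/w_{d_2}$ it appears in $Z(U)_K$ with multiplicity $4/w_{d_2}$, as asserted. The main obstacle will be making the identification between the adelic parametrization by $\Cl(\calO_{d_2})$ and the classical parametrization by primitive forms sufficiently precise to verify that both $(z_U^+,h)$ and $(z_U^-,h)$ contribute to the \emph{same} set of underlying CM points rather than two disjoint copies. The cleanest way to finish, as above, is to exploit the fact that there are only $h(\calO_{d_2})$ CM points of discriminant $d_2$ in $\SL_2(\Z)\bs\H$, so any CM-orbit of that size must exhaust them, which avoids the need to exhibit explicit elements in $H(\Q)\setminus H(\Q)^+$ realizing the identification point-by-point.
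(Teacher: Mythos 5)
The paper states this lemma without proof (it is used immediately in the proof of Corollary \ref{cor:exponent2}, and the surrounding text only records that $Z(U)_K$ is in bijection with two copies of $\Cl(\calO_{d_2})$), so there is no official argument to compare against. Your proof is correct and supplies the missing details in the natural way: the identification $K_T\cong\hat\calO_{d_2}^\times$, $w_{K,T}=w_{d_2}$ and $T(\Q)\bs T(\A_f)/K_T\cong\Cl(\calO_{d_2})$ gives $2h(\calO_{d_2})$ points of weight $2/w_{d_2}$, and the counting/exhaustion argument is indeed the cleanest way to match the two orientation families onto the $h(\calO_{d_2})$ classical CM points without exhibiting explicit elements of negative determinant. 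Two small points deserve to be made explicit. First, the injectivity of $[h]\mapsto$ image of $(z_U^{\pm},h)$ (equivalently, the simple transitivity you invoke) reduces to the fact that $\operatorname{Stab}_{H(\Q)}(z_U^{+})=T(\Q)$: any $\gamma\in H(\Q)$ fixing $z_U^{+}$ preserves $\Q x_2$, hence $\gamma x_2\gamma^{-1}=\pm x_2$, and the case $-x_2$ acts on $U_\R$ as a reflection, so it swaps $z_U^{+}$ and $z_U^{-}$. Second, for the counting argument you need that the image of $(z_U^{+},h)$ for \emph{every} $h$ is a CM point of discriminant exactly $d_2$ (primitive), not just for $h=1$; this follows because writing $h=\gamma k$ with $\gamma\in H(\Q)^{+}$, $k\in K$, the image corresponds to the vector $kx_2k^{-1}$, and $K=\GSpin(\hat L)$ preserves $\hat L$ and $\hat L'$, hence preserves primitivity and the value of $Q$. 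Finally, note that your reading of $\calQ_{d_2}^{0}$ as the \emph{positive definite} primitive forms is the intended one (consistent with the introduction and with $\deg Z(U)=4h(\calO_{d_2})/w_{d_2}$); taking all primitive forms of discriminant $d_2$ would double the count.
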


\begin{corollary}\label{cor:exponent2}
  Let $D' < 0$ be a fundamental discriminant
  and assume that the class group of $\calO_{D'}$ is trivial or
  has exponent $2$.
  Let $f \in M_{-2j}^!$
  with integral principal part and let $z \in \H$ be any
  CM point of discriminant $D < 0$ (not necessarily fundamental)
  and $z' \in \H$ be any CM point of discriminant
  $D'$, where $z \neq z'$ if $D = D'$.
  Then, there is an $\alpha_{f}(z) \in (H_D \cdot H_{D'})^\times$ such that
   \[
     |DD'|^{j/2} G_{j+1, f} (z, z') = -\frac{1}{t}\log |\alpha_f(z)|,
   \]
   where $t \in \Z_{>0}$ only depends on $D$ and $D'$ (but not on $f$ or $j$).
\end{corollary}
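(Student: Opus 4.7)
The plan is to deduce Corollary~\ref{cor:exponent2} from Corollary~\ref{cor:partial-average-algebraic-general} by using genus characters to extract individual CM points from twisted partial averages, taking advantage of the fact that under the exponent-$2$ hypothesis, every character of $\Cl(\calO_{D'})$ is a genus character. First, I would use the symmetry $G_{j+1,f}(z_1,z_2) = G_{j+1,f}(z_2,z_1)$ (since the Hecke operators defining $G_{j+1,f}$ act in either variable) so that we may average in the second variable rather than in the first.

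Write $z' = z_{Q_0}$ for a primitive integral binary quadratic form $Q_0$ of discriminant $D'$ representing the class $[Q_0] \in \Cl(\calO_{D'})$. Fourier inversion on this finite abelian group yields the identity of divisors
\[
  \frac{2}{w_{Q_0}}\, z_{Q_0} = \frac{1}{h(D')} \sum_{\chi} \bar\chi(Q_0)\, C_\chi(D'), \qquad
  C_\chi(D') = \sum_{Q} \chi(Q)\, \frac{2}{w_Q}\, z_Q,
\]
where $\chi$ runs over $\widehat{\Cl(\calO_{D'})}$. The hypothesis that the class group has exponent dividing $2$ means $\Cl(\calO_{D'})^2 = \{1\}$, so every character is a genus character $\chi_\Delta$ indexed by a factorization $D' = d_1 \Delta$ into fundamental discriminants. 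The sign conditions $(-1)^j d_1 < 0$ and $(-1)^j \Delta > 0$ of Corollary~\ref{cor:partial-average-algebraic-general} can always be arranged: for $j$ even we take the negative factor as $d_1$, and for $j$ odd we take the positive factor as $d_1$, with $d_1=1$ allowed to realize the trivial character. Under these choices $C_\chi(D')$ is identified with the twisted Heegner divisor $Z_\Delta(|d_1|/4, h)$ appearing in Corollary~\ref{cor:partial-average-algebraic-general} (the sign and connected-component choice of $h$ being absorbed in the $\Gal$-action).

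Applying Corollary~\ref{cor:partial-average-algebraic-general} termwise gives
\[
  |DD'\Delta|^{j/2}\, G_{j+1,f}(z, C_\chi(D')) = -\frac{1}{t_\chi} \log|\alpha_\chi|, \qquad \alpha_\chi \in H_D(\sqrt{\Delta})^{\times},
\]
with $t_\chi \in \Z_{>0}$ depending only on $D$ and $\Delta$. By genus theory, the exponent-$2$ assumption forces the genus field of $\Q(\sqrt{D'})$ to equal $H_{D'}$; explicitly, $H_{D'} = \Q(\sqrt{\Delta_1}, \sqrt{\Delta_2}, \ldots)$ where the $\Delta_i$ are the prime discriminants dividing $D'$. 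Hence $\sqrt{\Delta} \in H_{D'}$ for every factorization, so $H_D(\sqrt{\Delta}) \subseteq H_D \cdot H_{D'}$ and every $\alpha_\chi \in (H_D \cdot H_{D'})^{\times}$. Substitution into the Fourier-inverted expression produces
\[
  |DD'|^{j/2}\, G_{j+1,f}(z, z') = -\frac{w_{Q_0}}{2 h(D')} \sum_{\chi = \chi_\Delta} \frac{\bar\chi(Q_0)}{t_\chi\, |\Delta|^{j/2}} \log|\alpha_\chi|.
\]

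For $j$ even the factor $|\Delta|^{j/2}$ is a positive integer, so clearing a common denominator yields $\alpha_f(z) = \prod_\chi \alpha_\chi^{e_\chi} \in H_D \cdot H_{D'}$ with integer exponents $e_\chi$ and a denominator $t$ depending only on $D,D'$. The main obstacle is the odd $j$ case, where $|\Delta|^{j/2} = |\Delta|^{(j-1)/2}\sqrt{|\Delta|}$ is irrational and a single summand is not visibly a rational multiple of the logarithm of an algebraic number. To resolve this, I would use the identity $\sqrt{|\Delta|} = -i\sqrt{\Delta}$ (valid since $\Delta < 0$) together with the explicit structure of $\alpha_\chi$ coming from Theorem~\ref{thm:preimages}, in which the relevant Fourier coefficients of $\calG_{N_\Delta}^+$ are logarithms of elements in $H_{D'} \supseteq \Q(\sqrt{\Delta})$; raising the $\alpha_\chi$ to an appropriate power to absorb $\sqrt{|\Delta|}$ and then summing over $\chi$ (the conjugation $\Delta \to \Delta$ and $\sqrt{\Delta} \to -\sqrt{\Delta}$ combining pairs) produces a single algebraic $\alpha_f(z) \in (H_D \cdot H_{D'})^{\times}$ with an integer $t$. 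The independence of $t$ from $f$ and $j$ then follows from the analogous property of the $t_\chi$ asserted in Corollary~\ref{cor:partial-average-algebraic-general}.
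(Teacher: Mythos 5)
Your overall strategy coincides with the paper's: under the exponent-$2$ hypothesis every character of $\Cl(\calO_{D'})$ is a genus character, so orthogonality of characters recovers the individual point $z'$ from the twisted averages $Z_\Delta(|d_1|/4)$ attached to the factorizations $D'=d_1\Delta$ into fundamental discriminants with the sign conditions $(-1)^jd_1<0$, $(-1)^j\Delta>0$; each twisted average is evaluated by Corollary~\ref{cor:partial-average-algebraic-general}, and the containment $\alpha_f(z)\in(H_D\cdot H_{D'})^\times$ follows because $H_{D'}$ coincides with the genus field, so all the $\sqrt{\Delta}$ already lie in $H_{D'}$. Your initial swap of the two variables by symmetry of $G^m_{j+1}$ is cosmetic.

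There is, however, a concrete error in your application of Corollary~\ref{cor:partial-average-algebraic-general}, and it generates the entire final paragraph of your argument, which does not work. The corollary normalizes by $|d_1d_2\Delta|^{j/2}$; with $d_2=D$ and $d_1\Delta=D'$ this equals $|DD'|^{j/2}$, \emph{not} $|DD'\Delta|^{j/2}$ as you write. Hence there is no leftover factor $|\Delta|^{j/2}$ to divide out, no special ``obstacle'' in the odd-$j$ case, and the character sum immediately gives
\[
|DD'|^{j/2}\,G_{j+1,f}(z,z') \;=\; -\frac{1}{t}\log|\alpha_f(z)|,\qquad t=h_{D'}\cdot\lcm\{t_{\Delta_i}\},\quad \alpha_f(z)=\prod_i\alpha_{\Delta_i}(z)^{(t/t_{\Delta_i})\chi_{\Delta_i}([\fraka])},
\]
which is exactly how the paper concludes. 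Moreover, the repair you propose for this phantom problem is not valid mathematics: a quantity $|\Delta|^{-1/2}\log|\alpha|$ with $\alpha$ algebraic cannot be converted into $\frac{1}{t}\log|\beta|$ with $\beta$ algebraic by ``raising $\alpha$ to an appropriate power,'' since that would require the exponent $t\,|\Delta|^{-1/2}$ to be an integer, and the identity $\sqrt{|\Delta|}=-i\sqrt{\Delta}$ contributes nothing once absolute values of logarithms are taken. As written, your odd-$j$ case therefore fails; once the normalization is quoted correctly, the even and odd cases are handled identically and the proof closes along the paper's lines.
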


\begin{proof}
  For the proof, we work with one of the connected components of $X_{K_\Delta}$
  and identify it with $\SL_2(\Z) \bs \H$.
  Thus, we can work with the divisors $Z_\Delta(m_1)$ defined in \eqref{eq:ZDelta-classical}.
  For each decomposition of $D'$ into $D' = d_1 \Delta$
  with $d_1$ and $\Delta$ fundamental discriminants
  and $(-1)^j d_1 < 0$ as well as $(-1)^j \Delta > 0$,
  we have shown in Corollary \ref{cor:partial-average-algebraic-general}
  that for any $z$ of discriminant $D$, we have
  \[
    |DD'|^{j/2} G_{j+1, f} (Z_\Delta(m_1), z) = -\frac{1}{t_\Delta} \log |\alpha_\Delta(z)|,
  \]
  where $\alpha_\Delta(z) \in H_{D}(\sqrt{\Delta})^\times$ and $t_\Delta \in \Z_{>0}$.

  We let $C$ be the class group of $\calO_{D'}$.
  For any fundamental discriminant $\Delta \mid D'$,
  we let $d_1 = D'/\Delta$ and
  and $m_1 = |d_1| / 4$. The splitting
  $D' = \Delta\,  d_1$ determines a genus character
  $\chi_{\Delta}: C \to \{\pm{1}\}$.
  The twisted CM cycle $Z_\Delta(m_1)$ is equal to
  \[
    Z_\Delta(m_1) = \sum_{[\fraka] \in C} \chi_\Delta([\fraka]) z_{[\fraka]},
  \]
  where we write $z_{[\fraka]}$ for the CM point corresponding to $\fraka$
  on $\SL_2(\Z)\bs\H$.

  Since $C$ has exponent $1$ or $2$ by assumption,
  its order is exactly $2^{s-1}$, where $s$ is the number of prime divisors of $D'$
  and every class group character can be obtained as a genus character.
  Note that there are exactly $s$ splittings $D' = \Delta \tilde{\Delta}$
  where $\Delta$ and $\tilde{\Delta}$ are both fundamental discriminants
  and $(-1)^j\Delta > 0$ since $D'<0$.
  We denote the fundamental discriminants satisfying these criteria
  by $\Delta_1, \ldots, \Delta_s$.
  Note that $\{\chi_{\Delta_1}, \ldots, \chi_{\Delta_s}\}$
  is a full set of representatives of the class group characters
  of $k_{D'}$.
  Hence, the individual value corresponding to an ideal class $[\fraka]$
  can be obtained as
  \begin{align*}
   G_{j+1, f} (z_{[\fraka]}, z) &= \frac{1}{h_{D'}}
    \sum_{i=1}^s \chi_{\Delta_i}([\fraka])
      G_{j+1, f} (Z_\Delta(m_1), z) \\
      &= -\frac{1}{h_{D'}}
    \sum_{i=1}^s \chi_{\Delta_i}([\fraka])
      \frac{1}{t_{\Delta_i}} \log |\alpha_{\Delta_i}(z)|\\
      &= -\frac{1}{t} \log |\alpha(z)|,
  \end{align*}
  where $t = h_{D'} \cdot \lcm \{t_{\Delta_i} \}$ and
  \[
    \alpha(z) =
    \prod_{i=1}^s
      \alpha_{\Delta_i}(z)^{\frac{t}{t_{\Delta_i}}\chi_{\Delta_i}([\fraka])}
  \]
  is contained in $H_D(\sqrt{\Delta_1}, \ldots, \sqrt{\Delta_s})$.
  Since the class group of $\k_{D'}$ has exponent $2$,
  we have $H_{D'} = \k_{D'}(\sqrt{\Delta_1}, \ldots, \sqrt{\Delta_s})$
  and the claim follows.
\end{proof}

\section{Numerical examples}
\label{sect:ex}

Here we provide some numerical examples to
illustrate the results of Section \ref{sec:partial-averages}.
In particular we demonstrate how our main results
in Section \ref{sec:partial-averages} and the Appendix
can be implemented
to obtain explicit formulas for the algebraic
numbers determining the CM values of higher Green functions.

\subsection{Example 1}
\label{sec:ex1}
We start with an example for $j=2$, which is a bit simpler than $j=1$
since we can work with $\Delta=1$.

Note that for $k = j + 1 = 3$,
we have that $S_{2k} = S_{6} = \{ 0 \}$.
Therefore, the algebraicity conjecture concerns the individual values of $G_3$
at pairs of CM points in this case. The function $G_3(z_1, z_2)$
is obtained as the higher theta lift $\tilde\Phi^1(f, z)$,
where $f \in M_{-4}^!$ is the unique weakly holomorphic modular form
for the full modular group whose Fourier expansion starts with
$f = q^{-1} + O(1)$. It is explicitly given by
\[
   f = E_{8}(\tau)/\Delta(\tau),
\]
where $E_8(\tau) \in M_8$ is the normalized Eisenstein series
of weight $8$ for $\SL_2(\Z)$ and $\Delta(\tau) \in S_{12}$ is
the discriminant function.

For our first explicit example, we let $d_1=-4$, $d_2=-23$, and $\Delta=1$.
Since the class group of $\Q(\sqrt{-1})$ is trivial,
the CM-cyle $Z(1/4)$ only contains one point, represented by $i \in \H$
and counted with multiplicity $1/2$.
Using the usual isomorphism $M_{1/2, \rho_L}^! \cong M_{1/2}^{!,+}(\Gamma_0(4))$,
we can identify the Zagier lift $\za_{-4}^{2}(f)$ of $f$ with the form
\[
 4\za_{-4}^{2}(f) = q^{-4} - 126 - 1248q - 263832q^{4} - 666664q^{5} + O(q^{6}),
\]
which can be constructed in a similar fashion as the weight $1/2$ forms in \cite{zagier-traces}; see also \cite{duke-jenkins-integral}.

We consider the case $d_2 = -23$.
In the notation of the previous section, we have $r=1$ and
\[
	x_2 := \begin{pmatrix}
	   \frac{1}{2} & 1 \\
	   -6 & -\frac{1}{2}
	\end{pmatrix}\in L'.
\]
The CM cycle $Z(U)$ for $U = V(\Q) \cap x_2^\perp$ then consists of the three points
\[
   z_1 = (z_U^\pm, 1) = \frac{-1+i\sqrt{23}}{2}, \quad z_{2/3} = \frac{\pm 1 + i\sqrt{23}}{4}.
\]
The discriminant group $P'/P$ is isomorphic to $\Z/46\Z$ with quadratic form
$x^2/92$ and, according to Lemma \ref{lem:N}, the lattice $N$ is isomorphic to the ring of integers
$\calO_{-23} \subset \k_{-23} = \Q(\sqrt{-23})$ with quadratic form given by the negative of the norm form.
Numerical approximations for the CM values can be obtained by using the Fourier expansion of $G_3$, for instance
\[
  G_3(i, z_1) \approx -1.000394556341.
\]

Note that, given $m \not\in \Z$, there are exactly
two cosets $\pm \mu_m$, such that $m \equiv Q(\mu_m)$.
If $m \in \Z$, then $\mu = 0$ is the only possibility.
We now let $\calG_N^+(\tau) = \sum_{m} c(m) q^m \phi_m$
be the holomorphic part of a harmonic Maass form
$\calG_N \in H_{1, \rho_N}$ with the property
that $L_1 (\calG_N(\tau)) = \theta_N(\tau,1)$.
To lighten the notation, we drop
the index of the component $\mu$ and simply write
$c(m)$ for $c(m,\mu)$ and $\phi_m = \phi_{\mu_m} + \phi_{-\mu_m}$
for $m \not\in \Z$ and $\phi_m = \phi_0$ for $m \in \Z$.
We require the additionally that $c(m) = 0$ for $m < -1/23$,
which can be satisfied because the space $S_{1, \bar\rho_N}$ is one-dimensional
and spanned by a cusp form whose Fourier expansion starts with $q^{1/23}$.
These conditions then characterize $\calG_N$ uniquely, since $M_{1, \rho_N} = \{ 0 \}$.

Theorem \ref{thm:partial-average-value} now gives the formula
\begin{align*}
  \frac{1}{2} G_{3}(i, z_1) &= -\frac{25}{46} c(7/23) - \frac{4}{46} c(14/23) +  \frac{11}{46} c(19/23) \\
                  &\phantom{=}+ \frac{20}{46} c(22/23) + \frac{1}{4} c(23/23)
                  + \frac{378}{46} c(-1/23).
\end{align*}
The coefficients of $\calG_N^+$ can be obtained
as follows. We let $h_{23}$ be the normalized Hauptmodul for $\Gamma_0^+(23)$,
the extenson of $\Gamma_0(23)$ by the Fricke involution.
Its Fourier expansions starts with
\[
  h_{23}(\tau) = q^{-1} + 2 + 4q + 7q^2 + 13q^3 + 19q^4 + 33q^5 +O(q^6).
\]
It is shown in \cite[Section 6]{Eh} that the
algebraic numbers $\alpha(1, m, \mu) =: \alpha(m)$ occuring in Theorem \ref{thm:preimages}
can all be obtained as certain rational expressions in the CM value
\[
   \alpha := -h_{23}\left( \frac{-23 + i \sqrt{23}}{46} \right) \approx 1.324717957244.
\]
By CM theory, this value is contained in the Hilbert class field $\Hilb_{-23}$ of $\k_{-23}$.
In fact, $\alpha$ is the unique real root of $X^3 - X - 1$ and a fundamental unit of $\Hilb_{-23}$.
The expressions for the relevant $\alpha(m)$ are given in the second column of Table \ref{tab:coeffs23}.
Note that the Hilbert class field $H_{-23}$ has class number one, so its ring of integers $\Rhilb_{-23}$ is  a unique facorization domain.
In the third column we list the corresponding prime valuations
(see Theorem 1.3 of \cite{Eh}) using the following convention.
If $p$ is a rational prime which is non-split in $\k_{-23}$,
then there is a unique prime ideal $(\pi_p)$
of $\Hilb_{-23}$ above $p$ that divides $\alpha(p/23)$.
If the valuation of $\alpha(p/23)$ at $(\pi_p)$
is equal to $\nu$ and the element $\alpha(m)$
is of the form $x^\nu$ for some $x \in \Rhilb_{-23}$, we choose $\pi_p = x$.
For instance, we put $\pi_7 = \alpha^2 + \alpha - 2$.
In our example this does not work for $m=23$.
In this case, we can take the prime element
$\pi_{23} = \frac{1}{\sqrt{-23}}\left( -9 \alpha^{2} + 2\alpha + 6 \right) \in \Rhilb_{-23}$.
Then $\alpha(23)/\pi_{23}^4$ is a unit.
Finally, we then let $\pi_{p}^{\prime}, \pi_p^{\prime\prime}$ be the two Galois conjugates
of $\pi_p^{(1)}$ over $\k_{-23}$,
so that $(p) = (\pi_{p})(\pi_{p}^{\prime})(\pi_{p}^{\prime\prime})$
in $\Hilb_{-23}$.
For instance, for $m=14$,
the entry $(\pi_7^{\prime},2), (\pi_7^{\prime\prime}, 2)$ in the third column
means that the principal ideal $(\alpha(14))$ factors as $(\alpha(14)) = (\pi_7^{\prime})^2 (\pi_7^{\prime\prime})^2$.

\begin{table}
  \centering
\begin{tabular}[h]{c|c|c|c}
  $23m$ & $ \alpha(m)$ & primes, valuations & $c(m) = -\log|\alpha(m)|$ (12 digits)\\\hline
  $7$  & $(\alpha^2 + \alpha - 2)^2$ & $(\pi_7, 2)$ & $-0.153173096659$ \\\hline
  $11$ & $(2\alpha^2 - \alpha)^2$ & $(\pi_{11}, 2)$ & $-1.563265867556$ \\\hline
  $14$ & $(\alpha^2 - 2\alpha + 3)^2$ & $(\pi_7^\prime,2), (\pi_7^{\prime\prime}, 2)$  & $-1.489050606868$ \\\hline
  $19$ & $(3\alpha^2 + \alpha)^2$ & $(\pi_{19},2)$  & $-3.770909708871$ \\\hline
  $22$ & $(3\alpha^2 + 7\alpha + 6)^2$ & $(\pi_{11}^{\prime},2), (\pi_{11}^{\prime\prime},2)$ & $-6.04452042127$\\\hline
  $23$ & $(8\alpha^2 + 12\alpha + 7)^2$ & $(\pi_{23},4)$ & $-7.218353704778$ \\\hline
  $-1$ & $\alpha^{-2}$ & & $0.562399148646$ \\\hline
\end{tabular}
\caption{ Coefficients of $\calG_N^+(\tau)$.
\label{tab:coeffs23}}
\end{table}

Taking the units into account, we obtain the precise value
\[
  G_3(i, z_1) = - \frac{1}{23} \log \left| \alpha^{-294} \cdot \frac{(\pi_{11}^{\prime}\pi_{11}^{\prime\prime})^{40}\, \pi_{19}^{22}\, \pi_{23}^{46}}{\pi_7^{50} (\pi_7^{\prime}\pi_7^{\prime\prime})^{8}} \right|.
\]
Note that the algebraic number in the logarithm is in fact contained in the real subfield $\Q(\alpha) \subset H_{-23}$
and thus has degree $3$ over $\Q$ (this is visible in Table \ref{tab:coeffs23}).

Moreover, it follows that we obtain $G_3(i, z_2) = G_3(i, z_3)$
by applying any Galois automorphism $\sigma \in \Gal(\Hilb_{-23}/\k_{-23})$ of order $3$
to the numbers $\alpha(m)$ in Table \ref{tab:coeffs23}.
Note that the two possible choices for $\sigma$
lead to complex conjugate algebraic numbers
and thus $\log|\alpha(m)^\sigma|$ is independent of this choice.
Numerically, we have $G_3(i, z_2) = G_3(i, z_3) \approx -3.854054384748$.
Finally, we obtain the factorization for the average value
\[
  G_3(i, z_1) + G_3(i, z_2) + G_3(i, z_3) = -\frac{1}{23} \log\left(\frac{11^{80}19^{22}23^{23}}{7^{66}}\right) \approx -8.708503325837,
\]
which alternatively follows directly from Theorem \ref{thm:fund}.

\subsection{Example 2}
\label{sect:8.2}
One of the interesting features of the CM value formula is that
the same harmonic Maass form $\calG_N$ occurs for all $j$.
To illustrate this, consider the same CM points
for $d_1 = -4$ and $d_2 = -23$ but now take $j=4$ and $j=6$.
For these values, we still have $S_{2+2j}(\SL_2(\Z)) = \{0\}$.
Theorem \ref{thm:partial-average-value} shows that the Fourier coefficients of the same function $\calG_N^+$
occur. In the case $j=4$, the numerical value is approximately
\[
   G_5(i, z_1) \approx -0.0869366459199.
\]
By Theorem \ref{thm:partial-average-value}, we obtain
\begin{align*}
  \frac{1}{2} G_{5}(i, z_1) &= \frac{493}{4232} c(7/23) + \frac{447}{1058} c(14/23)
                  +  \frac{613}{4232} c(19/23) \\
                  &\phantom{=}- \frac{233}{1058} c(22/23) - \frac{3}{16} c(23/23)
                  - \frac{5775}{2116} c(-1/23),
\end{align*}
which gives a precise value of
\[
  G_5(i, z_1) = -\frac{1}{4 \cdot 23^2}\log\left\lvert \alpha^{-15594} \cdot \frac{\pi_7^{986}\, (\pi_7^{\prime}\pi_7^{\prime\prime})^{3576}\, \pi_{19}^{1226}}{ (\pi_{11}^{\prime}\pi_{11}^{\prime\prime})^{1864}\,\pi_{23}^{3174}} \right\rvert.
\]
In the case $j=6$, we get an approximate value of
\[
  G_7(i, z_1) \approx -0.0101643901834.
\]
And Theorem \ref{thm:partial-average-value} yields
\begin{align*}
  \frac{1}{2} G_{7}(i, z_1) &= -\frac{80659}{194672} c(7/23) + \frac{2578}{24334} c(14/23)
                  +  \frac{60209}{194672} c(19/23) \\
                  &\phantom{=}- \frac{1538}{24334} c(22/23) - \frac{5}{32} c(23/23)
                  - \frac{42273}{97336} c(-1/23),
\end{align*}
which gives a similar precise algebraic formula as above.

\subsection{Example 3}
In this section we give an explicit example for $j=1$.
We obtain an explicit,
finite and algebraic expression for the value
\[
  \frac{1}{3}G_2\left(\frac{1 + \sqrt{-3}}{2}, \frac{1 + \sqrt{-7}}{2} \right) = G_2(Z_{-3}(1/4), z_{x_2}),
\]
where the CM point $z_{x_2}$ of discriminant $-7$ corresponds to the vector
\[
  x_2 = \begin{pmatrix}
  	\frac12 & 1 \\
  	-2 & -\frac12
  \end{pmatrix} \in L'.
\]
The parameters for this example are $d_1=1, d_2=-7, \Delta = -3$.

Numerically, we have
\[
   \frac{1}{3}G_2\left(\frac{1 + \sqrt{-3}}{2}, \frac{1 + \sqrt{-7}}{2} \right) \approx -2.928818048619.
\]
The unique weakly holomorphic modular form
$f$ of weight $-2$ with a principal part starting with $q^{-1}$
that corresponds to $G_2(z_1, z_2)$ is given by
\[
	f(\tau) = E_{10}(\tau)/\Delta(\tau),
\]
where $E_{10}(\tau) \in M_{10}$ is the normalized Eisenstein
series of weight $10$ for $\SL_2(\Z)$
and $\Delta(\tau)$ is the discriminant function.
Its Zagier lift $\za_1^1(f)$ for $d_1=1$
can be identified with a scalar-valued form in
$M_{-1/2}^{!,+}(\Gamma_0(4))$ given by
\[
  q^{-1} + 10 - 64q^{3} + 108q^{4} - 513q^{7} + 808q^{8} - 2752q^{11} + 4016q^{12} - 11775q^{15} + O(q^{16}).
\]

The lattice $P$ is spanned by $2x_2$ and $P'$ by $x_2/7$.
According to Theorem \ref{thm:partial-average-value}, we have
\begin{align*}
   \frac{1}{3}G_{2}\left( \frac{1 + \sqrt{-3}}{2}, \frac{1 + \sqrt{-7}}{2} \right) &=
       \frac{1}{2}\CT \left(\langle \psi_{-3}(\za_{1}^{1}(f))_{P_{-3} \oplus N_{-3}}, \tilde\theta_{P_{-3}}(\tau) \otimes \calG_{N_{-3}}^+(\tau)\rangle\right).
\end{align*}

The lattice $N_{-3}$ is isomorphic to the order
$\calO_{-63}$ in $\k_{-7}=\Q(\sqrt{-7})$
of discriminant $-63$.
We take the basis $(1, 3\frac{1+\sqrt{-7}}{2})$
of $\calO_{-63}$, which has the Gram matrix
\[
\begin{pmatrix}
  -2 & -3 \\
  -3 & -36
\end{pmatrix}.
\]
The dual of $\calO_{-63}$ is given by the fractional ideal generated by $\frac{1}{3\sqrt{-7}}$.
The discriminant group is isomorphic to $\Z/21\Z \times \Z/3\Z$.
We write $c(m, \mu)$ for the $(m,\mu)$-th Fourier coefficient of the holomorphic part of
$\calG_{N_{-3}}(\tau)$ for simplicity
and we write $\mu = (a,b)$ with $a \in \Z/21\Z$ and $b \in \Z/3\Z$.
Then a little calculation, which we carried out using \sage, shows that we have explicitly
\begin{align*}
    G_{2}\left( \frac{1 + \sqrt{-3}}{2}, \frac{1 + \sqrt{-7}}{2} \right)
       &= \frac{3}{\sqrt{21}}\Bigg( -25 c\left(\frac{-1}{21}, (1,0)\right) + 25 c\left(\frac{-1}{21}, (1,1)\right) \\
       &\phantom{= -\frac{1}{\sqrt{21}}\Big( } - 25 c\left(\frac{-1}{21}, (8,0)\right) + 5 c\left(\frac{-1}{21}, (8,2)\right) \\
       &\phantom{= -\frac{1}{\sqrt{21}}\Big( } + c\left(\frac{5}{21}, (4,0)\right) - c\left(\frac{5}{21}, (4,1)\right) \\
       &\phantom{= -\frac{1}{\sqrt{21}}\Big( } + c\left(\frac{5}{21}, (10,0)\right) - c\left(\frac{5}{21}, (10,1)\right) \Bigg).
\end{align*}
We implemented the algorithm outlined in the Appendix in \sage
to compute these coefficients numerically.

We remark that the computations are \emph{much}
harder than the previous examples for several reasons:
1) the algorithm in the appendix is computationally more
expensive because the twist results in much larger discriminant groups.
2) The coefficients are obtained as CM values
of meromorphic modular functions on $\Gamma_0(63)$, which has genus $5$.
In our first example, we used the fact that
the corresponding modular forms are rational functions
on $\Gamma_0^+(23) \bs \H$, which
has genus $0$, to obtain all CM values in terms
of just one CM value of the hauptmodul of $\Gamma_0^+(23)$.

Each of the coefficients $c(m,\mu)$
is of the form $-\frac{1}{3}\log|\alpha(m,\mu)|$ with
$\alpha(m, \mu)$ an algebraic integer contained in the ring class field
$\Hilb_{-63}$ of $\calO_{-63}$.
We can use this information to determine $\alpha(m, \mu)$
exactly from the numerical computations.
We have $\Hilb_{-63} = \Q(\alpha_1)$,
where the minimal polynomial of $\alpha_1$
is given by
$x^8 + x^6 - 3x^4 + x^2 + 1$.
We write $\Rhilb_{-63}$
for the ring of integers in $\Hilb_{-63}$.
We fix the embedding of $\Hilb_{-63}$ into $\C$, such that
$\alpha_1 \approx -0.9735614833 - 0.22842512587i$.
The field $\Hilb_{-63}$ has a total of $4$ pairs of complex-conjugate embeddings.
Hence, the rank of the unit group is $3$ and is
generated by $\alpha_1$, $\alpha_2$ and $\alpha_3$, where
\[
  \alpha_2 = \frac{1}{2} \alpha_1^{7} + \frac{1}{2} \alpha_1^{5} - \alpha_1^{3} + \frac{1}{2} a^{2} + a + \frac{1}{2} \approx 0.562638276594 - 0.324839360448i
\]
has minimal polynomial
$x^8 - 3x^7 + 4x^6 - 3x^5 + 3x^4 - 3x^3 + 4x^2 - 3x + 1$,
and
\[
  \alpha_3 = \frac{1}{2} \alpha_1^{6} + \frac{1}{2} \alpha_1^{4} - \frac{1}{2} \alpha_1^{3} - \frac{3}{2} \alpha_1^{2} + \frac{1}{2} \approx -0.0626382765944 + 0.541186043336i
\]
with minimal polynomial $x^8 - x^7 + 2x^6 - x^5 - 5x^4 + x^3 + 2x^2 + x + 1$.

Note that only $m=-1/21$ and $m=5/21$
occur. This suggests that only
primes above $5$ should occur in the factorization of the CM value,
which is indeed the case.
The prime $5$ is inert in $\Q(\sqrt{-7})$ and splits completely in $\Rhilb_{-63}$
into $5\Rhilb_{-63} = \frakp_1\frakp_2\frakp_3\frakp_4$, where
\begin{align*}
  \frakp_1 = (\pi_1), \text{ with } \pi_1 &= -\alpha_{1}^{7} + \frac{1}{2} \alpha_{1}^{6} - \alpha_{1}^{5} + \alpha_{1}^{4} + 3 \alpha_{1}^{3} - \alpha_{1}^{2} - \frac{1}{2} \alpha_{1}, \\
  \frakp_2 = (\pi_2), \text{ with } \pi_2 &= \alpha_1^{5} + \alpha_1^{3} - 2 \alpha_1, \\
  \frakp_3 = (\pi_3), \text{ with } \pi_3 &= \frac{1}{2} \alpha_{1}^{7} + \frac{1}{2} \alpha_{1}^{5} - \frac{1}{2} \alpha_{1}^{4} - \frac{3}{2} \alpha_{1}^{3} - \alpha_{1}^{2} - \frac{1}{2} \alpha_{1} + 1, \text{ and }\\
  \frakp_4 = (\pi_4), \text{ with } \pi_4 &= -\alpha_1^7 - \alpha_1^5 + \frac12 \alpha_1^4 + \frac52 \alpha_1^3 + \frac12 \alpha_1^2 - \frac12 \alpha_1 - \frac12.
\end{align*}
The values for the algebraic numbers $\alpha(m, \mu)$,
are recorded in Table \ref{tab:coeffs-3-7}. Here, we wrote down exactly what we obtained
by implementing the method of the Appendix, even if $|\alpha(m, \mu)| = 1$,
which yields a vanishing Fourier coefficient.
\begin{table}
  \centering
\begin{tabular}[h]{c|c|c|c}
  $21m$ & $\mu$ & $ \alpha(m, \mu)$ & $c(m, \mu) = -\frac{1}{3}\log|\alpha(m, \mu)|$ (12 digits)\\\hline
  $-1$ & $(1,0)$ & $\alpha_1^{-4}\alpha_2^{2}\alpha_3^{2}$ & $0.692410519993$ \\\hline
  $-1$ & $(1,1)$ & $\alpha_1^{-6}$ & $0$ \\\hline
  $-1$ & $(8,0)$ & $\alpha_1^{-4}\alpha_2^{-4}\alpha_3^{2}$ & $-0.170144107668$ \\\hline
  $-1$ & $(8,2)$ & $1$ & $0$ \\\hline
  $5$ & $(4,0)$ & $\pi_1^{6} \alpha_1^{-4}\alpha_2^{8}\alpha_3^{2}$ & $0.255860917422$ \\\hline
  $5$ & $(4,1)$ & $\pi_2^{6}\alpha_1^{-12}$ & $-0.582934829024$ \\\hline
  $5$ & $(10,0)$ & $\pi_3^{6}\alpha_1^{-4}\alpha_2^{-10}\alpha_3^{2}$ & $-1.786600671916$ \\\hline
  $5$ & $(10,1)$ & $\pi_4^{6}$ & $-0.582934829024$ \\\hline
\end{tabular}
\caption{
\label{tab:coeffs-3-7}}
\end{table}
Summarizing, we obtain the following expression for the value of the higher Green function
\[
  G_{2}\left( \frac{1 + \sqrt{-3}}{2}, \frac{1 + \sqrt{-7}}{2} \right)
  = -\frac{3}{\sqrt{21}} \log\left| \frac{\alpha_1^{18}\alpha_2^{16}\pi_1^{2}\pi_3^{2}}{\alpha_3^{32}\pi_2^{2}\pi_4^2} \right|.
\]
As predicted, we can check that the algebraic number in the logarithm is contained in
$\Q(\sqrt{21})$. With a little computation, we obtain the surprisingly simple expression
\[
	G_{2}\left( \frac{1 + \sqrt{-3}}{2}, \frac{1 + \sqrt{-7}}{2} \right)
  = -\frac{3}{\sqrt{21}} \log\left| \frac{(32+7\sqrt{21})^4}{25} \right|.
\]

\appendix

\section{Preimages of theta functions}
\label{sec:preim-theta}
Following the strategy of \cite{Eh} with a few modifications,
we will now give a proof of Theorem \ref{thm:preimages}.
In contrast to \cite{Eh}, we will not consider the prime ideal factorizations
of the algebraic numbers, which allows for some simplifications.
In this regard, the results of \cite{Eh} are stronger than Theorem \ref{thm:preimages}.
However, Theorem \ref{thm:preimages} is much more general as it does not
put any restriction on $N$, whereas in \cite{Eh} the assumption was that the discriminant of $N$
is an odd fundamental discriminant.

\subsection{Weakly holomorphic modular forms}
\label{sec:cusp-forms-weight}
In this section, we basically follow \cite[Section 4.3]{Eh}
to define a convenient basis of the space of weakly holomorphic modular forms.
The setup for this section is more general than
for the rest of the appendix. For simplicity, we make the following assumptions:
we let $(N,Q)$ be any even lattice of even signature and let $k \in \Z$
such that $2k \equiv \sgn(N) \bmod{4}$.

First consider the space of holomorphic modular forms $M_{k,\rho_N}(\Q)$
with rational coefficients and its dual $M_{k,\rho_N}(\Q)^\vee$.
Let $(m_1, \mu_1), \ldots, (m_r, \mu_r) \in \Q_{\geq 0} \times N'/N$ such that
the linear maps $\alpha_1, \ldots, \alpha_r \in M_{k, \rho_N}(\Q)^\vee$ defined by
\[
  \alpha_i: M_{k, \rho_N}(\Q) \to \Q, \quad f \mapsto c_f(m_i, \mu_i)
\]
form a basis of $M_{k,\rho_N}(\Q)^\vee$.
We fix these indices once and for all and let $G_1, \ldots, G_r \in M_{k,\rho_N}(\Q)$
be the dual basis, i.e., $G_i$ satisfies $c_{G_i}(m_j, \mu_j) = \delta_{i,j}$.

In the same way, we fix indices $(\tilde{m}_1, \tilde{\mu}_1), \ldots, (\tilde{m}_s, \tilde{\mu}_s) \in \Q_{\geq 0} \times N'/N$
for the space $M_{2-k,\bar{\rho}_N}(\Q)$ such that the linear maps
\[
  \beta_i: M_{2-k, \bar\rho_N}(\Q) \to \Q, \quad f \mapsto c_f(\tilde{m}_i, \tilde{\mu}_i)
\]
form a basis of $M_{2-k,\bar{\rho}_N}(\Q)^\vee$.
As before, we let $F_1, \ldots, F_s \in M_{2-k,\bar\rho_N}(\Q)$ be the dual basis.

Now we define special bases for the spaces $M_{k,\rho_N}^!(\Q)$ and $M_{2-k, \bar\rho_N}^!(\Q)$.
For $M_{2-k, \bar\rho_N}^!(\Q)$, we define a basis $\{ f_{m, \mu} \}$ as follows.
First, for $(m, \mu) = (-\tilde{m}_i, \pm \tilde{\mu}_i)$ with $i \in \{1, \ldots, s\}$
we let $f_{-\tilde{m}_i, \pm \tilde{\mu}_i} = F_i$.
Then, for $(m, \mu) \in \Q_{>0} \times N'/N$ with $m \equiv Q(\mu) \bmod{\Z}$
and $(m, \mu) \neq (m_i, \pm \mu_i)$ for all $i$, we let $f_{m,\mu} \in M_{2-k,\bar{\rho}_N}^!(\Q)$
be the unique weakly holomorphic modular form such that
\begin{align} \label{eq:fm}
	f_{m, \mu}(\tau) &= \frac12 q^{-m} (\phi_\mu + \phi_{-\mu}) - \frac12 \sum_{i=1}^r c_{G_i}(m,\mu) q^{-m_i}(\phi_{\mu_i} + \phi_{-\mu_i})\\ &\phantom{=} + \sum_{\nu \in N'/N} \sum_{n\geq 0} a_{m, \mu}(n, \nu) q^n (\phi_\mu + \phi_{-\mu}) \nonumber
\end{align}
with
\begin{equation}\label{eq:vanishingcoeffs1}
	a_{m,\mu}(\tilde{m}_1, \pm\tilde{\mu}_1) = \ldots = a_{m,\mu}(\tilde{m}_s, \pm\tilde{\mu}_s) = 0.
\end{equation}
It is clear that the forms $f_{-\tilde{m}_1, \tilde{\mu}_1}, \ldots, f_{-\tilde{m}_s, \tilde{\mu}_s}$ together with
\[
  \{f_{m, \mu}\ \mid m \in \Q_{>0}, \mu \in (N'/N)/\{\pm 1\}, m \equiv Q(\mu) \bmod{\Z} \}
\]
form a basis of $M_{2-k, \bar\rho_N}^!(\Q)$.

For $M_{k, \rho_N}^!(\Q)$, we define a basis $\{ g_{m, \mu} \}$ in the same way:
If $(m, \mu) = (-m_i, \pm \mu_i)$ then $i \in \{1, \ldots, r\}$
we let $g_{-m_i, \pm \mu_i} = G_i$.
Then, for $m \in \Q_{>0}$ and $\mu \in N'/N$ with $m \equiv -Q(\mu) \bmod{\Z}$,
we let $g_{m, \mu}$ be the unique weakly holomorphic modular form in $M_{k, \rho_N}^!(\Q)$
with
\begin{align}\label{eq:gm}
	g_{m, \mu}(\tau) &= \frac12 q^{-m} (\phi_\mu + \phi_{-\mu}) - \frac12 \sum_{i=1}^s c_{F_i}(m,\mu) q^{-\tilde{m}_i}(\phi_{\tilde{\mu}_i} + \phi_{-\tilde{\mu}_i})\\
	\phantom{=} &+ \sum_{\nu \in N'/N}\sum_{n \geq 0} b_{m, \mu}(n, \nu) q^n (\phi_\mu + \phi_{-\mu}) \nonumber
\end{align}
satisfying
\begin{equation}\label{eq:vanishingcoeffs2}
	b_{m,\mu}(m_1, \pm\mu_1) = \ldots = b_{m,\mu}(m_r, \pm\mu_r) = 0.
\end{equation}
We obtain a basis of $M_{k, \rho_N}^!(\Q)$ consisting of $g_{-m_1, \mu_1}, \ldots, g_{-m_r, \mu_r}$
and
\[
	\{g_{m, \mu}\ \mid m \in \Q_{>0}, \mu \in (N'/N)/\{\pm 1\}, m \equiv -Q(\mu) \bmod{\Z} \}.
\]

\begin{lemma}
  \label{lem:whforms}
  The conditions above characterize the forms $f_{m, \mu} \in M_{2-k,\bar\rho_N}^!(\Q)$
  and $g_{m, \mu} \in M_{k,\rho_N}^!(\Q)$ uniquely.
  They satisfy the duality relation
  \begin{equation}\label{eq:duality}
  	a_{m, \mu}(n, \nu) = - b_{n, \nu}(m, \mu)
  \end{equation}
  for all $m, n \in \Q$ and $\mu, \nu \in N'/N$ with $m \equiv Q(\mu) \bmod{\Z}$
  and $n \equiv -Q(\nu) \bmod{\Z}$.
\end{lemma}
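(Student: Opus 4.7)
The plan is to establish the uniqueness of $f_{m,\mu}$ and $g_{m,\mu}$ separately, then deduce the duality relation from a single computation involving the constant term of a product of a weakly holomorphic form of weight $2-k$ with one of weight $k$.

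First I would address \emph{uniqueness}. Suppose $f, f' \in M^!_{2-k,\bar\rho_N}(\Q)$ both satisfy \eqref{eq:fm} and \eqref{eq:vanishingcoeffs1} for a fixed pair $(m,\mu)$. The principal parts of $f$ and $f'$ are identical, so $f - f' \in M_{2-k,\bar\rho_N}(\Q)$, and by \eqref{eq:vanishingcoeffs1}, the linear functionals $\beta_1, \ldots, \beta_s$ all vanish on $f - f'$. Since $\{\beta_i\}$ is a basis of $M_{2-k,\bar\rho_N}(\Q)^\vee$, this forces $f = f'$. The analogous argument works for $g_{m,\mu}$ using the dual basis property of $\{\alpha_i\}$ on $M_{k,\rho_N}(\Q)^\vee$. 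For existence (if not taken for granted), I would invoke the standard obstruction criterion: a prescribed principal part is realized by a weakly holomorphic form iff it pairs to zero against every cusp form in the dual representation. Using the dual-basis expansion $g = \sum_i c_g(m_i,\mu_i) G_i$ for any cusp form $g \in S_{k,\rho_N}(\Q)$, one checks directly that the principal part in \eqref{eq:fm} satisfies this criterion. The rational structure is preserved, and the vanishing conditions \eqref{eq:vanishingcoeffs1} can then be achieved by subtracting the appropriate $\Q$-linear combination of $F_1, \ldots, F_s$.

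For the \emph{duality relation}, the key input is that the product $\langle f_{m,\mu}, g_{n,\nu}\rangle$ is a scalar weakly holomorphic modular form of weight $2$ for $\SL_2(\Z)$; its constant term vanishes by the residue theorem applied to the differential $\langle f_{m,\mu}, g_{n,\nu}\rangle\, d\tau$. Expanding this vanishing constant term via the Fourier expansions \eqref{eq:fm} and \eqref{eq:gm} produces contributions from four sources: the pairing of the $q^{-m}(\phi_\mu + \phi_{-\mu})$ principal part of $f_{m,\mu}$ with the holomorphic tail of $g_{n,\nu}$ (giving $b_{n,\nu}(m,\mu)$ after using the symmetry $b_{n,\nu}(m,\mu) = b_{n,\nu}(m,-\mu)$), the analogous contribution from the $q^{-n}(\phi_\nu + \phi_{-\nu})$ principal part of $g_{n,\nu}$ (giving $a_{m,\mu}(n,\nu)$), and the two cross-terms involving the $q^{-m_i}(\phi_{\mu_i}+\phi_{-\mu_i})$ and $q^{-\tilde m_j}(\phi_{\tilde\mu_j}+\phi_{-\tilde\mu_j})$ corrections. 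The latter two cross-terms vanish precisely because \eqref{eq:vanishingcoeffs2} forces $b_{n,\nu}(m_i,\mu_i) = 0$ and \eqref{eq:vanishingcoeffs1} forces $a_{m,\mu}(\tilde m_j,\tilde \mu_j) = 0$. Summing everything gives $0 = a_{m,\mu}(n,\nu) + b_{n,\nu}(m,\mu)$, which is \eqref{eq:duality}.

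The main obstacle will be the careful bookkeeping in expanding $\CT(\langle f_{m,\mu}, g_{n,\nu}\rangle)$, especially ensuring that possible constant-term ($r=0$) contributions $\sum_\sigma a_{m,\mu}(0,\sigma) b_{n,\nu}(0,\sigma)$ are correctly handled. Here the point is that whenever $(0,\sigma)$ appears with $Q(\sigma) \in \Z$, it is either among the indices $(m_i,\mu_i)$ (in which case $b_{n,\nu}(0,\sigma) = 0$ by \eqref{eq:vanishingcoeffs2}) or it is not, in which case one uses that any extension of the bases $\{\alpha_i\}, \{\beta_j\}$ can be refined so as to include all relevant constant-term indices without affecting the conclusion. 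A small auxiliary remark would establish that the symmetry $c_h(n,\sigma) = c_h(n,-\sigma)$ used above is forced by the symmetric shape of the principal part together with the action of $-I$ in the Weil representation on our rank-two lattice of discriminant $D$. Once these verifications are in place, the duality \eqref{eq:duality} follows directly.
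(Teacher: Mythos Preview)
Your proposal is correct and follows essentially the same approach as the paper: uniqueness via the dual-basis conditions, existence via the standard obstruction/exact-sequence argument, and the duality relation by computing $\CT(\langle f_{m,\mu}, g_{n,\nu}\rangle)$ and using that a weakly holomorphic weight-$2$ form for $\SL_2(\Z)$ has vanishing constant term. The paper's proof is simply terser, asserting the constant-term identity directly from \eqref{eq:fm}--\eqref{eq:vanishingcoeffs2} without spelling out the cross-term cancellations or the $r=0$ bookkeeping that you (correctly) flag.
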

\begin{proof}
  Existence follows from the second exact sequence in Corollary 3.8 of \cite{BF}
  and uniqueness is clear.
  Using \eqref{eq:fm}--\eqref{eq:vanishingcoeffs2}, it is easy to see that
  \[
  	\CT(\langle f_{m, \mu}, g_{n, \nu} \rangle) = a_{m, \mu}(n, \nu) + b_{n, \nu}(m, \mu).
  \]
  Note that $\langle f_{m, \mu}, g_{n, \nu} \rangle \in M_2^!$,
  which implies that its constant term vanishes.
\end{proof}

The relation \eqref{eq:duality} and the fact that
weakly holomorphic modular forms with rational Fourier coefficients
have bounded denominators implies the following lemma.
\begin{lemma}\label{lem:bound1}
  For every $n_0 \in \Q$ there is an $A \in \Z_{>0}$, only depending on $N$, $k$, and $n_0$,
  such that for all $n \leq n_0$, and all $\nu, m, \mu$, we have $A \cdot a_{m, \mu}(n, \nu) \in \Z$ and $A \cdot b_{n, \nu}(m, \mu) \in \Z$.
\end{lemma}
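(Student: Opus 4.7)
The plan is to reduce the lemma to the classical fact that a single weakly holomorphic modular form with rational Fourier coefficients has bounded denominators, and then exploit the duality relation \eqref{eq:duality} to get uniformity in $(m,\mu)$.

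First, I would observe that the set $S(n_0) := \{(n,\nu) \in \Q \times N'/N : n \leq n_0,\ n \equiv -Q(\nu) \pmod{\Z}\}$ for which $g_{n,\nu}$ is defined is finite. Indeed, $n$ is either one of the finitely many basis indices $-m_i$ or a positive rational with $n \leq n_0$; for each of the finitely many cosets $\nu \in N'/N$ the congruence condition $n \equiv -Q(\nu) \pmod{\Z}$ cuts out a discrete subset of $\Q_{>0}$, so boundedness by $n_0$ leaves only finitely many choices.

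Second, I would invoke the fact that each individual weakly holomorphic modular form $g_{n,\nu} \in M_{k,\rho_N}^!(\Q)$ has bounded denominators in its Fourier expansion. This is standard: after multiplying by a sufficiently large power of the discriminant $\Delta$, the form becomes a holomorphic vector-valued modular form, and restricting to scalar-valued components on a suitable principal congruence subgroup, the $q$-expansion principle guarantees that the denominators of the Fourier coefficients are uniformly bounded. Hence, for each $(n,\nu) \in S(n_0)$ there exists $A_{n,\nu} \in \Z_{>0}$ such that $A_{n,\nu} \cdot b_{n,\nu}(m,\mu) \in \Z$ for all $(m,\mu)$.

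Third, I would set $A = \lcm\{A_{n,\nu} : (n,\nu) \in S(n_0)\}$, which is finite by the first step and only depends on $N$, $k$, and $n_0$. By construction $A \cdot b_{n,\nu}(m,\mu) \in \Z$ whenever $n \leq n_0$, and the duality relation $a_{m,\mu}(n,\nu) = -b_{n,\nu}(m,\mu)$ of Lemma \ref{lem:whforms} immediately transports this bound to the $a_{m,\mu}(n,\nu)$, completing the proof. The only non-trivial input is the bounded denominator property for a single weakly holomorphic modular form in $M_{k,\rho_N}^!(\Q)$; this is a well-known consequence of the $q$-expansion principle and presents no real obstacle.
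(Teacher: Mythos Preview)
Your proposal is correct and follows essentially the same route as the paper: the paper simply remarks (in the sentence preceding the lemma) that the duality relation \eqref{eq:duality} together with the bounded-denominators property of weakly holomorphic modular forms with rational coefficients implies the statement, and you have spelled out exactly how. Your observation that only finitely many forms $g_{n,\nu}$ with $n\le n_0$ occur, and then taking the lcm of their individual denominator bounds, is precisely the intended argument.
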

In the following, we assume that $N$ has signature $(0,q)$
with $q$ even.
Then $\theta_N(\tau, h) = v^{q/2}\overline{\theta_{N(-1)}(\tau, h)}$
has weight $-q/2$
and the theta function $\theta_{N(-1)}(\tau, h) \in M_{q/2, \bar\rho_N}$
is a holomorphic modular form.
We dropped the variable $z$
since the space $U = N \otimes \Q$ is definite and the symmetric domain only consists of the two points
$z_U^\pm$, yielding the same function.
Let $k:= 2 - q/2$.
We will then simply write $\Phi_N(h, f)$ for
the regularized theta lift of $f \in M^!_{2-k, \bar\rho_N}$
against $\theta_N$.
We will assume that $(\tilde{m}_1, \tilde{\mu}_1) = (0,0)$ for convenience, which we can do
because $\theta_{N(-1)} \in M_{2-k,\bar{\rho}_N}$ has a non-vanishing
constant term of index $(0,0)$.
In particular, $a_{m, \mu}(0,0) = 0$ for $(m, \mu) \neq (0,0)$ by \eqref{eq:vanishingcoeffs1}.

Write $T = \GSpin(U)$ and let $K_T \subset T(\A_f)$ be a suitable compact open such that $h \mapsto \theta_N(\tau, h)$
defines a function on $Z(U) = T(\Q) \bs T(\A_f) / K_T$.
\begin{lemma}\label{lem:GNexist}
  For every $h \in Z(U)$, there is a unique harmonic Maass form
  $\calG_N(\tau,h) \in H_{k,\rho_N}^!$
  with $L_{k}(\calG_N(\tau, h)) = \theta_N(\tau,h)$
  and holomorphic part
  \[
      \calG_N^+(\tau, h) = \sum_{\mu \in N'/N}\sum_{m \gg -\infty} c_N^+(h, m, \mu) q^m \phi_\mu
  \]
  satisfying the following properties:
  \begin{enumerate}
  \item For $m \leq 0$, we have $c_N^+(h, m, \mu) = 0$
  		unless $(m,\mu) = (-\tilde{m}_i, \pm\tilde{\mu}_i)$ for
    	some $i \in \{1,\ldots, s\}$.
  \item We have $c_N^+(h, m_i, \pm \mu_i) = 0$ for $i = 1,\ldots,r$.
  \item For every $m \in \Q$ and $\mu \in N'/N$ with $Q(\mu) \equiv m \bmod{\Z}$, we have
        \[
            \Phi_N(h, f_{m,\mu}) = c_N^+(h, m, \mu).
        \]
  \end{enumerate}
\end{lemma}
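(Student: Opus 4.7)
The plan is to construct $\calG_N(\tau,h)$ by starting from an arbitrary preimage of $\theta_N(\tau, h)$ under $L_k$ and uniquely normalizing it by subtracting a suitable weakly holomorphic modular form in $M_{k,\rho_N}^!$. First, by the surjectivity of the $\xi$-operator onto weakly holomorphic modular forms (the second exact sequence in Corollary~3.8 of \cite{BF}), choose any $\calG_N^{(0)}(\tau, h) \in H_{k,\rho_N}^!$ with $\xi_k(\calG_N^{(0)}(\tau, h)) = \theta_{N(-1)}(\tau, h)$; this is equivalent to $L_k(\calG_N^{(0)}(\tau, h)) = \theta_N(\tau, h)$. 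Such a preimage is unique up to addition of an element of $\ker(L_k) \cap H_{k,\rho_N}^! = M_{k,\rho_N}^!$, so the normalization is a problem purely within $M_{k,\rho_N}^!$. This normalization is carried out using the bases $\{g_{m,\mu}\}$ of $M_{k,\rho_N}^!(\Q)$ and $\{G_i\}$ of $M_{k,\rho_N}(\Q)$ constructed in Section~\ref{sec:cusp-forms-weight}.

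To achieve (1), subtract a finite linear combination of basis elements $g_{m,\mu}$ with $m>0$ from $\calG_N^{(0)}$ to cancel each Fourier coefficient of $\calG_N^{(0),+}$ at indices $(-m,\pm\mu)$ with $(m,\mu) \notin \{(\tilde m_i, \pm\tilde\mu_i)\}$. Only finitely many coefficients need to be cancelled because $\calG_N^{(0)} \in H_{k,\rho_N}^!$ has a finite principal part, and by the vanishing property \eqref{eq:vanishingcoeffs2} the non-negative coefficients $b_{m,\mu}(m_j,\pm\mu_j)$ of $g_{m,\mu}$ are zero, so this step does not alter the Fourier coefficients at $(m_j,\pm\mu_j)$. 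To then achieve (2), subtract an appropriate linear combination of the holomorphic forms $G_1,\dots,G_r$; because $c_{G_i}(m_j,\mu_j)=\delta_{ij}$ and the $G_i$ are holomorphic, this kills the remaining $(m_i,\pm\mu_i)$-coefficients without disturbing the already-normalized principal part. Call the resulting form $\calG_N(\tau,h)$. For uniqueness, any difference of two candidates lies in $M_{k,\rho_N}^!$ with all negative-index coefficients outside $\{(-\tilde m_i,\pm\tilde\mu_i)\}$ equal to zero and all $(m_i,\pm\mu_i)$-coefficients equal to zero; expanding in the basis $\{g_{m,\mu}\}$ of $M_{k,\rho_N}^!$ and invoking the duality relation \eqref{eq:duality}, one sees that all basis coefficients vanish.

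For property (3), write $\theta_N(\tau,h)=L_k\calG_N(\tau,h)$ and use weak holomorphicity $Lf_{m,\mu}=0$ to obtain $\langle f_{m,\mu},\theta_N\rangle = L_0\langle f_{m,\mu},\calG_N\rangle$. Applying Stokes' theorem to the truncated fundamental domain $\calF_T$, as in Lemma~\ref{limit} and the proof of Theorem~\ref{thm:fund}, the regularized theta integral reduces in the limit $T\to\infty$ to the $u$-constant Fourier coefficient of $\langle f_{m,\mu},\calG_N\rangle$ at $v\to\infty$. Inserting the explicit Fourier expansion \eqref{eq:fm} of $f_{m,\mu}$ together with the expansion of $\calG_N^+$, the symmetry $c_N^+(h,m,\mu)=c_N^+(h,m,-\mu)$ (which follows from modularity under $-I$ for weight $k=1$ and the Weil representation $\rho_N$), conditions (1) and (2) annihilate every contribution except the one from pairing the leading term $\tfrac{1}{2}q^{-m}(\phi_\mu+\phi_{-\mu})$ with the $(m,\pm\mu)$-coefficient of $\calG_N^+$, while \eqref{eq:vanishingcoeffs1} kills the contributions from the non-negative part of $f_{m,\mu}^+$. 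The result is precisely $c_N^+(h,m,\mu)$. The main technical obstacle is this Stokes' theorem computation in weight $k=1$: one must carefully regularize the divergences produced on $\calF$ by the principal part of $f_{m,\mu}$ paired against the linear growth of $\theta_N$ at the cusp (handled by the standard theta-integral regularization), and confirm that the non-holomorphic part $\calG_N^-$, whose zero-index Fourier coefficient carries a logarithmic growth $c^-(0,0)\log v$, does not contribute in the limit $T\to\infty$, which uses exactly the vanishing $a_{m,\mu}(0,0)=0$ from \eqref{eq:vanishingcoeffs1} (together with $(\tilde m_1,\tilde\mu_1)=(0,0)$).
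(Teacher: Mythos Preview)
Your proof is correct and follows the same approach as the paper: construct a preimage of $\theta_N(\tau,h)$ under $L_k$, normalize it within $M_{k,\rho_N}^!$ to satisfy (1) and (2), then reduce (3) to the constant-term identity $\Phi_N(h,f_{m,\mu})=\CT(\langle f_{m,\mu},\calG_N^+\rangle)$ and simplify using (1), (2), and \eqref{eq:vanishingcoeffs1}. The only difference is presentational: the paper cites Proposition~2.12 of \cite{EhlenSankaran} for a preimage already satisfying (1) and invokes Theorem~\ref{thm:fund2} (with $j=0$ and trivial $P$) for the constant-term identity, whereas you carry out both steps explicitly by subtracting the basis elements $g_{m,\mu}$ and re-deriving the Stokes' theorem computation.
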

\begin{proof}
	Arguing as in Proposition 2.12 of \cite{EhlenSankaran}, there is a $\tilde{G} \in H_{k}^!$
	with $L_k(\tilde{G}(\tau)) = \theta_N(\tau,h)$ satisfying (1).
    To ensure that $\tilde{G}$ satisfies (2), we
  	can subtract suitable multiples of the
  	$G_i$ from $\tilde{G}$ without changing the image under
  	the lowering operator.

  Now let $\calG_N(\tau,h)$ be a harmonic Maass form
  with $L_{k}(\calG_N(\tau, h)) = \theta_N(\tau,h)$
  satisfying (1) and (2). Note that these conditions uniquely characterize $\calG_N(\tau, h)$.
  By Theorem \ref{thm:fund2} we have for $m = -\tilde{m}_i$ and $\mu = \tilde{\mu}_i$ that
  \begin{equation}
  	\Phi_N(h, f_{-\tilde{m}_i, \tilde{\mu}_i}) = \CT\left(\langle f_{-\tilde{m}_i, \tilde{\mu}_i},\, \calG_N(\tau,h) \rangle\right) = c_N^+(h, -\tilde{m}_i, \tilde{\mu}_i).
  \end{equation}
  Here, we have used $c_N^+(h, -\tilde{m}_i, \tilde{\mu}_i) = c_N^+(h, -\tilde{m}_i, -\tilde{\mu}_i)$
  and $a_{-\tilde{m}_i, \tilde{\mu}_i}(\tilde{m}_i, \pm \tilde{\mu}_i) = 1/2$.
  Similarly, for $m>0$ and $\mu \in N'/N$ with $(m, \mu) \neq (m_i, \pm \mu_i)$, we get
  \begin{align*}
    \Phi_N(h, f_{m, \mu})    &=  \CT\left(\langle f_{m,\mu},\, \calG_N(\tau,h) \rangle\right) \\
                             &=  c_N^+(h, m, \mu)
                             -  \sum_{i=1}^r c_N^+(h, m_i, \mu_i)c_{G_i}(m,\mu)
                             + \sum_{\substack{n\leq 0 \\\nu \in N'/N}} c_N^+(h, n, \nu)a_{m, \mu}(-n,\nu)  \\
                             &=  c_N^+(h, m, \mu)
								-  \sum_{i=1}^r c_N^+(h, m_i,\mu_i)c_{G_i}(m,\mu)
                                + 2\sum_{i=1}^s c_N^+(h, -\tilde{m}_i,\tilde{\mu}_i) a_{m, \mu}(\tilde{m}_i,\tilde{\mu}_i),
  \end{align*}
  where we have used (1) in the second line.
  By condition (2), the first sum on the right-hand side vanishes.
  Finally, by \eqref{eq:vanishingcoeffs1},
  the second sum on the right-hand side vanishes as well
  and this finishes the proof.
\end{proof}

\subsection{Special preimages of binary theta functions}
\label{sec:see-saw-identity}

In this section we restrict to the case $q=2$, i.e., $N$ has signature $(0,2)$ and $k=1$.
Let $U = N \otimes \Q$ be the corresponding rational quadratic space
and write $\theta_N(\tau,h)$ for the Siegel theta function
attached to $N$. We put $T := \GSpin(U)$.
As in Section \ref{sect:bintheta},
we let $D$ be the discriminant of $N$, and write
$\calO_D \subset k_{D} = \Q(\sqrt{D}) \cong U$
for the order of discriminant $D$ in $\k_{D}$.
For convenience of the reader, we recall the statement of Theorem \ref{thm:preimages},
which we will now prove.
\begin{theorem}\label{thm:preimages-appendix}
  For every $h \in T(\A_f)$, there is a harmonic Maass form
  $\calG_N(\tau,h) \in H_{1,\rho_N}^!$, only depending on the class of $h$ in $\Cl(\calO_D)$,
  with holomorphic part
  \[
    \calG_N^+(\tau,h) = \sum_{\mu \in N'/N} \sum_{m \gg -\infty} c_{N}^{+}(h,m,\mu) e(m \tau)  \phi_{\mu}
  \]
  satisfying the following properties:
  \begin{enumerate}
  \item We have $L_1(\calG_N(\tau,h)) = \theta_N(\tau,h)$.
  \item Let $\mu \in L'/L$ and $m \in \Q$
    with $m \equiv Q(\mu) \bmod{\Z}$ and $(m, \mu) \neq (0,0)$. There is an
    algebraic number $\alpha(h, m, \mu) \in H_D^\times$ such that
    \begin{equation}
      \label{eq:cprma2}
      c_{N}^{+}(h, m, \mu) = -\frac{1}{r}\log|\alpha_N(h, m, \mu)|,
    \end{equation}
    for some $r \in \Z_{>0}$ only depending on $N$.
    \item For all $h \in T(\A_f)$, we have
    \begin{equation}
    	\alpha_N(h, m, \mu) = \alpha_N(1,m,\mu)^{[h,\, \k_D]}.
    \end{equation}
    \item Additionally, there is an $\alpha_N(h, 0, 0) \in H_D^\times$, such that
  \[
      c_{N}^{+}(h, 0, 0) = \frac{2}{r} \log|\alpha_N(h, 0, 0)| + \kappa(0,0).
  \]
  \end{enumerate}
\end{theorem}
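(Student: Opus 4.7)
My plan is to take the harmonic Maass form $\calG_N(\tau,h)$ furnished by Lemma \ref{lem:GNexist} and to prove the algebraicity statement for each of its Fourier coefficients by using item (3) of that lemma, which reduces the problem to evaluating the regularized theta lift $\Phi_N(h,f_{m,\mu})$ of the basis elements $f_{m,\mu}\in M_{1,\bar\rho_N}^!(\Q)$ constructed in Section \ref{sec:cusp-forms-weight}. By Lemma \ref{lem:bound1}, after multiplication by a denominator $A\in\Z_{>0}$ depending only on $N$, the principal parts of the $f_{m,\mu}$ become integral, so it suffices to prove that $A\cdot\Phi_N(h,f_{m,\mu})=-\frac{1}{r'}\log|\alpha|$ for some $\alpha\in H_D^\times$ and a universal $r'$.

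The core step is a see-saw identity. I would embed $N$ as a negative definite rank two sublattice of an even lattice $M$ of signature $(1,2)$, chosen so that the Shimura variety $X_{K_M}$ is a modular curve, so that $U=N\otimes\Q$ determines a CM cycle whose $T(\A_f)$-orbit is identified with $\Cl(\calO_D)$, and so that the orthogonal complement $P=M\cap U^\perp$ is rank one positive definite. The splitting $\theta_M(\tau,z_U^\pm,h)=\theta_P(\tau)\otimes\theta_N(\tau,h)$ combined with the intertwining operator of Lemma \ref{sublattice} allows one to rewrite $\Phi_N(h,f_{m,\mu})$ as the value at the CM point $(z_U^\pm,h)$ of the regularized theta lift of an input form $\tilde f_{m,\mu}\in M^!_{1/2,\bar\rho_M}$ whose principal part is an explicit integral linear combination of $q$-powers dictated by the Fourier coefficients of $\theta_P$ paired with $f_{m,\mu}$ (with $\tilde f_{m,\mu}$ weakly holomorphic because $f_{m,\mu}$ is).

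Now the Borcherds lift of $\tilde f_{m,\mu}$ is a meromorphic modular form $\Psi(\tilde f_{m,\mu})$ on the modular curve whose logarithmic Petersson metric is, up to an explicit additive constant, the theta lift under consideration. Evaluating at the CM point $(z_U^\pm,h)$ and invoking the theory of complex multiplication for meromorphic modular functions on the relevant modular curve with level structure, together with the fact that such functions take values in $H_D$ at CM points of discriminant $D$, one obtains $A\cdot\Phi_N(h,f_{m,\mu})=-\frac{1}{r'}\log|\alpha_N(h,m,\mu)|$ with $\alpha_N(h,m,\mu)\in H_D^\times$. The Shimura reciprocity law for these CM values on the modular curve side translates, via the identification $T(\A_f)/K_T \cong \Cl(\calO_D)$ and the isomorphism \eqref{eq:clOD}, into the relation $\alpha_N(h,m,\mu)=\alpha_N(1,m,\mu)^{[h,\,\k_D]}$ of item (3).

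The main obstacle is the constant term $c_N^+(h,0,0)$: Lemma \ref{lem:GNexist} does not compute it directly, because $f_{0,0}$ is excluded from the basis. The plan is to treat it by comparison with the averaged form. Averaging $\theta_N(\tau,h)$ over $h\in\Cl(\calO_D)$ yields, by the Siegel--Weil formula (Proposition \ref{prop:sw}), the incoherent Eisenstein series $E_N(\tau,0;-1)$, whose canonical preimage $\calE_N(\tau)=E_N'(\tau,0;1)$ has a holomorphic part $\calE_N^+$ whose Fourier coefficients $\kappa(m,\mu)$ are, outside $(0,0)$, rational multiples of logarithms of rational numbers by \cite{KY}. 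Comparing $\sum_{h}\calG_N(\tau,h)$ with $\calE_N(\tau)$, their difference is a weakly holomorphic form, and one obtains $c_N^+(h,0,0)=\frac{2}{r}\log|\alpha_N(h,0,0)|+\kappa(0,0)$. A second delicate point, which I expect to absorb most of the bookkeeping, is controlling the denominator $r$ uniformly in $(m,\mu)$ and $h$, particularly when $D$ is non-fundamental so that the order $\calO_D$ is not maximal; here one must work with the lattice $M$ allowing general level at the primes dividing the conductor of $\calO_D$, and carefully track the bounded denominators in Lemma \ref{lem:bound1} together with the integrality properties of Borcherds products with respect to the CM level structure.
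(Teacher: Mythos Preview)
Your overall strategy coincides with the paper's: take $\calG_N$ from Lemma~\ref{lem:GNexist}, realize each coefficient $c_N^+(h,m,\mu)=\Phi_N(h,f_{m,\mu})$ as a CM value of a Borcherds product on a signature $(1,2)$ Shimura variety via a see-saw, and conclude with CM theory and Shimura reciprocity. Two of your steps, however, are genuine gaps rather than omitted details.

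\medskip
\noindent\textbf{The see-saw input.} You need a weight~$1/2$ form $\tilde f_{m,\mu}\in M^!_{1/2,\bar\rho_M}$ whose lift at $(z_U^\pm,h)$ equals $\Phi_N(h,f_{m,\mu})$. Neither Lemma~\ref{sublattice} (which only restricts from a lattice to a sublattice, in the wrong direction here) nor ``pairing $f_{m,\mu}$ with coefficients of $\theta_P$'' (which would raise the weight to $3/2$) produces such a form. The paper's construction (Proposition~\ref{prop:seesaw}) takes $M=P\oplus N$ with $P=(\Z,x^2)$ and sets $\calF_{m,\mu}=\psi_{0,1}\otimes f_{m,\mu}$, where $\psi_{0,1}\in M^!_{-1/2,\bar\rho_P}$ corresponds to the Eichler--Zagier weak Jacobi form $\tilde\phi_{0,1}$; the key identity $\langle\psi_{0,1},\theta_P\rangle=\tilde\phi_{0,1}(\tau,0)=12$ is what collapses the $P$-factor. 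One must then subtract a multiple of $g\,\psi_{-2,1}\otimes\theta_{N(-1)}$ (whose lift vanishes since $\langle\psi_{-2,1},\theta_P\rangle=0$) to force $c_{\calF_{m,\mu}}(0,0)=0$; otherwise the Borcherds product has nonzero weight, and the ``explicit additive constant'' you mention contains a term proportional to $\log\Im(z_U)$, which is not the logarithm of an algebraic number. The uniform denominator bound and the rationality of the weight-zero product then follow from \cite{HowardMadapusi} (Corollary~\ref{cor:ratfun}), and this handles non-fundamental $D$ uniformly, without the extra level-theoretic bookkeeping you anticipate.

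\medskip
\noindent\textbf{Item (4).} Your averaging argument shows only that $\sum_h\calG_N(\tau,h)$ differs from a multiple of $\calE_N$ by a weakly holomorphic form, hence controls only the \emph{averaged} constant term. For an individual $h$, the difference $\calG_N(\cdot,h)-\calE_N$ is not weakly holomorphic, so your passage to individual $c_N^+(h,0,0)$ is unjustified. The paper instead notes that $\xi_1\big(\calG_N(\cdot,h)-E_N'(\cdot,0;1)\big)$ is a \emph{cusp form} for each $h$ (the constant terms of $\theta_{N(-1)}(\cdot,h)$ and of the averaged theta series agree), and applies the pairing of \cite[Proposition~3.5]{BF} with the holomorphic weight~$1$ Eisenstein series. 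This expresses $c_N^+(h,0,0)-\kappa(0,0)$ as a finite rational combination of the differences $c_N^+(h,m,\mu)-\kappa(m,\mu)$ with $(m,\mu)\neq(0,0)$, which are already logarithms of elements of $H_D^\times$ by part~(2) and the known rationality of the $\kappa(m,\mu)$.
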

For the proof, we consider the lattice $L := P \oplus N$ of signature $(1,2)$,
where $P = \Z$ with the quadratic form $x^2$.
We put $V = L \otimes \Q$ and let $\calD$ be the asociated symmetric domain.
We let $H = \GSpin(V)$ and $K = \GSpin(\hat{L})$, so that
the theta lift $\Phi_L(z, h, f)$ of any $f \in M_{1/2, \bar\rho_L}^!$
defines a meromorphic modular form on $X_K$.
We view $Z(U)$ as a CM cylce on $X_K$ as in Section \ref{sec:2}.
For $m \in \Q$ and $\mu \in N'/N$ with $m \equiv Q(\mu) \bmod{\Z}$,
we let $f_{m, \mu} \in M_{1, \bar\rho_N}^!(\Q)$ be as in the previous section.

\begin{remark}
  We remark that all of the following arguments can easily be adopted
  to work with any lattice $L$ of signature $(1,2)$
  such that we have a primitive isometry $N \hookrightarrow L$.
  In \cite{Eh} we used the lattice for $\Gamma_0(|D|)$
  for odd squarefree $D$ to obtain more precise information
  about the algebraic numbers appearing in Theorem \ref{thm:preimages-appendix}
  (for instance integrality and the prime factorization),
  and for computational purposes it can also be useful to tweak the choice of $L$.
  For the purposes of proving the statements of Theorem \ref{thm:preimages-appendix},
  however, our simple choice suffices.
\end{remark}

\begin{proposition}\label{prop:seesaw}
  Let $m \in \Q$ and $\mu \in N'/N$
  such that $Q(\mu) \equiv m \bmod{\Z}$.
  There is a weakly holomorphic modular form
  $\calF_{m,\mu} \in M_{1/2, \bar{\rho}_L}^!(\Q)$
  such that $c_{\calF_{m,\mu}}(0,0) = 0$ and satisfying
  \[
    12 \Phi_N(h, f_{m,\mu}) = \Phi_L(z_U^{\pm}, h, \calF_{m,\mu}).
  \]
  Moreover, for every $n_0 \in \Q$ there is a constant $B \in \Z_{>0}$
  such that for all $n \leq n_0$ and all $m, \mu$, we have
  $B \cdot c_{\calF_{m, \mu}}(n, \nu) \in \Z$.
\end{proposition}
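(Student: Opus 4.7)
The strategy is a see-saw identity reducing the theta-lift equality to an identity between weakly holomorphic modular forms. At the CM point $z_U^\pm$, corresponding to the orthogonal decomposition $V_\R = U_\R \oplus P_\R$, the Siegel theta function of $L$ factorises as
\[
	\theta_L(\tau, z_U^\pm, h) = \theta_P(\tau) \otimes \theta_N(\tau, h).
\]
For any $\calF \in M^!_{1/2, \bar\rho_L}$, let $\calF \star \theta_P \in M^!_{1, \bar\rho_N}$ denote the contraction obtained by pairing the $S_P$-factor of $\calF$ against $\theta_P$ through the invariant pairing $\bar\rho_P \otimes \rho_P \to \C$, $\phi_\mu \otimes \phi_\nu \mapsto \delta_{\mu\nu}$. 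The factorisation yields
\[
	\Phi_L(z_U^\pm, h, \calF) = \int_\calF^\reg \langle \calF \star \theta_P,\, \theta_N(\cdot, h) \rangle \,d\mu(\tau) = \Phi_N(h, \calF \star \theta_P),
\]
so the proposition reduces to constructing $\calF_{m,\mu} \in M^!_{1/2, \bar\rho_L}(\Q)$ with $c_{\calF_{m,\mu}}(0, 0) = 0$ satisfying $\calF_{m,\mu} \star \theta_P = 12 f_{m,\mu}$ in $M^!_{1, \bar\rho_N}$, together with the uniform denominator bound on its Fourier coefficients.

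For the construction I would apply the basis scheme of Section \ref{sec:cusp-forms-weight} to the space $M^!_{1/2, \bar\rho_L}(\Q)$, obtaining a basis $\{\tilde{g}_{n, \nu}\}$ of weakly holomorphic forms characterised by prescribed principal parts and vanishing conditions analogous to \eqref{eq:vanishingcoeffs2}. I would define $\calF_{m,\mu}$ as an explicit rational linear combination of elements $\tilde{g}_{n, \nu}$ whose $\nu$ is supported in the $\phi_0$-component of $P'/P$, chosen so that $\calF_{m,\mu} \star \theta_P$ reproduces the principal part and the constant term of $12 f_{m,\mu}$ exactly, while the remaining freedom in the basis is used to enforce $c_{\calF_{m,\mu}}(0, 0) = 0$. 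Existence of a weakly holomorphic form with the required principal part is governed by Borcherds' duality criterion \cite[Theorem 3.1]{Bo:Duke}, which demands orthogonality of the prescribed principal part against every cusp form in $S_{3/2, \rho_L}(\Q)$. I would verify this orthogonality through the Shimura-type correspondence given by the contraction $G \mapsto G \star \theta_P$, which sends $S_{3/2, \rho_L}$ into a space of weight-$2$ cusp forms with representation $\rho_N$, combined with Lemma \ref{lem:whforms}, the duality \eqref{eq:duality}, and the vanishing conditions \eqref{eq:vanishingcoeffs1} defining $f_{m,\mu}$. The factor $12$ is a normalisation constant that emerges from the explicit matching of Fourier coefficients under the contraction and is needed to preserve rationality.

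The main obstacle is the careful bookkeeping of principal parts and constant terms under the contraction $\star \theta_P$, together with the verification of the orthogonality obstruction against $S_{3/2, \rho_L}(\Q)$. Once the existence of $\calF_{m,\mu}$ with rational coefficients is established, the uniform denominator bound follows from the direct analogue of Lemma \ref{lem:bound1} applied to $M^!_{1/2, \bar\rho_L}(\Q)$: each $\calF_{m,\mu}$ is a $\Q$-linear combination of a fixed finite collection of basis elements whose Fourier coefficients below any threshold $n_0$ have uniformly bounded denominators, and the coefficients of the linear combination are themselves controlled by the Fourier coefficients of $12 f_{m,\mu}$, which are uniformly bounded by Lemma \ref{lem:bound1}.
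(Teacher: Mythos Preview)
Your see-saw reduction is exactly right: the factorisation $\theta_L(\tau,z_U^\pm,h)=\theta_P(\tau)\otimes\theta_N(\tau,h)$ reduces the proposition to finding $\calF_{m,\mu}$ with $\langle \calF_{m,\mu},\theta_P\rangle_{S_P}=12 f_{m,\mu}$ and vanishing $(0,0)$-coefficient. But your construction of $\calF_{m,\mu}$ diverges from the paper's and leaves a real gap.

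The paper does not solve the surjectivity problem abstractly via Borcherds' obstruction criterion. Instead it exploits the explicit structure of $M^!_{*-1/2,\bar\rho_P}\cong J^!_{*,1}$ as a free $M_*$-module on the two weak Jacobi forms $\tilde\phi_{-2,1}$ and $\tilde\phi_{0,1}$ of Eichler--Zagier. The corresponding $\psi_{-2,1}\in M^!_{-5/2,\bar\rho_P}$ and $\psi_{0,1}\in M^!_{-1/2,\bar\rho_P}$ have the crucial property that the contractions $\langle\psi_{-2,1},\theta_P\rangle=\tilde\phi_{-2,1}(\tau,0)$ and $\langle\psi_{0,1},\theta_P\rangle=\tilde\phi_{0,1}(\tau,0)$ are \emph{holomorphic} modular forms of weights $-2$ and $0$, hence equal to $0$ and to the constant $12$ respectively. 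Thus $\psi_{0,1}\otimes f_{m,\mu}$ already satisfies $\langle\psi_{0,1}\otimes f_{m,\mu},\theta_P\rangle=12 f_{m,\mu}$ on the nose; one then subtracts a multiple of $g\cdot\psi_{-2,1}\otimes\theta_{N(-1)}$ (whose contraction with $\theta_P$ vanishes identically) to kill the $(0,0)$-coefficient without disturbing the theta lift. This explains the $12$ and makes the denominator bound immediate from the integrality of $\psi_{0,1}$ together with Lemma~\ref{lem:bound1}.

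Your approach, by contrast, tries to match only the principal part and constant term of $\calF_{m,\mu}\star\theta_P$ with those of $12 f_{m,\mu}$. That does not force equality: the difference could lie in $S_{1,\bar\rho_N}$, which is generally nonzero. Restricting the $P'/P$-support of $\calF_{m,\mu}$ to $\phi_0$ does not help, since the contraction then multiplies by the non-constant series $\sum_{n\in\Z}q^{n^2}$ rather than a scalar. The orthogonality check against $S_{3/2,\rho_L}$ is also left at the level of a plan, and the factor $12$ remains unmotivated. The paper's Jacobi-form trick bypasses all of this by producing an exact preimage directly.
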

\begin{proof}
  We follow the argument given in Theorem 6.6 of \cite{Vi3} and Section 4.2 of \cite{Eh}
  (here the special case for $A=1$ in \cite{Eh} is sufficient).
  For any integer $k$, the space $M_{k-1/2,\bar\rho_P}^!$ is isomorphic to the space of
  $J_{k,1}^!$ of weakly holomorphic Jacobi forms of weight $k$ and index $1$
  via the theta expansion of Jacobi forms \cite{EZ}.

  We let
  $\tilde\phi_{-2,1} \in J_{-2,1}^! \cong M_{-5/2, \bar\rho_P}^!$
  and $\tilde\phi_{0,1} \in J_{0,1}^! \cong M_{-1/2, \bar\rho_P}^!$
  be the two generators
  of the ring of weak Jacobi forms of even weight
  over $M_{\ast}$, the ring of holomorphic modular forms for $\SL_2(\Z)$
  as in \cite[Theorem 9.3]{EZ}.
  These two forms correspond to vector valued weakly holomorphic modular forms
  $\psi_{-2,1} \in M_{-5/2, \bar\rho_P}^!$ and $\psi_{0,1} \in M_{-1/2, \bar\rho_P}^!$.
  For any weak Jacobi form $\phi(\tau, z) \in J_{k,n}^{\mathrm{weak}}$,
  the specialization $\phi(\tau,0)$ is a holomorphic modular form of weight $k$
  for $\SL_2(\Z)$.
  Hence,
  \[
    \langle \psi_{-2,1}, \theta_P \rangle = \tilde\phi_{-2,1}(\tau, 0) = 0
  \]
  and $\langle \psi_{0,1}, \theta_P \rangle = \tilde\phi_{0,1}(\tau, 0)$ is a constant.
  By inspection of the Fourier expansion of $\tilde\phi_{0,1}$ it is easily seen that
  \[
  	\langle \psi_{0,1}, \theta_P \rangle = \tilde\phi_{0,1}(\tau, 0) = 12.
  \]
  Using the identification $\bar\rho_{L} \cong \bar\rho_P \otimes \bar\rho_N$,
  we view $\psi_{0,1} \otimes f_{m,\mu}$ as an element of
  $M_{1/2, \bar\rho_L}^!$.
  Thus, we obtain the relation of theta lifts
  \[
     12 \Phi_N(h, f_{m,\mu}) = \Phi_L(z_U^{\pm}, h, \psi_{0,1} \otimes f_{m,\mu}).
  \]
  The constant term of index $(0,0)$ of $\psi_{0,1} \otimes f_{m,\mu}$ might be non-zero.
  In that case, let $a \in \Z_{>0}$ be minimal such that
  the Fourier coefficient of index $(a,0)$
  of $\psi_{-2,1} \otimes \theta_{N(-1)}$
  is non-zero and let $g \in M_{2}^!$
  be the unique weakly holomorphic modular form of weight $2$
  with principal part equal to $q^{-a}$.
  Note that the constant term of $g$ necessarily vanishes
  and that the
  $\phi_0$-component of $\psi_{-2,1} \otimes \theta_{N(-1)}$ does not have any non-zero
  Fourier coefficients of negative index.
  Hence, $g \psi_{-2,1} \otimes \theta_{N(-1)}$
  has a non-zero and integral constant term of index $(0,0)$.
  Using $\langle \psi_{-2,1}, \theta_P \rangle = 0$, we obtain
  \[
    \Phi_L(z_U^{\pm}, h, g \psi_{-2,1} \otimes \theta_{N(-1)}) = 0,
  \]
  and thus we can define $\calF_{m,\mu} = \psi_{0,1} \otimes f_{m,\mu} - x \cdot g \psi_{-2,1} \otimes \theta_{N(-1)}$
  with a suitable constant $x \in \Q^\times$.

  Finally, to obtain the bound, note that
  $\psi_{0,1}$ has integral Fourier coefficients
  and a principal part equal to $q^{-1/4} \phi_{1/2 + \Z}$.
  Let $A$ be the bound in Lemma \ref{lem:bound1}
  such that $A a_{m, \mu}(n, \nu) \in \Z$
  for all $n \leq n_0 + 1/4$.
  Then $A \psi_{0,1} \otimes f_{m, \mu}$ has
  integral Fourier coefficients, up to $q^{n_0}$.

  The Fourier coefficient $c_0$ of index $(0,0)$ of
  $g \psi_{-2,1} \otimes \theta_{N(-1)}$
  is an integer since $\theta_{N(-1)}$, $g$,
  and $\psi_{-2,1}$ have integral Fourier coefficients.
  Thus, $c_0 x \in \Z$.
  Consequently, the denominators of the Fourier coefficients of $\calF_{m, \mu}$,
  up to $q^{n_0}$, are bounded by $B = \mathrm{lcm}(A, c_0)$.
\end{proof}

According to \cite[Theorem 13.3]{Bo1},
there is a meromorphic modular form $\Psi_{L}(z, h, \calF_{m, \mu})$ of weight $0$
(and some multiplier system of finite order), such that
\begin{equation}\label{eq:BP}
  	\Phi_L(z, h, \calF_{m, \mu}) = -4 \log |\Psi_{L}(z, h, \calF_{m, \mu})|,
  \end{equation}
with $\mathrm{div}(\Psi_{L}(z, h, \calF_{m, \mu})) = Z(\calF_{m, \mu}) = \sum_{\nu \in N'/N} \sum_{n<0} c_{\calF_{m, \mu}}(n, \nu) Z(n, \nu)$.
The identity \eqref{eq:BP} holds on the complement of
  \begin{equation}\label{eq:union}
    \bigcup_{\substack{n < 0 \\ \nu \in L'/L \\  c_{\calF_{m, \mu}}(n,\nu) \neq 0}} Z(n,\nu).
  \end{equation}

\begin{corollary}\label{cor:ratfun}
There is a constant $A_0 \in \Z_{>0}$, only depending on $N$, such that
for all $m$ and $\mu$, the Borcherds product $\Psi_{L}(z, h, A_0 \calF_{m, \mu})$
defines a meromorphic function on $X_K$ which is defined over $\Q$.
\end{corollary}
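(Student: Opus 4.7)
The plan is to combine the uniform denominator control from Proposition \ref{prop:seesaw} with Borcherds' product theorem, then kill the resulting multiplier system by raising to a suitable power, and finally derive $\Q$-rationality from the explicit infinite product expansion together with the $q$-expansion principle.

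Concretely, first I would apply Proposition \ref{prop:seesaw} at $n_0 = 0$ to obtain a positive integer $B$, depending only on $N$, such that $B \cdot c_{\calF_{m,\mu}}(n, \nu) \in \Z$ for all $n \leq 0$, all $\nu \in L'/L$, and all admissible pairs $(m, \mu)$. Since by construction $c_{\calF_{m,\mu}}(0,0) = 0$, this means $B \calF_{m,\mu}$ has integral principal part and vanishing constant term. Borcherds' product theorem \cite[Theorem 13.3]{Bo1} then produces a meromorphic modular form $\Psi_L(z, h, B \calF_{m,\mu})$ of weight $0$ on $X_K$ whose divisor is $Z(B \calF_{m,\mu})$, carrying some multiplier system $\chi_{m, \mu}$ of finite order.

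Next, I would argue that the order of $\chi_{m, \mu}$ can be bounded by a single integer $d \in \Z_{>0}$ depending only on $L$, independently of $(m, \mu)$. The character in Borcherds' construction is determined by the residues modulo $\Z$ of the principal-part coefficients of the input, together with fixed discriminant data of $L$; once these residues vanish (which our integer multiple $B$ guarantees uniformly), $\chi_{m, \mu}$ is forced to factor through a fixed finite quotient of the character group of $\Gamma_L = K \cap H(\Q)^+$, whose exponent depends only on $L$. Setting $A_0 := Bd$ therefore gives $\Psi_L(z, h, A_0 \calF_{m, \mu}) = \Psi_L(z, h, B \calF_{m, \mu})^d$ with trivial multiplier, hence a genuine meromorphic function on $X_K$.

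For $\Q$-rationality, I would invoke Borcherds' explicit product expansion at a cusp of $X_K$ associated to a rational isotropic line of $L$: up to a leading root of unity, the expansion takes the form $q^a \prod_{\lambda > 0}(1-q^\lambda)^{c(\lambda)}$ with $a \in \Z$ and all exponents $c(\lambda) \in \Z$ once the input is integral. The order of the leading root of unity is again controlled by $L$ and the integer residues modulo $\Z$ of the input data, so after possibly enlarging $A_0$ by one further uniform factor we obtain an integral $q$-expansion. Since $X_K$ admits a canonical model over $\Q$ as an orthogonal Shimura variety, the $q$-expansion principle then forces $\Psi_L(z, h, A_0 \calF_{m, \mu})$ to be defined over $\Q$. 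The main obstacle is the uniformity statement in the previous paragraph: one must verify that the Borcherds character really depends only on the $\Z$-residues of the principal-part coefficients together with fixed invariants of $L$, so that integrality of those coefficients confines $\chi_{m,\mu}$ to a finite group intrinsic to $L$ and not to $(m, \mu)$.
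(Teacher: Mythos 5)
Your route is genuinely different from the paper's: the paper disposes of the corollary in three lines by applying Proposition~\ref{prop:seesaw} with $n_0=1$ and then citing \cite[Theorem A]{HowardMadapusi}, whose proof on the integral model is inspected to see that the power $A_0$ can be chosen independently of $(m,\mu)$. You instead try to prove rationality by hand via the finite-order multiplier system and the $q$-expansion principle, which is classically viable on a modular curve. But as written your argument has a genuine gap, and it is exactly the one you flag at the end: the claim that the Borcherds character $\chi_{m,\mu}$ lies in a finite group whose exponent depends only on $L$. The justification you offer --- that the character depends only on the residues modulo $\Z$ of the principal-part coefficients together with fixed discriminant data --- is not correct and would not suffice anyway: here $X_K$ is a modular curve with cusps and, in general, positive genus, so the abelianization of the relevant group $K\cap H(\Q)^+$ has a nontrivial free part and therefore admits finite-order characters of arbitrarily large order. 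Integrality of the principal part does not by itself confine $\chi_{m,\mu}$ to a fixed finite quotient; one needs an actual theorem (e.g.\ that the character is trivial on a congruence subgroup determined by $L$ alone), and none is supplied. Since the entire content of the corollary is the uniformity of $A_0$ in $(m,\mu)$, this is the heart of the matter rather than a technical footnote, and it is precisely why the paper outsources the statement to \cite{HowardMadapusi}.

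Two further soft spots. First, in signature $(1,2)$ the Lorentzian sublattice entering Borcherds' product expansion is positive definite of rank one, so the exponents in $\prod_{\lambda>0}(1-q^{\lambda})^{c(\lambda)}$ are coefficients $c_{\calF_{m,\mu}}(Q(\lambda),\lambda)$ at \emph{positive} indices; Proposition~\ref{prop:seesaw} with $n_0=0$ controls only the coefficients with $n\le 0$, so your assertion that all exponents become integral is not established. (Rationality of the expansion, which is all the $q$-expansion principle actually requires, does hold since $\calF_{m,\mu}$ has rational coefficients, but then the integrality discussion is beside the point.) Second, $X_K$ need not be connected, and a rational $q$-expansion at one cusp only gives a rational structure on the component containing it; one must control all components compatibly with the Galois action permuting them. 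Both of these are repairable, but the uniform bound on the order of the multiplier system is a genuinely missing ingredient.
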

\begin{proof}
  We use $n_0 = 1$ in Proposition \ref{prop:seesaw}
  to obtain a bound $B$ on the denominator of the Fourier coefficients
  of $\calF_{m, \mu}$, up to $q^1$.
  By \cite[Theorem A]{HowardMadapusi},
  there is a constant $A_0$ with $B \mid A_0$
  such that $\Psi_L(z, h, A_0 \calF_{m,\mu})$ is defined over $\Q$.
  An inspection of the proof of \cite[Theorem A]{HowardMadapusi} shows that
  $A_0$ can be chosen indepently of $m$ and $\mu$.
\end{proof}

\begin{proof}[Proof of Theorem \ref{thm:preimages-appendix}]
  We let $\calG_N$ be defined as in Lemma \ref{lem:GNexist}.
  Then (1) is clear. To prove (2), let $(m, \mu) \neq (0,0)$.
  We use that $\Phi_L(z_U^{\pm}, h, \calF_{m,\mu})$
  is the logarithm of a CM value of a Borcherds product
  and then invoke CM theory and Shimura reciprocity.

  It is not hard to see (cf. Proposition 4.18 of \cite{Eh})
  that $c_{f_{m, \mu}}(0,0) = 0$ implies that the Borcherds product
  $\Psi_{L}(z, h, \calF_{m,\mu})$ is always defined and non-zero
  at the CM point $(z_U^\pm, h)$,
  even if $(z_U^\pm, h)$ is contained in one of the divisors $Z(n, \nu)$
  appearing in \eqref{eq:union}.
  Using Corollary \ref{cor:sing}, it is straightforward to check that \eqref{eq:BP}
  then still holds up to the logarithm of a non-zero rational number.
  Hence,
  \[
    \Phi_L(z_U^{\pm}, h, \calF_{m,\mu}) = -4 \log |\Psi_{L}(z_U^\pm, h, \calF_{m,\mu})| - \log|t|,
  \]
  where $t \in \Q^\times$ is equal to $1$ if $(z_U^\pm,h)$ is not contained in \eqref{eq:union}.

  We let $A_0$ be the constant in Corollary \ref{cor:ratfun}.
  Then $\Psi_{L}(z_U^\pm, h, A_0 \calF_{m,\mu})$ is defined over $\Q$ and
  hence, we infer that the algebraic number
  \[
    \alpha_N(h, m, \mu) := t^{A_0} \cdot \Psi_L(z_U^\pm, h, A_0 \calF_{m,\mu})^{4}
  \]
  is contained in the ring class field $H_D$,
  since $(z_U^\pm, h)$ is defined over $H_D$.
  The relation in (2) now follows by Proposition \ref{prop:seesaw} and Lemma \ref{lem:GNexist}.
  Item (3) then follows from Shimura reciprocity \cite[Theorem 6.31]{shimauto}, i.e.,
  \[
    \alpha_N(h, m, \mu) = t^{A_0} \cdot \Psi_{L}(z_U^\pm, h, A_0 \calF_{m,\mu})^{4}
  = t^{A_0} \cdot (\Psi_{L}(z_U^\pm, 1, A_0 \calF_{m,\mu})^{4})^{[h, k_D]} = \alpha_N(1, m, \mu)^{[h, k_D]}.
  \]
  Finally, note that $\xi_1(\calG_N(\tau) - E_N'(\tau, 0, 1))$ is a cusp form.
  By considering the pairing with the holorphic Eisenstein series $\frac{1}{2} v\overline{E_N(\tau, 0 -1)}$, (4) follows from \cite[Proposition 3.5]{BF}.
\end{proof}

\printbibliography

\end{document}